\newenvironment{customthm}[1]
  {\innercustomthm}
  {\endinnercustomthm}
\newtheorem{theorem}{Theorem}[subsection]
\newtheorem{corollary}[theorem]{Corollary}
\newtheorem{lemma}[theorem]{Lemma}
\newtheorem{proposition}[theorem]{Proposition}
\theoremstyle{definition}
\newtheorem{definition}[theorem]{Definition}
\theoremstyle{definition}
\newtheorem{remark}[theorem]{Remark}
\newtheorem{example}[theorem]{Example}
\newcommand{\C}{\mathscr{C}}
\newcommand{\Z}{\mathbb{Z}}
\newcommand{\map}{\mathrm{map}^h}
\newcommand{\Q}{\mathbb{Q}}
\newcommand{\F}{\mathbb{F}}
\newcommand{\M}{\mathcal{M}}
\newcommand{\N}{\mathcal{N}}
\newcommand{\cof}{\mathrm{Cof}}
\newcommand{\inj}{\mathrm{inj}}
\newcommand{\I}{\mathcal{I}}
\newcommand{\W}{\mathcal{W}}
\newcommand{\X}{\mathbb{X}}
\newcommand{\ho}{\mathrm{Ho}}
\newcommand{\Req}{\mathrm{R}\text{-}\mathrm{eq}}
\newcommand{\qi}{\mathrm{q.i.}}
\newcommand{\cateq}{\mathrm{cat.eq.}}
\newcommand{\eq}{\mathrm{eq.}}
\newcommand{\et}{\text{\'{E}t}}
\newcommand{\q}{\mathfrak{q}}
\newcommand{\y}{\mathfrak{i}_1}
\numberwithin{equation}{section}
\begin{document}
\title[The simplicial coalgebra of chains under three different notions of weak equivalence]{The simplicial coalgebra of chains under three different notions of weak equivalence}

\author[G. Raptis]{George Raptis}
\address{\newline G.R., Fakultät für Mathematik, Universität Regensburg, 93040 Regensburg, Germany}
\email{\href{mailto:georgios.raptis@mathematik.uni-regensburg.de}{georgios.raptis@mathematik.uni-regensburg.de}}

\author[M. Rivera]{Manuel Rivera}
\address{\newline M.R., Department of Mathematics, Purdue University}
\email{\href{mailto:manuelr@purdue.edu}{manuelr@purdue.edu}}

\begin{abstract}
We study the simplicial coalgebra of chains on a simplicial set with respect to three notions of weak equivalence. To this end, we construct three model structures on the category of reduced simplicial sets for any commutative ring $R$. The weak equivalences are given by: (1) an $R$-linearized version of categorical equivalences, (2) maps inducing an isomorphism on fundamental groups and an $R$-homology equivalence between universal covers, and (3) $R$-homology equivalences. Analogously, for any field $\F$, we construct three model structures on the category of connected simplicial cocommutative $\F$-coalgebras. The weak equivalences in this context are (1$’$) maps inducing a quasi-isomorphism of dg algebras after applying the cobar functor, (2$’$) maps inducing a quasi-isomorphism of dg algebras after applying a localized version of the cobar functor, and (3$’$) quasi-isomorphisms. Building on previous work of Goerss in the context of (3)--(3$'$), we prove that, when $\F$ is algebraically closed, the simplicial $\F$-coalgebra of chains defines a homotopically full and faithful left Quillen functor for each one of these pairs of model categories. More generally, when $\F$ is a perfect field, we compare the three pairs of model categories in terms of suitable notions of homotopy fixed points with respect to the absolute Galois group of $\F$.

\end{abstract}

\maketitle

\setcounter{tocdepth}{1}
\tableofcontents

\section{Introduction}  

\subsection{Overview} A central problem in algebraic topology is to understand how much information about a category of topological spaces, considered up to a specified notion of weak equivalence, is preserved by a particular functorial invariant. Quillen proved that a suitable version of the rational chains functor defines an equivalence of homotopy theories between simply-connected spaces, considered up to rational homotopy equivalence, and simply-connected cocommutative differential graded (dg) $\mathbb{Q}$-coalgebras, considered up to quasi-isomorphism \cite{Q69}. In the context of spaces of finite type, Sullivan developed an effective version of this theory, suitable for geometric applications, in terms of simply-connected commutative dg $\mathbb{Q}$-algebras and their minimal models \cite{S77}. 

\smallskip

For a field $\F$ of arbitrary characteristic, Goerss \cite{Go} considered spaces (simplicial sets) up to $\F$-homology equivalence and the comparison with simplicial cocommutative $\F$-coalgebras, considered up to quasi-isomorphism, via the simplicial $\F$-chains functor:
$$\F[-] \colon \mathsf{sSet} \to \mathsf{sCoCoalg}_{\F}, \ X \mapsto \F[X].$$ 
For any simplicial set $X$, the simplicial cocommutative $\mathbb{F}$-coalgebra of chains $\mathbb{F}[X]$ is given degreewise by the free $\F$-vector space on $X$ together with the coproduct induced by the diagonal map $X \to X \times X$. One of the main results of \cite{Go} shows that the functor $\F[-]$ is homotopically full and faithful when $\mathbb{F}$ is algebraically closed. This implies that any space may be recovered, up to $\F$-localization in the sense of Bousfield \cite{B75}, from its simplicial cocommutative $\F$-coalgebra of chains through the derived functor of the right adjoint of $\F[-]$. More explicitly, the right adjoint of $\F[-]$, also known as the \textit{functor of $\F$-points}, is given by
$$\mathcal{P} \colon \mathsf{sCoCoalg}_{\F} \to \mathsf{sSet}, \ C \mapsto \mathsf{sCoCoalg}_{\F}(\underline{\F}, C),$$
where $\underline{\F}$ denotes the constant simplicial object at $\F$. This result is of particular importance over $\overline{\F}_p$, the algebraic closure of the field $\mathbb{F}_p$ of $p$ elements. Both Quillen and Goerss used the framework of model categories to describe the necessary homotopical constructions. Furthermore, Mandell \cite{M01} proved a similar statement for finite type nilpotent spaces and the $E_{\infty}$-dg-algebra of singular cochains, and also obtained an intrinsic description of the essential image of this functor (see also \cite{Ma2} for the integral case). In another direction, Yuan \cite{Yu21} recently described a full and faithful integral model for finite type nilpotent spaces up to weak homotopy equivalence using the $E_{\infty}$-ring spectrum of spherical cochains. 

\smallskip

All these models fail to capture the information of the fundamental group in complete generality. In fact, for any commutative ring $R$, the $R$-localization of a space drastically changes the fundamental group in general. In order to retain the information of the fundamental group, we will consider instead spaces up to $\pi_1$-$R$-\textit{equivalence}. A $\pi_1$-$R$-equivalence is a map of (based, connected) spaces inducing an isomorphism on fundamental groups and an $R$-homology equivalence between the universal covers \cite{RWZTransactions, BS93}. For instance, a $\pi_1$-$\mathbb{Z}$-equivalence is the same as a weak homotopy equivalence. 

A key result for the present work is that a map of (based, connected) topological spaces is a $\pi_1$-$R$-equivalence if and only if the induced map of coaugmented dg coalgebras of normalized singular chains becomes a quasi-isomorphism of dg algebras upon applying the \textit{cobar functor}. Based on this result, we introduce in the present article a new notion of weak equivalence between connected simplicial cocommutative $R$-coalgebras, called $\widehat{\mathbb{\Omega}}$-\textit{quasi-isomorphism}, that turns out to be precisely the analogue of a $\pi_1$-$R$-equivalence between arbitrary (not necessarily fibrant) reduced simplicial sets. The class of $\widehat{\mathbb{\Omega}}$-\textit{quasi-isomorphisms} is detected by a functor from connected simplicial cocommutative coalgebras to dg algebras,
\[\widehat{\mathbb{\Omega}} \colon \mathsf{sCoCoalg}^0_R \to \mathsf{dgAlg}_R,\]
that is constructed by appropriately localizing the cobar construction of the normalized chains dg coalgebra of a connected simplicial cocommutative coalgebra at a suitable set of $0$-cycles. Then the functor of simplicial $R$-chains restricted to the category of reduced simplicial sets,
$$R[-] \colon \mathsf{sSet}_0 \to \mathsf{sCoCoalg}^0_{R},$$
preserves and detects weak equivalences: a map $f \colon S \to S'$ of reduced simplicial sets is a $\pi_1$-$R$-equivalence if and only if $R[f]$ is an $\widehat{\mathbb{\Omega}}$-\text{quasi-isomorphism} (Theorem \ref{LOmegareflects}). 

It turns out that there are good homotopy theories for these notions: we construct model category structures for reduced simplicial sets up to $\pi_1$-$R$-equivalence and, when $R=\mathbb{F}$ is a field, for connected simplicial cocommutative $\F$-coalgebras up to $\widehat{\mathbb{\Omega}}$-quasi-isomorphism. Moreover, the corresponding adjunction of categories
\begin{eqnarray}\label{chainsandpoints}
\F[-] \colon \mathsf{sSet}_0 \rightleftarrows \mathsf{sCoCoalg}^0_{\F} \colon \mathcal{P}
\end{eqnarray}
defines a Quillen adjunction between these model categories. Furthermore, we prove that the left Quillen functor $\F[-]$ is homotopically full and faithful when $\F$ is an algebraically closed field. We also study the Quillen adjunction \eqref{chainsandpoints} in the more general case where $\mathbb{F}$ is a perfect field with absolute Galois group $G$. In this case, we prove that the derived unit transformation of the Quillen adjunction \eqref{chainsandpoints} may be identified with the canonical map into the homotopy $G$-fixed points, interpreted appropriately in the homotopy theory of simplicial \emph{discrete} $G$-sets up to $\pi_1$-$\F$-equivalence. This is a significant improvement of the main result of \cite{RWZTransactions}, where a detection result was shown, namely, that for any field $\F$, two (fibrant) spaces $X$ and $Y$ are $\pi_1$-$\F$-equivalent if and only if the simplicial cocommutative coalgebras $\F[X]$ and $\F[Y]$ are $\mathbb{\Omega}$-quasi-isomorphic. The present article may be read independently of \cite{Go} and \cite{RWZTransactions}.

\smallskip

The homotopy theory of connected simplicial cocommutative $\F$-coalgebras up to $\widehat{\mathbb{\Omega}}$-quasi-isomorphism fits strictly between two other homotopy theories: (1) connected simplicial cocommutative coalgebras up to $\mathbb{\Omega}$-quasi-isomorphism, and (2) connected simplicial cocommutative coalgebras up to quasi-isomorphism. In the present article, we also construct a model category for (1) and explain its relation to a linearized version of the Joyal model category for reduced simplicial sets \cite{Joyal2, LuHTT, CHL}. On the other hand, the homotopy theory for (2) is the connected version of the model category studied by Goerss \cite{Go} and is related to the Bousfield model category of reduced simplicial sets up to homology equivalence. The main results of the present article are phrased in such a way that the parallelism between three corresponding Quillen adjunctions is emphasized. 

\smallskip

The main motivation for this work is to understand the strength of a particular invariant for homotopy types (in this case the simplicial cocommutative coalgebra of chains) with respect to different notions of weak equivalence. We are particularly interested in notions of weak equivalence arising from the homotopy theory of algebraic structures governed by a dg operad, since these usually yield flexible and explicit theories. One of our eventual goals is to develop a similar analysis for the invariant of the simplicial cocommutative coalgebra of \textit{integral} chains considered up to $\widehat{\mathbb{\Omega}}$-quasi-isomorphism (cf. \cite{Ma2} for the case of the $E_{\infty}$ dg algebra of integral cochains up to quasi-isomorphism). It was conjectured in \cite{RWZTransactions} that this invariant faithfully detects whether two homotopy types are equivalent. A refined form of this conjecture will be addressed in subsequent work.

\subsection{Summary of results}

We now summarize our main results. For any commutative ring $R$, we consider the following three notions of weak equivalence on the category $\mathsf{sSet}_0$ of reduced simplicial sets (simplicial sets with a single vertex).
\begin{enumerate}
\item Let $\Lambda (-;R): \mathsf{sSet}_0 \to \mathsf{dgAlg}_R$ be the restriction  to $\mathsf{sSet}_0$ of the left adjoint of the dg nerve functor $\mathsf{N}_{\text{dg}}: \mathsf{dgCat}_R \to \mathsf{sSet}$ from dg categories to simplicial sets (see Section \ref{cobar_sec2}). A map $f\colon X \to Y$ in $\mathsf{sSet}_0$ is called a \textit{categorical} $R$-\textit{equivalence} if it induces a quasi-isomorphism of dg $R$-algebras \[ \Lambda(f; R) \colon \Lambda(X; R) \xrightarrow{\simeq} \Lambda(Y; R).\] 
This is a linearized version of the notion of categorical equivalence (or Joyal equivalence) between reduced simplicial sets.  We denote by $\mathcal{W}_{J,R}$ the class of categorical $R$-equivalences. 
\item  A map $f\colon X \to Y$ in $\mathsf{sSet}_0$ is called a $\pi_1$-$R$-\textit{equivalence} if it induces an isomorphism between the fundamental groups \[\pi_1(|f|): \pi_1(|X|) \xrightarrow{\cong} \pi_1(|Y|)\] and an $R$-homology isomorphism between the universal covers
\[H_*(\overline{|f|};R): H_*(\overline{|X|};R) \xrightarrow{\cong} H_*(\overline{|Y|};R).\] We denote by $\mathcal{W}_{\pi_1\text{-}R}$ the class of  $\pi_1$-$R$-equivalences. 
\item A map $f\colon X \to Y$ in $\mathsf{sSet}_0$  is called an $R$-\textit{equivalence} if it induces an isomorphism in $R$-homology \[H_*(f;R): H_*(X;R) \xrightarrow{\cong} H_*(Y;R).\] We denote by $\mathcal{W}_R$ the class of $R$-equivalences. 
\end{enumerate}

The category of reduced simplicial sets is a convenient framework to make certain pointed constructions functorial (such as the fundamental group and the universal cover). Moreover, the reduced setting will be convenient in order to establish key connections between (1)--(2) and the cobar construction.
%Since a $\mathbb{Z}$-equivalence between simply-connected spaces is a weak homotopy equivalence, a $\pi_1$-$\mathbb{Z}$-equivalence is the same as a weak homotopy equivalence between reduced simplicial sets. Also 
Note that the class of $\pi_1$-$R$-equivalences is strictly contained in the class of $R$-equivalences that induce a  $\pi_1$-isomorphism (see \cite[Example 4.35]{Ha}).

\smallskip

We construct three model category structures on $\mathsf{sSet}_0$, one for each of the above notions of weak equivalence.

\begin{customthm}{1}[see Theorem \ref{sset_modelcat}] \label{thm1} Let $R$ be a commutative ring. The category $\mathsf{sSet}_0$ admits three left proper combinatorial model category structures, denoted by $(\mathsf{sSet}_0, R\text{-cat-eq.})$, $(\mathsf{sSet}_0, \pi_1\text{-}R\text{-eq.})$, and $(\mathsf{sSet}_0, R\text{-eq.})$, which have the monomorphisms as cofibrations and $\mathcal{W}_{J,R}, \mathcal{W}_{\pi_1\text{-}R}$, and $\mathcal{W}_{R}$ as weak equivalences, respectively. Furthermore, we have strict inclusions $\mathcal{W}_{J,R} \subsetneq  \mathcal{W}_{\pi_1\text{-}R}\subsetneq \mathcal{W}_{R}.$  
\end{customthm} 

The model category $(\mathsf{sSet}_0, \pi_1\text{-}\mathbb{Z}\text{-eq.})$ is the classical Kan--Quillen model structure on reduced simplicial sets. Moreover, as part of our discussion of the model category $(\mathsf{sSet}_0, \pi_1\text{-}R\text{-eq.})$, we prove a result similar to the classical \textit{fracture theorem} \cite{Sul70, BK72, MPBook}, but now fully taking into account the fundamental group (Theorem \ref{fiberwise_fracture_thm}). More specifically, this result describes the weak homotopy type of a reduced simplicial set as a homotopy pullback in terms of its fibrant replacements in $(\mathsf{sSet}_0, \pi_1\text{-}\mathbb{Q}\text{-eq.})$ and $(\mathsf{sSet}_0, \pi_1\text{-}\mathbb{F}_p\text{-eq.})$ for all prime numbers $p$.

\smallskip

The main goal of the present article is to model each of the three homotopy theories of Theorem \ref{thm1} using connected simplicial cocommutative coalgebras via the functor of simplicial chains. For each of the three notions of weak equivalence in $\mathsf{sSet}_0$, we define an analogous notion in the category $\mathsf{sCoCoalg}^0_R$ of connected simplicial cocommutative coalgebras.
\begin{enumerate}
\item A map $f: C \to C'$ in $\mathsf{sCoCoalg}^0_R$ is called an $\mathbb{\Omega}$\textit{-quasi-isomorphism} if it becomes a quasi-isomorphism of dg algebras after applying $\mathbb{\Omega} \colon\mathsf{sCoCoalg}^0_R \to \mathsf{dgAlg}_R$, that is, the normalized chains functor followed by the cobar functor $\mathsf{Cobar}\colon \mathsf{dgCoalg}^c_R \to \mathsf{dgAlg}_R$  (see Section \ref{cobar_sec}). We denote by $\mathcal{W}_{\mathbb{\Omega}}$ the class of $\mathbb{\Omega}$-quasi-isomorphisms.
\item A map $f: C\to C'$ in $\mathsf{sCoCoalg}^0_R$ is called an $\widehat{\mathbb{\Omega}}$ \textit{-quasi-isomorphism} if it becomes a quasi-isomorphism of dg algebras after first localizing (in an appropriate way) $\mathbb{\Omega}(C)$ and $\mathbb{\Omega}(C')$ at suitable sets of $0$-cycles (see Section \ref{loc_sec}). We denote by $\mathcal{W}_{\widehat{\mathbb{\Omega}}}$ the class of $\widehat{\mathbb{\Omega}}$-quasi-isomorphisms. 
\item A map $f: C \to C'$ in $\mathsf{sCoCoalg}^0_R$ is called a \textit{quasi-isomorphism} if the induced map of normalized chains induces an isomorphism on homology. We denote by $\mathcal{W}_{\text{q.i.}}$ the class of quasi-isomorphisms. 
\end{enumerate}

\smallskip

When $R=\mathbb{F}$ is a field, we construct three corresponding model category structures on $\mathsf{sCoCoalg}^0_{\mathbb{F}}$.

\begin{customthm}{2}[see Theorem \ref{coalg_modelcat}] \label{thm2} Let $\mathbb{F}$ be a field. The category $\mathsf{sCoCoalg}^0_{\mathbb{F}}$ admits three left proper combinatorial model category structures, denoted by \begin{center}$(\mathsf{sCoCoalg}^0_{\mathbb{F}}, \mathbb{\Omega}\text{-}\qi)$, $(\mathsf{sCoCoalg}^0_{\mathbb{F}}, \widehat{\mathbb{\Omega}}\text{-}\qi)$, and $(\mathsf{sCoCoalg}^0_{\mathbb{F}}, \qi)$,
\end{center}
which have the injective maps as cofibrations and $\mathcal{W}_{\mathbb{\Omega}}$, $\mathcal{W}_{\widehat{\mathbb{\Omega}}}$, and $\mathcal{W}_{\qi}$ as weak equivalences, respectively. Furthermore, we have strict inclusions \[\W_{\mathbb{\Omega}} \subsetneq \W_{\widehat{\mathbb{\Omega}}} \subsetneq \W_\qi.\]
\end{customthm}

The proofs of Theorems \ref{thm1} and \ref{thm2} rely on a useful method for constructing combinatorial model category structures which is based on J. Smith’s recognition theorem and may be of independent interest. This general method is discussed in Section \ref{prelim_modelcat} and can be read independently of the rest of the article.

\smallskip

We then compare the model categories of Theorems \ref{thm1} and \ref{thm2}. For this comparison, a key result is that $\pi_1$-$R$-equivalences can be completely described in terms of $\widehat{\mathbb{\Omega}}$-quasi-isomorphisms; see Theorem \ref{LOmegareflects}. We show that we have three Quillen adjunctions (Proposition \ref{quillen_adj}):

\begin{equation}\label{Qadj1}
\F[-]\colon (\mathsf{sSet}_0, \F\text{-}\cateq) \rightleftarrows (\mathsf{sCoCoalg}^0_\F, \mathbb{\Omega}\text{-}\qi) \colon \mathcal{P}
\end{equation}
\begin{equation} \label{Qadj2}
\F[-]\colon (\mathsf{sSet}_0, \pi_1\text{-}\F\text{-}\eq) \rightleftarrows (\mathsf{sCoCoalg}^0_\F, \widehat{\mathbb{\Omega}}\text{-}\qi) \colon \mathcal{P}
\end{equation}
\begin{equation} \label{Qadj3}
\F[-]\colon (\mathsf{sSet}_0, \F\text{-}\eq) \rightleftarrows (\mathsf{sCoCoalg}^0_\F, \qi) \colon \mathcal{P}.
\end{equation}

\medskip

When $\F$ is an algebraically closed field, or more generally a perfect field, we prove the following statements about these Quillen adjunctions.
 
\begin{customthm}{3}[see Theorem \ref{comparisons1} and Corollary \ref{perfectfield_corollary}]\label{thm4}
If $\F$ is  algebraically closed, then $\F[-]$ is homotopically full and faithful with respect to each one of the three Quillen adjunctions \eqref{Qadj1}--\eqref{Qadj3}.

\indent More generally, suppose $\F$ is a perfect field with absolute Galois group $G$, and let $X$ be a reduced simplicial set. Then the derived unit transformation
$$X \to \mathbb{R}\mathcal{P}(\F[X])$$
of each one of the three Quillen adjunctions can be identified with the canonical map $$X \to (\delta(X))^{hG}$$ from $X$ to the homotopy $G$-fixed points of $\delta(X)$, where $\delta(X)$ denotes $X$ equipped with the trivial $G$-action and the homotopy $G$-fixed points functor $(-)^{hG}$ is interpreted appropriately in each case in the respective homotopy theory. 
\end{customthm}

In particular, Theorem \ref{thm4} says that when $\mathbb{F}$ is an algebraically closed field, one may functorially recover any reduced simplicial set $X$, up to $\pi_1$-$\mathbb{F}$-equivalence, from any simplicial cocommutative coalgebra that is $\widehat{\mathbb{\Omega}}$-quasi-isomorphic to the simplicial coalgebra of chains $\mathbb{F}[X]$.  The proof of Theorem \ref{thm4} makes use of the structure theory of coalgebras over a field and also relies on the construction of three corresponding model structures on the category of simplicial discrete $G$-sets (Theorem \ref{Gsset_modelcat}). The statement about the Quillen adjunction \eqref{Qadj3} (for $\mathsf{sSet}$) was shown by Goerss \cite{Go}; this result was extended to the context of simplicial presheaves of coalgebras (with respect to the local model structures) by the first-named author \cite{Ra} and it has also been shown for motivic homotopy theory in \cite{Gu}. 

\subsection{Organization of the paper} In Section \ref{prelim_sec}, we review some categorical and algebraic preliminaries and fix the notation and terminology. Section \ref{modelcategories_sec} is a recollection of several known model category structures used throughout the article. 

In Section \ref{loc_sec}, we discuss suitable notions of homotopical localization for (reduced) simplicial sets at a set of $1$-simplices and for dg algebras at a set of $0$-cycles. These notions are then used to define the localized cobar construction $\widehat{\mathbb{\Omega}}$. In Section \ref{sec_3_weq}, we define and study the three notions of weak equivalence for connected simplicial cocommutative coalgebras ($\mathbb{\Omega}$-quasi-isomorphisms, $\widehat{\mathbb{\Omega}}$-quasi-isomorphisms, and quasi-isomorphisms) and the three corresponding notions for reduced simplicial sets (categorical $R$-equivalences, $\pi_1$-$R$-equivalences, and $R$-equivalences). 

In Section \ref{prelim_modelcat}, we describe a general method for constructing combinatorial model category structures, based on ideas of J. Smith, which will be used later in Sections \ref{model_coalg} and \ref{sec_fields}. In Section \ref{model_coalg}, we establish the existence of three model structures on the category of reduced simplicial sets (Theorem \ref{thm1}), three model structures on the category of connected simplicial cocommutative coalgebras (Theorem \ref{thm2}), and obtain the three corresponding Quillen adjunctions \eqref{Qadj1}--\eqref{Qadj3}. In addition, in Section \ref{model_coalg}, we prove a fiberwise version of the fracture theorem for arbitrary pointed connected weak homotopy types that fits nicely in the context of the model categories $(\mathsf{sSet}_0, \pi_1\text{-}R\text{-eq.})$ for $R=\Q, \F_p$ ($p$ prime). 

Finally, in Section \ref{sec_fields}, we review some key facts about the structure theory of coalgebras and prove our main comparison results (Theorem \ref{thm4}) about the three Quillen adjunctions \eqref{Qadj1}--\eqref{Qadj3} in the case where $\F$ is an algebraically closed field (Theorem \ref{comparisons1}) or a perfect field (Theorem \ref{comparisons2} and Corollary \ref{perfectfield_corollary}). 

In Appendix \ref{sec_appendix}, we give a detailed proof of the fact that for the natural cylinder construction $C \mapsto \text{Cyl}(C)$ in connected simplicial coalgebras, the canonical projection map $\text{Cyl}(C) \to C$ is an $\mathbb{\Omega}$-quasi-isomorphism for any $C \in \mathsf{sCoCoalg}^0_{\F}$. This is a key step in the proof of the existence of the model category structure $(\mathsf{sCoCoalg}^0_{\F}, \mathbb{\Omega}\text{-q.i.})$ shown in Section \ref{model_coalg}. 

\subsection*{Acknowledgments} The authors would like to thank Hadrian Heine, Julian Holstein, Andrey Lazarev, Manfred Stelzer, Felix Wierstra, and Mahmoud Zeinalian for interesting discussions and fruitful exchanges. GR was partially supported by \emph{SFB 1085 -- Higher Invariants} (University of Regensburg), funded by the DFG. MR was supported by NSF Grant 210554 and the Karen EDGE Fellowship. MR would like to acknowledge the excellent working conditions of the Max Planck Institute for Mathematics in Bonn and of Université Sorbonne Paris Nord, where parts of this research were conducted. Finally, the authors thank the anonymous referees for their careful reading and helpful comments. 

\section{Preliminaries} \label{prelim_sec}
In this section, we recall some categorical and algebraic preliminaries.

\subsection{Simplicial objects}
Let $\mathbb{\Delta}$ be the category whose objects are the non-empty finite ordinals $\{ [n]=\{0 < \ldots < n\} \ | \ n  \in \mathbb{N} \}$ and the morphisms are given by order-preserving maps. The morphisms in $\mathbb{\Delta}$ are generated by coface maps $d^i\colon [n-1] \to [n]$ and codegeneracy maps $s^i\colon [n+1] \to [n]$ for $i=0,...,n$ and these maps satisfy the usual cosimplicial identities. 

A \textit{simplicial object} in a category $\mathsf{C}$ is a functor $F\colon \mathbb{\Delta}^{op} \to \mathsf{C}$, where $\mathbb{\Delta}^{op}$ denotes the opposite category of $\mathbb{\Delta}$. These form a category, denoted by $\mathsf{sC}$, with morphisms being the natural transformations. For any object $F$ in $\mathsf{sC}$ we write $F_n :=F([n])$. Thus any simplicial object $F$ in $\mathsf{sC}$ is determined  by the data of objects $F_0, F_1, F_2,...$ in $\mathsf{C}$ together with face maps $F(d^i)=d_i\colon F_n \to F_{n-1}$ and degeneracy maps $F(s^i)=s_i\colon F_n \to F_{n+1}$ in $\mathsf{C}$ satisfying the usual simplicial identities. 

\smallskip

Simplicial objects in $\mathsf{Set}$, the category of sets, are called \textit{simplicial sets}. We denote by $\mathsf{sSet}_0$ the full subcategory of $\mathsf{sSet}$ whose objects are all simplicial sets $S$ such that $S_0$ is a singleton. The objects of  $\mathsf{sSet}_0$ are called \textit{reduced simplicial sets}. 

\subsection{Simplicial coalgebras}
Fix a commutative ring $R$ and write $\otimes : = \otimes_R$. Let $\mathsf{Coalg}_R$ be the category of counital coassociative $R$-coalgebras. More precisely, the objects in $\mathsf{Coalg}_R$ are triples $(C, \Delta, \epsilon)$ where $C$ is an $R$-module, $\Delta\colon C \to C \otimes C$ is an $R$-linear map, called the \textit{coproduct}, satisfying the coassociativity condition 
$$(\Delta  \otimes \text{id}_C) \circ \Delta = (\text{id}_C \otimes \Delta) \circ \Delta,$$
and $\epsilon\colon C \to R$ is an $R$-linear map, called the \textit{counit}, that satisfies
$$(\text{id}_C \otimes \epsilon) \circ \Delta = \text{id}_C= (\epsilon \otimes \text{id}_C)\circ \Delta$$ 
(we used implicitly the canonical identifications $C \otimes R \cong C \cong R \otimes C$). The morphisms in $\mathsf{Coalg}_R$ are $R$-linear maps of $R$-modules preserving the coproduct and the counit. We denote by $\mathsf{CoCoalg}_R$ the category of \textit{cocommutative $R$-coalgebras}, namely, the full subcategory of $\mathsf{Coalg}_R$  consisting of those coalgebras $(C, \Delta, \epsilon)$ for which the coproduct $\Delta$ is cocommutative, i.e., it satisfies $\Delta = \tau \circ \Delta$, where the $R$-linear map $\tau\colon C \otimes C \to C \otimes C$ is the switch map determined by $\tau(x \otimes y )= y \otimes x$. 

\smallskip

In this article we will consider \textit{simplicial (cocommutative) coalgebras}, i.e., simplicial objects in $\mathsf{Coalg}_R$ ($\mathsf{CoCoalg}_R$), as models for different homotopy theories. We note that a simplicial coalgebra $C\colon \mathbb{\Delta}^{op} \to  \mathsf{Coalg}_R$ is equivalently described as a simplicial $R$-module $C$ equipped with maps of simplicial $R$-modules $\Delta\colon C \to C \otimes C$ and $\epsilon \colon C \to R$, where $(C \otimes C)_n = C_n \otimes C_n$ and $R$ here denotes the corresponding constant simplicial object, making $C$ into a counital coassociative coalgebra object. We denote by $\mathsf{sCoalg}^0_R$ the full subcategory of $\mathsf{sCoalg}_R$ consisting of those simplicial objects $C\colon \mathbb{\Delta}^{op} \to \mathsf{Coalg}_R$ for which $C_0 = (R, \Delta_R, \mathrm{id}_R)$ where $\Delta_R \colon R \xrightarrow{\cong} R \otimes R$ is defined by $\Delta_R(1)=1 \otimes 1$.  The objects of $\mathsf{sCoalg}^0_R$ are called \textit{connected simplicial coalgebras}. The category $\mathsf{sCoCoalg}^0_R$ is defined similarly. 

\subsection{Simplicial chains}

Any simplicial set $X \in \mathsf{sSet}$ gives rise to a simplicial cocommutative coalgebra $R[X]\colon \mathbb{\Delta}^{op} \to \mathsf{CoCoalg}_R$. The underlying $R$-module $R[X]_n$ is defined to be $R[X_n]$, the free $R$-module generated by the set $X_n$. The face and degeneracy maps of $R[X]$ are induced by those of $X$, by functoriality. The coproduct maps $$\Delta_n\colon R[X]_n \to R[X]_n \otimes R[X]_n$$ are induced functorially by the diagonal maps $$X_n \to X_n \times X_n, \ x \mapsto (x,x).$$ 
This construction gives rise to the \textit{simplicial (R-)chains functor}
$$R[-] \colon \mathsf{sSet} \to \mathsf{sCoCoalg}_R.$$
The simplicial $R$-chains functor $R[-]$ is clearly given by the corresponding functor $R[-] \colon \mathsf{Set} \to \mathsf{CoCoalg}_R$ by passing to simplicial objects in the respective categories. We also obtain a restricted functor
$$R[-] \colon \mathsf{sSet}_0 \to \mathsf{sCoCoalg}^0_R.$$ The functor of simplicial $R$-chains has a right adjoint
$$\mathcal{P}\colon \mathsf{sCoCoalg}_R\to  \mathsf{sSet},$$ called the \textit{functor of (R-)points} (or the \textit{set-like elements functor}), which is defined pointwise by
$$\mathcal{P}(C)_n = \text{Hom}_{\mathsf{CoCoalg}}(R, C_n),$$
where $R$ is considered as a cocommutative $R$-coalgebra (as indicated above). More explicitly, 
$$\mathcal{P}(C)_n \cong \{ x \in C_n \ | \ \Delta_n(x) = x \otimes x \text{ and } \epsilon(x)=1\}.$$ We also obtain a restricted functor
$$\mathcal{P} \colon \mathsf{sCoCoalg}_R^0 \to  \mathsf{sSet}_0.$$ 
When $R$ has no non-trivial idempotents, the unit of the adjunction $(R[-], \mathcal{P})$ is a natural isomorphism $$X \xrightarrow{\cong} \mathcal{P}(R[X])$$ for any simplicial set $X$. 
In particular, the simplicial $R$-chains functor $R[-]$ is full and faithful. Of course this also holds for the adjunction $R[-] \colon \mathsf{Set} \rightleftarrows \mathsf{CoCoalg}_R \colon \mathcal{P}$ before passing to simplicial objects. 

\subsection{Differential graded algebras and coalgebras}

A \textit{differential graded (dg) $R$-module}, or \textit{chain complex} for short, is given by a pair $(M, d)$ where $M$ is a $\Z$-graded $R$-module and $d\colon M \to M$ is an $R$-linear map of degree $-1$ satisfying $d \circ d = 0$. %Note that our convention uses homological grading here.% 
If $M=(M, d_M)$ and $N =(N, d_N)$ are chain complexes, then $M \otimes N$ is the chain complex with $(M \otimes N)_n= \bigoplus_{i+j=n} M_i \otimes N_j$ and differential $d_{M \otimes N}= d_M \otimes \text{id}_N + \text{id}_M \otimes d_N$. Throughout the article, we use the Koszul sign rule when applying graded maps to elements.

\smallskip

A \textit{dg unital associative $R$-algebra} $(A,d,\mu)$, or \textit{dg algebra} for short,  consists of a dg $R$-module $(A,d)$ and a degree $0$ unital associative product $\mu \colon A \otimes A \to A$ for which $d$ is a derivation, i.e., $d$ satisfies the following property:
$$d \circ \mu = \mu \circ (d \otimes \text{id}_A) + \mu \circ (\text{id}_A \otimes d).$$
(Writing $\mu(a \otimes b)=ab$ and using the Koszul sign rule, the above equation says
$d(ab)= d(a)b + (-1)^{|a|}ad(b)$, where $|a|$ denotes the degree of $a$.) $R$ is regarded as a dg algebra concentrated in degree $0$. We denote by $\mathsf{dgAlg}_R$ the category of ($\Z$-graded) dg algebras with morphisms the degree $0$ maps that preserve the differential and multiplicative structures. 

\smallskip

A \textit{dg counital coassociative $R$-coalgebra} $(N,\partial, \mathbf{\Delta})$, or \textit{dg coalgebra} for short,  consists of a dg $R$-module $(N, \partial)$ and a degree $0$ counital coassociative coproduct $\mathbf{\Delta} \colon N \to N \otimes N$ for which $\partial$ is a coderivation, i.e.,
$$\mathbf{\Delta} \circ \partial=  (\partial \otimes \text{id}_N) \circ \mathbf{\Delta} +  (\text{id}_N \otimes \partial) \circ \mathbf{\Delta}.$$ We denote by $\mathsf{dgCoalg}_R$ the category of dg coalgebras with morphisms the degree $0$ maps that preserve all the structure. 

A \textit{coaugmentation} of a dg coalgebra $N$ is a map of dg coalgebras $e: R \to N$, where $R$ is the dg coalgebra concentrated in degree $0$ with coproduct determined by the isomorphism $R \xrightarrow{\cong} R \otimes R$. A \textit{coaugmented dg coalgebra} is a dg coalgebra equipped with a coaugmentation. For any coaugmented dg coalgebra $(N, \partial, \mathbf{\Delta}, e)$, we write $\overline{N}:= N/e(R)$ and denote by $\overline{\partial}: \overline{N}\to \overline{N}$ and $\overline{\mathbf{\Delta}}: \overline{N} \to \overline{N}^{\otimes 2}$ the induced differential and coproduct, respectively. A \textit{conilpotent dg coalgebra} is a coaugmented dg coalgebra $(N,\partial, \mathbf{\Delta}, e)$ such that $$\overline{N}= \bigcup_{n=1}^{\infty} \text{ker}( \overline{\mathbf{\Delta}}^n ),$$ where $\overline{\mathbf{\Delta}}^n: \overline{N} \to  \overline{N}^{\otimes {n+1}}$ denotes the $n$-times iterated coproduct. Let $\mathsf{dgCoalg}^c_R$ denote the category whose objects are the conilpotent dg coalgebras and the morphisms are the maps of dg coalgebras that preserve the coaugmentations.

\subsection{Normalized chains and coalgebra structures}

Every simplicial counital coassociative coalgebra $C$ gives rise to a dg counital coassociative coalgebra $\mathcal{N}_*(C)$ defined as follows. Given a simplicial coassociative coalgebra $C$ with coproduct $\Delta\colon C \to C \otimes C$, let $(\mathcal{N}_*(C), \partial)$ be the chain complex obtained as the quotient of chain complexes $(N_*(C), \partial) / (D_*(C), \partial)$, where $N_n(C)= C_n$, the differential $$\partial= \sum_i(-1)^{i}d_i\colon N_{*}(C) \to N_{*-1}(C)$$ is given by the alternating sum of the face maps of $C$, and $D_*(C) \subseteq N_*(C)$ is the subcomplex generated by the images of the degeneracy maps of $C$. The chain complex $(\mathcal{N}_*(C), \partial)$ becomes a dg coalgebra when equipped with the coproduct
$$\mathbf{\Delta} \colon \mathcal{N}_*(C) \xrightarrow{\mathcal{N}_*(\Delta)} \mathcal{N}_*(C \otimes C) \xrightarrow{AW} \mathcal{N}_*(C) \otimes \mathcal{N}_*(C).$$
\noindent The map $AW$ is the \textit{Alexander-Whitney natural transformation}, which is defined for any $x \otimes y \in (C \otimes C)_n= C_n \otimes C_n$ by $$AW(x \otimes y)=  \sum_{p=1}^{n+1} d_{p+1} \circ \dots \circ d_n (x) \otimes \underbrace{d_0 \circ \dots \circ d_0}_{p}(y),$$
where each $d_i$ denotes the respective face map of $C$. Moreover, the counit $\epsilon \colon C \to R$ determines an associated counit $\mathcal{N}_*(\epsilon) \colon \mathcal{N}_*(C) \to \mathcal{N}_*(\underline{R}) = R$. The construction $$(C, \Delta, \epsilon) \mapsto (\mathcal{N}_*(C), \partial, \mathbf{\Delta}, \mathcal{N}_*(\epsilon))$$ extends to the \textit{normalized chains functor}
$$\mathcal{N}_*\colon \mathsf{sCoalg}_R \to \mathsf{dgCoalg}_R.$$ 
We have an induced functor
$\mathcal{N}_*\colon \mathsf{sCoalg}^0_R \to \mathsf{dgCoalg}^c_R,$
where we equip $\mathcal{N}_*(C)$ with the natural coaugmentation map $e: R \cong \mathcal{N}_0(C) \to \mathcal{N}_*(C)$ for any $C \in \mathsf{sCoalg}^0_R$. 
These functors preserve colimits, since these are computed in the categories of simplicial modules and chain complexes, respectively. 

\subsection{The cobar construction} \label{cobar_sec}

We now recall the definition of the \textit{cobar construction} (or the \textit{cobar functor}):
$$\mathsf{Cobar}\colon \mathsf{dgCoalg}^c_R \to \mathsf{dgAlg}_R.$$
For any $N \in  \mathsf{dgCoalg}^c_R$, the underlying graded algebra of $\mathsf{Cobar}(N)$ is the tensor algebra
$$T(s^{-1} \overline{N})= R \oplus s^{-1} \overline{N} \oplus  ( s^{-1}\overline{N} \otimes  s^{-1}\overline{N}) \oplus  ( s^{-1}\overline{N} \otimes s^{-1} \overline{N} \otimes  s^{-1}\overline{N}) \oplus... ,$$
where $s^{-1}$ denotes the functor which shifts the grading by $(-1)$, namely, $(s^{-1}\overline{N})_i=\overline{N}_{i+1}$. The differential $$D\colon T(s^{-1} \overline{N})\to T(s^{-1} \overline{N})$$ is defined by extending the map $$-s^{-1} \circ \overline{\partial} \circ s^{+1} + (s^{-1} \otimes s^{-1}) \circ \overline{\mathbf{\Delta}} \circ s^{+1} \colon s^{-1}\overline{N} \to T(s^{-1}\overline{N})$$ as a derivation to all of $T(s^{-1} \overline{N}).$  The equation $D\circ D=0$ is equivalent to the three properties: $\partial^2=0$, $\partial$ is a coderivation of $\mathbf{\Delta}$, and $\mathbf{\Delta}$ is coassociative.
The functor $\mathsf{Cobar}$ preserves colimits. %Moreover, regarding $\mathsf{Cobar}(N)$ as an \emph{augmented} dg algebra and assuming that $R$ is a field, the cobar functor becomes the left adjoint of a Quillen equivalence (\emph{derived Koszul duality}) with right adjoint given by the classical bar construction (see, for example, \cite{LV12}, \cite{Po11}).

For any $N \in \mathsf{dgCoalg}_R^c$, the dg algebra $\mathsf{Cobar}(N)$ is equipped with a natural augmentation $\mathsf{Cobar}(N) \to R$ given by the canonical projection map. Then the cobar construction may be regarded as a functor
\[ \mathsf{Cobar} \colon \mathsf{dgCoalg}_R^c \to \mathsf{dgAlg}_R^{a},\]
where $\mathsf{dgAlg}_R^{a}$ denotes the category of augmented dg algebras. This functor has a right adjoint
\[\mathsf{Bar} \colon \mathsf{dgAlg}_R^{a} \to \mathsf{dgCoalg}_R^c,\]
known as the \textit{bar construction}. %When $R$ is a field, the adjunction $(\mathsf{Cobar}, \mathsf{Bar})$ defines a Quillen equivalence for suitably defined model category structures, as we now record. 

\subsection{Loop spaces and the cobar construction} \label{cobar_sec2} We consider the following composition of functors 
$$\Lambda( - ; R):= \mathsf{Cobar} \circ \mathcal{N}_* \circ R[-]\colon \mathsf{sSet}_0 \to \mathsf{dgAlg}_R.$$ 
\noindent This functor can be identified with the restriction of the left adjoint of a well-known adjunction between simplicial sets and dg categories. We recall that a (small) dg $R$-category is a (small) category enriched in (the monoidal category of) chain complexes of $R$-modules. This is a many-object version of a dg $R$-algebra; specifically, a dg algebra is exactly a  dg category with a single object. Moreover, the category $\mathsf{dgAlg}_R$ of dg algebras is a full subcategory of the category $\mathsf{dgCat}_R$ of small dg categories. (Analogously, we may view an object in $\mathsf{sSet}$ as a many-object version of an object in $\mathsf{sSet}_0$.) Lurie \cite[Construction 1.3.1.6]{LuHA} constructed a right adjoint functor $$\mathsf{N}_{\text{dg}} \colon \mathsf{dgCat}_R \to \mathsf{sSet}$$ called the \emph{differential graded nerve functor}, and this (adjoint pair) also restricts to $\mathsf{N}_{\text{dg}} \colon \mathsf{dgAlg}_R \to \mathsf{sSet}_0$. 
The following result relates the cobar functor to the differential graded nerve functor.

\begin{theorem} \cite[Theorems 6.1 and 7.1]{RiZe16} \label{leftadjoint} The functor $\Lambda(-; R)\colon \mathsf{sSet}_0 \to \mathsf{dgAlg}_R$ is left adjoint to the functor $\mathsf{N}_{\text{dg}}\colon \mathsf{dgAlg}_R \to \mathsf{sSet}_0$.
\end{theorem}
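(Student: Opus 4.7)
The plan is to build a natural bijection
$$\mathrm{Hom}_{\mathsf{dgAlg}_R}(\Lambda(X;R), A) \;\cong\; \mathrm{Hom}_{\mathsf{sSet}_0}(X, \mathsf{N}_{\text{dg}}(A))$$
by identifying both sides with a common third set of \emph{twisting cochains} $\mathcal{N}_*(R[X]) \to A$. The key input on the algebraic side is the universal property of the cobar construction: for any conilpotent dg coalgebra $(N, \overline{\partial}, \overline{\mathbf{\Delta}})$ and any dg algebra $(A, d_A, \mu_A)$, a dg algebra map $\mathsf{Cobar}(N) \to A$ corresponds to a degree $-1$ $R$-linear map $\tau \colon \overline{N} \to A$ satisfying the Maurer--Cartan equation
$$d_A \circ \tau \;+\; \tau \circ \overline{\partial} \;+\; \mu_A \circ (\tau \otimes \tau) \circ \overline{\mathbf{\Delta}} \;=\; 0,$$
which is exactly the condition that the derivation on $\mathsf{Cobar}(N)$ extending $-s^{-1}\tau$ commutes with the differentials.

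Specializing to $N = \mathcal{N}_*(R[X])$ and using the explicit Alexander--Whitney coproduct, such a $\tau$ amounts to an assignment $\sigma \mapsto \tau_\sigma \in A_{n-1}$ for each simplex $\sigma \in X_n$ with $n \geq 1$, which vanishes on degeneracies and for which $d_A(\tau_\sigma)$ is a signed sum of the $\tau_{d_i \sigma}$ (the linear term, coming from $\overline{\partial}$) together with products $\tau_{\text{back}} \cdot \tau_{\text{front}}$ ranging over all "front-back" splittings of $\sigma$ (the quadratic term, coming from $AW$). On the other side, I would unpack Lurie's definition of $\mathsf{N}_{\text{dg}}(A)$: an $n$-simplex is a family $\{a_I \in A_{|I|-2}\}$ indexed by subsets $I = \{i_0 < \cdots < i_k\} \subseteq [n]$ of size $\geq 2$, satisfying a coherence relation whose linear part is $\sum_j (-1)^{j}\, a_{I \setminus \{i_j\}}$ and whose quadratic part is a front-back sum $\sum_{0<j<k} \pm\, a_{\{i_0,\ldots,i_j\}}\cdot a_{\{i_j,\ldots,i_k\}}$. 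A simplicial map $X \to \mathsf{N}_{\text{dg}}(A)$ is then precisely the data of such a family $\{a_{\sigma, I}\}$ for each $\sigma \in X_n$, compatible with face and degeneracy operators.

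With both sides unpacked, the bijection is the obvious one: from a twisting cochain $\tau$ define the simplicial map $\Phi(\tau)\colon X \to \mathsf{N}_{\text{dg}}(A)$ sending $\sigma \in X_n$ to the family $a_{\sigma, I} := \tau_{\sigma|_I}$, where $\sigma|_I$ denotes the face of $\sigma$ indexed by $I$; conversely, a simplicial map $f$ yields the twisting cochain $\sigma \mapsto a_{\sigma, [n]}$. Simpliciality of $\Phi(\tau)$, naturality in $X$ and $A$, and mutual inversion of the two constructions are formal once the two collections of conditions are shown to coincide. The substantive step — and the main obstacle — is the sign and combinatorics bookkeeping needed to verify that the Maurer--Cartan equation for $\tau$ matches \emph{term by term} with Lurie's coherence relation on the family $(a_{\sigma, I})$: one must reconcile the Koszul signs produced by the suspension $s^{-1}$ in $\mathsf{Cobar}$ with those built into the definition of $\mathsf{N}_{\text{dg}}$, and check that the iterated Alexander--Whitney coproduct produces precisely the front-back decompositions $\{i_0,\ldots,i_j\} \cup \{i_j,\ldots,i_k\}$ appearing in Lurie's formula. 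This is the content of \cite[Theorems 6.1 and 7.1]{RiZe16}.
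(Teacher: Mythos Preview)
The paper does not supply a proof of this statement; it simply records the result with a citation to \cite[Theorems 6.1 and 7.1]{RiZe16}. Your outline --- identify dg algebra maps out of $\mathsf{Cobar}(\mathcal{N}_*(R[X]))$ with twisting cochains via the universal property of the tensor algebra, unpack Lurie's $\mathsf{N}_{\text{dg}}$ as families indexed by subsets of $[n]$, and then match the Maurer--Cartan equation against the coherence relations term by term --- is a correct strategy and is in substance the approach carried out in the cited reference, so there is nothing further to compare.
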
 

%\noindent In particular, we have an induced adjunction $$\Lambda(-;R) \colon \mathsf{sSet}_0 \rightleftarrows \mathsf{dgAlg}_R \colon \mathsf{N}_{\text{dg}},$$ where we used the same notation for the restriction of the dg nerve functor to dg algebras. 

%\smallskip

For any $X \in \mathsf{sSet}_0$, we denote by $C_*(\Omega |X|; R)$ the dg $R$-algebra of (normalized) singular $R$-chains on the based Moore loop space of the geometric realization $|X|$. The following is one of the main results of \cite{RiZe16} and extends a classical theorem of Adams to the non-simply-connected case. 

\begin{theorem}\cite[Proposition 8.2 and Corollary 9.2]{RiZe16} \label{nscAdams} Let $X \in \mathsf{sSet}_0$ be a Kan complex. Then the dg $R$-algebras $ \Lambda(X;R)$ and $C_*(\Omega |X|; R)$ are naturally quasi-isomorphic. 
%In particular, there is a natural isomorphism $$H_0(\Lambda(X;R)) \cong R[\pi_1(|X|)],$$ where $R[\pi_1(|X|)]$ denotes the group ring of the fundamental group $\pi_1(|X|)$ at the base point given by the single element of $X_0$. 
\end{theorem}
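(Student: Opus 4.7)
The plan is to prove the natural quasi-isomorphism by comparing both dg algebras with the normalized chain algebra $R[G(X)]$ of Kan's simplicial loop group $G(X)$. The advantage of this intermediate object is that it admits direct comparisons to both sides: to $C_*(\Omega|X|;R)$ via Milnor's theorem on geometric realizations of simplicial groups, and to $\Lambda(X;R)$ via a combinatorial description of the generators of $G(X)$ in terms of the non-degenerate simplices of $X$.

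First, I would invoke Milnor's classical theorem, which for a reduced Kan complex $X$ gives a natural weak equivalence $|G(X)| \xrightarrow{\simeq} \Omega|X|$ of topological monoids. Passing to normalized chains (and using the Eilenberg--Zilber shuffle product to equip $R[G(X)]$ with a natural dg algebra structure coming from the group multiplication in $G(X)$) then produces a natural quasi-isomorphism of dg algebras $R[G(X)] \xrightarrow{\simeq} C_*(\Omega|X|;R)$. Crucially, this step uses only that $X$ is a Kan complex, and requires no simple-connectivity hypothesis.

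Next, I would construct a natural dg algebra map $\Phi_X \colon \Lambda(X;R) = \mathsf{Cobar}(\mathcal{N}_*(R[X])) \to R[G(X)]$ by sending each tensor generator $s^{-1}[x]$, for a non-degenerate simplex $[x]$ of $X$ of positive degree, to the corresponding generator of $G(X)$ under Kan's combinatorial presentation. Compatibility with the differential on the cobar side unpacks to a direct verification comparing the face maps of $G(X)$ with the alternating-sum differential and the Alexander--Whitney coproduct of $\mathcal{N}_*(R[X])$, the latter corresponding to concatenating loops by cutting a simplex at an interior vertex. To show that $\Phi_X$ is a quasi-isomorphism I would use a twisted tensor product argument: build a universal twisting cochain $\tau \colon \mathcal{N}_*(R[X]) \to \mathsf{Cobar}(\mathcal{N}_*(R[X]))$, form the twisted tensor product $\mathcal{N}_*(R[X]) \otimes_\tau \mathsf{Cobar}(\mathcal{N}_*(R[X]))$ as an explicit chain-level model for the path space of $X$, check that it is acyclic via a combinatorial contracting homotopy, and compare this with the analogous simplicial bar construction built from the universal principal $G(X)$-bundle.

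The main obstacle, and the main divergence from the classical Adams argument, is the non-simply-connected case: the Eilenberg--Moore spectral sequence of the path-loop fibration does not converge unconditionally, so one cannot simply deduce the quasi-isomorphism from $E^2$-level input. The essential input that circumvents this is the use of the full (non-cocommutative) Alexander--Whitney coproduct on $\mathcal{N}_*(R[X])$ together with the restriction to reduced simplicial sets; together these ensure that the cobar construction remembers the fundamental group of $X$ through the word-length filtration on $T(s^{-1}\overline{\mathcal{N}_*(R[X])})$ rather than losing this information under an abelianization. I expect this to be the technical heart of the argument, and it is precisely why the statement, though formally reminiscent of Adams's theorem, requires the extended framework developed in \cite{RiZe16}.
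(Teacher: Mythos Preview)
The paper does not give its own proof of this statement; it simply records the result with a citation to \cite[Proposition 8.2 and Corollary 9.2]{RiZe16}. So there is no in-paper argument to compare against, only the approach of the cited reference.

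That reference proceeds by a genuinely different route from your outline. Rather than passing through Kan's simplicial loop group $G(X)$, \cite{RiZe16} works with a \emph{cubical} rigidification: one identifies $\Lambda(X;R)$ with normalized cubical chains on the endomorphism object of a cubical enrichment of $\mathfrak{C}(X)$, and then compares that cubical model directly to $\Omega|X|$. Your route via $G(X)$ is closer in spirit to the Hess--Tonks approach \cite{HT10}, which the present paper explicitly contrasts with the localized cobar construction; it buys you a purely simplicial argument and a familiar intermediary, whereas the cubical method buys a cleaner identification of the cobar differential with the cubical boundary and sidesteps the need for a Szczarba-type comparison map.

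Your sketch is plausible up to the point where you must show that $\Phi_X$ is a quasi-isomorphism. There the proposal becomes underspecified: the twisted tensor product and acyclicity argument you invoke is essentially the classical Adams set-up, and you correctly flag that Eilenberg--Moore convergence is unavailable without simple connectivity. But you do not say what replaces it. The observation that the Alexander--Whitney coproduct on $\mathcal{N}_*(R[X])$ retains the fundamental group through the word-length filtration is the right heuristic, yet it does not by itself yield an isomorphism at the level of the comparison of total complexes or spectral sequences. This is exactly the technical content supplied in \cite{RiZe16} by other means, and a complete proof along your lines would need an explicit argument here (for instance, a direct contracting homotopy on the comparison of principal twisted products over $X$ that does not rely on nilpotence of the fibre).
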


\noindent For simplicity (and partly motivated by the last theorem), we denote by 
$$\mathbb{\Omega}\colon \mathsf{sCoCoalg}^0_R \to \mathsf{dgAlg}_R$$ the functor defined as the composition $\mathbb{\Omega} = \mathsf{Cobar} \circ \mathcal{N}_*.$ The functor $\mathbb{\Omega}$ preserves colimits.

\subsection{The fundamental bialgebra}

For any $C \in \mathsf{sCoCoalg}^0_R$, the dg algebra $\mathbb{\Omega}(C)$ may be naturally equipped with additional structure. In fact,  $\mathbb{\Omega}(C)$ has a natural coproduct 
$$\nabla \colon \mathbb{\Omega}(C) \to \mathbb{\Omega}(C) \otimes \mathbb{\Omega}(C)$$
making it a dg bialgebra. Specifically, since $C$ is a simplicial \textit{cocommutative} coalgebra, the dg coalgebra of normalized chains $\mathcal{N}_*(C)$ has a natural $E_2$-coalgebra structure. Furthermore, $\mathcal{N}_*(C)$ has an $E_{\infty}$-coalgebra structure that may be described explicitly through a coaction of the surjection operad (see \cite{McSmi}). The $E_2$-coalgebra part of the structure induces a coassociative coproduct on the cobar construction of the underlying $E_1$-coalgebra. This is described explicitly in \cite{Ka03} in terms of the corresponding structure maps of the surjection operad.

In degree $0$, which is the only case we need to consider for the present article, the coproduct
\[\nabla \colon \mathbb{\Omega}(C)_0 \to  \mathbb{\Omega}(C)_0 \otimes  \mathbb{\Omega}(C)_0\] is explicitly determined by the formulas $\nabla(1)=1 \otimes 1$ and
\begin{eqnarray} \label{nabla}
\nabla( s^{-1}\overline{x} ) = \sum_{(x)} s^{-1}\overline{x'} \otimes s^{-1}\overline{x''} + 1 \otimes s^{-1} \overline{x} + s^{-1} \overline{x} \otimes 1,
\end{eqnarray}
where $x \in C_1$, $\overline{x} \in \mathcal{N}_1(C)$ denotes the class of $x$ in the cokernel of the coaugmentation $R \to \mathcal{N}_*(C)$, and $s^{-1}$ is the degree shift functor. Here we have used Sweedler's notation $\Delta_1(x)= \sum_{(x)}x' \otimes x''$ for the coproduct of $C_1$. In the above formula, $s^{-1}\overline{x}$ is considered as a monomial of length $1$ and total degree $0$ in $\mathsf{Cobar}(\mathcal{N}_*(C))=\mathbb{\Omega}(C)$, and $\nabla$ may then be extended as an algebra map $\nabla \colon \mathbb{\Omega}(C)_0 \to \mathbb{\Omega}(C)_0 \otimes \mathbb{\Omega}(C)_0.$ In particular, if $\Delta_1(x)=x \otimes x$, then
\[ \nabla(s^{-1}\overline{x} + 1)=(s^{-1}\overline{x} + 1) \otimes (s^{-1}\overline{x} + 1).\]
 The induced coproduct
$$H_0(\nabla) \colon H_0(\mathbb{\Omega}(C)) \to H_0(\mathbb{\Omega}(C)) \otimes H_0(\mathbb{\Omega}(C))$$ endows the algebra $H_0(\mathbb{\Omega}(C))$ with a bialgebra structure that is functorial with respect to morphisms in $\mathsf{sCoCoalg}^0_R$. This bialgebra is called the \textit{fundamental bialgebra of $C$} \cite[Section 5.1]{RWZTransactions} and determines a functor
\[\pi \colon \mathsf{sCoCoalg}^0_R \to \mathsf{Bialg}_R, \ \ C \mapsto \pi(C) := H_0(\mathbb{\Omega}(C)).\]
The fundamental bialgebra admits an explicit description in the case where $C=R[S], S \in \mathsf{sSet}_0$. Recall that for any $S \in \mathsf{sSet}_0$, the \emph{homotopy category} $\tau(S)$ of $S$ is the monoid defined as the quotient of the free monoid $F(S_1)$ on $S_1$ by the relations that arise from the elements of $S_2$. Explicitly, there is a relation $\sigma_2 \cdot \sigma_0 \sim \sigma_1$ for any $\alpha \in S_2$ with $d_i(\alpha)=\sigma_i$ for $i =0,1,2$. For any $\sigma \in S_1$ we denote by $[\sigma]$ the $\sim$-equivalence class of $\sigma$ in $\tau(S)$. The unit of the monoid $\tau(S)$ corresponds to $[s_0(*)]$, where $\{*\}=S_0$ and $s_0: S_0 \to S_1$ is the degeneracy map. This defines a functor $\tau \colon \mathsf{sSet}_0 \to \mathsf{Mon}$ which is left adjoint to the (restriction of the) classical nerve functor $\mathsf{N} \colon \mathsf{Mon} \to \mathsf{sSet}_0$ on the category of monoids -- this adjunction is the restriction of the adjunction $\tau \colon \mathsf{sSet} \rightleftarrows \mathsf{Cat} \colon \mathsf{N}$ between simplicial sets and small categories. 

\begin{proposition}\label{fundamentalbialgebra}
%Denote by $\tau \colon \mathsf{sSet}_0 \to \mathsf{Mon}$ the restriction of the left adjoint of the nerve functor to a functor between the categories of reduced simplicial sets and monoids. 
For any $S \in \mathsf{sSet}_0$, there is a natural isomorphism of bialgebras $$\pi(R[S]) \xrightarrow{\cong} R[\tau(S)],$$
where the bialgebra structure on the left hand side is induced by (\ref{nabla}) and on the right hand side by the canonical monoid bialgebra structure.
\end{proposition}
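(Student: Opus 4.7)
The plan is to exhibit a natural bialgebra morphism $\phi \colon R[\tau(S)] \to \pi(R[S])$ on generators and show it is an isomorphism. Since $R[S]$ is connected, $\overline{\mathcal{N}_0(R[S])} = 0$, so $\mathbb{\Omega}(R[S])$ is concentrated in non-negative degrees and its degree-$0$ piece equals the tensor algebra $T\bigl(s^{-1}\overline{R[S_1]}\bigr)$ with trivial differential out of degree $0$. For each $\sigma \in S_1$, the element $s^{-1}\overline{\sigma} + 1$ is therefore a degree-$0$ cycle, and I set $\phi(\sigma) \in \pi(R[S])$ to be its class.

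I first verify that $\phi$ factors through a monoid homomorphism $\tau(S) \to \pi(R[S])^{\times}$. The degenerate edge $s_0(*)$ satisfies $\overline{s_0(*)} = 0$ in $\overline{\mathcal{N}_*(R[S])}$, so $\phi(s_0(*)) = 1$. For a $2$-simplex $\alpha \in S_2$ with $d_i\alpha = \sigma_i$, I compute $D(s^{-1}\overline{\alpha})$: applying the Alexander--Whitney formula to $\Delta_2(\alpha) = \alpha \otimes \alpha$ yields $\mathbf{\Delta}(\alpha) = e(1) \otimes \alpha + \sigma_2 \otimes \sigma_0 + \alpha \otimes e(1)$, so $\tilde{\mathbf{\Delta}}(\overline{\alpha}) = \overline{\sigma_2} \otimes \overline{\sigma_0}$, while $-s^{-1}\overline{\partial}(\overline{\alpha}) = -s^{-1}\bigl(\overline{\sigma_0} - \overline{\sigma_1} + \overline{\sigma_2}\bigr)$. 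Tracking the Koszul sign arising from $(s^{-1}\otimes s^{-1})$ and rearranging, the identity $[D(s^{-1}\overline{\alpha})] = 0$ in $\pi(R[S])$ becomes $(s^{-1}\overline{\sigma_2} + 1)(s^{-1}\overline{\sigma_0} + 1) = s^{-1}\overline{\sigma_1} + 1$, i.e.\ the defining relation $[\sigma_2] \cdot [\sigma_0] = [\sigma_1]$ of $\tau(S)$. By the universal property of the monoid algebra, $\phi$ extends to a natural algebra map $\phi \colon R[\tau(S)] \to \pi(R[S])$.

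For bijectivity I would invoke Theorem \ref{leftadjoint}. A direct inspection of Lurie's construction shows that when $A \in \mathsf{Alg}_R$ is viewed as a dg algebra concentrated in degree $0$, the higher coherence data in $\mathsf{N}_{\text{dg}}(A)$ automatically vanish for degree reasons, so $\mathsf{N}_{\text{dg}}(A)$ is canonically isomorphic to the classical nerve $\mathsf{N}(A^{\times})$ of the multiplicative monoid. Since $\Lambda(S;R)$ is concentrated in non-negative degrees, any dg algebra map $\Lambda(S;R) \to A$ factors uniquely through $H_0(\Lambda(S;R)) = \pi(R[S])$, and combining these observations with Theorem \ref{leftadjoint} exhibits $\pi \circ R[-] \colon \mathsf{sSet}_0 \to \mathsf{Alg}_R$ as left adjoint to $\mathsf{N} \circ (-)^{\times}$. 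The classical adjunctions $\tau \dashv \mathsf{N}$ and $R[-] \dashv (-)^{\times}$ realize $R[-] \circ \tau$ as another left adjoint to the same functor, so uniqueness of adjoints supplies a natural algebra isomorphism $R[\tau(S)] \cong \pi(R[S])$; tracing the unit through the two adjunctions at level $1$ identifies it with $\phi$.

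Finally, formula (\ref{nabla}) applied to $\sigma \in S_1$ — with $\Delta_1(\sigma) = \sigma \otimes \sigma$ in $R[S]$ — yields $\nabla(s^{-1}\overline{\sigma} + 1) = (s^{-1}\overline{\sigma} + 1) \otimes (s^{-1}\overline{\sigma} + 1)$, so $\phi([\sigma])$ is group-like, matching the group-like element $[\sigma]$ of the monoid bialgebra $R[\tau(S)]$. Since the coproduct on either bialgebra is an algebra map determined by its values on the algebra generators, $\phi$ intertwines the two coproducts; the counits agree by inspection. The main obstacle is the bookkeeping in the second paragraph: keeping the Koszul signs in $D(s^{-1}\overline{\alpha})$ aligned so that the boundary produced by a $2$-simplex simplifies, under the shift $[\sigma] \leftrightarrow s^{-1}\overline{\sigma} + 1$, to exactly the triangle relation of $\tau(S)$. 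Once this is settled, the remaining verifications are essentially formal.
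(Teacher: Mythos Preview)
Your argument is correct and takes a genuinely different route from the paper for the key step of proving bijectivity. The paper constructs the map $\phi\colon \mathbb{\Omega}(R[S])_0 \to R[\tau(S)]$ in the opposite direction, writes down the explicit candidate inverse $\psi$ (your $\phi$), and refers to a direct computation in \cite{RWZTransactions} to verify that $H_0(\phi)$ and $\psi$ are inverse algebra maps. You instead package the isomorphism as a uniqueness-of-left-adjoints statement: using Theorem~\ref{leftadjoint} together with the identification of $\mathsf{N}_{\text{dg}}$ on algebras in degree $0$ with the ordinary nerve, you exhibit both $\pi\circ R[-]$ and $R[-]\circ\tau$ as left adjoint to $\mathsf{N}$ composed with the underlying-monoid functor. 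This is more conceptual and explains \emph{why} the two constructions agree, at the cost of needing to unwind Lurie's dg nerve in degree $0$ and to trace the unit to pin down the specific isomorphism as your explicit $\phi$. The verification that $\phi$ respects the relations in $\tau(S)$ via $D(s^{-1}\overline{\alpha})$ and the bialgebra check via \eqref{nabla} are essentially the same in both approaches.

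One notational caveat: you write $A^{\times}$ for the multiplicative monoid underlying an algebra $A$. Standard usage reserves $A^{\times}$ for the group of units, and the right adjoint to $R[-]\colon\mathsf{Mon}\to\mathsf{Alg}_R$ is the underlying multiplicative monoid, not the units. Your parenthetical ``of the multiplicative monoid'' makes clear what you intend, but it would be safer to use a different symbol to avoid any suggestion that invertibility is being imposed.
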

\begin{proof}
This follows from the proof of \cite[Theorem 26 and Proposition 27]{RWZTransactions}. We outline the proof below for completeness. 
%Recall that the monoid $\tau(S)$ is defined as a quotient of the free monoid $F(S_1)$ on $S_1$ given by relations that arise from the elements of $S_2$. Explicitly, there is a relation 
%$$ \sigma_2 \cdot \sigma_0 \sim \sigma_1$$ for $\sigma_i \in S_1, i=0,1,2$, for any $\alpha \in S_2$ with $d_i(\alpha)=\sigma_i$ for $i =0,1,2$. For any $\sigma \in S_1$ we denote by $[\sigma]$ the $\sim$-equivalence class of $\sigma$ in $\tau(S)$. The unit of the group algebra $R[\tau(S)]$ corresponds to $1_{R} [s_0(*)]$, where $\{*\}=S_0$ and $s_0: S_0 \to S_1$ is the degeneracy map. Now
We define a map of $R$-algebras $$\phi: \mathbb{\Omega}(R[S])_0 \to R[\tau(S)]$$
by setting $\phi(1_{R}) = 1_{R}[s_0(*)]$ and
$$\phi: s^{-1} \overline{\sigma} \mapsto \big([\sigma] - 1_{R}[s_0(*)]\big)$$ for any non-degenerate $1$-simplex $\sigma \in S_1$. On monomials of arbitrary length in  $\mathbb{\Omega}(R[S])_0$, $\phi$ is defined by extending the above formula as an algebra map. A straightforward computation (see the proof of \cite[Theorem 26]{RWZTransactions}) yields that $\phi$ induces a well-defined map on $0$-th homology $$H_0(\phi): H_0(\mathbb{\Omega}(R[S])) \to R[\tau(S)].$$ This map is an isomorphism of algebras with inverse $\psi$ determined by $\psi(1_{R} [s_0(*)])=1_{R}$ 
and
$$\psi: \sigma \mapsto s^{-1}\overline{\sigma} + 1_{R}$$
for any non-degenerate $\sigma \in S_1$. The monoid bialgebra coproduct
\begin{eqnarray}\label{nablaS}
\nabla: R[\tau(S)] \to  R[\tau(S)]\otimes R[\tau(S)]
\end{eqnarray}
is determined by $\nabla(g) := g \otimes g$ for any $g \in \tau(S)$. A straightforward computation verifies that $H_0(\phi)$ intertwines the coproducts (\ref{nabla}) and (\ref{nablaS}), thus defining an isomorphism of bialgebras.  
\end{proof}

\smallskip

\noindent The following diagram summarizes the categories and functors discussed in this section.
\begin{equation*} \label{main_diagram} 
\begin{tikzcd}
 {\mathsf{Mon}} \\ {\mathsf{sSet}_0} & {\mathsf{sCoCoalg}^0_R} & & &\mathsf{Bialg_R} && \\
	{\mathsf{sSet}_0} & {\mathsf{sCoalg}^0_R} & {\mathsf{dgCoalg}^c_R} & {\mathsf{dgAlg}_R} & \mathsf{Alg}_R.
   \arrow["{R[-]}", from=2-1, to=2-2]
	\arrow[equals, from=2-1, to=3-1]
	\arrow["{R[-]}", from=3-1, to=3-2]
	\arrow[hook, from=2-2, to=3-2]
	\arrow["{\mathcal{N}_*}", from=3-2, to=3-3]
	\arrow["{\mathsf{Cobar}}", from=3-3, to=3-4]
    \arrow["\pi", from=2-2, to=2-5]
	\arrow["{\mathbb{\Omega}}", bend left=20, from=3-2, to=3-4]
    \arrow["H_0", from=3-4, to=3-5]
    \arrow["\text{forget}", from=2-5, to=3-5]
	\arrow["{\Lambda(-;R) = \mathbb{\Omega} \circ R[-]}"', bend right=24, from=3-1, to=3-4]
    \arrow["\tau", from=2-1, to=1-1]
    \arrow["{R[-]}", bend left=15, from=1-1, to=2-5]
\end{tikzcd}
\end{equation*}

\section{Review of some model categories}\label{modelcategories_sec}

In this section, we recall several known model categories that will be used throughout the article. We will assume basic knowledge of model category theory; standard references for the subject are \cite{Hir}, \cite{Ho}. For the theory of combinatorial model categories, see also \cite{Be}, \cite{D}, \cite[A.2.6]{LuHTT}, \cite{Ra_notes}. 

\subsection{The Joyal model structure on $\mathsf{sSet}$} Following \cite[1.1.5]{LuHTT}, we recall the definition of the functor
$$\mathfrak{C}\colon \mathsf{sSet} \to \mathsf{Cat}_{\mathsf{sSet}},$$ 
where $\mathsf{Cat}_{\mathsf{sSet}}$ denotes the category of (small) simplicial categories, i.e., categories enriched in (the cartesian monoidal category of) simplicial sets. For the standard $n$-simplex $\mathbb{\Delta}^n \in \mathsf{sSet}$, we define $\mathfrak{C}(\mathbb{\Delta}^n) \in  \mathsf{Cat}_{\mathsf{sSet}}$ by:
\begin{enumerate}
    \item $\text{Obj} \, \mathfrak{C}(\mathbb{\Delta}^n) = \lbrace 0, 1, \dots, n \rbrace$.
    \item If $i,j \in \{0, 1, \dots, n\}$, then $\mathfrak{C}(\mathbb{\Delta}^n)(i,j) = \begin{cases}
    \emptyset & \text{if }i > j \\
    N(P^n_{i,j}) \cong (\mathbb{\Delta}^1)^{ \times (j - i -1)} & \text{if } i < j\\
    \mathbb{\Delta}^0 & \text{if } i=j 
    \end{cases}$\\
    where $N$ denotes the nerve functor and  $P^n_{i,j}$ is the poset (regarded as category) of subsets $U \subseteq \{i,i+1,\dots, j\}$ with $i,j \in U$, ordered by inclusion of subsets. 
     \item For $0 \leq i_1 \leq i_2 \leq i_3 \leq n$, the composition:
    \begin{equation*}
        \mathfrak{C}(\mathbb{\Delta}^n)(i_2,i_3) \times \mathfrak{C}(\mathbb{\Delta}^n)(i_1,i_2) \to \mathfrak{C}(\mathbb{\Delta}^n)(i_1,i_3)
    \end{equation*}
    is induced by $P_{i_2,i_3} \times P_{i_1,i_2} \to P_{i_1, i_3}, \ (U, U') \mapsto U \cup U'$.
\end{enumerate}
The assigment $[n] \mapsto \mathfrak{C}(\mathbb{\Delta}^n)$ defines a cosimplicial object in $\mathsf{Cat}_{\mathsf{sSet}}$. Then the functor $\mathfrak{C} \colon \mathsf{sSet} \to \mathsf{Cat_{sSet}}$ is the (essentially) unique colimit-preserving extension of $\mathfrak{C}(\mathbb{\Delta}^{\bullet}) \colon \mathbb{\Delta} \to \mathsf{Cat_{sSet}}$, i.e., $\mathfrak{C}$ is given for every $S \in \mathsf{sSet}$ by
$$\mathfrak{C}(S)= \underset{\sigma\colon \mathbb{\Delta}^n \to S}{\text{colim}} \mathfrak{C}(\mathbb{\Delta}^n).$$
We have a functor $$\pi_0\colon \mathsf{Cat}_{\mathsf{sSet}} \to \mathsf{Cat}$$ given by applying the path components functor on each simplicial set of morphisms. For any $S \in \mathsf{sSet}$, $\pi_0\mathfrak{C}(S)$ recovers the \textit{homotopy category of $S$} -- this is the category with objects $S_0$ and morphisms generated by the set of $1$-simplices $S_1$ with relations given by the $2$-simplices of $S$ -- which defines the left adjoint to the usual nerve functor $\mathsf{N} \colon \mathsf{Cat} \to \mathsf{sSet}$.

We also recall that $|-|\colon \mathsf{sSet} \to \mathsf{Top}$ denotes the geometric realization functor from simplicial sets to topological spaces. A map of simplicial sets $f\colon S \to S'$ is a \textit{weak homotopy equivalence} if $|f|\colon |S| \to |S'|$ is a weak homotopy equivalence of topological spaces. 

 \begin{theorem} \cite[Section 6]{Joyal2}, \cite[Theorems 2.2.5.1 and 2.4.6.1]{LuHTT} \label{Jmodelstructure} There is a left proper combinatorial model category structure on $\mathsf{sSet}$ such that
 \begin{enumerate}
     \item a morphism $f\colon S \to S'$ is a weak equivalence if $\pi_0\mathfrak{C}(f)\colon \pi_0\mathfrak{C}(S) \to \pi_0\mathfrak{C}(S')$ is an essentially surjective functor and for every pair $x,y \in S_0$, the induced map $$\mathfrak{C}(f)\colon \mathfrak{C}(S)(x,y) \to \mathfrak{C}(S)(f(x), f(y))$$
     is a weak homotopy equivalence of simplicial sets;
     \item  a morphism is a cofibration if it is a monomorphism;
     \item the fibrant objects are the \textit{quasi-categories}, i.e., the simplicial sets $Q$ with the property that any map $f\colon \Lambda^n_k \to Q$, for $0 < k < n$, can be extended to $\widetilde{f}\colon \mathbb{\Delta}^n \to Q$.
 \end{enumerate}
\end{theorem}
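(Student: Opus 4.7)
The plan is to invoke Jeff Smith's recognition theorem for combinatorial model categories (the version developed in Section \ref{prelim_modelcat}). Take the cofibrations to be the monomorphisms, which form the saturated class generated by the set $I = \{\partial \mathbb{\Delta}^n \hookrightarrow \mathbb{\Delta}^n \mid n \geq 0\}$ of boundary inclusions, and define the class $\W_J$ of candidate weak equivalences exactly as in (1): a map $f$ lies in $\W_J$ if and only if $\mathfrak{C}(f)$ is a Dwyer--Kan equivalence in the Bergner model structure on $\mathsf{Cat}_{\mathsf{sSet}}$. Smith's theorem will then require four things: that $\W_J$ be accessible and satisfy 2-out-of-3, that the class $\mathsf{inj}(I)$ of trivial Kan fibrations be contained in $\W_J$, and that $\cof \cap \W_J$ be closed under pushouts and transfinite compositions.

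The first two conditions are straightforward. Accessibility of $\W_J$ follows from combinatoriality of the Bergner model structure (whose weak equivalences form an accessible class, by Bergner's theorem) together with the fact that $\mathfrak{C}$ is a left adjoint and hence accessible. The 2-out-of-3 property is immediate from the functoriality of $\mathfrak{C}$ and the corresponding property in $\mathsf{Cat}_{\mathsf{sSet}}$. For the containment $\mathsf{inj}(I) \subseteq \W_J$, one uses the explicit formula $\mathfrak{C}(\mathbb{\Delta}^n)(i,j) \cong (\mathbb{\Delta}^1)^{\times(j-i-1)}$ to verify directly that $\mathfrak{C}$ sends trivial Kan fibrations to Dwyer--Kan trivial fibrations of simplicial categories.

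The main obstacle is the closure of $\cof \cap \W_J$ under pushouts and transfinite compositions. The strategy is to identify a small set $J$ of generating trivial cofibrations, consisting of the inner horn inclusions $\Lambda^n_k \hookrightarrow \mathbb{\Delta}^n$ for $0 < k < n$, augmented by a set of auxiliary ``categorical anodyne'' maps (such as the inclusion $\mathbb{\Delta}^0 \hookrightarrow \mathsf{N}(E)$ of an endpoint into the nerve of the free-living isomorphism) which will force classical equivalences inside quasi-categories to be invertible. The key technical input is the explicit verification that $\mathfrak{C}$ carries each inner horn inclusion to a Dwyer--Kan trivial cofibration of simplicial categories; this is the content of \cite[Proposition 2.2.2.7]{LuHTT} and its proof reduces, via the combinatorics of $\mathfrak{C}(\mathbb{\Delta}^n)$, to an inductive analysis of cube collapses. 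Granting this, together with the analogous statement for the auxiliary generators (via Joyal's theory of isomorphisms in quasi-categories), the closure axiom follows because $\mathfrak{C}$ preserves colimits and the Bergner weak equivalences are preserved under the relevant pushouts and transfinite compositions.

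With the model structure in hand, the identification of fibrant objects with quasi-categories is a direct consequence of the chosen set $J$: fibrant objects are exactly those with the right lifting property against $J$, which in particular gives the lifting property against all inner horn inclusions, i.e., the quasi-category condition. Conversely, any quasi-category admits the required lifts against the auxiliary generators by Joyal's lifting theorem for isomorphisms, see \cite[Theorem 2.4.6.1]{LuHTT}. Left properness is automatic: every simplicial set is cofibrant, and since $\mathfrak{C}$ preserves pushouts and the Bergner model structure is left proper, the pushout of a map in $\W_J$ along a monomorphism again lies in $\W_J$.
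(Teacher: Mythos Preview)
The paper does not supply its own proof of this theorem: it is recorded in Section~\ref{modelcategories_sec} purely as a recollection of a known result, with citations to Joyal and Lurie, and is used later as input. So there is no argument in the paper to compare your proposal against.

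That said, your sketch has a genuine gap in the verification of the closure axiom for Smith's theorem. You argue that $\mathfrak{C}$ sends the maps in your explicit set $J$ (inner horn inclusions and the auxiliary anodyne maps) to Bergner trivial cofibrations, and then conclude that ``the closure axiom follows because $\mathfrak{C}$ preserves colimits.'' But this only shows that the \emph{saturation of $J$} lies in $\W_J$; it does not show that $\cof \cap \W_J$ is closed under pushouts, which is what Smith's theorem actually requires. There may well be monomorphisms in $\W_J$ that are not in the saturation of $J$, and you have said nothing about why a pushout along such a map stays in $\W_J$. To make the argument work via Proposition~\ref{Smith_thm2}, you would need to know that $\mathfrak{C}$ sends \emph{every} monomorphism of simplicial sets to a Bergner cofibration (so that the image of any pushout square along a map in $\cof \cap \W_J$ is a pushout along a Bergner trivial cofibration, hence a homotopy pushout). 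This is true---it is essentially the content of Lurie's proof that $(\mathfrak{C}, N)$ is a Quillen adjunction---but it is a substantial fact that requires the same kind of inductive cube analysis you allude to for the inner horns, and you have not addressed it. The same missing ingredient reappears in your alternative argument for left properness via $\mathfrak{C}$ (though your first sentence there, that left properness is automatic because every object is cofibrant, is already a complete and correct argument on its own).
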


This model category structure is called the \textit{Joyal model structure} on simplicial sets. We call a weak equivalence in the Joyal model structure a \textit{categorical equivalence}. There is also an induced model structure on $\mathsf{sSet}_0$, as shown in \cite[Lemma 3.2]{CHL}, where 
the weak equivalences and the cofibrations are defined in the same way.  

\subsection{The Kan--Quillen model structure on $\mathsf{sSet}$}

The following is a classical result due to Quillen \cite{Q67} (see also \cite{Ho}).

\begin{theorem} \label{KQmodelstructure} There is a proper combinatorial model category structure on $\mathsf{sSet}$ such that
\begin{enumerate}
    \item the weak equivalences are the weak homotopy equivalences;
    \item a morphism is a cofibration if it is a monomorphism;
    \item the fibrant objects are the \textit{Kan complexes}, i.e., the simplicial sets $K$ with the property that any map $f\colon \Lambda^n_k \to K$ can be extended to $\widetilde{f}\colon \mathbb{\Delta}^n \to K$ for every $0\leq k\leq n$.
\end{enumerate}
\end{theorem}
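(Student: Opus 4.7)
The plan is to verify the model category axioms using the generating sets $I=\{\partial\Delta^n\hookrightarrow\Delta^n\}_{n\geq 0}$ for the cofibrations and $J=\{\Lambda^n_k\hookrightarrow\Delta^n\}_{0\leq k\leq n,\, n\geq 1}$ for the trivial cofibrations. The class $\mathcal{W}$ of weak homotopy equivalences is clearly closed under two-out-of-three and retracts, since $|-|\colon\mathsf{sSet}\to\mathsf{Top}$ preserves these and the notion of weak equivalence in $\mathsf{Top}$ has these closure properties. Combinatoriality will then follow from the fact that $\mathsf{sSet}$ is locally presentable, $I$ and $J$ are sets, and $\mathcal{W}$ is accessible and accessibly embedded in the arrow category (visible e.g.\ from a detection formula in terms of finite homotopy groups).

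First I would establish two recognition principles. A map $p\colon X\to Y$ has the right lifting property (RLP) against $I$ if and only if it is a ``trivial Kan fibration'', and a standard argument via adjunction shows this is equivalent to $p$ being a Kan fibration (RLP against $J$) that is in addition a weak homotopy equivalence; the nontrivial direction uses that such a $p$ has contractible homotopy fibers and one builds lifts by filling horns layer by layer. A map has RLP against $J$ exactly when it is a Kan fibration. The small object argument applied to $I$ and to $J$ then yields the two functorial factorizations, with $I$-cell complexes being (transfinite compositions of pushouts of) monomorphisms and $J$-cell complexes being anodyne extensions.

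The key geometric input, to be established separately, is that every anodyne extension is a weak homotopy equivalence. Here one shows that each horn inclusion $\Lambda^n_k\hookrightarrow\Delta^n$ realizes to a deformation retract of topological spaces; then one uses that $|-|$ preserves colimits and monomorphisms go to closed cofibrations of CW complexes, so transfinite compositions of pushouts of horn inclusions realize to weak equivalences of CW complexes by standard homotopy-invariance arguments (gluing lemma plus closure under sequential colimits along cofibrations). Combined with the previous paragraph, this gives the lifting axiom: a trivial cofibration (defined as a cofibration in $\mathcal{W}$) is a retract of an anodyne extension, and hence has the left lifting property against Kan fibrations.

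The hard step, and the one I would expect to require the most genuine work, is showing that every trivial cofibration is a retract of a $J$-cell complex (equivalently, that an $(I)\cap\mathcal{W}$-map has the left lifting property against every Kan fibration). Here I would factor a trivial cofibration $i\colon A\rightarrowtail B$ as $i = p\circ j$ with $j$ a $J$-cell complex and $p$ a Kan fibration; by two-out-of-three $p$ is a weak equivalence, hence by the first recognition principle $p$ is a trivial Kan fibration, and so $i$ is a retract of $j$ by the usual retract argument. The remaining axioms are routine once this is in place, and properness follows from the facts that all objects are cofibrant (giving left properness) and that the pullback of a weak equivalence along a Kan fibration is a weak equivalence (giving right properness), the latter reducing via $|-|$ to the topological case.
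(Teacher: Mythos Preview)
The paper does not actually give a proof of this theorem: it is stated as a classical result with references to Quillen \cite{Q67} and Hovey \cite{Ho}, and no proof environment follows the statement. So there is nothing in the paper to compare your argument against.

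That said, your sketch is a sound outline of the standard cofibrantly-generated approach (as in Hovey or Goerss--Jardine). The one step you pass over too quickly is the equivalence ``Kan fibration and weak homotopy equivalence $\Longleftrightarrow$ RLP against $I$'': you invoke this in the final retract argument as already established by your ``first recognition principle'', but your description of that principle only gestures at contractible homotopy fibers and horn-filling. The implication from left to right (an acyclic Kan fibration has RLP against all $\partial\Delta^n\hookrightarrow\Delta^n$) is precisely Quillen's theorem and is the genuine technical heart of the whole construction; historically it was proved via minimal fibrations, and more modern accounts still need a nontrivial argument (e.g.\ fiberwise contractibility plus an inductive lifting construction, or the theory of anodyne extensions combined with the equivalence $\mathrm{Ex}^\infty$). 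Everything else in your outline is routine once this is in hand, so if you were writing this up in full, that is where the real work would need to go.
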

We call this model category structure the \textit{Kan--Quillen model structure} on simplicial sets. The Kan--Quillen model category is a model for the homotopy theory of spaces. There is an induced model structure on $\mathsf{sSet}_0$, which models connected pointed homotopy types, with the same weak equivalences and cofibrations (see, for example, \cite[Section V.6]{GJ}). 

The Kan--Quillen model structure is a left Bousfield localization of the Joyal model structure at the map $\mathbb{\Delta}^1\to \mathbb{\Delta}^0$. Thus, every categorical equivalence is a weak homotopy equivalence and every weak homotopy equivalence \textit{between Kan complexes} is a categorical equivalence. In fact, something slightly stronger also holds:
  
\begin{proposition}\label{Joyalsthm} 
Let $\mathcal{J}\colon \mathsf{sSet} \to \mathsf{sSet}$ denote a fibrant replacement functor in the Joyal model structure.  
\begin{enumerate}
\item If the homotopy category of $S \in \mathsf{sSet}$ is a groupoid, then $\mathcal{J}(S)$ is a Kan complex. 
\item If $f\colon S \to S'$ is a weak homotopy equivalence in $\mathsf{sSet}$ and the homotopy categories of $S$ and $S'$ are groupoids, then $f$ is a categorical equivalence. 
\end{enumerate}
\end{proposition}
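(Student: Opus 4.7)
The plan is to reduce both parts to the classical theorem of Joyal which characterizes when a quasi-category is a Kan complex: a quasi-category $Q$ is a Kan complex if and only if its homotopy category $\pi_0 \mathfrak{C}(Q)$ is a groupoid. This is the one substantive ingredient; everything else will be formal manipulation using the relationship between the Joyal and Kan--Quillen model structures on $\mathsf{sSet}$.

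For (1), I would note that $\mathcal{J}(S)$ is by construction a quasi-category, and the unit map $S \to \mathcal{J}(S)$ is a categorical equivalence. By the description of categorical equivalences in Theorem \ref{Jmodelstructure}, $\pi_0\mathfrak{C}(S \to \mathcal{J}(S))$ is essentially surjective and fully faithful (the latter because $\mathfrak{C}(-)$ produces weak equivalences of mapping simplicial sets, and $\pi_0$ of a weak homotopy equivalence is a bijection). Hence $\pi_0\mathfrak{C}(\mathcal{J}(S))$ is equivalent as a category to $\pi_0\mathfrak{C}(S)$. Since any category equivalent to a groupoid is itself a groupoid (a fully faithful functor reflects isomorphisms), Joyal's theorem applies and $\mathcal{J}(S)$ is a Kan complex.

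For (2), I would first apply (1) to conclude that $\mathcal{J}(S)$ and $\mathcal{J}(S')$ are both Kan complexes. Then I would consider the naturality square
\[
\begin{tikzcd}
S \arrow{r}{f} \arrow{d} & S' \arrow{d} \\
\mathcal{J}(S) \arrow{r}{\mathcal{J}(f)} & \mathcal{J}(S').
\end{tikzcd}
\]
The vertical maps are categorical equivalences, hence weak homotopy equivalences (since Kan--Quillen is a left Bousfield localization of Joyal, as recalled after Theorem \ref{KQmodelstructure}), and the top map is a weak homotopy equivalence by hypothesis. Two-out-of-three for the class of weak homotopy equivalences shows $\mathcal{J}(f)$ is a weak homotopy equivalence between Kan complexes. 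As recalled in the excerpt, such a map is automatically a categorical equivalence. A second application of two-out-of-three, now for categorical equivalences and using that the vertical maps are categorical equivalences, yields that $f$ itself is a categorical equivalence.

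The main obstacle, and really the only non-formal step, is the classical Joyal criterion (quasi-category $+$ groupoidal homotopy category $\Rightarrow$ Kan complex). I would invoke this as a known result rather than reprove it; once it is available, the rest of the argument is a straightforward combination of naturality of fibrant replacement and two-out-of-three applied twice across the two compatible model structures on $\mathsf{sSet}$.
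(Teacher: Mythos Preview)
Your proposal is correct and follows essentially the same approach as the paper: both parts reduce to Joyal's criterion that a quasi-category is a Kan complex if and only if its homotopy category is a groupoid, and part (2) is deduced from (1) via the fact that weak homotopy equivalences between Kan complexes are categorical equivalences. Your write-up simply makes explicit the naturality square and the two applications of two-out-of-three that the paper leaves implicit.
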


\begin{proof} $(1)$ follows from \cite[Corollary 1.4]{Joyal} which states that a quasi-category is a Kan complex if and only if its homotopy category is a groupoid. $(2)$ follows from $(1)$ and from the fact that a weak homotopy equivalence between Kan complexes is a categorical equivalence.
\end{proof}

We can also describe the weak homotopy equivalences in terms of the functor $\mathfrak{C}$ as follows. Denote by $\mathsf{Gpd}_{\mathsf{sSet}}$ the category of \textit{simplicial groupoids}, i.e., simplicial objects in the category of groupoids $\mathsf{G}\colon \mathbb{\Delta}^{op} \to \mathsf{Gpd}$ with a constant simplicial set of objects.  
Let $$L\colon \mathsf{Cat}_{\mathsf{sSet}} \to \mathsf{Gpd}_{\mathsf{sSet}}$$ be the functor from simplicial categories to simplicial groupoids that formally inverts degreewise every morphism in a simplicial category, i.e.,  the left adjoint of the full and faithful embedding $\mathsf{Gpd}_{\mathsf{sSet}} \hookrightarrow \mathsf{Cat}_{\mathsf{sSet}}$. Then the weak homotopy equivalences may be described by localizing categorical equivalences as follows.

\begin{proposition} \cite{DK} \cite[Corollary 4.8]{MRZ}  A map $f\colon S \to S'$ in $\mathsf{sSet}$ is a weak homotopy equivalence if and only if $\pi_0L\mathfrak{C}(f)\colon \pi_0L\mathfrak{C}(S) \to \pi_0L\mathfrak{C}(S')$ is an essentially surjective functor of groupoids and for every pair $x,y \in S_0$, the induced map of simplicial sets
$$L\mathfrak{C}(f)\colon L\mathfrak{C}(S)(x,y) \to L\mathfrak{C}(S')(f(x), f(y))$$ is a weak homotopy equivalence. 
\end{proposition}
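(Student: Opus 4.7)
The plan is to derive this characterization from the Dwyer--Kan Quillen equivalence between $(\mathsf{sSet}, \text{Kan--Quillen})$ and $(\mathsf{Gpd}_{\mathsf{sSet}}, \text{Dwyer--Kan})$, together with a comparison between Kan's loop groupoid functor $G \colon \mathsf{sSet} \to \mathsf{Gpd}_{\mathsf{sSet}}$ and the functor $L \mathfrak{C}$.

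First, I would recall that the Dwyer--Kan model structure on $\mathsf{Gpd}_{\mathsf{sSet}}$ has as its weak equivalences precisely the maps $\phi \colon \mathcal{G} \to \mathcal{G}'$ such that $\pi_0(\phi)$ is essentially surjective (as a functor of ordinary groupoids) and, for every pair of objects $x, y$ of $\mathcal{G}$, the induced map of simplicial sets $\mathcal{G}(x, y) \to \mathcal{G}'(\phi(x), \phi(y))$ is a weak homotopy equivalence. By the classical theorem of Dwyer--Kan \cite{DK}, the adjunction $G \dashv \overline{W}$, where $\overline{W}$ denotes the classifying simplicial set functor, is a Quillen equivalence between $(\mathsf{sSet}, \text{Kan--Quillen})$ and $(\mathsf{Gpd}_{\mathsf{sSet}}, \text{Dwyer--Kan})$.

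Second, I would appeal to \cite[Corollary 4.8]{MRZ}, which provides a natural Dwyer--Kan weak equivalence of simplicial groupoids $G(S) \xrightarrow{\simeq} L \mathfrak{C}(S)$ for every $S \in \mathsf{sSet}$. Given these two ingredients, the proposition follows formally. For the forward direction, suppose $f$ is a weak homotopy equivalence. Since every simplicial set is cofibrant in the Kan--Quillen structure, the left Quillen equivalence $G$ preserves all weak equivalences, so $G(f)$ is a Dwyer--Kan equivalence. Naturality of $G \simeq L \mathfrak{C}$ in $S$ together with the two-out-of-three property then imply that $L \mathfrak{C}(f)$ is also a Dwyer--Kan equivalence, which is precisely the condition in the statement.

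For the reverse direction, suppose $L \mathfrak{C}(f)$ satisfies the two stated conditions, i.e., is a Dwyer--Kan equivalence. Then by the same natural comparison and two-out-of-three, $G(f)$ is a Dwyer--Kan equivalence. Since the Quillen equivalence $G \dashv \overline{W}$ induces an equivalence of homotopy categories and all objects of $\mathsf{sSet}$ are cofibrant (so $G$ represents its own total left derived functor), one concludes that $f$ is a weak homotopy equivalence. The main technical input is the natural comparison $G \simeq L \mathfrak{C}$, which is not tautological and is carried out in \cite{MRZ}; everything else is formal model-categorical bookkeeping built on top of this identification.
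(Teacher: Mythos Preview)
Your proposal is correct and follows precisely the route indicated by the paper's citations: the paper does not give its own proof of this proposition but simply records it with references to \cite{DK} and \cite[Corollary 4.8]{MRZ}, and your argument is exactly the expected unwinding of those two ingredients (the Dwyer--Kan Quillen equivalence $G \dashv \overline{W}$ together with the natural comparison $G \simeq L\mathfrak{C}$).
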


\subsection{The Bousfield model structure on $\mathsf{sSet}$}
The following model category is a special case of a well-known theorem due to Bousfield \cite{B75} (see also \cite{Hir, Be, Ra_notes}).
 
\begin{theorem}\label{Bousfieldmodelstructure}
Let $R$ be a commutative ring. There is a left proper combinatorial model category structure on $\mathsf{sSet}$ such that
\begin{enumerate}
\item a morphism $f\colon S \to S'$ is a weak equivalence if $$\mathcal{N}_*(R[f])\colon \mathcal{N}_*(R[S]) \to \mathcal{N}_*(R[S'])$$ is a quasi-isomorphism;
\item a morphism is a cofibration if it is a monomorphism;
\item a morphism is a fibration if it has the right lifting property with respect to the trivial cofibrations.
\end{enumerate}
\end{theorem}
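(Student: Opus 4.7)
The plan is to obtain this model structure as an instance of J.~Smith's recognition theorem, or equivalently as a left Bousfield localization of the Kan--Quillen model structure of Theorem \ref{KQmodelstructure}. The general method sketched in Section \ref{prelim_modelcat} would be directly applicable, and indeed this is one of the motivating examples.

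The first step is to establish the good categorical properties of the class $\mathcal{W}_R$ of $R$-homology equivalences. Since $\mathcal{N}_*(R[-]) \colon \mathsf{sSet} \to \mathsf{Ch}_R$ is a left adjoint, it preserves all colimits, and in particular filtered colimits; combined with the fact that quasi-isomorphisms of chain complexes form an accessibly embedded accessible subcategory of the arrow category, one deduces that $\mathcal{W}_R$ is itself accessibly embedded and accessible. The 2-out-of-3 property and closure under retracts are then immediate from the additivity of $\mathcal{N}_*(R[-])$. The cofibrations are the monomorphisms, cofibrantly generated by $I := \{\partial\mathbb{\Delta}^n \hookrightarrow \mathbb{\Delta}^n\}_{n \geq 0}$, exactly as in the Kan--Quillen structure.

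The key step is to produce a set $J$ of generating trivial cofibrations. By the accessibility of $\mathcal{W}_R$, one can choose a regular cardinal $\kappa$ such that every monomorphism in $\mathcal{W}_R$ is a $\kappa$-filtered colimit of $\kappa$-small monomorphisms in $\mathcal{W}_R$; taking $J$ to be a set of representatives of the $\kappa$-small monomorphisms in $\mathcal{W}_R$ (one may enlarge this set by the Kan--Quillen generators $\{\Lambda^n_k \hookrightarrow \mathbb{\Delta}^n\}$ to streamline the verification), Smith's theorem supplies the model structure once one checks the remaining hypothesis: every map having the right lifting property against $I$ belongs to $\mathcal{W}_R$. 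This follows because such a map is a trivial Kan fibration, hence a weak homotopy equivalence, and $\mathcal{W}_{\pi_1\text{-}\mathbb{Z}} \subseteq \mathcal{W}_R$ (weak homotopy equivalences induce isomorphisms on integral and hence $R$-linear homology).

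Left properness follows from the stability of $\mathcal{W}_R$ under pushouts along monomorphisms: applying $\mathcal{N}_*(R[-])$ turns such a pushout square in $\mathsf{sSet}$ into a pushout of chain complexes in which one leg is a degreewise split injection, which is therefore a homotopy pushout, and quasi-isomorphisms of chain complexes are preserved under homotopy pushouts. The main obstacle in this program is the recognition step, namely verifying that the class $\mathrm{Cof}(J)$ obtained from the accessibility of $\mathcal{W}_R$ really coincides with $\mathrm{Cof}(I) \cap \mathcal{W}_R$; this is precisely what Smith's theorem packages, and it reduces to the interplay between accessibility of $\mathcal{W}_R$ and closure of $\mathcal{W}_R \cap \mathrm{Cof}(I)$ under transfinite composition and pushout, both of which follow from the chain-level argument above.
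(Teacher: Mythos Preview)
The paper does not give its own proof of this theorem; it records it as a known result due to Bousfield \cite{B75}, with pointers to \cite{Hir}, \cite{Be}, \cite{Ra_notes} for modern treatments. Your approach via Smith's recognition theorem is essentially the one in Beke \cite{Be} and is exactly the template the paper lays out in Section~\ref{prelim_modelcat}, so your sketch is aligned with the paper's viewpoint and with one of the cited references.

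A brief comparison with the original: Bousfield's 1975 argument predates Smith's theorem and instead uses a direct bounded-cofibration argument to produce the generating trivial cofibrations (every $R$-acyclic cofibration $A \hookrightarrow B$ can be filtered by sub-$R$-acyclic-cofibrations of bounded cardinality). Your route trades this hands-on construction for the abstract accessibility machinery, which is cleaner once Smith's theorem is available but gives less explicit control over the generating set $J$.

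Your sketch is essentially correct. Two minor points of phrasing: (i) closure of $\mathcal{W}_R$ under retracts follows simply because homology is a functor and retracts of isomorphisms are isomorphisms---``additivity'' is not the relevant property; (ii) in Smith's theorem (as in Theorem~\ref{Smith_thm}) you do not construct $J$ by hand from accessibility and then verify something about it---rather, once you check that $I\text{-inj} \subseteq \mathcal{W}_R$ and that $\mathrm{Cof}(I) \cap \mathcal{W}_R$ is closed under pushouts and transfinite compositions (both of which you do verify correctly), the existence of $J$ is the \emph{conclusion} of the theorem. Your final paragraph conflates input and output of Smith's theorem slightly, but the substance is all there.
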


We call the above model structure the \textit{Bousfield model structure} on simplicial sets and denote it by $(\mathsf{sSet}, \Req)$. We will call a weak equivalence in the Bousfield model structure an $R$-\textit{equivalence}. The Bousfield model structure is a left Bousfield localization of the Kan--Quillen model structure. A fibrant replacement in the Bousfield model structure is called an $R$-\textit{localization}. The Bousfield model structure induces also a model structure on $\mathsf{sSet}_0$, which is a left Bousfield localization of the corresponding Kan--Quillen model structure on $\mathsf{sSet}_0$. 

\subsection{Model structures on $\mathsf{sCoCoalg}_R$} \label{model_str_coalg_prelim}
Let $R$ be a commutative ring. Let $\kappa$ be a regular cardinal such that $\mathsf{CoCoalg}_R$ is locally $\kappa$-presentable. For example, this is satisfied if $\kappa > \mathrm{max}\{|R|, \aleph_0\}$ \cite{Ba, Ra} -- in fact, $\kappa = \aleph_1$ always suffices, see the recent preprint \cite{Po24}. Let $\I_{\kappa}$ denote the set of maps $i \colon A \to B$ in $\mathsf{sCoCoalg}_R$ between $\kappa$-presentable objects whose underlying map of simplicial $R$-modules is a monomorphism. The following result is a special case of the model category shown by the first-named author \cite{Ra} and generalizes the model category shown by Goerss \cite{Go} in the case of fields. 

\begin{theorem}\cite[Theorem A]{Ra} \label{modelcatqi} There is a left proper combinatorial model category structure on $\mathsf{sCoCoalg}_R$ such that
\begin{enumerate}
\item a morphism $f\colon C \to C'$ is a weak equivalence if $$\mathcal{N}_*(f)\colon \mathcal{N}_*(C) \to \mathcal{N}_*(C')$$ is a quasi-isomorphism;
\item the class of cofibrations $\cof(\I_{\kappa})$ is cofibrantly generated by the set $\I_{\kappa}$;
\item a morphism is a fibration if it has the right lifting property with respect to the trivial cofibrations.
\end{enumerate}
\end{theorem}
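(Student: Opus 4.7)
The plan is to apply J.~Smith's recognition theorem for combinatorial model categories. Since $\mathsf{sCoCoalg}_R$ is locally $\kappa$-presentable (by the references cited just above the theorem), it suffices to verify four conditions: (i) the class $\W$ of $\mathcal{N}_*$-quasi-isomorphisms is accessible and closed under $2$-out-of-$3$ and retracts; (ii) the class $\cof(\I_\kappa)$ coincides with the class of (levelwise) monomorphisms; (iii) every map with the right lifting property against $\I_\kappa$ lies in $\W$; and (iv) $\cof(\I_\kappa) \cap \W$ is closed under pushout and transfinite composition. Left properness will then be extracted from (iv).

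For (i), accessibility follows because $\mathcal{N}_*$ preserves filtered colimits (being defined levelwise on each simplicial degree) and the class of quasi-isomorphisms of chain complexes of $R$-modules is accessible; closure under $2$-out-of-$3$ and retracts is inherited from chain complexes. For (ii), the key observation is that the forgetful functor $\mathsf{sCoCoalg}_R \to \mathsf{sMod}_R$ admits a right adjoint given by the cofree cocommutative coalgebra functor, so colimits in $\mathsf{sCoCoalg}_R$ are computed on underlying simplicial $R$-modules; consequently every map in $\cof(\I_\kappa)$ is a monomorphism. The reverse inclusion follows from local presentability: any monomorphism can be written as a $\kappa$-filtered transfinite composition of pushouts of maps in $\I_\kappa$, by exhausting the target by its $\kappa$-presentable subobjects and taking pushouts along the successive inclusions.

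For (iii), my approach is to construct a natural cylinder object $C \otimes R[\mathbb{\Delta}^1]$ in $\mathsf{sCoCoalg}_R$, together with the standard cofibration $C \amalg C \hookrightarrow C \otimes R[\mathbb{\Delta}^1]$, and use the $\I_\kappa$-lifting property of a map $f \colon C \to D$ to produce a coalgebra section of $f$ together with a coalgebra homotopy back to the identity. This exhibits $f$ as a deformation retract in $\mathsf{sCoCoalg}_R$, and hence $\mathcal{N}_*(f)$ as a chain-homotopy equivalence, forcing $f$ to lie in $\W$.

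The principal obstacle will be step (iv). Since pushouts in $\mathsf{sCoCoalg}_R$ are created in $\mathsf{sMod}_R$ and $\mathcal{N}_*$ realizes the Dold--Kan equivalence between $\mathsf{sMod}_R$ and nonnegatively graded chain complexes of $R$-modules, the pushout $B \cup_A C$ of $f \colon A \to C$ along a trivial cofibration $i \colon A \hookrightarrow B$ normalizes to the chain-complex pushout of $\mathcal{N}_*(f)$ along $\mathcal{N}_*(i)$. Because $\mathcal{N}_*(i)$ is an injective quasi-isomorphism, its cokernel $\mathcal{N}_*(B)/\mathcal{N}_*(A)$ is acyclic and is canonically isomorphic to the cokernel of $\mathcal{N}_*(C) \to \mathcal{N}_*(B \cup_A C)$; the long exact sequence in homology then forces the latter map to be a quasi-isomorphism. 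Closure under transfinite composition follows since $\mathcal{N}_*$ preserves filtered colimits and filtered colimits of quasi-isomorphisms of chain complexes are quasi-isomorphisms. Finally, left properness reduces to the statement that the cobase change of a weak equivalence along a monomorphism is a weak equivalence; this follows from (iv) by the standard argument of factoring the weak equivalence as a trivial cofibration followed by a trivial fibration (which exists once the model structure has been established), together with the observation that every object is cofibrant since the unique map from the initial coalgebra is a monomorphism.
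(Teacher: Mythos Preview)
The paper does not prove this theorem here---it is quoted from \cite{Ra}---but it does describe the method in Section~\ref{prelim_modelcat} (Theorem~\ref{Smith_thm} and Propositions~\ref{Smith_thm2}--\ref{Smith_thm3}), and that method differs from yours at the crucial step.

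Your argument has a genuine gap at step~(ii). You assert that $\cof(\I_\kappa)$ coincides with the class of all levelwise monomorphisms, arguing that any monomorphism can be exhausted by $\kappa$-presentable subobjects. This is not justified for a general commutative ring $R$: local $\kappa$-presentability guarantees that every object is a $\kappa$-filtered colimit of $\kappa$-presentable objects, but not that the structure maps of such a colimit are monomorphisms, nor that one can exhaust a given monomorphism by pushouts of maps in $\I_\kappa$. Indeed, the paper remarks immediately after the theorem that the model structure genuinely depends on the choice of $\kappa$ (different $\kappa$ give Quillen equivalent, not identical, model categories). If $\cof(\I_\kappa)$ were always the class of all monomorphisms, the cofibrations would be independent of $\kappa$ and the model structures would coincide. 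Over a field one does have $\cof(I)=\{\text{injective maps}\}$ (this is proved carefully in Theorem~\ref{coalg_modelcat}(1) using the fundamental theorem of coalgebras and a Zorn argument), but that argument does not transfer to arbitrary $R$.

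This gap propagates: your step~(iii) needs to lift against $\varnothing\to D$ and against $C\amalg C\to C\otimes R[\mathbb{\Delta}^1]$ for \emph{arbitrary} $C,D$, which requires these maps to lie in $\cof(\I_\kappa)$; without~(ii) you cannot conclude this. Likewise your left-properness argument invokes ``every object is cofibrant,'' which again rests on~(ii). (Left properness in fact follows more directly from the five-lemma argument on cokernels that you essentially give in~(iv), applied now to a cofibration and an arbitrary weak equivalence rather than to a trivial cofibration.)

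The route actually taken in \cite{Ra}, as summarized in Proposition~\ref{Smith_thm3}, avoids this entirely: to verify $\I_\kappa\text{-}\inj\subseteq W$ one factors every map $f$ \emph{between $\kappa$-presentable objects} through its mapping cylinder as $f=p\,i$, checks that $i$ is an injective map between $\kappa$-presentable objects (hence lies in $\I_\kappa$ by definition), and that $\mathcal{N}_*(p)$ is a trivial fibration of chain complexes. No global identification of $\cof(\I_\kappa)$ is needed. Your step~(iv) is correct and matches Proposition~\ref{Smith_thm2}.
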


\noindent This model category will be denoted by $(\mathsf{sCoCoalg}_R, \qi)$. Strictly speaking, this model category depends on the choice of a sufficiently large regular cardinal $\kappa$. However, for any two such $\kappa < \kappa'$, the identity functor is a Quillen equivalence between the respective model categories. Thus, for simplicity, we have omitted $\kappa$ from the notation, even though some choice of $\kappa$ is implicitly assumed. We also refer to \cite{St} for a refinement of this model structure which applies to simplicial cocommutative \emph{flat} coalgebras over Prüfer domains.  

\smallskip

It is easy to see that the adjunction $R[-]\colon (\mathsf{sSet}, \Req) \rightleftarrows (\mathsf{sCoCoalg}_R, \qi) \colon \mathcal{P}$
is a Quillen adjunction between model categories. As shown in \cite{Ra}, this adjunction can also be used to induce a model structure on $\mathsf{sCoCoalg}_R$, transferred from the Kan--Quillen model structure on $\mathsf{sSet}$, which additionally makes the adjunction $(R[-], \mathcal{P})$ into a Quillen equivalence. 

\begin{theorem}\cite[Theorem 5.4]{Ra} \label{modelcatpts} Suppose that $R$ has no non-trivial idempotents. There is a proper combinatorial model category structure on $\mathsf{sCoCoalg}_R$ such that
\begin{enumerate}
\item a morphism $f\colon C \to C'$ is a weak equivalence if $\mathcal{P}(f)\colon \mathcal{P}(C) \to \mathcal{P}(C')$ is a weak homotopy equivalence;
\item a morphism $f\colon C \to C'$ is a fibration if $\mathcal{P}(f)\colon \mathcal{P}(C) \to \mathcal{P}(C')$ is a Kan fibration;
\item a morphism is a cofibration if it has the left lifting property with respect to the trivial fibrations; the class of cofibrations is cofibrantly generated by 
the set of morphisms $\{R[i_n] \ | \ i_n \colon \partial \mathbb{\Delta}^n \subseteq \mathbb{\Delta}^n, \ n \geq 0\}$.
\end{enumerate}
Moreover, with respect to this model structure on $\mathsf{sCoCoalg}_R$ and the Kan--Quillen model structure on $\mathsf{sSet}$,  the adjunction $$R[-]\colon \mathsf{sSet} \rightleftarrows \mathsf{sCoCoalg}_R \colon \mathcal{P}$$ is a Quillen equivalence.
\end{theorem}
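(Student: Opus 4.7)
The plan is to construct this model structure via J.\ Smith's recognition theorem for combinatorial model categories, right-lifting the Kan--Quillen structure from $\mathsf{sSet}$ along the adjunction $R[-] \dashv \mathcal{P}$. I will declare the class $\W$ of weak equivalences to consist of those maps $f$ for which $\mathcal{P}(f)$ is a weak homotopy equivalence, and take as generating cofibrations the set $R[I] = \{R[\partial\mathbb{\Delta}^n \hookrightarrow \mathbb{\Delta}^n]\}_{n \geq 0}$, with candidate generating trivial cofibrations $R[J] = \{R[\Lambda^n_k \hookrightarrow \mathbb{\Delta}^n]\}$. The ambient category $\mathsf{sCoCoalg}_R$ is locally presentable; the class $\W$ inherits accessibility, 2-out-of-3, and closure under retracts from the corresponding properties of weak homotopy equivalences in $\mathsf{sSet}$ via the accessible right adjoint $\mathcal{P}$. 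By the adjunction, a map $f$ has the right lifting property against $R[I]$ (resp.\ $R[J]$) if and only if $\mathcal{P}(f)$ has it against $I$ (resp.\ $J$), which will yield the desired characterization of trivial fibrations and fibrations.

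The central difficulty will be verifying the \emph{acyclicity condition}: every transfinite composition of cobase changes of maps in $R[J]$ must lie in $\W$. This is the step I expect to be the main obstacle, and the crux of the matter is understanding how $\mathcal{P}$ interacts with pushouts along maps of the form $R[A \hookrightarrow B]$ for $A \hookrightarrow B$ a monomorphism in $\mathsf{sSet}$. The hypothesis that $R$ has no nontrivial idempotents enters here through the fact that the unit $\eta_X \colon X \xrightarrow{\cong} \mathcal{P}(R[X])$ is a natural isomorphism, so that $R[-]$ embeds $\mathsf{sSet}$ as a reflective subcategory of $\mathsf{sCoCoalg}_R$. Combined with classical structural results for coalgebras (in particular, that every cocommutative coalgebra is the filtered colimit of its finite-type subcoalgebras, giving sufficient control over how $\mathcal{P}$ commutes with the colimits appearing in $R[J]$-cell complexes), one shows that $\mathcal{P}$ applied to an $R[J]$-cell complex produces, up to weak equivalence, the corresponding $J$-cell complex in $\mathsf{sSet}$, which is an anodyne extension and hence a weak homotopy equivalence. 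Properness of the resulting model structure will then be inherited from the Kan--Quillen structure: right properness from the fact that $\mathcal{P}$ preserves pullbacks as a right adjoint, and left properness from the same pushout analysis together with left properness of the Kan--Quillen structure.

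Finally, the Quillen equivalence claim will follow almost formally from the unit isomorphism. Since every simplicial set is cofibrant in the Kan--Quillen structure, the derived unit at $X$ is represented by the composite $X \xrightarrow{\eta_X} \mathcal{P}(R[X]) \to \mathcal{P}(\widetilde{R[X]})$, where $R[X] \to \widetilde{R[X]}$ is any fibrant replacement in the newly constructed model structure on $\mathsf{sCoCoalg}_R$; the first map is an isomorphism by hypothesis and the second is a weak homotopy equivalence by the very definition of $\W$. Hence the derived unit is always a weak equivalence, and since $\mathcal{P}$ reflects weak equivalences by construction, the standard criterion for a Quillen equivalence is satisfied.
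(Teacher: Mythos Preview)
The paper does not give its own proof of this statement: it is quoted verbatim from \cite[Theorem~5.4]{Ra} as background material in Subsection~\ref{model_str_coalg_prelim}, so there is no in-paper argument to compare your proposal against.

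That said, your outline is the correct one, and presumably close to what is done in \cite{Ra}: right-transfer along $R[-]\dashv\mathcal{P}$, with the Quillen equivalence following from the unit isomorphism and the fact that $\mathcal{P}$ detects weak equivalences by definition. Your treatment of the Quillen equivalence is entirely correct.

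The one place your sketch is underspecified is the acyclicity step. Your appeal to the fundamental theorem of coalgebras (filtered colimit of finite-dimensional subcoalgebras) is a red herring here: that result controls filtered colimits, not the pushouts you need. What actually makes the argument work is the concrete computation that $\mathcal{P}$ preserves pushouts along maps of the form $R[A\hookrightarrow B]$. Since the forgetful functor $\mathsf{CoCoalg}_R\to R\text{-}\mathsf{Mod}$ creates colimits, the pushout $D=C\cup_{R[A]}R[B]$ has underlying module $C\oplus R[B\setminus A]$ with $\Delta_D(b)=b\otimes b$ for $b\in B\setminus A$; a short calculation using $r^2=r\Rightarrow r\in\{0,1\}$ (this is exactly where the idempotent hypothesis enters) then shows that the group-like elements of $D$ are precisely $\mathcal{P}(C)\sqcup(B\setminus A)\cong \mathcal{P}(C)\cup_A B$. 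From this, $\mathcal{P}$ sends $R[J]$-cell complexes to $J$-cell complexes on the nose (not just up to weak equivalence), and acyclicity, as well as left properness, follows immediately. You should replace the vague invocation of structural results with this lemma.
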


\subsection{Model structures on $\mathsf{dgCat}_R$ and $\mathsf{dgAlg}_R$} \label{model_str_dgalg_prelim} First we recall the model category structure on the category $\mathsf{dgCat}_R$ of small dg categories over a commutative ring $R$ as shown by Tabuada \cite{Ta, Ta2} (see also \cite[1.3.1]{LuHA}). This model category will not be used in the present article, but it is closely related to a model category of dg algebras that will be important for later purposes. 

We denote by $H_0 \colon \mathsf{dgCat}_R \to \mathsf{Cat}_R$ the functor which is defined by applying $H_0$ to the morphism chain complexes, and $\mathsf{Cat}_R$ denotes the category of small categories enriched in $R$-modules. This is the homotopy category functor in the enriched context of dg categories.

\begin{theorem}\cite{Ta}
There is a right proper combinatorial model category structure on $\mathsf{dgCat}_R$ such that
\begin{enumerate} 
\item a morphism $F\colon \mathsf{C} \to \mathsf{C'}$ is a weak equivalence if the induced functor $$H_0(F)\colon H_0(\mathsf{C}) \to H_0(\mathsf{C'})$$ is essentially surjective, and for every pair of objects $x,y \in \mathsf{C}$ the induced map 
$$F\colon \mathsf{C}(x,y) \to \mathsf{C'}(F(x), F(y))$$ is a quasi-isomorphism of chain complexes;

\item a morphism $F\colon \mathsf{C} \to \mathsf{C'}$ is a fibration if for every pair of objects $x,y \in \mathsf{C}$, the chain map $F\colon \mathsf{C}(x,y) \to \mathsf{C'}(F(x), F(y))$ is degreewise surjective; moreover, for every isomorphism $F(x) \to z$ in $H_0(\mathsf{C'})$, there is a lift to an isomorphism $x \to y$ in $H_0(\mathsf{C})$. 
\end{enumerate}
\end{theorem}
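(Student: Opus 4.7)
The plan is to apply J.\ Smith's recognition theorem for combinatorial model categories (the same method used later in the article for Theorems \ref{thm1} and \ref{thm2}), combined with explicit generating sets chosen so as to encode the two ingredients of weak equivalence: Hom-wise quasi-isomorphism and essential surjectivity on $H_0$. For generating cofibrations $\I$, I would take (a) the map $\emptyset \to \mathbf{1}$, where $\mathbf{1}$ is the dg category with one object and endomorphism complex $R$ concentrated in degree $0$, together with (b) maps $\mathsf{S}(S^{n-1}) \hookrightarrow \mathsf{S}(D^{n})$ for $n \in \Z$, where $\mathsf{S}(K)$ denotes the free dg category on a chain complex $K$ of morphisms placed between two distinct objects, and $S^{n-1} \hookrightarrow D^{n}$ is the standard sphere-into-disk inclusion. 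For generating trivial cofibrations $\mathcal{J}$, I would take the maps $\emptyset \to \mathsf{S}(D^{n})$ together with a single map $\mathbf{1} \to \mathcal{K}$, where $\mathcal{K}$ is a small cofibrant dg category with two objects made mutually inverse up to coherent homotopy (the \emph{dg interval}), designed so that the right lifting property against it corresponds exactly to lifting $H_0$-isomorphisms.

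Next I would verify Smith's criteria. Local presentability of $\mathsf{dgCat}_R$ is standard. The class $\W$ of weak equivalences is closed under retracts by definition, satisfies the 2-out-of-3 property because quasi-isomorphisms of chain complexes do and essential surjectivity on $H_0$ is a property of a functor, and is accessibly embedded in the arrow category since both defining conditions are preserved under filtered colimits in $\mathsf{dgCat}_R$. A direct check shows that the maps with the right lifting property against $\I$ are precisely the functors that are Hom-wise surjective quasi-isomorphisms together with surjective on objects, which in turn forces them to be weak equivalences of the stated type; this will identify the trivial fibrations.

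The main technical step is to show that $\cof(\mathcal{J}) \subseteq \W$. For pushouts along generators of the first type, one analyzes the pushout using the word-length filtration of its Hom-complexes and observes that freely adjoining a contractible Hom-complex leaves every Hom-quasi-isomorphism class and every essential image in $H_0$ unchanged. For pushouts along $\mathbf{1} \to \mathcal{K}$ (selecting an object of $\mathsf{C}$ and formally adjoining a homotopy-inverse to it), one must show that the resulting enlargement is a quasi-equivalence: the new object is tautologically $H_0$-isomorphic to the chosen one, but verifying that no Hom-complex acquires spurious homology is the heart of the argument and requires an explicit contracting-homotopy analysis on the words built from the new formal data. This pushout analysis is the principal obstacle, and a careful choice of $\mathcal{K}$ (following Tabuada) is essential to make the homotopies explicit and transfinitely stable.

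Finally, right properness follows by unwinding the characterization of fibrations: the pullback of a Hom-wise surjective quasi-isomorphism along a Hom-wise surjection is again a Hom-wise quasi-isomorphism, and essential surjectivity on $H_0$ is preserved under pullback along an isofibration of ordinary categories. Notably, left properness is \emph{not} asserted, which is consistent with the expected behaviour that pushouts of cofibrations of dg categories can fail to preserve quasi-equivalences without further flatness hypotheses on the Hom-complexes.
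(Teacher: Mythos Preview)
The paper does not prove this theorem; it is stated as a recollection of Tabuada's result \cite{Ta, Ta2} with the explicit remark that ``this model category will not be used in the present article'' except as motivation for the model structure on $\mathsf{dgAlg}_R$. There is therefore no proof in the paper to compare against.

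That said, your sketch is close in spirit to Tabuada's original argument, but there are two points of confusion worth flagging. First, you invoke Smith's recognition theorem yet also specify a set $\mathcal{J}$ of generating trivial cofibrations and reduce to showing $\cof(\mathcal{J}) \subseteq \W$; this is the classical Kan/Hovey recognition principle, not Smith's. Smith's theorem takes only $I$ and $\W$ as input and requires instead that $\cof(I) \cap \W$ be closed under pushouts and transfinite composition. Either framework works here, but you should commit to one. Second, the proposed generator $\emptyset \to \mathsf{S}(D^n)$ for $\mathcal{J}$ is not right: the right lifting property against this map only detects the existence of objects and a $D^n$-shaped morphism, not degreewise surjectivity on Hom-complexes. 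The correct generators for Hom-surjectivity are of the form $\mathsf{S}(0) \to \mathsf{S}(D^n)$ (or equivalently $\mathbf{1} \sqcup \mathbf{1} \to \mathsf{S}(D^n)$), i.e.\ one fixes the two objects and freely adjoins a contractible morphism between them. With these corrections your outline matches Tabuada's, and you have correctly identified the pushout along $\mathbf{1} \to \mathcal{K}$ as the crux of the argument.
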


\noindent We refer to \cite{Hol} for the question of left properness for this model category. 

This model category structure induces a model category structure on the category of dg algebras, where the latter are considered as dg categories with one object. The existence of this model category structure was shown (independently of dg categories) by Jardine \cite{Ja} (under different grading conventions) and it is also a special case of more general results obtained by Hinich \cite{Hin}. See also \cite[Section 9.1]{Po11} for another detailed account. 
%The model category of \cite{Ja} applies to the case of non-negatively graded dg algebras using the cohomological grading, so the cofibrations in this case must be modified accordingly; see also \cite[Section 3]{Ta2} for the analogous model category of dg categories enriched in non-negatively graded chain complexes.

\begin{theorem} \cite{Hin, Ja} \label{Pmodelstructure} There is a right proper combinatorial model structure on $\mathsf{dgAlg}_R$ such that
\begin{enumerate}
\item a morphism $f \colon (A, d) \to (A', d')$ is a weak equivalence if $f$ is a quasi-isomorphism of the underlying chain complexes;
\item a morphism $f \colon (A, d) \to (A', d')$ a fibration if $f$ is degreewise surjective.
\end{enumerate}
\end{theorem}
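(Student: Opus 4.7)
The plan is to construct this model structure by transfer along the free-forgetful adjunction $T \dashv U$ between the category of chain complexes of $R$-modules (equipped with its projective model structure) and $\mathsf{dgAlg}_R$, where $T$ is the tensor algebra functor with its induced differential. Recall that the projective model structure on chain complexes is a well-known combinatorial model category with quasi-isomorphisms as weak equivalences, degreewise surjections as fibrations, and generating (trivial) cofibrations the inclusions $S^{n-1} \hookrightarrow D^n$ (resp.\ $0 \hookrightarrow D^n$) for $n \in \mathbb{Z}$, where $D^n$ denotes the contractible chain complex $R \xrightarrow{\id} R$ in degrees $n$ and $n-1$, and $S^{n-1}$ denotes $R$ placed in degree $n-1$.

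The category $\mathsf{dgAlg}_R$ is locally presentable, being the category of algebras for a finitary monad on chain complexes, and the forgetful functor $U$ preserves filtered colimits. Hence a version of Kan's transfer theorem for combinatorial model categories applies, producing the desired combinatorial model structure with generating (trivial) cofibrations $T(S^{n-1} \hookrightarrow D^n)$ and $T(0 \hookrightarrow D^n)$, provided one verifies the acyclicity condition: every relative $T(J)$-cell complex has underlying chain map a quasi-isomorphism. Since quasi-isomorphisms are closed under transfinite composition along monomorphisms and under filtered colimits, it suffices to verify this for a single pushout in $\mathsf{dgAlg}_R$ along $T(0 \hookrightarrow D^n)$: namely, for any dg algebra $A$, the canonical inclusion $A \to A \sqcup_R T(D^n)$ is a quasi-isomorphism.

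The heart of the argument is this last verification, which I expect to be the main obstacle. The pushout admits the explicit description as $A\langle x, y \mid dy = x\rangle$, the dg algebra obtained from $A$ by freely adjoining non-commuting generators $y$ of degree $n$ and $x = dy$ of degree $n-1$. As a graded left $A$-module it decomposes as a direct sum indexed by reduced words alternating between $A$-coefficients and letters in $\{x, y\}$; one can then define an $R$-linear contracting homotopy $h$ on the cokernel $A\langle x, y\rangle/A$ by replacing any terminal $x$ by $y$ (with the Koszul sign) and sending monomials ending otherwise to zero, and check that $dh + hd = \id$ on the cokernel. Equivalently, one filters by total word-length in $\{x, y\}$, observes that the associated graded decouples the multiplicative interaction with $A$, and reduces to the elementary acyclicity of $\overline{T(D^n)}$ via a spectral sequence argument.

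Finally, right properness follows from the corresponding property of the projective model structure on chain complexes: limits in $\mathsf{dgAlg}_R$ are created on underlying chain complexes, so it suffices to note that in the projective model structure the pullback of a quasi-isomorphism along a degreewise surjection is again a quasi-isomorphism, as can be seen by applying the five lemma to the long exact homology sequences of the common kernel of the surjection and of its pullback.
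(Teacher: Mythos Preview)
The paper does not supply its own proof of this statement; it is quoted as a known result from \cite{Hin, Ja} (with \cite{Po11} also mentioned), and the paragraph following the theorem merely records the generating (trivial) cofibrations, which match your transferred generators $T(0 \hookrightarrow D^n)$ and $T(S^{n-1} \hookrightarrow D^n)$ up to a redundancy. Your argument via Kan transfer along $T \dashv U$ is precisely the approach of those references, and your verification of acyclicity and right properness is correct.
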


We will refer to this model category as the \textit{projective model structure} on dg algebras. It is easy to see that this model category is finitely combinatorial using the following descriptions of the cofibrations and trivial cofibrations. Let $T_n(x)$ denote the free dg algebra generated by an element $x$ in degree $n$, and $S_{n}(x)$ the free graded $R$-algebra $R\langle x \rangle$ generated by an element $x$ in degree $n$ and equipped with the trivial differential. Then the maps 
$$R \to T_n(x), \ n \in \Z,$$
generate the trivial cofibrations, and the maps
$$R\to T_n(x), \ R \to S_n(x), \ S_{n-1}(dx) \to T_n(x), \ n \in \Z,$$
generate the cofibrations. We refer to \cite[Remark 2.15]{BCL} and \cite[Section 2.4]{Re} for the question of left properness for this model category. %We deduce the following characterization of a subclass of cofibrations in this model structure (cf. \cite[Section 9.1]{Po11}): given dg algebras $(A, d)$ and $(A', d')$ which vanish in negative degrees, then a morphism of dg algebras $f \colon (A, d) \to (A', d')$ is a cofibration in the projective model category if and only if it is a retract of a morphism of the form $$(A,d) \to (A\langle \{ x_{n,\alpha} | n \in \mathbb{N}, \alpha \in \mathcal{I}_n \}\rangle, d'),$$ where $A\langle \{ x_{n,\alpha} \ | \ n \in \mathbb{N}, \alpha \in \mathcal{I}_n \} \rangle$ denotes the algebra obtained by freely adjoining to $A$ elements in degree $n$, for any $n \geq 0$,  
%$$\{x_{n,\alpha} \ | \ n \in \mathbb{N}, \alpha \in \mathcal{I}_n \}, \text{ for some indexing set } \mathcal{I}_n,$$ 
%in such a way that $d'(x_{n, \alpha})  \in A\langle \{ x_{m,\beta} \ | \ m <n , \beta \in \mathcal{I}_m\}\rangle$. 

Note that a map of dg algebras is a trivial fibration (resp. weak equivalence) in $\mathsf{dgAlg}_R$ if and only if it is so in $\mathsf{dgCat}_R$. As a consequence, the cofibrations in $\mathsf{dgAlg}_R$ are precisely the maps which are 
cofibrations in $\mathsf{dgCat}_R$. (The situation is analogous to the model category structures on $\mathsf{sSet}_0$ 
and $\mathsf{sSet}$.)

\begin{remark}[simplicial sets and dg categories] \label{joyalanddgalg}
 By \cite[Proposition 1.3.1.20]{LuHA} (see also \cite{Ta2}), the differential graded nerve functor $\mathsf{N}_{\text{dg}} \colon \mathsf{dgCat}_R \to \mathsf{sSet}$ is a right Quillen functor between the model category of Tabuada and the Joyal model category. It follows from the previous observations that the induced adjunction (Theorem \ref{leftadjoint})
 $$\Lambda( - ; R)\colon \mathsf{sSet}_0 \rightleftarrows \mathsf{dgAlg}_R\colon \mathsf{N}_{\text{dg}},$$ 
is again a Quillen adjunction where the model structure on $\mathsf{sSet}_0$ is induced by the Joyal model structure and the one on $\mathsf{dgAlg}_R$ is induced by the model structure on $\mathsf{dgCat}_R$ (as stated in Theorem~\ref{Pmodelstructure}). 
%Indeed, the left adjoint preserves cofibrations and weak equivalences (using the explicit description above or by appealing directly to \cite[1.3.1.20]{LuHA}). 
In particular, the map $$\Lambda(f; R) \colon \Lambda(S; R) \to \Lambda(S'; R)$$ is a quasi-isomorphism of dg algebras for every categorical equivalence $f \colon S \to S'$.
\end{remark} 

Lastly, we briefly mention the interaction between the homotopy theory of dg algebras and the cobar construction (Subsection \ref{cobar_sec}).

\begin{theorem}\label{barcobar} The counit transformation of the adjunction
$$\mathsf{Cobar} \colon \mathsf{dgCoalg}_R^c \rightleftarrows \mathsf{dgAlg}_R^{a} \colon \mathsf{Bar}
$$
is an objectwise quasi-isomorphism. 
\end{theorem}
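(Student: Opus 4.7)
The plan is to show, for each augmented dg $R$-algebra $A$, that the counit
$$\epsilon_A \colon \mathsf{Cobar}(\mathsf{Bar}(A)) \to A$$
is a quasi-isomorphism by a standard filtration spectral sequence argument that reduces to an explicit contracting homotopy in the case where the multiplication on $\overline{A}$ is trivial. The starting point is to unpack the bigraded structure of the domain: as a graded module, $\mathsf{Cobar}(\mathsf{Bar}(A)) = T\bigl(s^{-1}T^c_{\geq 1}(s\overline{A})\bigr)$, whose monomials can be pictured as cobar words of non-empty bar brackets $[a_{1,1}|\cdots|a_{1,k_1}] \otimes \cdots \otimes [a_{n,1}|\cdots|a_{n,k_n}]$. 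The total differential decomposes as $D = d_A + d_B + d_C$, where $d_A$ acts letterwise, $d_B$ multiplies two adjacent letters inside a single bar bracket using the product of $A$, and $d_C$ deconcatenates a single bar bracket into two consecutive cobar factors via the cofree coproduct of $\mathsf{Bar}(A)$. The counit $\epsilon_A$ projects onto the length-$(1,1)$ summand, desuspends, and embeds into $A$; everything else is killed.

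Next, equip the complex with the increasing filtration $F_p$ defined by the total number of $\overline{A}$-letters $N = \sum_i k_i$. Both $d_A$ and $d_C$ preserve $N$, while $d_B$ strictly lowers $N$ by one. On $E^0$, the surviving differential is therefore $d_A + d_C$, so the $E^0$-complex is naturally identified with $\mathsf{Cobar}(\mathsf{Bar}(\widetilde{A}))$, where $\widetilde{A}$ denotes the chain complex $A$ equipped with the zero multiplication on $\overline{A}$. This reduces the problem to verifying the statement in a much simpler situation, where the only coupling between bar brackets and cobar words comes from the (cofree) deconcatenation coproduct.

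For this reduced complex, one constructs an explicit contracting chain homotopy onto $\widetilde{A}$ using the extra-degeneracy associated to the cofree coalgebra structure on $T^c(s\overline{A})$: informally, the homotopy reverses the action of $d_C$ by inserting a marked letter into an outermost cobar slot. Combined with $d_A$, this yields an explicit equivalence $\mathsf{Cobar}(\mathsf{Bar}(\widetilde{A})) \xrightarrow{\simeq} \widetilde{A}$, compatible with $\epsilon_{\widetilde{A}}$. Consequently $E^1 \cong H_*(A)$, concentrated on the lowest two filtration lines, and the filtration is exhaustive and degreewise bounded (each bar bracket contributes a nonzero factor to the internal grading, so $N$ cannot grow unboundedly in a fixed total degree). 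The spectral sequence therefore converges strongly to $H_*(\mathsf{Cobar}(\mathsf{Bar}(A)))$, and the comparison with the target shows that $\epsilon_A$ induces the identification $E^\infty \cong H_*(A)$.

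The main obstacle I expect is the construction and sign verification of the contracting homotopy on the $E^0$-complex, together with guaranteeing strong convergence of the filtration without ad hoc boundedness assumptions; this is a combinatorial bookkeeping exercise in the signs and suspensions inherent to the bar–cobar formalism, but once it is carried out, the rest of the argument is formal.
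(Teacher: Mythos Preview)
The paper does not actually give a proof of this theorem: it simply cites \cite[II.4]{HMS74} and \cite[Corollary 2.15]{Mu}. Your proposal is one of the classical arguments (essentially the weight--filtration proof found, e.g., in Loday--Vallette), so in spirit it is consistent with what the cited references do.

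There is, however, a genuine gap in your convergence claim. You assert that the filtration by the total number of letters $N$ is degreewise bounded because ``each bar bracket contributes a nonzero factor to the internal grading.'' This is false in the generality of the statement. A cobar word of the form $[a_1]\otimes\cdots\otimes[a_n]$ with each $a_i\in\overline{A}_0$ has total degree $0$ for every $n$, since each bracket $[a_i]$ contributes $|a_i|+1-1=0$. Thus if $\overline{A}_0\neq 0$ (and the paper works with $\mathbb{Z}$-graded dg algebras, so this is certainly allowed), the filtration is \emph{not} bounded in degree $0$, and the naive convergence argument breaks down. Your sketch is valid only under a connectivity hypothesis such as $\overline{A}_{\leq 0}=0$, which is not assumed here and is not satisfied in the applications later in the paper (e.g.\ to $\widehat{\mathbb{\Omega}}(C)$, which has nontrivial degree-$0$ part).

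To fix this, you must replace the spectral sequence step by something that works unconditionally. The standard route taken in the cited references is to produce an explicit contracting homotopy on the kernel of $\epsilon_A$ directly (Munkholm does this via his theory of SHM-maps and trivialized twisted tensor products; Husemoller--Moore--Stasheff via the acyclicity of the two-sided bar construction), rather than passing through an associated-graded reduction. Alternatively, one can filter by cobar word-length $n$ instead of $N$ and invoke a complete-convergence/Eilenberg--Moore comparison theorem, but then one must verify completeness of the filtration, which again requires care. Either way, the ``reduce to trivial multiplication and use the obvious homotopy'' step is the right intuition, but the packaging via your filtration does not converge without further hypotheses.
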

\begin{proof}
See \cite[II.4]{HMS74}, \cite[Corollary 2.15]{Mu}. %that the unit and counit of the adjunction are quasi-isomorphisms over an arbitrary ring $R$. 
\end{proof}

We note that the functor $\mathsf{Cobar}$ does \textit{not} preserve quasi-isomorphisms in general. (We will see an important manifestation of this point in Section \ref{sec_3_weq}.)
On the other hand, this adjunction becomes a derived equivalence if we localize the category $\mathsf{dgCoalg}_R^c$ at the class of weak equivalences determined by $\mathsf{Cobar}$ (\emph{derived Koszul duality}). Indeed, it follows formally that the components of the unit transformation of the adjunction are sent to quasi-isomorphisms of dg algebras after applying $\mathsf{Cobar}$.

\begin{remark}[derived Koszul duality via model categories]\label{derived_koszul}For a suitable model category structure on $\mathsf{dgCoal}_R^c$ (where $R$ is a field), the adjunction 
$$\mathsf{Cobar} \colon \mathsf{dgCoalg}_R^c \rightleftarrows \mathsf{dgAlg}_R^{a} \colon \mathsf{Bar}
$$
becomes a Quillen equivalence (see \cite{Po11}). The model category structure on $\mathsf{dgAlg}_R^a$ is induced from the one of Theorem \ref{Pmodelstructure}. The cofibrations in $\mathsf{dgCoalg}_R^c$ are the injective maps; in particular, every object in $\mathsf{dgCoalg}_R^c$ is cofibrant. We emphasize that the notion of weak equivalence in  $\mathsf{dgCoalg}_R^c$ used for this Quillen equivalence is detected by the cobar functor and is strictly contained in the class of quasi-isomorphisms of conilpotent dg coalgebras. It follows that the (derived) unit transformation of the adjunction is an objectwise quasi-isomorphism.
\end{remark}

\section{Homotopical localization}\label{loc_sec}

In this section we describe explicit and functorial models for certain localizations in the contexts of reduced simplicial sets and of differential graded algebras. 

\subsection{Simplicial localization} 
Denote by $S^1 : = \mathbb{\Delta}^1/\partial \mathbb{\Delta}^1 \in \mathsf{sSet}_0$ the reduced simplicial set with exactly two non-degenerate simplices, one in each of the dimensions $0$ and $1$. The homotopy category of $S^1$ consists of a single object and a single non-identity endomorphism generates freely its endomorphisms. 
  
\begin{definition}\label{localization} A \textit{localization} (or \emph{weak group completion}) of $S^1$ is a map of reduced simplicial sets $$\iota\colon S^1 \hookrightarrow \X$$ which is a trivial cofibration in the Kan--Quillen model structure and the homotopy category of $\X$ is a groupoid (with one object). 
\end{definition}

\begin{example} \label{localization_examples}
The following are examples of localizations of $S^1$.
\begin{enumerate} 
\item Any fibrant replacement of $S^1$ in the Kan--Quillen model structure for $\mathsf{sSet}_0$. 
\item Let $J = (\bullet \rightleftarrows \bullet)$ denote the connected groupoid with two objects and no non-trivial automorphisms and let $(\sigma \colon \mathbb{\Delta}^1 \to N(J))$ be a non-degenerate $1$-simplex. For $(\mathbb{\Delta}^1 \to S^1)$ the non-degenerate $1$-simplex of $S^1$, we consider the pushout $\X = S^1 \cup_{\mathbb{\Delta}^1} N(J)$. Then the canonical map $\iota\colon S^1 \hookrightarrow \X$ is a localization of $S^1$. Note that $\X$ is not a quasi-category in this case (for example, there is no composition ``$[\sigma] \circ [\sigma]$'' in $\X$).
\end{enumerate}
\end{example}

Let $\mathsf{sSet}_0^+$ denote the category of \textit{marked reduced simplicial sets}. The objects of $\mathsf{sSet}_0^+$ are pairs $(S,W)$ where $S \in \mathsf{sSet}_0$ and $W \subseteq S_1$ is a subset of the $1$-simplices in $S$. A morphism $f\colon (S,W) \to (S',W')$ is a map of simplicial sets $f\colon S \to S'$ such that $f(W) \subseteq W'$. Given $(S,W) \in \mathsf{sSet}_0^+$, we will often identify the subset $W \subseteq S_1$ with the associated map $\underset{W}{\bigvee} S^1 \to S.$

\begin{definition} \label{sim_loc_def} Let $\iota\colon S^1 \hookrightarrow \X$ be a localization of $S^1$. The  \textit{simplicial localization functor} (or \emph{weak group completion functor}) \textit{with respect to $\iota\colon S^1 \hookrightarrow \X$}, 
$$\mathcal{K}_{\iota}\colon \mathsf{sSet}_0^+ \to \mathsf{sSet}_0,$$ 
is defined for every $(S, W) \in \mathsf{sSet}_0^+$ by the following pushout diagram of reduced simplicial sets
\[\begin{tikzcd}
	{\underset{W}{\bigvee}S^1} & S \\
	{\underset{W}{\bigvee}\X} & {\mathcal{K}_{\iota}(S,W)}
	\arrow[hook, from=1-1, to=1-2]
	\arrow[hook, from=1-2, to=2-2]
	\arrow[hook, from=2-1, to=2-2]
	\arrow["{\underset{W}{\vee}\iota}"', hook, from=1-1, to=2-1].
\end{tikzcd}\]
\end{definition}

Note that $\mathcal{K}_{\iota}(S,W)$ is a homotopy pushout in the Joyal model structure. The following proposition shows that the functor $\mathcal{K}_{\iota}$ is essentially independent of the choice of the localization $\iota$. 

\begin{proposition} \label{inftyloc} Let $\iota\colon S^1 \hookrightarrow \X$ and $\iota'\colon S^1 \to \X'$ be two localizations of $S^1$. Then, for any $(S,W) \in \mathsf{sSet}_0^+$, the reduced simplicial sets $\mathcal{K}_{\iota}(S,W)$ and $\mathcal{K}_{\iota'}(S,W) $ are connected by a natural zigzag of categorical equivalences under $S$.

In particular, the reduced simplicial sets $\X$ and $\X'$ are connected by a natural zigzag of categorical equivalences under $S^1$. The homotopy category of $\X$ is isomorphic to $\Z$ (as groups).
\end{proposition}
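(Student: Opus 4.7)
The plan is to exhibit a common localization $Y$ of $S^1$ that receives both $\X$ and $\X'$, deduce that the comparison maps $\X \to Y$ and $\X' \to Y$ are categorical equivalences, and then transport this comparison to arbitrary $(S,W)$ via left properness of the Joyal model structure.

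First I would form the pushout $Y := \X \cup_{S^1} \X'$ in $\mathsf{sSet}_0$, equipped with the canonical map $\iota'' \colon S^1 \to Y$, and verify that $\iota''$ is itself a localization. The map $\iota''$ factors as $S^1 \xrightarrow{\iota} \X \to Y$, where the second map is a pushout of the Kan--Quillen trivial cofibration $\iota'$; hence $\iota''$ is a composition of two trivial cofibrations. For the homotopy category, since $\tau \colon \mathsf{sSet}_0 \to \mathsf{Mon}$ is a left adjoint, $\tau(Y)$ is the monoid pushout $\tau(\X) \cup_{\tau(S^1)} \tau(\X')$. Both $\tau(\X)$ and $\tau(\X')$ are groups by hypothesis, and any monoid map from $\mathbb{N} = \tau(S^1)$ into a group factors uniquely through the group completion $\Z$; thus the pushout in $\mathsf{Mon}$ agrees with a pushout in the category of groups, and in particular $\tau(Y)$ is a group, i.e.\ a one-object groupoid.

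Next I would show that the structure maps $\X \to Y$ and $\X' \to Y$ are categorical equivalences. Each is a pushout of a Kan--Quillen trivial cofibration and is therefore a weak homotopy equivalence; and both source and target have groupoid homotopy categories by the previous step. Proposition~\ref{Joyalsthm}(2) then yields the claim. For general $(S,W) \in \mathsf{sSet}_0^+$, the comparison $\mathcal{K}_{\iota}(S,W) \to \mathcal{K}_{\iota''}(S,W)$ is induced between pushouts along the monomorphisms $\vee_W S^1 \hookrightarrow S$, so it is a map of homotopy pushouts in the Joyal model category. The nontrivial corner of this comparison is the coproduct $\vee_W \X \to \vee_W Y$ of the categorical equivalence $\X \to Y$ in $\mathsf{sSet}_0$ (between cofibrant objects, as all objects are cofibrant in the Joyal structure); since the Joyal model category is left proper and combinatorial, such coproducts remain categorical equivalences, and so does the induced map of homotopy pushouts. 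The analogous argument for $\iota'$ yields the natural zigzag $\mathcal{K}_{\iota}(S,W) \to \mathcal{K}_{\iota''}(S,W) \leftarrow \mathcal{K}_{\iota'}(S,W)$ under $S$.

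Finally, to compute $\tau(\X)$, I would pass to the Joyal fibrant replacement $\mathcal{J}(\X)$, which is a Kan complex by Proposition~\ref{Joyalsthm}(1). Since categorical equivalences induce equivalences of homotopy categories, and on a reduced Kan complex the homotopy category coincides with $\pi_1$ of the geometric realization, one gets $\tau(\X) \cong \tau(\mathcal{J}(\X)) \cong \pi_1(|\mathcal{J}(\X)|) \cong \pi_1(|S^1|) \cong \Z$. The main subtlety in the whole argument is the verification that $Y$ is a localization---specifically, that its homotopy category is a groupoid---but the reformulation as a pushout of monoids in which all objects other than $\mathbb{N}$ are already groups reduces this cleanly to the fact that the group completion functor $\mathsf{Mon} \to \mathsf{Grp}$ preserves colimits.
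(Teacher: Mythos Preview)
Your proof is correct and follows essentially the same approach as the paper: form the pushout $Y = \X \cup_{S^1} \X'$, verify it is again a localization, apply Proposition~\ref{Joyalsthm}(2) to conclude that $\X \to Y \leftarrow \X'$ are categorical equivalences, and then use that $\mathcal{K}_{\iota}(S,W)$ is a homotopy pushout in the Joyal model structure. The only minor difference is in the identification $\tau(\X) \cong \Z$: the paper appeals to the explicit localization of Example~\ref{localization_examples}(2), whereas you argue via the Kan fibrant replacement and $\pi_1$; both are fine.
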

\begin{proof}
We consider the pushout
\[\begin{tikzcd}
	{S^1} & {\X'}  \\
	{\X} &  {\X \underset{S^1}{\cup} \X'.} 
	\arrow["\iota"', hook, from=1-1, to=2-1]
	\arrow["\iota' ", hook, from=1-1, to=1-2]
	\arrow[hook, from=1-2, to=2-2]
	\arrow[hook, from=2-1, to=2-2]
\end{tikzcd}\]
Every map in the diagram is a weak homotopy equivalence (and a monomorphism). The homotopy categories of the simplicial sets $\X, \X'$ and $\X \cup_{S^1} \X'$ are groups. In particular, the composite map $S^1 \to \X \cup_{S^1} \X'$ is again a localization of $S^1$. By Proposition \ref{Joyalsthm}(2), it follows that the natural zigzag of maps 
$$\X \to \X \cup_{S^1} \X' \leftarrow \X'$$
consists of categorical equivalences under $S^1$. The identification of their homotopy category follows from Example \ref{localization_examples}(2). Moreover, it follows that $\mathcal{K}_{\iota}(S,W)$ and $\mathcal{K}_{\iota'}(S,W)$ are connected by a natural zigzag of categorical equivalences under $S$, since they are defined by homotopy pushouts in the Joyal model structure. 
\end{proof}

\subsection{Localization of dg algebras} \label{derivedloc_subsection}

%We define the category $\mathsf{sCoCoalg}_R^{0+}$ of marked connected simplicial cocommutative coalgebras as follows. The objects are pairs $(C,P)$ where $C \in \mathsf{sCoCoalg}_R^0$ and $P \subseteq \mathcal{P}(C_1)$ is a subset of set-like elements in $C_1$. The morphisms $f\colon (C,P) \to (C',P')$ are given by morphisms of simplicial coalgebras $f\colon C\to C'$ such that $f(P)\subseteq P'$. We will often identify the marking $P \subseteq \mathcal{P}(C_1)$ with the canonical map in $\mathsf{sCoColag}_R^0$
%$$R[ \underset{P}{\bigvee} S^1] \to C,$$
%which is adjoint to the inclusion of the marked loops $\underset{P}{\bigvee} S^1 \to \mathcal{P}(C)$. 

For a commutative ring $R$, we define the category $\mathsf{dgAlg}_R^{+}$ of \textit{marked dg algebras} as follows. The objects are pairs $(A,P)$, where $A$ is a dg algebra and $P \subseteq A_0$ is a set of $0$-cycles. The morphisms $f\colon (A,P) \to (A',P')$ are given by morphisms of dg algebras $f\colon A\to A'$ such that $f(P)\subseteq P'$. 

A simplicial localization of $S^1$ gives rise to a notion of localization for marked dg algebras as follows. Note that for an indexing set $P$ the dg algebra 
$$\Lambda (\underbrace{S^1 \vee \ldots \vee S^1}_{P};R)$$ 
is the free associative $R$-algebra $R \langle P \rangle$ generated by the set $P$ and equipped with the trivial differential.

\begin{definition}
\label{derivedlocalization_def} 

 Let $\iota\colon S^1 \hookrightarrow \X$ be a localization of $S^1$. The \textit{localization functor with respect to $\iota\colon S^1 \hookrightarrow \X$},  
\[\mathcal{L}_{\iota}\colon \mathsf{dgAlg}_R^{+} \to \mathsf{dgAlg}_R,\]
is defined for any $(A, P) \in  \mathsf{dgAlg}_R^{+} $ by the pushout diagram of dg algebras
\[\begin{tikzcd}
	{ \Lambda (\underset{P}{\bigvee}S^1;R)} & A\\
	{\Lambda (\underset{P}{\bigvee}\X;R)} & {\mathcal{L}_{\iota}(A,P)}
	\arrow[from=1-1, to=1-2]
	\arrow[from=1-2, to=2-2]
	\arrow[from=2-1, to=2-2]
 	\arrow[hook, from=1-1, to=2-1],
%	\arrow["{\Lambda(\iota;R)}"', hook, from=1-1, to=2-1],
	\end{tikzcd}\]
where the top horizontal map $R \langle P \rangle \to A$ is induced by the inclusion $P \hookrightarrow A $ and the left vertical map is induced by $\iota \colon S^1 \to \X$.  We also write
\[\mathcal{L}_{\iota}(A,P)= \Lambda (\underbrace{\X \vee \ldots \vee \X}_P ;R) \underset{R\langle P \rangle }{\circledast}A,\] 
 where $A \underset{B}\circledast C$ denotes the pushout of a diagram of dg algebras $(A \leftarrow B \to C)$. 
\end{definition}

\begin{remark} \label{derivedlocalization} Whenever $A$ is \textit{left proper} as a dg algebra, for instance, when $A$ is cofibrant as a dg algebra or flat as an $R$-module (see \cite[Definition 2.5 and Theorem 2.14]{BCL}), the dg algebra $\mathcal{L}_{\iota}(A,P)$ is a model for the \textit{derived localization} of $A$ at $P$ as defined in \cite[Section 3]{BCL}. This means that $\mathcal{L}_{\iota}(A,P)$ is a model for the homotopy pushout \[A \circledast^{\mathbb{L}}_{R\langle P \rangle} R\langle P,P^{-1}\rangle,\] where $\mathsf{dgAlg}_R$ is equipped with the projective model structure. Here $R\langle P,P^{-1}\rangle$ denotes the algebra generated by the symbols $\{p,p^{-1} : p \in P\}$ modulo the ideal generated by the relations $pp^{-1}=1=p^{-1}p.$ In fact, the dg algebra map 
\[ R\langle P \rangle = \Lambda (\underbrace{S^1 \vee \ldots \vee S^1}_P;R) \to \Lambda (\underbrace{\X \vee \ldots \vee \X}_P;R)\] 
is a cofibrant replacement of $R\langle P\rangle \to R\langle P,P^{-1}\rangle$ in the induced model structure on the category $R\langle P \rangle \downarrow \mathsf{dgAlg}_R$. Thus, assuming that $A$ is a left proper dg algebra, it is enough to make this replacement to compute the desired homotopy pushout, see \cite[Remark 3.11]{BCL}. We also refer to \cite{CHL} for interesting connections between the derived localization of dg algebras and the homotopy theory of monoids.
\end{remark}

As in the case of reduced simplicial sets, the functor $\mathcal{L}_{\iota}$ is independent of the choice of localization $\iota$ in the following sense. 

\begin{proposition} \label{independenceofloc} Let $\iota\colon S^1 \hookrightarrow \X$ and $\iota'\colon S^1 \hookrightarrow \X'$ be two simplicial localizations of $S^1$ and let $(A,P) \in \mathsf{dgAlg}_R^{+}$, where $A$ is a left proper dg algebra. Then the dg algebras  $\mathcal{L}_{\iota}(A,P)$ and $\mathcal{L}_{\iota'}(A,P)$ are connected by a natural zigzag of quasi-isomorphisms under $A$. 
 \end{proposition}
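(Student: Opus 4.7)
The plan is to lift the natural zigzag of categorical equivalences $\X \to \X \cup_{S^1} \X' \leftarrow \X'$ under $S^1$ provided by (the proof of) Proposition \ref{inftyloc} to a natural zigzag of quasi-isomorphisms between the corresponding localized dg algebras $\mathcal{L}_{\iota}(A,P)$, $\mathcal{L}_{\iota''}(A,P)$, and $\mathcal{L}_{\iota'}(A,P)$ under $A$, where $\iota''\colon S^1 \to \X \cup_{S^1} \X'$ denotes the composite localization. The transport proceeds in two moves: first through the left Quillen functor $\Lambda(-;R)$ for the Joyal model structure, and then through the pushout along $R\langle P\rangle \to A$.

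Concretely, I would first wedge $|P|$ copies of the zigzag from Proposition \ref{inftyloc} under $S^1$. Since the Joyal model structure on $\mathsf{sSet}_0$ is left proper and every object is cofibrant, the wedge (an iterated pushout along the initial object) preserves categorical equivalences, producing a zigzag of categorical equivalences $\bigvee_P \X \to \bigvee_P (\X \cup_{S^1} \X') \leftarrow \bigvee_P \X'$ under $\bigvee_P S^1$, with all structure maps from $\bigvee_P S^1$ being monomorphisms. Applying the left Quillen functor $\Lambda(-;R)$ (Remark \ref{joyalanddgalg}) then produces a zigzag of quasi-isomorphisms of dg algebras
\[ \Lambda\!\left(\textstyle\bigvee_P \X;R\right) \xrightarrow{\simeq} \Lambda\!\left(\textstyle\bigvee_P(\X \cup_{S^1} \X');R\right) \xleftarrow{\simeq} \Lambda\!\left(\textstyle\bigvee_P \X';R\right) \]
under $\Lambda(\bigvee_P S^1;R) = R\langle P\rangle$, with all three structure maps out of $R\langle P\rangle$ being cofibrations of dg algebras, since $\Lambda(-;R)$ sends monomorphisms of reduced simplicial sets to cofibrations of dg algebras. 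Pushing out each of these three dg algebras along $R\langle P\rangle \to A$ yields, by Definition \ref{derivedlocalization_def}, the three dg algebras $\mathcal{L}_{(-)}(A,P)$ together with two induced comparison maps under $A$.

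The main obstacle is the very last step: the projective model structure on $\mathsf{dgAlg}_R$ is not left proper in general, so pushouts along arbitrary cofibrations of dg algebras need not preserve quasi-isomorphisms. This is precisely where the left properness hypothesis on $A$ is used. By Remark \ref{derivedlocalization} (based on \cite{BCL}), whenever $A$ is left proper the dg algebra $\mathcal{L}_{\iota}(A,P)$ is a genuine model for the derived localization $A \circledast^{\mathbb{L}}_{R\langle P\rangle} R\langle P, P^{-1}\rangle$, and the same applies to $\iota''$ and $\iota'$. Since the derived localization is well-defined up to quasi-isomorphism, the induced comparison maps $\mathcal{L}_{\iota}(A,P) \to \mathcal{L}_{\iota''}(A,P) \leftarrow \mathcal{L}_{\iota'}(A,P)$ are the desired natural zigzag of quasi-isomorphisms under $A$. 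Naturality is automatic because every step of the construction — the zigzag of Proposition \ref{inftyloc}, the wedge, the application of $\Lambda(-;R)$, and the pushout with $A$ — is functorial in $(A,P)$.
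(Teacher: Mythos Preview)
Your proof is correct and follows essentially the same approach as the paper's own proof: both wedge the zigzag from Proposition~\ref{inftyloc}, apply $\Lambda(-;R)$ via Remark~\ref{joyalanddgalg}, and then use left properness of $A$ (via Remark~\ref{derivedlocalization}) to conclude that the defining pushouts are homotopy pushouts. Your version is simply more explicit about the intermediate object $\X \cup_{S^1} \X'$ and about why the final pushout step requires the left properness hypothesis on $A$.
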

 
 \begin{proof} By Proposition \ref{inftyloc}, we know that $\X$ and $\X'$ are naturally categorically equivalent under $S^1$, so $\underset{P}{\bigvee}\X$ and $\underset{P}{\bigvee}\X'$ are also connected by a natural zigzag of categorical equivalences under $\underset{P}{\bigvee} S^1$. This induces a natural zigzag of quasi-isomorphisms of dg algebras upon applying $\Lambda( - ; R)= \mathbb{\Omega} \circ R[-]$ (Remark \ref{joyalanddgalg}). Then the result follows from the fact that, assuming $A$ is left proper, the square of Definition \ref{derivedlocalization_def}  defines a homotopy pushout of dg algebras (see also Remark \ref{derivedlocalization}). 
 \end{proof}

Given a set of $0$-cycles $P$ in $A$, we denote by $[P]=\{[p] \in H_0(A) : p \in P\}$ the set of the homology classes of its elements. As the next proposition shows, the localization at the set $P$ of $0$-cycles only depends on $[P] \subseteq H_0(A)$.

\begin{proposition}\label{invariance}
Let $\iota \colon S^1 \to \X$ be a simplicial localization of $S^1$, $A$ a left proper dg algebra, and let $P, P' \subseteq A_0$ be two sets of $0$-cycles. If $[P] = [P']$, then the dg algebras $\mathcal{L}_{\iota}(A,P)$ and $\mathcal{L}_{\iota}(A,P')$ are canonically quasi-isomorphic.
\end{proposition}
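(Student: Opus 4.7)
The plan is to compare both $\mathcal{L}_\iota(A,P)$ and $\mathcal{L}_\iota(A,P')$ to the joint localization $\mathcal{L}_\iota(A, P \cup P')$: the inclusions $P \hookrightarrow P \cup P'$ and $P' \hookrightarrow P \cup P'$ induce a canonical cospan of dg algebras under $A$,
\[\mathcal{L}_\iota(A,P) \longrightarrow \mathcal{L}_\iota(A, P \cup P') \longleftarrow \mathcal{L}_\iota(A,P'),\]
and the claim is that both arrows are quasi-isomorphisms. By symmetry, it suffices to prove the following sharper statement: whenever $P \subseteq P''$ are sets of $0$-cycles in $A$ with $[P] = [P''] \subseteq H_0(A)$, the canonical map $\mathcal{L}_\iota(A,P) \to \mathcal{L}_\iota(A,P'')$ is a quasi-isomorphism. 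This then yields the desired canonical zigzag.

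To establish the reduction, I would first use the fact that nested pushouts compose to obtain a natural identification
\[\mathcal{L}_\iota(A,P'') \;\cong\; \mathcal{L}_\iota\bigl(\mathcal{L}_\iota(A,P),\, P'' \setminus P\bigr),\]
where $P'' \setminus P$ is regarded as a set of $0$-cycles in $\mathcal{L}_\iota(A,P)$ via the unit map $A \to \mathcal{L}_\iota(A,P)$, and under this identification the comparison map from $\mathcal{L}_\iota(A,P)$ corresponds to the unit of the second localization. Since $\mathcal{L}_\iota(A,P)$ is obtained as the pushout of the cofibration $R\langle P \rangle \to \Lambda(\bigvee_P \X; R)$ in the projective model structure on $\mathsf{dgAlg}_R$, and since $A$ is left proper, $\mathcal{L}_\iota(A,P)$ is itself left proper (cf.\ Remark \ref{derivedlocalization}), so the second application of $\mathcal{L}_\iota$ indeed models the correct derived functor. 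Next, for any $q \in P'' \setminus P$ the assumption $[P] = [P'']$ provides some $p \in P$ with $[q] = [p]$ in $H_0(A)$, hence also in $H_0(\mathcal{L}_\iota(A,P))$; but by construction each $[p]$ with $p \in P$ is invertible in $H_0(\mathcal{L}_\iota(A,P))$, so the class $[q]$ is invertible there as well.

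The conclusion then follows from the fundamental property of derived localization: if $B$ is a left proper dg algebra and $Q \subseteq B_0$ is a set of $0$-cycles whose homology classes are already invertible in $H_0(B)$, then the unit $B \to \mathcal{L}_\iota(B, Q)$ is a quasi-isomorphism. This is a standard consequence of the universal property of derived localization recalled in Remark \ref{derivedlocalization}, as developed in \cite[Section 3]{BCL}. Applied to $B = \mathcal{L}_\iota(A,P)$ and $Q = P'' \setminus P$, it completes the argument. The step that requires the most care is precisely this appeal to the universal property: one must propagate the left properness hypothesis through the iterated construction so that the strict pushout defining $\mathcal{L}_\iota$ really computes the homotopy pushout at each stage, which is what makes the ``already invertible implies quasi-isomorphism'' principle applicable.
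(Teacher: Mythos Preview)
Your argument is correct and in fact supplies substantially more detail than the paper, which simply cites \cite[Theorem 3.10 and Remark 3.11]{BCL} without further comment. Your strategy --- pass to the joint set $P \cup P'$, identify the resulting map with an iterated localization, and then invoke that derived localization at already-invertible classes is a quasi-isomorphism --- is exactly the mechanism underlying the cited results, so the two approaches coincide once unpacked.

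One small point deserves tightening. You assert that $\mathcal{L}_\iota(A,P)$ is again left proper because it is a pushout of a cofibration out of a left proper $A$; this is not immediate from the definition and is not what Remark \ref{derivedlocalization} says. A cleaner way to justify that the second localization in your identification
\[
\mathcal{L}_\iota(A,P'') \;\cong\; \mathcal{L}_\iota\bigl(\mathcal{L}_\iota(A,P),\, P'' \setminus P\bigr)
\]
is the \emph{derived} localization is to use the pasting law for homotopy pushouts: factor the defining pushout square for $\mathcal{L}_\iota(A,P'')$ vertically into two pushout squares (first inverting $P$, then $P'' \setminus P$); since $A$ is left proper, both the top square and the outer rectangle are homotopy pushouts (the left vertical maps are cofibrations between cofibrant dg algebras), and hence so is the bottom square. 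This bypasses the need to know that $\mathcal{L}_\iota(A,P)$ is itself left proper. With this adjustment your argument goes through, and the final appeal to the universal property of derived localization in \cite[Section 3]{BCL} is precisely the content the paper is citing.
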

\begin{proof} This follows from \cite[Theorem 3.10 and Remark 3.11]{BCL}.
\end{proof} 

Given a set $W \subseteq S_1$ of $1$-simplices in a simplicial set $S$, we consider an associated set of $0$-cycles:
\begin{eqnarray}\label{+1}
\overline{W}=\{ s^{-1}\overline{\sigma} + 1 \in \Lambda(S;R)_0 : \sigma \in W\},
\end{eqnarray}
 where $\overline{\sigma}$ denotes the class of $\sigma$ in the cokernel of the coaugmentation $R \to \mathcal{N}_*(C)$ and $s^{-1}$ the degree shift by $-1$ used in the definition of the cobar functor.
The functors $\mathcal{L}_{\iota}$ and $\mathcal{K}_{\iota}$ are compatible in the following way. 

\begin{proposition}\label{localizations}
For any $(S,W) \in \mathsf{sSet}_0^+$, there is a natural isomorphism of dg algebras $\mathcal{L}_{\iota} (\Lambda(S;R), \overline{W}) \cong \Lambda(\mathcal{K}_{\iota} (S,W);R).$
\end{proposition}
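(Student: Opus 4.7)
The plan exploits that $\Lambda(-;R) = \mathbb{\Omega} \circ R[-]$ is a left adjoint by Theorem~\ref{leftadjoint} (its right adjoint is the dg nerve $\mathsf{N}_{\mathrm{dg}}$), and hence preserves pushouts. First I would apply $\Lambda(-;R)$ to the pushout of reduced simplicial sets defining $\mathcal{K}_{\iota}(S,W)$, obtaining a pushout square in $\mathsf{dgAlg}_R$
\[\begin{tikzcd}
\Lambda(\underset{W}{\bigvee}S^1;R) \arrow[r] \arrow[d, "\Lambda(\underset{W}{\vee}\iota;R)"'] & \Lambda(S;R) \arrow[d] \\
\Lambda(\underset{W}{\bigvee}\X;R) \arrow[r] & \Lambda(\mathcal{K}_{\iota}(S,W);R)
\end{tikzcd}\]
in which the top horizontal arrow is $\Lambda(-;R)$ applied to the wedge map $\underset{W}{\bigvee}S^1 \to S$ picking out the $1$-simplices in $W$.

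The remaining task is to match this pushout with the pushout in Definition~\ref{derivedlocalization_def} that defines $\mathcal{L}_{\iota}(\Lambda(S;R),\overline{W})$. Both diagrams already share the top-right corner $\Lambda(S;R)$ and, via the natural bijection $W \leftrightarrow \overline{W}$ sending $\sigma$ to $s^{-1}\overline{\sigma}+1$, the bottom-left corner $\Lambda(\underset{W}{\bigvee}\X;R) = \Lambda(\underset{\overline{W}}{\bigvee}\X;R)$. The two top-left corners $\Lambda(\underset{W}{\bigvee}S^1;R)$ and $R\langle\overline{W}\rangle$ are both free graded associative $R$-algebras on $|W|$ generators with zero differential. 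My approach is to identify them via the isomorphism sending the group-like $0$-cycle $s^{-1}\overline{\sigma_\sigma}+1 \in \Lambda(\underset{W}{\bigvee}S^1;R)_0$ to the formal generator $[s^{-1}\overline{\sigma}+1] \in R\langle\overline{W}\rangle$, where $\sigma_\sigma$ denotes the non-degenerate $1$-simplex of the $\sigma$-th copy of $S^1$. This is legitimate because $s^{-1}\overline{\sigma_\sigma}+1$ differs from the usual cobar generator $s^{-1}\overline{\sigma_\sigma}$ by a unit and therefore freely generates the same algebra.

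Under this identification one verifies directly that the two top arrows agree (each sends $[s^{-1}\overline{\sigma}+1] \mapsto s^{-1}\overline{\sigma}+1 \in \Lambda(S;R)$) and the two left arrows agree as well (each sends $[s^{-1}\overline{\sigma}+1] \mapsto s^{-1}\overline{\iota(\sigma_\sigma)}+1$ in the $\sigma$-th factor of $\Lambda(\underset{W}{\bigvee}\X;R)$, i.e.\ the image of the group-like generator under $\Lambda(\iota;R)$). Hence the two pushout spans coincide, and colimit-preservation by $\Lambda(-;R)$ produces the desired natural isomorphism of dg algebras in $(S,W)$.

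The main conceptual point, and really the only thing requiring care, is the choice of isomorphism $R\langle P\rangle \cong \Lambda(\underset{P}{\bigvee}S^1;R)$ implicit in Definition~\ref{derivedlocalization_def}: the formal generator $[p]$ must correspond to the group-like $0$-cycle $s^{-1}\overline{\sigma_p}+1$, rather than to the bare cobar generator $s^{-1}\overline{\sigma_p}$. This is precisely the content of the "$+1$" appearing in the definition of $\overline{W}$ in~\eqref{+1}, and without this matching the two pushouts would not agree on the nose (only up to a quasi-isomorphism obtained from Proposition~\ref{invariance}). Once this bookkeeping is in place, however, no further work is needed beyond invoking the preservation of colimits by $\Lambda(-;R)$.
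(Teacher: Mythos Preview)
Your proof is correct and follows essentially the same approach as the paper: apply the left adjoint $\Lambda(-;R)$ to the defining pushout of $\mathcal{K}_{\iota}(S,W)$ and identify the resulting square with that of Definition~\ref{derivedlocalization_def}. The paper handles the identification of the top map via Proposition~\ref{fundamentalbialgebra} rather than by the explicit bookkeeping with the ``$+1$'' shift you carry out, but the content is the same.
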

\begin{proof} 
Since $\Lambda(-;R)$ preserves pushouts (Theorem \ref{leftadjoint}), it sends the pushout square of Definition \ref{sim_loc_def}
\[\begin{tikzcd}
	{\underset{W}{\bigvee}S^1} & S \\
	{\underset{W}{\bigvee}\X} & {\mathcal{K}_{\iota}(S,W)}
	\arrow[from=1-1, to=1-2]
	\arrow[hook, from=1-2, to=2-2]
	\arrow[from=2-1, to=2-2]
	\arrow[hook, from=1-1, to=2-1]
\end{tikzcd}\]
to a pushout square of dg algebras
\[\begin{tikzcd}
	{ \Lambda (\underset{W}{\bigvee}S^1;R)} & \Lambda(S; R) \\
	{\Lambda (\underset{W}{\bigvee}\X;R)} & {\Lambda\big(\mathcal{K}_{\iota}(S,W); R\big).}	
 \arrow[from=1-1, to=1-2]
	\arrow[from=1-2, to=2-2]
	\arrow[from=2-1, to=2-2]
 	\arrow[hook, from=1-1, to=2-1]
%	\arrow["{\Lambda(\iota;R)}"', hook, from=1-1, to=2-1],
	\end{tikzcd}\]
Using Proposition \ref{fundamentalbialgebra}, the top horizontal map of dg algebras  
$$R \langle W \rangle =\Lambda (\underbrace{S^1 \vee \ldots \vee S^1}_W;R) \to \Lambda(S; R)$$
is induced by the map (of underlying sets) $W \to \Lambda(S; R), \ \sigma \mapsto s^{-1}\overline{\sigma} + 1_R$. Thus, the last pushout square is identified with the pushout square for the localization $\mathcal{L}_{\iota} (\Lambda(S;R), \overline{W})$ (Definition \ref{derivedlocalization_def}) and the claim follows. %This follows from the fact that $\Lambda(-;R)$ is a left adjoint, as stated in Proposition \ref{leftadjoint}. Note that, for any $S \in \mathsf{sSet}_0$ and $A \in \mathsf{dgAlg}_R$, the bijection
%\[\theta_{S, A} \colon \text{Hom}_{\mathsf{sSet_0}}(S, \mathsf{N}_{dg}(A)) 
%\to
%\text{Hom}_{\mathsf{dgAlg}_R}(\Lambda(S;R),A)\]
%establishing the adjunction of Proposition \ref{leftadjoint} 
%satisfies
%\[
%\theta_{S, A}(f)(s^{-1}\overline{\sigma})=f(\sigma)-1_A,\]
%for any map of reduced simplicial sets $f\colon S \to \mathsf{N}_{dg}(A)$ and any $1$-simplex $\sigma \in S_1$ in $S$ (see \cite[Theorem 7.1]{RiZe16} and also Proposition \ref{fundamentalbialgebra}). This explains the appearance of $\overline{W}$ when commuting $\Lambda(-;R)$ with the pushout defining $\mathcal{K}_{\iota}(S,W)$. See also \cite[Corollary 4.4]{CHL} for another version of this statement. 
\end{proof}

\subsection{The localized cobar construction} We define a functor $\mathfrak{X}\colon \mathsf{sCoCoalg}^0_R \to \mathsf{dgAlg}_R^{+}$ that ``marks" the cobar construction of a simplicial cocommutative coalgebra by a functorially defined set of $0$-cycles. Given $C \in \mathsf{sCoCoalg}^0_R$, we define 
\[ 
P_C = \{ a \in \mathbb{\Omega}(C)_0 : H_0(\nabla)([a])=[a] \otimes [a], \ \epsilon([a])=1 \},\]
where $[a] \in H_0(\mathbb{\Omega}(C))$ denotes the homology class of the $0$-cycle $a \in \mathbb{\Omega}(C)_0$ and $\epsilon$ is the counit of the fundamental bialgebra of $C$. In other words, $P_C$ consists of all $0$-cycles representing \emph{monoid-like elements} of the fundamental bialgebra $H_0(\mathbb{\Omega}(C))$ of $C$. Since the fundamental bialgebra construction is functorial, $C \mapsto P_C$ defines a functor as well. Therefore, we obtain a functor
\[ \mathfrak{X} \colon \mathsf{sCoCoalg}^0_R \to \mathsf{dgAlg}_R^{+}, \ \
\mathfrak{X}(C)=( \mathbb{\Omega}(C), P_C).\]

%Let \[\mathfrak{M} \colon \mathsf{Bialg}_R \to \mathsf{Mon}\]
%be the \textit{monoid-like elements} functor from the category of $R$-bialgebras to the category of monoids. For any $R$-bialgebra $B$ with coproduct $\nabla %\colon B \to B\otimes B$ and counit $\epsilon \colon B \to R$, this functor is defined by
%\[\mathfrak{M}(B)= \{b \in B : \nabla(b)=b \otimes b, \epsilon(b)=1\}.\]

\noindent The next proposition describes the localization of $\mathfrak{X}(C)$ in the case where $C = R[S]$. We denote by $(-)^{\sharp} \colon \mathsf{sSet}_0 \to \mathsf{sSet}_0^{+}$ the functor given by the ``maximal" marking, i.e., $S^{\sharp}=(S,S_1).$

\begin{proposition} \label{localizations2}
Let $\iota \colon S^1 \to \X$ be a simplicial localization, $S$ a reduced simplicial set in $\mathsf{sSet}_0$, and assume that $R$ has no non-trivial idempotents. Then the dg algebras $\Lambda(\mathcal{K}_{\iota}(S^{\sharp});R)$ and $\mathcal{L}_{\iota}(\mathfrak{X}(R[S]))$ are connected by a natural zigzag of quasi-isomorphisms. 
%\[\begin{tikzcd}
	%{\mathsf{sSet}_0} &&& {\mathsf{sCoCoalg}_R^0} \\
	%{\mathsf{sSet}_0} &&& {\mathsf{dgAlg_R}}
	%\arrow["{R[-]}", from=1-1, to=1-4]
	%\arrow["{\mathcal{K}_{\iota} \circ (-)^{\sharp}}"', from=1-1, to=2-1]
	%\arrow["{\Lambda(-;R)}", from=2-1, to=2-4]
	%\arrow["{\mathcal{L}_{\iota} \circ %\mathfrak{X} }", from=1-4, to=2-4].
%\end{tikzcd}\]
\end{proposition}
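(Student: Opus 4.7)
The strategy is to reduce the statement to a comparison between two derived localizations of the same dg algebra $\mathbb{\Omega}(R[S])$ at related sets of $0$-cycles. Proposition \ref{localizations} applied to the marked reduced simplicial set $S^{\sharp} = (S, S_1)$ gives a natural isomorphism
\[ \Lambda(\mathcal{K}_{\iota}(S^{\sharp}); R) \cong \mathcal{L}_{\iota}(\mathbb{\Omega}(R[S]), \overline{S_1}), \]
where $\overline{S_1} = \{s^{-1}\overline{\sigma} + 1 : \sigma \in S_1\}$. Since $\mathbb{\Omega}(R[S])$ is a tensor algebra on a free $R$-module, it is flat over $R$ and hence left proper (Remark \ref{derivedlocalization}), so the derived-localization machinery of Section \ref{derivedloc_subsection} applies. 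The task is then to exhibit a natural zigzag of quasi-isomorphisms between $\mathcal{L}_{\iota}(\mathbb{\Omega}(R[S]), \overline{S_1})$ and $\mathcal{L}_{\iota}(\mathfrak{X}(R[S])) = \mathcal{L}_{\iota}(\mathbb{\Omega}(R[S]), P_{R[S]})$.

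The first step is to verify the chain-level inclusion $\overline{S_1} \subseteq P_{R[S]}$. For $\sigma \in S_1$, cocommutativity of the diagonal coproduct on $R[S]$ yields $\Delta_1(\sigma) = \sigma \otimes \sigma$, so formula \eqref{nabla} simplifies to
\[ \nabla(s^{-1}\overline{\sigma} + 1) = (s^{-1}\overline{\sigma} + 1) \otimes (s^{-1}\overline{\sigma} + 1), \]
and the augmentation of $\mathbb{\Omega}(R[S])$ sends $s^{-1}\overline{\sigma} + 1$ to $1$. Hence $\overline{S_1} \subseteq P_{R[S]}$, and the inclusion of indexing sets induces a natural morphism of dg algebras
\[ \rho_S \colon \mathcal{L}_{\iota}(\mathbb{\Omega}(R[S]), \overline{S_1}) \longrightarrow \mathcal{L}_{\iota}(\mathbb{\Omega}(R[S]), P_{R[S]}) = \mathcal{L}_{\iota}(\mathfrak{X}(R[S])). \]

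The final step is to show that $\rho_S$ is a quasi-isomorphism. Using Proposition \ref{fundamentalbialgebra} to identify the fundamental bialgebra $H_0(\mathbb{\Omega}(R[S]))$ with the monoid bialgebra $R[\tau(S)]$, the set $[\overline{S_1}]$ corresponds to the canonical generating set $\{1\} \cup \{[\sigma] : \sigma \in S_1\}$ of the monoid $\tau(S)$. On the other hand, since $R$ has no non-trivial idempotents, a standard argument with the monoid bialgebra $R[\tau(S)]$ shows that its elements $x$ satisfying $\Delta(x)= x\otimes x$ and $\epsilon(x)=1$ are precisely the elements of $\tau(S)$, so $[P_{R[S]}] = \tau(S)$. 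Consequently, every class in $[P_{R[S]}]$ is a finite product of classes in $[\overline{S_1}]$ and therefore already becomes invertible in $H_0(\mathcal{L}_{\iota}(\mathbb{\Omega}(R[S]), \overline{S_1}))$ once the generators are inverted. Appealing to the standard property of derived localization from \cite{BCL} — namely, that enlarging the marking by $0$-cycles whose classes are already invertible in the localized homology produces a quasi-isomorphism — one concludes that $\rho_S$ is a quasi-isomorphism. Naturality in $S$ is built into every step, yielding the desired natural zigzag.

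The main obstacle is precisely this last appeal to derived-localization theory: one needs the invariance of $\mathcal{L}_{\iota}$ under enlarging the marking by already-invertible $0$-cycles, which bridges the gap between the relatively small, simplicially defined set $\overline{S_1}$ and the potentially much larger (and functorially less tractable) set $P_{R[S]}$ of all monoid-like representatives. Everything else is either a direct computation with the coproduct \eqref{nabla} or an application of the previously established Propositions \ref{localizations} and \ref{fundamentalbialgebra}.
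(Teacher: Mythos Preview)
Your proof is correct and follows essentially the same strategy as the paper: reduce via Proposition~\ref{localizations} to a comparison of derived localizations of $\mathbb{\Omega}(R[S])$, identify the monoid-like elements of $H_0(\mathbb{\Omega}(R[S]))\cong R[\tau(S)]$ with $\tau(S)$ using the no-idempotents hypothesis, and invoke invariance of derived localization from \cite{BCL}. The only structural difference is that the paper inserts an intermediate set $F(\overline{S_1})\subseteq P_{R[S]}$ (the free monoid on $\overline{S_1}$, realized as monomials in $\Lambda(S;R)_0$) so as to obtain an exact equality $[F(\overline{S_1})]=[P_{R[S]}]=\tau(S)$ and then apply Proposition~\ref{invariance} directly; the passage from $\overline{S_1}$ to $F(\overline{S_1})$ is handled separately via \cite[Lemma~3.7]{BCL}. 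Your argument collapses these two steps into one by appealing to the (equally standard, but not isolated in the paper) fact that enlarging the marking by cycles whose homology classes are already invertible yields a quasi-isomorphism. Both routes are valid; yours is slightly more direct, while the paper's pinpoints more precisely which statements from \cite{BCL} are being invoked.
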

\begin{proof} 
By Proposition \ref{localizations}, there is a natural isomorphism of dg algebras
\begin{eqnarray}\label{iso1}
\Lambda(\mathcal{K}_{\iota}(S, S_1);R)\cong \mathcal{L}_{\iota}(\Lambda(S;R),\overline{S_1}),
\end{eqnarray}
where $S_1$ is the set of $1$-simplices in $S$ and $\overline{S_1}$ is defined as in \eqref{+1} above. For any set $P$, let $F(P)$ denote the free monoid generated by the set $P$. The monoid $F(\overline{S_1})$ can be considered as a subset (monomials) of $\Lambda(S;R)_0$. We now argue that there is a natural quasi-isomorphism of dg algebras
\begin{eqnarray}\label{iso2}
\mathcal{L}_{\iota}(\Lambda(S;R),F(\overline{S_1})) \simeq \mathcal{L}_{\iota}(\Lambda(S;R),\overline{S_1}).
\end{eqnarray}
% We shall use the notation introduced in Remark \ref{derivedlocalization}.  
In fact, for any set $P$ there is a natural projection map 
\begin{eqnarray}\label{projection}
R \langle F(P) \rangle \to R \langle P \rangle
\end{eqnarray}
that is induced by the map (of underlying monoids) $F(P) \to R\langle P \rangle$ and $R\langle F(P) \rangle$ is the free associative $R$-algebra generated by (the underlying set) $F(P)$. This map makes $R \langle P \rangle$ into an $R \langle F(P) \rangle$-algebra. It follows from \cite[Lemma 3.7]{BCL} that there is a canonical quasi-isomorphism of homotopy pushouts (cf. Remark \ref{derivedlocalization})
\[ R \langle P \rangle \underset{R \langle F(P) \rangle}{\circledast^{\mathbb{L}}}   R \langle F(P), F(P)^{-1} \rangle \xrightarrow{\simeq} R \langle P \rangle \underset{R\langle P \rangle}{\circledast^{\mathbb{L}}} R \langle P, P^{-1} \rangle \cong R \langle P, P^{-1} \rangle. \]
This quasi-isomorphism may be realized by the canonical map of dg algebras (induced by \eqref{projection}) 
\[ 
R \langle P \rangle \underset{R \langle F(P) \rangle}{\circledast} \Lambda(\underbrace{\X \vee \ldots \X}_{F(P)}; R) \xrightarrow{\simeq} \Lambda(\underbrace{ \X \vee \ldots \X}_{P}; R)
\]
that model the respective homotopy pushouts. As a consequence, for any set $P$ of $0$-cycles in a cofibrant dg algebra $A$, the map \eqref{projection} induces a canonical quasi-isomorphism of dg algebras
\[ 
A \underset{{R\langle F(P) \rangle}}{\circledast} \Lambda(\underbrace{\X \vee \ldots \vee \X}_{F(P)};R) \xrightarrow{\simeq} A \underset{{R\langle P \rangle}}{\circledast}\Lambda(\underbrace{\X \vee \ldots \vee \X}_P;R).
\]
For $A=\Lambda(S;R)$ and $P=\overline{S_1}$, we obtain the desired quasi-isomorphism \eqref{iso2}.
Finally, we describe a natural quasi-isomorphism
\begin{eqnarray}\label{qiso}
\mathcal{L}_{\iota}(\Lambda(S;R),F(\overline{S_1})) \xrightarrow{\simeq} \mathcal{L}_{\iota}( \mathfrak{X}(R[S]))= \mathcal{L}_{\iota}(\Lambda(S;R),P_{R[S]})
\end{eqnarray} 
which together with \eqref{iso1} and \eqref{iso2} yield the desired conclusion. By Proposition \ref{fundamentalbialgebra}, the fundamental bialgebra $H_0(\Lambda(S;R))$ of $R[S]$ is isomorphic to $R[\tau(S)]$. The set $P_{R[S]}$ consists of all $0$-cycles in $\Lambda(S;R)_0$ representing the monoid-like elements of the bialgebra $H_0(\Lambda(S;R)) \cong R[\tau(S)]$. 
%The latter set is exactly
%\begin{eqnarray}\label{monoidlike} \mathfrak{M}(H_0(\Lambda(S;R)))\cong H_0(\phi)^{-1}(\mathfrak{M}(R[\tau(S)])) \cong H_0(\phi)^{-1}(\tau(S)),
%\end{eqnarray}
%where $\phi$ is defined as in the proof of Proposition \ref{fundamentalbialgebra}. The first isomorphism in \ref{monoidlike} follows from Proposition \ref{fundamentalbialgebra} and the second from the fact that there is a natural isomorphism $\mathfrak{M}(R[M]) \cong M$, for any monoid $M$, when $R$ has no non-trivial idempotents.
Since $R$ has no non-trivial idempotents, the monoid of monoid-like elements in the bialgebra $R[\tau(S)]$ is identified with (the monoid) $\tau(S)$. A canonical set of representatives for the elements in $\tau(S) \subseteq R[\tau(S)] \cong H_0(\Lambda(S;R))$ is given by $F(\overline{S_1}) \subseteq P_{R[S]}$. Then it follows from Proposition \ref{invariance} that the natural map \eqref{qiso} is a quasi-isomorphism and this completes the proof. 
\end{proof}

\begin{definition}\label{localizedcobardef}
We fix a simplicial localization and denote it by  $\iota \colon S^1 \to \mathbb{S}^1$. We define the \textit{localized cobar construction} to be the composition of functors
\[
\widehat{\mathbb{\Omega}} = \mathcal{L}_{\iota} \circ \mathfrak{X}\colon 
\mathsf{sCoCoalg}^0_R \to \mathsf{dgAlg}_R.\]
\end{definition}
\noindent Unraveling the definition, we have for any $C \in \mathsf{sCoCoalg}^0_R$
\begin{eqnarray}\label{pushout}
\widehat{\mathbb{\Omega}}(C)=
\mathbb{\Omega}(C) \underset{R\langle P_C \rangle}{\circledast} \mathbb{\Omega}(R[\underbrace{\mathbb{S}^1 \vee \ldots \vee \mathbb{S}^1}_{P_C}]).
\end{eqnarray}

\begin{remark}
A similar construction was proposed in \cite[Section 1.2]{HT10} under the name of \textit{extended cobar construction} in the context of dg coalgebras. However, the extended cobar construction is not functorial as defined, since it depends on a basis for the degree $1$ summand of the underlying graded $R$-module. On the other hand, for any simplicial cocommutative coalgebra $C$, we may obtain the desired set $P_C$ of degree $0$ elements in the cobar construction functorially through the monoid-like elements of the fundamental bialgebra $\pi(C)$.
\end{remark}

\section{Three notions of weak equivalence} \label{sec_3_weq}

In this section, we study the properties of the following three different notions of weak equivalence in $\mathsf{sCoCoalg}^0_R$.
\begin{itemize}
\item[(1)] The standard class $\W_{\qi}$ of \emph{quasi-isomorphisms} in $\mathsf{sCoCoalg}^0_R$ (or $\mathsf{sCoCoalg}_R$). The homotopy theory of simplicial $R$-coalgebras relative to $\W_{\qi}$ and its connection with the $R$-local homotopy theory of spaces have been studied in \cite{Go} (for fields) and in \cite{Ra} (for general presheaves of commutative rings) -- see Subsection \ref{model_str_coalg_prelim}.
\item[(2)] The class $\W_{\mathbb{\Omega}}$ of $\mathbb{\Omega}$-quasi-isomorphisms (or \emph{cobar quasi-isomorphisms}) in $\mathsf{sCoCoalg}^0_R$. 
This is a smaller class of weak equivalences than $\W_{\qi}$ and its properties have also been studied in \cite{RWZTransactions}.
\item[(3)] The class $\W_{\widehat{\mathbb{\Omega}}}$ of $\widehat{\mathbb{\Omega}}$-quasi-isomorphisms in $\mathsf{sCoCoalg}_R^0$ (or \emph{localized cobar quasi-isomorphisms}). This class of weak equivalences lies strictly between $\W_{\mathbb{\Omega}}$ and $\W_{\qi}$. This class determines a homotopy theory in $\mathsf{sCoCoalg}^0_R$, which is suitable for the comparison with the homotopy theory of reduced simplicial sets. 
\end{itemize}
We also relate each of the above notions to their corresponding counterparts for $\mathsf{sSet}_0$. 

\subsection{Quasi-isomorphisms} The following definitions are well known. 

\begin{definition} A map $f\colon C \to C'$ in $\mathsf{sCoCoalg}_R$ is a \textit{quasi-isomorphism} if the induced map of dg coalgebras $\mathcal{N}_*(f)\colon \mathcal{N}_*(C) \to \mathcal{N}_*(C')$ is a quasi-isomorphism. We denote the class of quasi-isomorphisms in $\mathsf{sCoCoalg}_R$ (or $\mathsf{sCoCoalg}^0_R$) by $\W_{\qi}$. 
\end{definition}

\begin{definition} A map $f\colon X \to X'$ in $\mathsf{sSet}$ is an $R$-\textit{equivalence}  if it induces an isomorphism $H_*(f;R)\colon H_*(X; R) \xrightarrow{\cong} H_*(X';R)$ on homology with $R$-coefficients. We denote the class of $R$-equivalences in $\mathsf{sSet}$ (or $\mathsf{sSet}_0$) by $\W_R$.
\end{definition}

We record the following obvious statement in order to emphasize the parallelism with analogous statements in the next sections. 

\begin{proposition} \label{qiandqi} A map $f\colon X \to X'$ in $\mathsf{sSet}$ is an $R$-equivalence if and only if $R[f]\colon R[X] \to R[X']$ is a quasi-isomorphism in $\mathsf{sCoCoalg}_R$.  
\end{proposition}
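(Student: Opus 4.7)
The plan is to unwind the definitions on both sides and observe that the dg $R$-module underlying $\mathcal{N}_*(R[X])$ is, by construction, the classical normalized chain complex that computes the singular (or equivalently simplicial) homology of $X$ with coefficients in $R$.

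First, I would recall that a morphism $f \colon C \to C'$ in $\mathsf{sCoCoalg}_R$ belongs to $\W_{\qi}$ precisely when $\mathcal{N}_*(f) \colon \mathcal{N}_*(C) \to \mathcal{N}_*(C')$ induces an isomorphism on the homology of the underlying chain complexes; the coalgebra structure plays no role in the condition. Applying this to $C = R[X]$ and $C' = R[X']$, the question reduces to whether the induced chain map
\[
\mathcal{N}_*(R[f]) \colon \mathcal{N}_*(R[X]) \to \mathcal{N}_*(R[X'])
\]
is a quasi-isomorphism.

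Next, I would identify $\mathcal{N}_*(R[X])$ with the standard normalized chain complex $C^{N}_{*}(X;R)$. By construction, $N_n(R[X]) = R[X_n]$ with differential $\partial = \sum_i (-1)^i d_i$ and with the subcomplex $D_*(R[X])$ generated by degenerate simplices quotiented out; this is verbatim the definition of the normalized singular chain complex on the simplicial set $X$ with coefficients in $R$. It is a classical fact (for instance via the Dold--Kan correspondence, or directly from the fact that the degenerate subcomplex $D_*$ is acyclic) that the homology of $\mathcal{N}_*(R[X])$ is naturally isomorphic to $H_*(X;R)$, and this identification is functorial in $X$.

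Putting the two observations together, $R[f]$ is a quasi-isomorphism in $\mathsf{sCoCoalg}_R$ iff $\mathcal{N}_*(R[f])$ is a quasi-isomorphism of chain complexes iff $H_*(f;R)$ is an isomorphism, i.e.\ iff $f \in \W_R$. No obstacles are expected, as each step is either a definition or a standard identification; the statement is essentially just a reformulation emphasizing the parallelism with the subsequent $\mathbb{\Omega}$- and $\widehat{\mathbb{\Omega}}$-versions.
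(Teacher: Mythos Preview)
Your proposal is correct and matches the paper's treatment: the paper records this statement as ``obvious'' without giving a proof, precisely because both sides amount to asking whether $\mathcal{N}_*(R[f])$ is a quasi-isomorphism of chain complexes. Your unwinding of the definitions is exactly the intended justification.
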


\subsection{$\mathbb{\Omega}$-quasi-isomorphisms} 
Recall the notation $\mathbb{\Omega}= \mathsf{Cobar} \circ \mathcal{N}_*$. 

\begin{definition}
A map $f\colon C \to C'$ in $\mathsf{sCoCoalg}^0_R$ is an $\mathbb{\Omega}$-\textit{quasi-isomorphism} if the map of dg algebras $\mathbb{\Omega}(f)\colon \mathbb{\Omega} (C) \to \mathbb{\Omega} (C')$ is a quasi-isomorphism. We denote by $\mathcal{W}_{\mathbb{\Omega}}$ the class of $\mathbb{\Omega}$-quasi-isomorphisms of connected simplicial cocommutative $R$-coalgebras. 
\end{definition}

This notion of weak equivalence is well studied in the context of dg coalgebras \cite{LH03} and has been used extensively in (derived) Koszul duality \cite{Po11}, \cite{LV12} (see also Subsection \ref{model_str_dgalg_prelim}). A first observation is that this notion is strictly stronger than quasi-isomorphism (see also Proposition \ref{inclusions} below).

\begin{proposition}\label{omegaqiandqi}  Let $C$ and $C'$ be connected simplicial cocommutative flat $R$-coalgebras. If $f\colon C\to C'$ is an $\mathbb{\Omega}$-quasi-isomorphism, then $f\colon C \to C'$ is a quasi-isomorphism, but the converse fails in general.
\end{proposition}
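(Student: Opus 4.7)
The plan is to deduce the forward implication from the bar-cobar adjunction of Theorem \ref{barcobar} and to exhibit a classical counterexample to the converse via an acyclic group.

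For the forward direction, the strategy uses two inputs from Koszul duality. First, the bar functor $\mathsf{Bar} \colon \mathsf{dgAlg}_R^a \to \mathsf{dgCoalg}_R^c$ sends quasi-isomorphisms between dg algebras that are degreewise $R$-flat to quasi-isomorphisms; one verifies this by filtering $\mathsf{Bar}(A)$ by tensor length and applying the K\"unneth theorem to compare the resulting convergent spectral sequences. Second, for every flat coaugmented conilpotent dg coalgebra $N$ with $N_0 = R$ and $N_i = 0$ for $i<0$, the unit map $\eta_N \colon N \to \mathsf{Bar}\mathsf{Cobar}(N)$ is a quasi-isomorphism; this is a standard Koszul duality statement (see \cite{HMS74}, \cite{LH03}, \cite{Po11} and also Remark \ref{derived_koszul}) proved by comparing the coradical filtration on $N$ with the bar-length filtration on $\mathsf{Bar}\mathsf{Cobar}(N)$, on whose associated gradeds $\eta_N$ becomes a canonical isomorphism. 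Both inputs use the flatness hypothesis of the proposition in an essential way.

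Given these inputs, the argument is a two-out-of-three chase. For a connected flat $C \in \mathsf{sCoCoalg}^0_R$, a direct degree count shows that $\mathcal{N}_*(C)$ is flat, non-negatively graded, conilpotent, and has $\mathcal{N}_0 = R$: the reduced coproduct $\overline{\mathbf{\Delta}}$ strictly decreases degree in each tensor factor, and since $\overline{\mathcal{N}}_0 = 0$ an iterated reduced coproduct of sufficient length must vanish on any element. Hence the second input applies to both $\mathcal{N}_*(C)$ and $\mathcal{N}_*(C')$, so the corresponding units $\eta$ are quasi-isomorphisms. Assuming $\mathbb{\Omega}(f) = \mathsf{Cobar}(\mathcal{N}_*(f))$ is a quasi-isomorphism, the first input yields that $\mathsf{Bar}\mathsf{Cobar}(\mathcal{N}_*(f))$ is also a quasi-isomorphism. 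Chasing the naturality square of $\eta$ and invoking two-out-of-three then forces $\mathcal{N}_*(f)$ to be a quasi-isomorphism, i.e.\ $f \in \mathcal{W}_{\qi}$.

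For the counterexample to the converse, I would take a non-trivial $\mathbb{Z}$-acyclic group $G$ (for instance, Higman's group); by universal coefficients, $G$ is then $R$-acyclic for every commutative ring $R$. Fix a reduced simplicial model of $BG$, e.g.\ the nerve of $G$ regarded as a one-object category. The canonical map $BG \to \ast$ is an $R$-equivalence, so by Proposition \ref{qiandqi} the induced map $R[BG] \to R$ is a quasi-isomorphism in $\mathsf{sCoCoalg}^0_R$. However, by Proposition \ref{fundamentalbialgebra} the fundamental bialgebra $H_0(\mathbb{\Omega}(R[BG])) \cong R[\tau(BG)] = R[G]$ differs from $H_0(\mathbb{\Omega}(R)) = R$ whenever $G$ is non-trivial, so the map in question cannot be a quasi-isomorphism of dg algebras after applying $\mathbb{\Omega}$. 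The hardest step to write carefully will be the flat conilpotent version of the unit-is-a-qi statement: while it is standard over a field, its extension to flat conilpotent dg coalgebras over an arbitrary commutative ring takes some care, though for the non-negatively graded coalgebras $\mathcal{N}_*(C)$ arising here the filtrations involved are bounded in each degree, and so convergence of the spectral sequences is automatic.
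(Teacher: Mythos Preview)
Your proposal is correct and follows the standard bar--cobar route that the paper's one-line citation to \cite[Proposition 2.4.2]{LV12} is pointing to; indeed, the paper itself carries out essentially this same argument (unit of $(\mathsf{Cobar},\mathsf{Bar})$ is a quasi-isomorphism, $\mathsf{Bar}$ preserves quasi-isomorphisms, two-out-of-three) in the proof of Proposition~\ref{inclusions}. Your explicit counterexample via an acyclic group is a nice addition that the paper leaves to the reference.
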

\begin{proof}
This is shown for fields in \cite[Proposition 2.4.2]{LV12}. The same proof applies here, too.
\end{proof}

The notion of $\mathbb{\Omega}$-quasi-isomorphism is related to the following linearized version of categorical equivalences between reduced simplicial sets.

\begin{definition} A map $f\colon S \to S'$ in $\mathsf{sSet}_0$ is a \textit{categorical $R$-equivalence} if the induced map of dg algebras $\Lambda(f;R)\colon \Lambda(S;R) \to \Lambda(S';R)$ is a quasi-isomorphism. We denote the class of categorical $R$-equivalences in $\mathsf{sSet}_0$ by $\W_{J, R}$ (the subscript ``$J$" stands for ``Joyal"). 
\end{definition} 

By Remark \ref{joyalanddgalg}, every categorical equivalence in $\mathsf{sSet}_0$ is also a categorical $R$-equivalence. The following is obvious from the definitions of the respective classes of weak equivalences. 

\begin{proposition} \label{RcateqandOmegaqi} A map $f\colon S \to S'$ in $\mathsf{sSet}_0$ is a categorical $R$-equivalence if and only if $R[f]\colon R[S] \to R[S']$ is an $\mathbb{\Omega}$-quasi-isomorphism of connected simplicial cocommutative $R$-coalgebras. In particular, if $f\colon S\to S'$ is a categorical equivalence then $R[f]\colon R[S] \to R[S']$ is an $\mathbb{\Omega}$-quasi-isomorphism. 
\end{proposition}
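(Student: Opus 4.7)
The plan is to observe that both directions of the equivalence follow essentially by unwinding the definitions: the functor $\Lambda(-;R) \colon \mathsf{sSet}_0 \to \mathsf{dgAlg}_R$ is by construction the composite $\mathsf{Cobar} \circ \mathcal{N}_* \circ R[-] = \mathbb{\Omega} \circ R[-]$, so for any map $f \colon S \to S'$ in $\mathsf{sSet}_0$ the morphism of dg algebras $\Lambda(f;R)$ is literally equal to $\mathbb{\Omega}(R[f])$. Therefore the condition that $\Lambda(f;R)$ be a quasi-isomorphism (that is, $f \in \mathcal{W}_{J,R}$) is identical to the condition that $\mathbb{\Omega}(R[f])$ be a quasi-isomorphism (that is, $R[f] \in \mathcal{W}_{\mathbb{\Omega}}$).

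For the ``in particular'' clause, I would invoke Remark \ref{joyalanddgalg}: the adjunction
\[
\Lambda(-;R) \colon \mathsf{sSet}_0 \rightleftarrows \mathsf{dgAlg}_R \colon \mathsf{N}_{\text{dg}}
\]
is a Quillen adjunction when $\mathsf{sSet}_0$ carries the (restricted) Joyal model structure and $\mathsf{dgAlg}_R$ carries the projective model structure. Since every object in the Joyal model structure on $\mathsf{sSet}_0$ is cofibrant (the cofibrations are the monomorphisms), Ken Brown's lemma implies that $\Lambda(-;R)$ sends all categorical equivalences to quasi-isomorphisms of dg algebras. Hence any categorical equivalence $f$ is a categorical $R$-equivalence, and combining this with the first part yields that $R[f]$ is an $\mathbb{\Omega}$-quasi-isomorphism.

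There is no substantive obstacle here; the statement is essentially a bookkeeping observation, and I would present it as such in a few lines, relying on the definitions of $\Lambda(-;R)$ and $\mathbb{\Omega}$, together with the Quillen adjunction recalled in Remark \ref{joyalanddgalg}.
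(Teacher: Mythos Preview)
Your proposal is correct and matches the paper's approach: the paper simply remarks that the statement is ``obvious from the definitions of the respective classes of weak equivalences,'' and your argument is exactly the unwinding of those definitions together with the observation from Remark \ref{joyalanddgalg} for the ``in particular'' clause.
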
 
%\begin{proof} This follows directly from the definitions of $\mathbb{\Omega}$-quasi-isomorphism and $R$-categorical equivalence, since $\Lambda(-;R) = \mathbb{\Omega} \circ R[-]$.
%\end{proof}

\begin{remark}\label{nscAdams2}
Given $S \in \mathsf{sSet}_0$, we denote by $C_*(\Omega |S|;R)$ the dg algebra of normalized singular chains on the topological monoid of based (Moore) loops in $|S|$. For any simplicial localization $S \to \mathcal{K}_{\iota}(S^{\sharp})$, we have a natural quasi-isomorphism of dg algebras
$$\Lambda(\mathcal{K}_{\iota}(S^{\sharp});R) \simeq C_*(\Omega |S|; R).$$ 
In particular, there is a natural isomorphism of algebras $H_0( \Lambda (\mathcal{K}_{\iota}(S^{\sharp})) \cong R[\pi_1(|S|)].$ % where the latter is the fundamental group algebra. 
These follow easily by using the fact that $\mathcal{K}_{\iota}(S^{\sharp})$ is categorically equivalent to a Kan complex together with Proposition \ref{RcateqandOmegaqi}, and then applying Theorem \ref{nscAdams}. See also \cite[Corollary 4.2]{CHL}. 
\end{remark}

\subsection{$\widehat{\mathbb{\Omega}}$-quasi-isomorphisms} \label{derived_cobar_eq}
Let $\iota \colon S^1 \to \mathbb{S}^1$ be a fixed localization. We will make use of the localized cobar construction as defined in Definition \ref{localizedcobardef}. 

\begin{definition} A map $f\colon C \to C'$ in $\mathsf{sCoCoalg}^0_R$ is an $\widehat{\mathbb{\Omega}}$-\textit{quasi-isomorphism} if the induced map $\mathbf{\widehat{\mathbb{\Omega}}}(f)\colon \mathbf{\widehat{\mathbb{\Omega}}}(C) \to \mathbf{\widehat{\mathbb{\Omega}}}(C')$ is a quasi-isomorphism of dg algebras. We denote by $\W_{\widehat{\mathbb{\Omega}}}$ the class of $\widehat{\mathbb{\Omega}}$-quasi-isomorphisms in $\mathsf{sCoCoalg}^0_R$ . 
\end{definition}

The motivation for the above definition comes from the fact that a map of simplicial sets $f: S \to S'$ is a weak homotopy equivalence if and only if the induced map $$\mathcal{K}_{\iota}(f^{\sharp}) : \mathcal{K}_{\iota}(S^{\sharp}) \to \mathcal{K}_{\iota}(S'^{\sharp}),$$ obtained by inverting all $1$-simplices, is a categorical equivalence. Given the compatibility between simplicial localization and derived localization (Propositions \ref{localizations} and   \ref{localizations2}), we obtain immediately the following relationship between $\mathbb{\Omega}$-quasi-isomorphisms and $\widehat{\mathbb{\Omega}}$-quasi-isomorphisms for the simplicial chains of reduced simplicial sets. 

\begin{proposition} \label{omega-kan}  Let $f\colon S \to S'$ be a map of reduced simplicial sets. Then $R[f]\colon R[S] \to R[S']$ is an $\widehat{\mathbb{\Omega}}$-quasi-isomorphism if and only if $$R[\mathcal{K}_{\iota}(f^{\sharp})]\colon R[\mathcal{K}_{\iota}(S^{\sharp})] \to R[\mathcal{K}_{\iota}(S'^{\sharp})]$$ is an $\mathbb{\Omega}$-quasi-isomorphism.
\end{proposition}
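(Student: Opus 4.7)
The plan is to reduce the statement to Proposition \ref{localizations2}, which already provides a natural zigzag of quasi-isomorphisms
\[\widehat{\mathbb{\Omega}}(R[S]) = \mathcal{L}_{\iota}(\mathfrak{X}(R[S])) \; \simeq \; \Lambda(\mathcal{K}_{\iota}(S^{\sharp}); R) = \mathbb{\Omega}(R[\mathcal{K}_{\iota}(S^{\sharp})]),\]
where the first and last equalities are by Definition \ref{localizedcobardef} and by the definitions $\Lambda(-;R) = \mathbb{\Omega} \circ R[-]$ and $\mathbb{\Omega} = \mathsf{Cobar} \circ \mathcal{N}_*$, respectively. (We use that $R$ has no non-trivial idempotents, as assumed in Proposition \ref{localizations2}; this is harmless for the applications to $\pi_1$-$R$-equivalences treated in this paper.)

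Given this, the proof proceeds as follows. First, I would record that the zigzag of quasi-isomorphisms in Proposition \ref{localizations2} is natural in $S \in \mathsf{sSet}_0$. This naturality follows directly from the construction: the identification in \eqref{iso1} comes from the fact that $\Lambda(-;R)$ preserves the pushout defining $\mathcal{K}_{\iota}(-, -)$; the intermediate quasi-isomorphism \eqref{iso2} is induced from the canonical free-monoid projection $R\langle F(\overline{S_1}) \rangle \to R\langle \overline{S_1} \rangle$, which is natural in $S$; and the final quasi-isomorphism \eqref{qiso} is induced by a comparison of derived localizations at the naturally defined subsets $F(\overline{S_1}) \subseteq P_{R[S]}$ (together with Proposition \ref{invariance}), which is again natural in $S$.

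Second, I would apply this natural zigzag to the morphism $f\colon S \to S'$. This yields a ladder-shaped commutative diagram in $\mathsf{dgAlg}_R$ in which the two horizontal lines are exactly the maps $\widehat{\mathbb{\Omega}}(R[f])$ and $\mathbb{\Omega}(R[\mathcal{K}_{\iota}(f^{\sharp})])$, while every vertical rung is a quasi-isomorphism of dg algebras. The $2$-out-of-$3$ property for quasi-isomorphisms, applied rung by rung, then shows that $\widehat{\mathbb{\Omega}}(R[f])$ is a quasi-isomorphism if and only if $\mathbb{\Omega}(R[\mathcal{K}_{\iota}(f^{\sharp})])$ is, which is exactly the claim.

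The main potential subtlety is the left-properness hypothesis implicitly used in Proposition \ref{localizations2} to interpret $\mathcal{L}_{\iota}$ as a derived localization. Since $\mathbb{\Omega}(R[S]) = \Lambda(S;R)$ is computed from a free tensor algebra, it is flat as an $R$-module and hence left proper as a dg algebra by \cite[Theorem 2.14]{BCL}, so the hypothesis is automatic here. Once this is noted, there are no further obstacles and the proof is essentially a formal naturality argument on top of Proposition \ref{localizations2}.
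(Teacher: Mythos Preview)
Your proposal is correct and follows essentially the same approach as the paper, which simply asserts that the result follows immediately from the compatibility of simplicial and derived localization (Propositions \ref{localizations} and \ref{localizations2}); you have just unpacked that assertion carefully by spelling out the naturality of the zigzag and the $2$-out-of-$3$ argument. Your explicit remark about the hypothesis that $R$ has no non-trivial idempotents (needed to invoke Proposition \ref{localizations2}) is a useful clarification that the paper leaves implicit.
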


Based on Theorem \ref{nscAdams}, we may give a concrete description of the $\widehat{\mathbb{\Omega}}$-quasi-isomorphisms arising from maps of reduced simplicial sets. This description is one of the key results for the comparison between the respective homotopy theories of reduced simplicial sets and connected simplicial cocommutative coalgebras.

\begin{definition} A map $f\colon S \to S'$ in $\mathsf{sSet}_0$ is a $\pi_1$-$R$-\textit{equivalence} if it induces an isomorphism between fundamental groups $$\pi_1(|f|)\colon \pi_1(|S|) \xrightarrow{\cong} \pi_1(|S'|)$$ and the induced map between the universal covers $$\overline{f}\colon \overline{|S|} \to \overline{|S'|}$$ is an $R$-equivalence. 
We denote the class of $\pi_1$-$R$-equivalences in $\mathsf{sSet}_0$ by $\W_{\pi_1\text{-}R}$. 
\end{definition} 

\begin{theorem}\label{LOmegareflects} 
A map of reduced simplicial sets $f\colon S \to S'$ is a $\pi_1$-$R$-equivalence if and only if $R[f]\colon R[S] \to R[S']$ is an $\widehat{\mathbb{\Omega}}$-quasi-isomorphism.
\end{theorem}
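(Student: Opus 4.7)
The plan is to trace through the chain of equivalences set up in the preceding propositions, reducing the theorem to a classical topological criterion, and then invoke Adams-type arguments for simply connected spaces. Combining Proposition~\ref{omega-kan} with Proposition~\ref{RcateqandOmegaqi} and Remark~\ref{nscAdams2}, one has that $R[f]$ is an $\widehat{\mathbb{\Omega}}$-quasi-isomorphism iff $R[\mathcal{K}_{\iota}(f^{\sharp})]$ is an $\mathbb{\Omega}$-quasi-isomorphism iff $\Lambda(\mathcal{K}_{\iota}(f^{\sharp});R)$ is a quasi-isomorphism of dg algebras iff $C_*(\Omega|f|;R)$ is a quasi-isomorphism, using that $\Lambda(\mathcal{K}_{\iota}(S^{\sharp});R)$ is naturally connected by a zigzag of quasi-isomorphisms to $C_*(\Omega|S|;R)$. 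The theorem thus reduces to showing that $C_*(\Omega|f|;R)$ is a quasi-isomorphism if and only if $f$ is a $\pi_1$-$R$-equivalence.

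I would then exploit the standard decomposition of the loop space. Writing $X=|S|$ and $Y=|S'|$, the space $\Omega X$ decomposes as $\coprod_{\gamma \in \pi_1(X)} \Omega_{\gamma} X$ with each component non-canonically equivalent to $\Omega\overline{X}$ and the identity component literally $\Omega\overline{X}$ based at the preferred lift of the basepoint. The map $\Omega|f|$ respects this decomposition via $\pi_1(|f|)$ and restricts on identity components to $\Omega\overline{|f|}$. Since $H_*(\Omega X;R)$ splits along the components, $C_*(\Omega|f|;R)$ is a quasi-isomorphism iff both (i) $\pi_1(|f|)$ is bijective (equivalently, $H_0(\Omega|f|;R) \colon R[\pi_1(X)] \to R[\pi_1(Y)]$ is an isomorphism; since this map is the $R$-linearization of $\pi_1(|f|)$ on canonical bases and $R \neq 0$, this forces bijectivity, hence a group isomorphism) and (ii) $C_*(\Omega\overline{|f|};R)$ is a quasi-isomorphism.

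It remains to prove that, for the simply connected spaces $\overline{X},\overline{Y}$, the map $C_*(\Omega\overline{|f|};R)$ is a quasi-isomorphism if and only if $\overline{|f|}$ is an $R$-equivalence. Choosing $2$-reduced simplicial models $T, T'$ for $\overline{X}, \overline{Y}$, the normalized chain complexes $\mathcal{N}_*(R[T]), \mathcal{N}_*(R[T'])$ become simply connected conilpotent dg coalgebras whose cobar constructions model the singular chains of the respective loop spaces by Theorem~\ref{nscAdams}. On simply connected conilpotent dg coalgebras, the cobar functor both preserves and reflects quasi-isomorphisms: preservation gives the forward direction, and reflection yields the converse. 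Reflection can be deduced from the facts that $\mathsf{Bar}$ preserves quasi-isomorphisms of augmented dg algebras with connected underlying algebra, together with Theorem~\ref{barcobar} asserting that the counit of the bar--cobar adjunction is an objectwise quasi-isomorphism.

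The main obstacle is the converse in the simply connected case --- deducing an $R$-equivalence of the universal covers from a quasi-isomorphism of loop space chains. Although this is classical, applying it cleanly requires the preliminary choice of $2$-reduced simplicial models for the universal covers so that their normalized chain coalgebras are genuinely simply connected and conilpotent, making the cobar-reflection argument available. The decomposition of the loop space and the matching of fundamental groups on $H_0$ are then essentially bookkeeping, though care is needed to identify the identity component of $\Omega|f|$ with $\Omega\overline{|f|}$ via the lift-of-loops construction.
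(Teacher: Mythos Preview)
Your proposal is correct and follows the same overall architecture as the paper's proof: both reduce via Proposition~\ref{omega-kan} and Remark~\ref{nscAdams2} to the statement that $C_*(\Omega|f|;R)$ is a quasi-isomorphism if and only if $f$ is a $\pi_1$-$R$-equivalence, and both then use the component decomposition of the loop space---the paper packages this as the natural isomorphism $\Theta_X\colon H_*(\Omega\overline{|X|};R)\otimes R[\pi_1(X)]\xrightarrow{\cong} H_*(\Omega|X|;R)$---to separate the $\pi_1$-isomorphism from the simply connected part.

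The genuine difference lies in the simply connected step. The paper applies the Zeeman comparison theorem to the map of Serre spectral sequences of the path fibrations over $\overline{|S|}$ and $\overline{|S'|}$, which gives directly that $\Omega\overline{|f|}$ is an $R$-equivalence if and only if $\overline{|f|}$ is. You instead pass to $2$-reduced simplicial models and invoke the fact that $\mathsf{Cobar}$ preserves and reflects quasi-isomorphisms between simply connected conilpotent dg coalgebras. Both arguments are classical; yours stays closer to the Koszul-duality machinery already developed in the paper (and avoids importing spectral sequence comparison), while the paper's is purely topological and sidesteps the need to choose $2$-reduced Kan models compatibly with $f$.

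One small correction: for the reflection half you need the \emph{unit} $N\to\mathsf{Bar}(\mathsf{Cobar}(N))$ to be a quasi-isomorphism (see Remark~\ref{derived_koszul}, or simply invoke Proposition~\ref{omegaqiandqi} directly), not the counit of Theorem~\ref{barcobar}, which lives on the algebra side and does not yield the conclusion you need.
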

\begin{proof} 
We recall that for any $X \in \mathsf{sSet}_0$ there is a natural isomorphism of graded algebras 
$$\Theta_X\colon H_*(\Omega \overline{|X|}; R) \otimes R[\pi_1(X)] \xrightarrow{\cong} H_*(\Omega |X|;R).$$ Furthermore,  $\Theta_X$ restricts to an isomorphism of bialgebras $R[\pi_1(|X|)]  \cong H_0(\Omega |X|; R)$ in degree $0$. 

We write $\pi_1 = \pi_1(|S|)$ and $\pi'_1 = \pi_1(|S'|)$ in order to simplify the notation. We consider the following commutative diagram
\[
\xymatrix{
H_*(\Omega \overline{|\mathcal{K}_{\iota}(S^{\sharp})|};R) \otimes R[\pi_1] \ar[d] \ar[r]^-{\Theta}_(0.57){\cong} & H_*(\Omega |\mathcal{K}_{\iota}(S^{\sharp})|; R) \ar[d]^{H_*(\Omega|\mathcal{K}_{\iota}(f^{\sharp})|; R)}  & H_*( \Lambda(\mathcal{K}_{\iota}(S^{\sharp}));R) \ar[l]^{(\ref{nscAdams2})}_{\cong} \ar[d]^{H_*(\Lambda(\mathcal{K}_{\iota}(f^{\sharp})); R)} \\
 H_*(\Omega\overline{|\mathcal{K}_{\iota}(S'^{\sharp})|};R) \otimes R[\pi'_1] \ar[r]^-{\Theta}_(0.57){\cong} &H_*(\Omega |\mathcal{K}_{\iota}(S'^{\sharp})|; R) & \ar[l]_{\cong}^{(\ref{nscAdams2})} H_*( \Lambda(\mathcal{K}_{\iota}(S'^{\sharp})); R) 
}
\]
where every horizontal map is an isomorphism and the vertical maps are induced by the map $f$. Note that the vertical maps are isomorphisms in degree $0$ if and only if $f$ induces an isomorphism of group rings  $R[\pi_1(|f|)]: R[\pi_1] \to R[\pi_1']$, which holds if and only if $f$ induces an isomorphism $\pi_1(|f|): \pi_1 \to \pi_1'$ of fundamental groups.  

Since the canonical maps $S \to \mathcal{K}_{\iota}(S^{\sharp})$ and $S' \to \mathcal{K}_{\iota}(S'^{\sharp})$ are weak homotopy equivalences, the left vertical map is an isomorphism if and only if $f$ is a $\pi_1$-isomorphism and $\Omega\overline{|f|}$ is an $R$-equivalence. %The latter property implies that $\overline{|f|}$ is also an $R$-equivalence. Moreover, using the Eilenberg--Moore spectral sequence, the condition about 
Moreover, $\overline{|f|}$ is a map of simply-connected spaces, so applying the Zeeman comparison theorem \cite{Z57} to the map of Serre spectral sequences of the respective path fibrations shows that $\Omega \overline{|f|}$ is an $R$-equivalence if and only if $\overline{|f|}$ is an $R$-equivalence.
%, since $\overline{|f|}$ is a map of simply-connected spaces. (Alternatively, one could also appeal to Theorem \ref{nscAdams}.) 
Thus, the left vertical map is an isomorphism if and only if $f$ is a $\pi_1$-$R$-equivalence. 

On the other hand, by Proposition \ref{omega-kan}, the right vertical map is an isomorphism if and only if $R[f]$ is an $\widehat{\mathbb{\Omega}}$-quasi-isomorphism.  The result follows.
\end{proof} 

By Proposition~\ref{independenceofloc}, when $R=\mathbb{F}$ is a field, the quasi-isomorphism type of the dg algebra $\widehat{\mathbb{\Omega}}(C)$ is independent of the choice of the localization $\iota\colon S^1 \to \mathbb{S}^1$. This is the context in which we will work in later in Subsection~\ref{modelstronscocoalg}. Moreover, the notion of $\mathbb{\Omega}$-quasi-isomorphism is stronger than $\widehat{\mathbb{\Omega}}$-quasi-isomorphism. In fact, we have the following strict inclusions of classes of weak equivalences.
 
 \begin{proposition} \label{inclusions} Let $R=\F$ be a field. The classes of morphisms $\W_{\mathbb{\Omega}}$, $\W_{\widehat{\mathbb{\Omega}}}$, and $\W_{\text{q.i.}}$ in $\mathsf{sCoCoalg}^0_\F$ satisfy the following strict inclusions
 $$\W_{\mathbb{\Omega}} \subsetneq \W_{\widehat{\mathbb{\Omega}}} \subsetneq \W_{\qi}.$$
\end{proposition}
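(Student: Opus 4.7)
The plan is to verify the two inclusions separately and then to exhibit strictness via explicit examples drawn from reduced simplicial sets through the chains functor $\F[-]$.

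For $\W_{\mathbb{\Omega}} \subseteq \W_{\widehat{\mathbb{\Omega}}}$, suppose $f \colon C \to C'$ is an $\mathbb{\Omega}$-quasi-isomorphism. Then $\mathbb{\Omega}(f)$ is a quasi-isomorphism between cofibrant (hence left proper) dg algebras, since every cobar construction is quasi-free. The induced isomorphism of fundamental bialgebras $H_0(\mathbb{\Omega}(f)) \colon \pi(C) \xrightarrow{\cong} \pi(C')$ restricts to a bijection of sets of monoid-like classes $[P_C] \to [P_{C'}]$. Invoking Proposition~\ref{invariance} (invariance of derived localization under cohomologous representatives) together with Remark~\ref{derivedlocalization} (derived localization is modeled by a homotopy pushout of dg algebras, which preserves quasi-isomorphisms between left proper inputs), we conclude that $\widehat{\mathbb{\Omega}}(f)$ is a quasi-isomorphism.

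For $\W_{\widehat{\mathbb{\Omega}}} \subseteq \W_\qi$, this is the more delicate direction and I expect it to be the main obstacle of the proof. A priori a derived localization of dg algebras can collapse homological information, so the argument must exploit the specific form of $\mathbb{\Omega}(C)$ (quasi-free) and of $P_C$ (monoid-like $0$-cycles of the fundamental bialgebra). The key point is that the localization is compatible with the coaugmentation and bialgebra structure, so $H_*(\widehat{\mathbb{\Omega}}(C))$ retains enough information to recover $H_*(\mathcal{N}_*(C))$. One route is to pass through the bar construction and invoke the derived Koszul duality (Theorem~\ref{barcobar} and Remark~\ref{derived_koszul}) to relate $\mathsf{Bar}(\widehat{\mathbb{\Omega}}(C))$ back to $\mathcal{N}_*(C)$; combined with Proposition~\ref{omegaqiandqi} (an $\mathbb{\Omega}$-quasi-isomorphism is a quasi-isomorphism) this should yield the inclusion. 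A more concrete route is to first treat the case $C = \F[S]$ for $S \in \mathsf{sSet}_0$, where Theorem~\ref{LOmegareflects} identifies $\widehat{\mathbb{\Omega}}$-quasi-isomorphisms on chains with $\pi_1$-$\F$-equivalences of reduced simplicial sets, and the latter are $\F$-equivalences by the standard Serre spectral sequence comparison between the universal-cover fibrations of $|S|$ and $|S'|$; one can then extend to arbitrary $C$ by means of a cofibrant resolution by objects of the form $\F[S]$.

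For strictness, two examples transported through $\F[-]$ from reduced simplicial sets suffice. First, let $j \colon S^1 \hookrightarrow K$ be a fibrant replacement of the reduced simplicial circle in the Kan--Quillen model structure on $\mathsf{sSet}_0$. Then $j$ is a weak homotopy equivalence, and hence a $\pi_1$-$\F$-equivalence, so Theorem~\ref{LOmegareflects} yields $\F[j] \in \W_{\widehat{\mathbb{\Omega}}}$. A direct computation from the definitions yields $\Lambda(S^1; \F) \cong \F[x]$, the polynomial algebra on one degree-$0$ generator with trivial differential, whereas Theorem~\ref{nscAdams} provides $\Lambda(K; \F) \simeq C_*(\Omega |S^1|; \F) \simeq \F[\mathbb{Z}] = \F[x, x^{-1}]$; these dg algebras are not quasi-isomorphic, so $\F[j] \notin \W_{\mathbb{\Omega}}$. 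Second, fix a prime $q$ invertible in $\F$ (any prime when $\F$ has characteristic zero) and take a reduced simplicial model of the canonical map $g \colon B(\mathbb{Z}/q) \to \mathbb{\Delta}^0$; since $H_*(B(\mathbb{Z}/q); \F) \cong \F$ is concentrated in degree $0$, the map $\F[g]$ is a quasi-isomorphism, but $\pi_1(B(\mathbb{Z}/q)) \neq 1$ prevents $g$ from being a $\pi_1$-$\F$-equivalence, so by Theorem~\ref{LOmegareflects} we have $\F[g] \notin \W_{\widehat{\mathbb{\Omega}}}$.
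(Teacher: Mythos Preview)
Your treatment of the first inclusion $\W_{\mathbb{\Omega}} \subseteq \W_{\widehat{\mathbb{\Omega}}}$ and both strictness examples is correct and matches the paper's proof essentially verbatim (the paper uses the localization $\iota \colon S^1 \to \mathbb{S}^1$ for the first example and an unspecified $\F$-equivalence that is not a $\pi_1$-isomorphism for the second; your $B(\mathbb{Z}/q) \to *$ with $q$ invertible in $\F$ is a fine explicit choice).

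The gap is in the second inclusion $\W_{\widehat{\mathbb{\Omega}}} \subseteq \W_{\qi}$. You correctly identify the bar--Koszul route as the right one, but you do not supply the step that makes it work, and your invocation of Proposition~\ref{omegaqiandqi} is a red herring. The missing lemma is that for \emph{every} $C \in \mathsf{sCoCoalg}^0_{\F}$ the canonical map
\[
\mathsf{Bar}(\mathbb{\Omega}(C)) \longrightarrow \mathsf{Bar}(\widehat{\mathbb{\Omega}}(C))
\]
is a quasi-isomorphism. Once you have this, the argument closes immediately: the unit $\mathcal{N}_*(C) \to \mathsf{Bar}(\mathbb{\Omega}(C))$ is a quasi-isomorphism by derived Koszul duality, and $\mathsf{Bar}$ preserves quasi-isomorphisms, so an $\widehat{\mathbb{\Omega}}$-quasi-isomorphism $f$ yields a quasi-isomorphism $\mathcal{N}_*(f)$ by $2$-out-of-$3$ in the evident commutative ladder. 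The paper proves the missing lemma by observing that $\widehat{\mathbb{\Omega}}(C)$ is defined as a homotopy pushout of dg algebras along the localization map $\mathbb{\Omega}(R[\bigvee_{P_C} S^1]) \to \mathbb{\Omega}(R[\bigvee_{P_C} \mathbb{S}^1])$, that $\mathsf{Bar}$ (being a right Quillen equivalence) sends homotopy pushouts to homotopy pushouts, and that $\mathsf{Bar}(\mathbb{\Omega}(R[\iota]))$ is a quasi-isomorphism because $\iota \colon S^1 \to \mathbb{S}^1$ is a weak homotopy equivalence and the Koszul-duality unit identifies this with $\mathcal{N}_*(R[\iota])$.

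Your proposed alternative route---reduce to $C = \F[S]$ and then ``extend to arbitrary $C$ by means of a cofibrant resolution by objects of the form $\F[S]$''---does not work as stated: there is no reason a general connected simplicial cocommutative $\F$-coalgebra admits such a resolution, and even if it did, you would need to know that $\widehat{\mathbb{\Omega}}$-quasi-isomorphisms are closed under the relevant colimits, which is essentially circular.
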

\begin{proof}
The dg algebra $\mathbb{\Omega}(C)$ is flat as a module over $\F$, consequently, it is left proper in the sense of \cite{BCL} (see \cite[Theorem 2.14]{BCL}). It follows that the dg algebra $\widehat{\mathbb{\Omega}}(C)$ is a model for the derived localization of $\mathbb{\Omega}(C)$ at the set of $0$-cycles given by $P_C$ (Remark \ref{derivedlocalization}). 
Suppose now that $f\colon C \to C'$ is a morphism in $\W_{\mathbb{\Omega}}$, that is, the induced map \[\mathbb{\Omega}(f)\colon \mathbb{\Omega}(C) \to \mathbb{\Omega}(C')\] is a quasi-isomorphism. Then the induced map 
\[H_0(\mathbb{\Omega}(f)) \colon H_0(\mathbb{\Omega}(C)) \xrightarrow{\cong} H_0(\mathbb{\Omega}(C')) \]
is an isomorphism of fundamental bialgebras and consequently induces an isomorphism between the respective monoids of monoid-like elements. Hence, the induced map $P_C \to P_{C'}$ yields a bijection \[[P_C] \xrightarrow{\cong} [P_{C'}]\] between the corresponding sets of homology classes. By the invariance of derived localization under quasi-isomorphisms of dg algebras (see \cite[Proposition 3.5]{BCL}, cf. Proposition \ref{invariance}), it follows that $\widehat{\mathbb{\Omega}}(f)\colon \widehat{\mathbb{\Omega}}(C) \to \widehat{\mathbb{\Omega}}(C')$ is a quasi-isomorphism. This shows $\W_{\mathbb{\Omega}} \subseteq \W_{\widehat{\mathbb{\Omega}}}$. The inclusion is clearly strict, since $\F[\iota] \colon \F[S^1] \to \F[\mathbb{S}^1]$ is an $\widehat{\mathbb{\Omega}}$-quasi-isomorphism, but not an $\mathbb{\Omega}$-quasi-isomorphism.

Now let $f\colon C \to C'$ be a map in  $\mathsf{sCoCoalg}^0_\F$ which induces a quasi-isomorphism \[\widehat{\mathbb{\Omega}}(f)\colon \widehat{\mathbb{\Omega}}(C) \to \widehat{\mathbb{\Omega}}(C').\] 
%that is, the map
%$$\mathcal{L}_{\iota} (\mathfrak{X}(f)))\colon \mathcal{L}_{\iota} (\mathfrak{X}(C)) \to  \mathcal{L}_{\iota} (\mathfrak{X}(C'))$$ is a quasi-isomorphism of dg algebras. Moreover, this is a quasi-isomorphism of naturally augmented algebras. 
The bar construction sends quasi-isomorphisms of augmented algebras to quasi-isomorphisms of dg conilpotent coalgebras (Theorem \ref{barcobar} and Remark \ref{derived_koszul}), so the induced map
\begin{eqnarray} \label{barlocalizedcobar0}
\mathsf{Bar} (\widehat{\mathbb{\Omega}}(f))\colon \mathsf{Bar}(\widehat{\mathbb{\Omega}}(C)) \to \mathsf{Bar}(\widehat{\mathbb{\Omega}}(C'))
\end{eqnarray}
is a quasi-isomorphism. We claim that for any $C \in \mathsf{sCoCoalg}^0_R$, the canonical map 
\begin{eqnarray}\label{baroflocalizedcobar}
    \mathsf{Bar}(\mathbb{\Omega}(C)) \to \mathsf{Bar}(\widehat{\mathbb{\Omega}}(C))
    \end{eqnarray}
is a quasi-isomorphism. To see this, we first recall that the bar construction $\mathsf{Bar}$ preserves homotopy pushouts (Remark \ref{derived_koszul}). Thus, since $\widehat{\mathbb{\Omega}}(C)$ is defined by the homotopy pushout of dg algebras in \eqref{pushout}, it suffices to observe that the map
\begin{eqnarray*}\label{baroflocalizedcobar2}
  \mathsf{Bar} (\mathbb{\Omega} (R[\iota]) ) \colon \mathsf{Bar}(\mathbb{\Omega}(R[S^1])) \to \mathsf{Bar}(\mathbb{\Omega}(R[\mathbb{S}^1]))
    \end{eqnarray*}
is a quasi-isomorphism. This holds because the map $\mathsf{Bar} (\mathbb{\Omega} (R[\iota]) )$ fits in a commutative square 
\[\begin{tikzcd}
	{ \mathsf{Bar}(\mathbb{\Omega}(R[S^1]))} & {\mathsf{Bar}(\mathbb{\Omega}(R[\mathbb{S}^1]))} \\
	{\mathcal{N}_*(R[S^1])} & {\mathcal{N}_*(R[\mathbb{S}^1])}
	\arrow[from=1-1, to=1-2]
	\arrow["\simeq"', from=2-1, to=2-2]
	\arrow["\simeq", from=2-1, to=1-1]
	\arrow["\simeq"', from=2-2, to=1-2]
\end{tikzcd}\]
and the vertical maps, given by the unit transformation of the $(\mathsf{Cobar}, \mathsf{Bar})$ adjunction, are quasi-isomorphisms by derived Koszul duality (Theorem \ref{barcobar} and Remark \ref{derived_koszul}). 
%The vertical maps are given by the unit of the $(\mathsf{Cobar}, \mathsf{Bar})$ adjunction and these are quasi-isomorphisms by Theorem \ref{barcobar}. 
The bottom horizontal map is a quasi-isomorphism since $\iota \colon S^1 \to \mathbb{S}^1$ is a weak homotopy equivalence. This shows the claim that \eqref{baroflocalizedcobar} is a quasi-isomorphism. Combining \eqref{barlocalizedcobar0} and \eqref{baroflocalizedcobar} and Remark \ref{derived_koszul}, we observe that in the commutative diagram
\[\begin{tikzcd}
	{\mathsf{Bar}(\widehat{\mathbb{\Omega}}(C))} & {\mathsf{Bar}(\widehat{\mathbb{\Omega}}(C'))} \\
	{\mathsf{Bar}(\mathbb{\Omega}(C))} & {\mathsf{Bar}(\mathbb{\Omega}(C'))} \\
	{\mathcal{N}_*(C)} & {\mathcal{N}_*(C')}
	\arrow["\simeq", from=1-1, to=1-2]
	\arrow["\simeq", from=2-1, to=1-1]
	\arrow[from=2-1, to=2-2]
	\arrow["\simeq"', from=2-2, to=1-2]
	\arrow["{\mathcal{N}_*(f)}"', from=3-1, to=3-2]
	\arrow["\simeq", from=3-1, to=2-1]
	\arrow["\simeq"', from=3-2, to=2-2],
\end{tikzcd}\]
all the maps labeled by $\simeq$ are quasi-isomorphisms of dg coalgebras. Thus, $\mathcal{N}_*(f)$ is a quasi-isomorphism as well, so it follows that  $\mathcal{W}_{\widehat{\mathbb{\Omega}}} \subseteq \mathcal{W}_{\text{q.i.}}.$
The inclusion is strict, since there are $\F$-equivalences $f \colon S \to S'$ in $\mathsf{sSet}_0$ which are not $\pi_1$-isomorphisms, so $R[f]$ is not an $\widehat{\mathbb{\Omega}}$-quasi-isomorphism by Theorem \ref{LOmegareflects}. 
%(cf. Proposition \ref{inclusions2} below).  
\end{proof}

The analogous result for the respective classes of weak equivalences in $\mathsf{sSet}_0$ also holds for general commutative rings $R$.

\begin{proposition} \label{inclusions2} The classes of morphisms $\W_{J, R}$, $\W_{\pi_1\text{-}R}$, $\W_{R}$ in $\mathsf{sSet}_0$
 satisfy the following strict inclusions
 $$\W_{J, R} \subsetneq \W_{\pi_1\text{-}R} \subsetneq \W_{R}.$$
\end{proposition}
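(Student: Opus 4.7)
My plan is to deduce both inclusions by transporting the analogous coalgebra-side statements from Proposition \ref{inclusions} along the simplicial chains functor $R[-]$, and by appealing to a standard spectral sequence for the second inclusion.

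For $\W_{J,R} \subseteq \W_{\pi_1\text{-}R}$, let $f\colon S \to S'$ be a categorical $R$-equivalence. By Proposition \ref{RcateqandOmegaqi}, $R[f]$ is an $\mathbb{\Omega}$-quasi-isomorphism. The proof of the first inclusion of Proposition \ref{inclusions} transfers verbatim to arbitrary commutative rings once one observes that $\mathbb{\Omega}(R[S])$ is the tensor $R$-algebra on the free $R$-module $s^{-1}\overline{\mathcal{N}_*(R[S])}$, hence flat and therefore left proper by \cite[Theorem 2.14]{BCL}; invariance of derived localization (\cite[Proposition 3.5]{BCL} together with Proposition \ref{invariance}) then lets one conclude that $R[f]$ is an $\widehat{\mathbb{\Omega}}$-quasi-isomorphism, and Theorem \ref{LOmegareflects} yields $f \in \W_{\pi_1\text{-}R}$. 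This is the main technical point, and it hinges essentially on the freeness of $R[S_n]$ over $R$ for every $n$.

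For $\W_{\pi_1\text{-}R} \subseteq \W_R$, I apply the Cartan--Leray (equivariant Serre) spectral sequence
\[
E^2_{p,q} = H_p\bigl(\pi_1(|S|);\, H_q(\overline{|S|};R)\bigr) \Longrightarrow H_{p+q}(|S|;R)
\]
associated with the universal covering $\overline{|S|} \to |S|$. If $f$ is a $\pi_1$-$R$-equivalence, then $\pi_1(|f|)$ is an isomorphism of groups and $H_*(\overline{|f|};R)$ is an isomorphism of the associated $\pi_1$-modules (identified along $\pi_1(|f|)$), so the induced map on $E^2$-pages is an isomorphism; the standard comparison theorem for convergent spectral sequences then gives that $H_*(|f|;R)$ is an isomorphism, so $f \in \W_R$.

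Strictness of both inclusions is exhibited by explicit examples. For the first, take the simplicial localization $\iota\colon S^1 \hookrightarrow \mathbb{S}^1$: it is a weak homotopy equivalence, and both $|S^1|$ and $|\mathbb{S}^1|$ are models of $K(\mathbb{Z},1)$ with contractible universal covers, so $\iota \in \W_{\pi_1\text{-}R}$; however, $\Lambda(S^1;R)$ is the polynomial algebra $R[x]$ concentrated in degree zero, while by Remark \ref{nscAdams2}
\[
\Lambda(\mathbb{S}^1;R) \simeq C_*(\Omega |S^1|;R) \simeq R[\mathbb{Z}],
\]
so on $H_0$ the map $\Lambda(\iota;R)$ is the strict inclusion $R[x] \hookrightarrow R[x,x^{-1}]$, showing $\iota \notin \W_{J,R}$. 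For the second, realize a nontrivial acyclic group $G$ (i.e.\ one with $\widetilde{H}_*(BG;\mathbb{Z})=0$, whose existence is classical by Baumslag--Dyer--Heller) as a reduced simplicial set $BG \in \mathsf{sSet}_0$; then $BG \to \mathbb{\Delta}^0$ is an $R$-equivalence for every commutative ring $R$ by the universal coefficient theorem, but it is not a $\pi_1$-isomorphism since $\pi_1(|BG|) = G \neq 1$, so it is not a $\pi_1$-$R$-equivalence.
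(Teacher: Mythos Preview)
Your proof is correct and follows essentially the same approach as the paper. For the first inclusion you argue exactly as the paper does (flatness of $\mathbb{\Omega}(R[S])$, then transport the proof of Proposition~\ref{inclusions} and apply Theorem~\ref{LOmegareflects}), and you use the same example $\iota\colon S^1 \to \mathbb{S}^1$ for strictness; for the second strict inclusion the paper simply says ``well known'', and your Cartan--Leray argument and acyclic-group example are standard ways to unpack that.
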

\begin{proof}
Suppose $f \in \W_{J, R}$. By Proposition \ref{RcateqandOmegaqi}, we have $R[f] \in \W_{\mathbb{\Omega}}$. The underlying graded $R$-module of the normalized chains on a simplicial set is flat over $R$, so $\mathbb{\Omega}(R[f])$ is a map between $R$-flat dg algebras. The same proof as for the first inclusion in Proposition \ref{inclusions} implies that $R[f] \in \W_{\widehat{\mathbb{\Omega}}}$. By Theorem \ref{LOmegareflects}, this means that $f \in \W_{\pi_1\text{-}R}$. This inclusion is clearly strict, since $\iota \colon S^1 \to \mathbb{S}^1$ is a $\pi_1$-$R$-equivalence that is not a categorical $R$-equivalence. 
The second strict inclusion is well known. 
%(Alternatively, this also follows from Theorem \ref{LOmegareflects} and Proposition \ref{omegaqiandqi}.)
\end{proof}

\section{A method for constructing model structures} \label{prelim_modelcat}

In this section we will discuss a useful and general method for constructing model structures on a locally presentable category $\C$ that are (left--)induced by a combinatorial model category $\M$ along an accessible functor $F \colon \C \to \M$. This method follows the proof in \cite[Section 4]{Ra} and is based on Smith's recognition theorem for combinatorial model category structures. We will review this method in a general abstract setting, as this will allow us to give a uniform treatment of the construction of the model structures in the next section. 

\subsection{Using Smith's recognition theorem} Let $\C$ be a locally presentable category, let $\M$ be a combinatorial model category, and let $F \colon \C \to \M$ be an accessible functor. Let $\kappa$ be a fixed regular cardinal such that: 
\begin{itemize}
\item[(a)] $\C$ is locally $\kappa$-presentable; 
\item[(b)] $\M$ is $\kappa$-combinatorial, that is, the category $\M$ is locally $\kappa$-presentable and there are generating sets of cofibrations and trivial cofibrations that are given by morphisms between $\kappa$-presentable objects;
\item[(c)] $F$ is $\kappa$-accessible. 
\end{itemize}
Then we consider the following classes of morphisms in $\C$.
\begin{itemize}
\item \emph{Weak equivalences:} $W_{F} = F^{-1}(W_{\M})$. In other words, the class of weak equivalences $W_{F}$ is the inverse image of the weak equivalences $W_{\M}$ in $\M$ under the functor $F$. The class of weak equivalences $W_{\M}$ is accessible and accessibly embedded in $\M^{\to}$ (see, e.g., \cite{Be, Ra2, RR}). Since $F$ is accessible, it follows that $W_{F}$ is also accessible and accessibly embedded in $\C^{\to}$. Moreover, $W_{F}$ satisfies the 2-out-of-3 property. 
\item \emph{Cofibrations:} Consider a set of morphisms in $\C$,
$$\hspace{2cm} I \subseteq \{ i \colon A \to B \ | \ A, B \text{ are }\kappa-\text{presentable, } F(i) \text{ is a cofibration in }\M\}.$$
The class of cofibrations $\cof(I)$ in $\C$ is the cofibrant closure of $I$, that is, the smallest class of morphisms in $\C$ which contains $I$ and is closed under pushouts, retracts, and transfinite compositions. 

For most applications, it will suffice to let $I$ be exactly the set of morphisms between $\kappa$-presentable objects (one from each isomorphism class) which become cofibrations in $\M$ after applying $F$. 
\end{itemize}
By Smith's recognition theorem (see \cite[Theorem 1.7]{Be}, \cite[Theorem 4.1]{Ra}), we have the following immediate conclusion.

\begin{theorem} \label{Smith_thm} 
Let $\C$, $\M$, and $F\colon \C \to \M$ be as above. Then the classes of morphisms in $\C$,
\begin{center}
Weak equivalences: $=W_{F}$ and Cofibrations: $=\cof(I),$
\end{center}
determine a combinatorial model category structure on $\C$ if and only if the following conditions are satisfied:
\begin{enumerate}
\item $I-\inj \subseteq W_{F}$.
\item The class of trivial cofibrations $\cof(I) \cap W_{F}$ is closed under pushouts and transfinite compositions. 
\end{enumerate}
\end{theorem}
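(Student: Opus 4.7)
The plan is to reduce this theorem to a direct invocation of J. Smith's recognition theorem for combinatorial model categories, as stated in \cite[Theorem 1.7]{Be} or \cite[Theorem 4.1]{Ra}. That theorem takes as input a locally presentable category equipped with (i) a class $W$ of weak equivalences that is accessible and accessibly embedded in the arrow category and satisfies 2-out-of-3, and (ii) a set $I$ of morphisms such that $I\text{-}\inj \subseteq W$ and such that $\cof(I) \cap W$ is closed under pushouts and transfinite compositions, and produces a cofibrantly generated model structure with cofibrations $\cof(I)$ and weak equivalences $W$. The task, therefore, is simply to match the hypotheses of our theorem with those of Smith's.

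For the ``if'' direction, I would verify the above hypotheses for the pair $(W_F, I)$. Hypotheses on $I$ are exactly conditions (1) and (2) in the statement; closure of the trivial cofibrations under retracts is automatic, since any class containing the identities and closed under pushouts is closed under retracts. The 2-out-of-3 property for $W_F$ is inherited from $W_\M$. Finally, the accessibility and accessible embedding of $W_F$ in $\C^{\to}$ is exactly what was observed in the preamble of the theorem: the functor $F^{\to} \colon \C^{\to} \to \M^{\to}$ induced by $F$ on arrow categories is $\kappa$-accessible, and pulling back an accessible, accessibly embedded subcategory along an accessible functor yields another accessible, accessibly embedded subcategory, so $W_F = (F^{\to})^{-1}(W_\M)$ inherits both properties.

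The ``only if'' direction is immediate from the model category axioms. In any model category, the class of trivial cofibrations is closed under pushouts and transfinite compositions, giving condition (2). For condition (1), any $f \in I\text{-}\inj$ lies in $\cof(I)\text{-}\inj$ (since $I$ generates $\cof(I)$ under the cofibrant closure), and this class is precisely the class of trivial fibrations once $\cof(I)$ is the class of cofibrations; in particular, $f \in W_F$.

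The main ``obstacle'' is really only pro forma: namely, the transfer of accessibility from $W_\M$ to $W_F$ along $F$. This has already been carried out in the setup surrounding the theorem, and uses only standard closure properties of accessible subcategories under accessible preimages. The substantive content of the theorem is thus condensed into the two verifiable conditions (1) and (2), whose practical value, exactly as in the proof method of \cite[Section 4]{Ra}, is that they are far more tractable to check in concrete examples than the full set of model category axioms.
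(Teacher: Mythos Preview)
Your proposal is correct and takes essentially the same approach as the paper: both reduce the statement directly to Smith's recognition theorem (\cite[Theorem 1.7]{Be}, \cite[Theorem 4.1]{Ra}), with the accessibility of $W_F$ already established in the setup. The paper's proof is in fact just the one-line citation, so your more detailed verification of the hypotheses and the ``only if'' direction simply spells out what the paper leaves implicit.
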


\subsection{Techniques} For the applications of Theorem \ref{Smith_thm}, it is helpful to identify specific properties that ensure conditions (1) and (2) and 
are easy to verify in practice. First, we observe that the following conditions on $F$ ensure that condition (2) of Theorem \ref{Smith_thm} is satisfied.

\begin{proposition}  \label{Smith_thm2}
Let $\C$, $\M$, and $F \colon \C \to \M$ be as in Theorem \ref{Smith_thm}. Condition (2) holds if $F$ preserves small colimits. More generally, condition (2) holds  if the following weaker assumptions are satisfied: 
\begin{itemize}
\item[(i)] $F$ sends every pushout square in $\C$
$$
\xymatrix{
A \ar[r] \ar[d]_i & C \ar[d] \\
B \ar[r] & D
}
$$
where $i \colon A \to B$ is in $\cof(I) \cap W_{F}$, to a homotopy pushout square in $\M$.

\item[(ii)] $F$ sends every transfinite composition in $\C$, 
$$A_0 \to A_1 \to \cdots \to A_{\lambda} \to \cdots \to \mathrm{colim}_{\lambda} A_{\lambda},$$
where each morphism $A_i \to A_{i+1}$ is in $\cof(I) \cap W_{F}$, to a homotopy colimit diagram in $\M$.

\end{itemize}
\end{proposition}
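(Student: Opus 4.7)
The plan is to verify condition (2) of Theorem \ref{Smith_thm} by treating closure under pushouts and closure under transfinite compositions separately. The class $\cof(I)$ is already closed under both operations by its definition as a cofibrant closure, so the entire content of condition (2) is to show that the subclass $W_F \subseteq \C^{\to}$ is stable under forming pushouts along, and transfinite compositions of, morphisms in $\cof(I) \cap W_F$. Since $W_F = F^{-1}(W_\M)$, this reduces to a statement about what $F$ does to these colimits in $\C$, which is exactly what hypotheses (i) and (ii) are designed to control.

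For the pushout part, given a pushout square in $\C$ with left-hand map $i \in \cof(I) \cap W_F$, hypothesis (i) produces a homotopy pushout square in $\M$ whose left-hand map $F(i)$ lies in $W_\M$. I will then invoke the standard fact that in any model category, homotopy pushouts preserve weak equivalences in each parallel pair: if one of the two parallel maps of a homotopy pushout square is a weak equivalence, then so is the other. Applying this yields $F(j) \in W_\M$, i.e.\ $j \in W_F$. For the transfinite composition part, given a chain of trivial cofibrations with colimit cocone $A_0 \to \mathrm{colim}_\lambda A_\lambda$, hypothesis (ii) tells us that $F$ sends this to a homotopy colimit diagram in $\M$ all of whose transition maps are weak equivalences; the standard fact that the canonical map from the initial object of such a chain to its homotopy colimit is a weak equivalence then finishes the job.

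To reduce the stronger statement (that $F$ preserves small colimits suffices) to the weaker hypotheses (i) and (ii), I will note that in that case $F(I) \subseteq \cof_\M$ together with closure of $\cof_\M$ under pushouts, transfinite compositions, and retracts implies $F(\cof(I)) \subseteq \cof_\M$. Hence a pushout in $\C$ along a morphism in $\cof(I)$ is sent by $F$ to an ordinary pushout in $\M$ along a cofibration, which is automatically a homotopy pushout; similarly, a transfinite composition of morphisms in $\cof(I)$ is sent to an ordinary transfinite composition of cofibrations, which is a homotopy colimit. Thus hypotheses (i) and (ii) hold, and the argument of the previous paragraph applies.

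The main obstacle, which is essentially notational rather than substantive, is to state cleanly what is meant by ``homotopy pushout'' and ``homotopy colimit'' in hypotheses (i) and (ii) so that the standard invariance results are directly applicable; this is the usual interpretation via cofibrant replacement in the appropriate Reedy or projective model structure on diagrams, and no further model-categorical subtlety beyond the two invariance statements cited above is needed.
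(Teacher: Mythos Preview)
Your proposal is correct and fills in exactly the standard details that the paper omits: the paper states this proposition as an observation with no proof, and your argument---reducing to invariance of homotopy pushouts and homotopy directed colimits under weak equivalences in $\M$, and then deducing the colimit-preserving case by showing $F(\cof(I)) \subseteq \cof_{\M}$---is the intended one.
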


Condition (1) of Theorem \ref{Smith_thm} is generally more difficult to verify in practice and some \emph{ad hoc} argument that uses the particularities of the context might be necessary. The following proposition states some general properties which ensure that condition (1) is satisfied (cf. \cite[Lemma 4.4]{Ra}).

\begin{proposition} \label{Smith_thm3}
Let $\C$, $\M$, and $F \colon \C \to \M$ be as in Theorem \ref{Smith_thm}. Suppose that every morphism $f \colon X \to Y$ between $\kappa$-presentable objects in $\C$ admits a factorization $f = p i$ such that:
\begin{itemize}
\item[(a)] $i \in \cof(I)$;
\item[(b)] $F(p)$ is a trivial fibration in $\M$. 
\end{itemize}
Then condition (1) of Theorem \ref{Smith_thm} is satisfied. 
\end{proposition}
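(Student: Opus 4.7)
My strategy is a retract argument that reduces to the case of morphisms between $\kappa$-presentable objects, where the hypothesis applies directly.

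\emph{Step 1.} Let $g \colon A \to B$ be in $I-\inj$ with both $A$ and $B$ $\kappa$-presentable. By the hypothesis, $g$ factors as $g = p \circ i$ with $i \colon A \to Z$ in $\cof(I)$ and $F(p)$ a trivial fibration in $\M$. Since $\cof(I)$ has the left lifting property against $I-\inj$, the solid square
\[
\begin{tikzcd}
A \arrow[r, equals] \arrow[d, "i"'] & A \arrow[d, "g"] \\
Z \arrow[r, "p"'] \arrow[ur, dashed, "s"] & B
\end{tikzcd}
\]
admits a diagonal filler $s \colon Z \to A$ satisfying $s \circ i = \id_A$ and $g \circ s = p$. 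This exhibits $g$ as a retract of $p$ in the arrow category $\C^{\to}$, so that $F(g)$ is a retract of $F(p)$ in $\M^{\to}$. Since $F(p)$ is a trivial fibration, in particular a weak equivalence in $\M$, and $W_\M$ is closed under retracts, we conclude $F(g) \in W_\M$, i.e., $g \in W_F$.

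\emph{Step 2.} For an arbitrary $g \in I-\inj$, I reduce to Step 1 via a colimit decomposition. Since $\C$ is locally $\kappa$-presentable, so is the arrow category $\C^{\to}$, and its $\kappa$-presentable objects are exactly the morphisms between $\kappa$-presentable objects of $\C$. Because $I$ consists of morphisms between $\kappa$-presentable objects, the class $I-\inj$ is closed under $\kappa$-filtered colimits in $\C^{\to}$ and forms an accessibly embedded full subcategory (by standard facts about injectivity classes in accessible categories). Hence every $g \in I-\inj$ can be written as a $\kappa$-filtered colimit $g = \text{colim}_\alpha \, g_\alpha$ with each $g_\alpha \in I-\inj$ and between $\kappa$-presentable objects of $\C$. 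By Step 1, each $F(g_\alpha) \in W_\M$. Since $F$ is $\kappa$-accessible, $F(g) = \text{colim}_\alpha \, F(g_\alpha)$. The class $W_\M$ is closed under $\kappa$-filtered colimits (a standard property of combinatorial model categories, possibly after a compatible enlargement of $\kappa$), so $F(g) \in W_\M$.

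\emph{Main obstacle.} The conceptual core is the retract construction of Step 1, which depends essentially on the hypothesis; it is otherwise a direct application of the universal property of the weak factorization system $(\cof(I), I-\inj)$. The technical subtlety lies in Step 2: ensuring that $I-\inj$ admits enough $\kappa$-presentable representatives and that $W_\M$ is closed under $\kappa$-filtered colimits. Both are accessibility-theoretic facts that can be arranged from the standing assumptions that $\M$ is $\kappa$-combinatorial, $F$ is $\kappa$-accessible, and $I$ consists of maps between $\kappa$-presentable objects, possibly after replacing $\kappa$ by a sufficiently large regular cardinal that remains compatible with all the hypotheses.
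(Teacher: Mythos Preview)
Your proposal is correct and follows the same approach as the proof the paper defers to in \cite[Proposition 4.5]{Ra}: the retract argument of Step~1 combined with an accessibility reduction in Step~2 to morphisms between $\kappa$-presentable objects. The delicate point you flag---that every $g \in I\text{-}\inj$ is a $\kappa$-filtered colimit of maps in $I\text{-}\inj$ between $\kappa$-presentable objects, and that $W_{\M}$ is closed under such colimits---is precisely what is handled there, and indeed may require enlarging $\kappa$ compatibly with the standing hypotheses.
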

\begin{proof}
The proof uses the same arguments as in \cite[Proposition 4.5]{Ra}.
\end{proof}

\begin{remark}
The technical assumptions of Proposition \ref{Smith_thm3} are often satisfied simply by using an appropriate mapping cylinder factorization. (This was the case in \cite[Lemma 4.4]{Ra}.) Property (b) is often the most tricky property to verify in practice. For that reason, it is desirable  to choose $\M$ appropriately. (For example, this was chosen to be the \emph{projective} model category of simplicial $R$-modules in \cite{Ra}.) Let us mention that in the case where $I$ is the maximal choice, i.e. the set of all morphisms between $\kappa$-presentable objects in $\C$ which become cofibrations in $\M$, then (a) clearly holds as long as: 
\begin{itemize}
\item[(a$_1$)] $F(i)$ is a cofibration in $\M$;
\item[(a$_2$)] the codomain of $i$ is $\kappa$-presentable in $\C$ (whenever $X$ and $Y$ are $\kappa$-presentable).  
\end{itemize}
\end{remark}  

\begin{proposition}
Let $\C$, $\M$, and $F \colon \C \to \M$ be as in Theorem \ref{Smith_thm} and suppose that conditions (1) and (2) of Theorem \ref{Smith_thm} are satisfied. Then $$F \colon \C \to \M$$ is a left Quillen functor (with respect to the model category structure of Theorem \ref{Smith_thm}) if and only if $F$ preserves small colimits.
\end{proposition}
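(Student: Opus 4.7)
The plan is to split the biconditional into its two directions, each of which is essentially a formal consequence of the setup.

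For the ``only if'' direction, assume $F$ is a left Quillen functor. By definition, $F$ is a left adjoint, so it preserves all small colimits. (Strictly speaking, one should note that the right adjoint exists since both $\C$ and $\M$ are locally presentable and $F$ is accessible, so the adjoint functor theorem for locally presentable categories applies; a left Quillen functor is a left adjoint by definition, and a left adjoint always preserves colimits.)

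For the ``if'' direction, assume $F$ preserves small colimits. Because $F$ is accessible between locally presentable categories and preserves colimits, the special adjoint functor theorem for locally presentable categories produces a right adjoint $G \colon \M \to \C$. It remains to verify that $F$ preserves cofibrations and trivial cofibrations. Preservation of weak equivalences is immediate from the very definition $W_F = F^{-1}(W_\M)$, and in particular $F$ sends trivial cofibrations in $\C$ to trivial cofibrations in $\M$. For cofibrations, note that the generators $I$ were chosen so that $F(i)$ is a cofibration in $\M$ for each $i \in I$. The class of cofibrations $\cof(I)$ in $\C$ is obtained from $I$ by closing under pushouts, transfinite compositions, and retracts. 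Since $F$ preserves small colimits, it preserves pushouts and transfinite compositions, and every functor preserves retracts; combined with the fact that cofibrations in $\M$ are themselves closed under these three operations, it follows that $F$ sends every morphism in $\cof(I)$ to a cofibration in $\M$.

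There is no real obstacle here: the ``if'' direction is the more substantial one, but it reduces entirely to a bookkeeping argument using the closure properties of $\cof(I)$ on one side and of the cofibrations of $\M$ on the other, together with the tautological preservation of weak equivalences built into the definition of $W_F$. The only subtlety worth explicitly flagging is the appeal to the adjoint functor theorem for locally presentable categories, which is needed to promote ``preserves small colimits'' to the existence of the right adjoint required in the definition of a left Quillen functor.
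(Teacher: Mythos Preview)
Your proof is correct and follows essentially the same approach as the paper's: both appeal to the special adjoint functor theorem for locally presentable categories to handle the existence of the adjoint, and both note that preservation of weak equivalences is tautological from $W_F = F^{-1}(W_\M)$. The paper simply asserts that preservation of cofibrations is ``immediate,'' whereas you spell out the closure argument explicitly; your added detail is correct and useful. (One small stylistic point: in your ``only if'' parenthetical, the appeal to the adjoint functor theorem is unnecessary --- if $F$ is assumed left Quillen then it is a left adjoint by definition, and that alone gives colimit preservation.)
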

\begin{proof} First, note that $F$ preserves weak equivalences by definition. By the (left) special adjoint functor theorem for locally presentable categories (see \cite{AR, LuHTT, NRS}), $F$ is a left adjoint if and only if $F$ preserves small colimits. In this case, it is immediate that $F$ also preserves cofibrations. 
\end{proof}

On the other hand, condition (1) of Theorem \ref{Smith_thm} is certainly satisfied when (for some set $I$) the class $I-\inj$ is already known to be contained in a class of weak equivalences $W_{\C}$ in $\C$ which is preserved by $F$. In particular, Smith's recognition theorem (\cite[Theorem 1.7]{Be}, \cite[Theorem 4.1]{Ra}) has the following immediate consequence for the existence of left Bousfield localizations of combinatorial model categories.

\begin{theorem} \label{loc_modelcat}
Let $(\C, \cof_{\C}, W_{\C}, \mathrm{Fib}_{\C})$ and $(\M, \cof_{\M}, W_{\M}, \mathrm{Fib}_{\M})$ be combinatorial model categories and let $F \colon \C \to \M$ be an accessible functor which preserves the weak equivalences. 

Then the left Bousfield localization of $\C$ with weak equivalences $W_F := F^{-1}(W_{\M})$ exists, and it is again a combinatorial model category, if and only if $\cof_{\C} \cap W_F$ is closed under pushouts and transfinite compositions. 

In particular, this holds if $F$ is additionally a left Quillen functor. 
\end{theorem}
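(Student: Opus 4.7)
The plan is to derive Theorem \ref{loc_modelcat} directly from Theorem \ref{Smith_thm} by taking $I$ to be a set of generating cofibrations of the original model structure on $\C$. With this choice, $\cof(I) = \cof_{\C}$, so that the combinatorial structure produced, if it exists, has the same cofibrations as $\C$; together with the weak equivalences $W_F := F^{-1}(W_{\M})$, this is precisely the candidate left Bousfield localization of $\C$ at the class of morphisms detected by $F$.

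First I would fix a regular cardinal $\kappa$ large enough that $\C$ is locally $\kappa$-presentable, $\M$ is $\kappa$-combinatorial with a set $I$ of generating cofibrations between $\kappa$-presentable objects, and $F$ is $\kappa$-accessible. The paragraph preceding Theorem \ref{Smith_thm} already records that $W_F$ is then accessible and accessibly embedded in $\C^{\to}$ and satisfies the 2-out-of-3 property, so no additional verification is needed there. The hypothesis that $F$ preserves weak equivalences gives the inclusion $W_{\C} \subseteq W_F$, which in particular shows that the resulting structure is a Bousfield localization rather than some unrelated combinatorial structure with the same cofibrations. The two remaining conditions of Theorem \ref{Smith_thm} are now easy to read off: for condition (1), since $I$ is a generating set of cofibrations for the original structure on $\C$, the class $I$-$\inj$ consists of trivial fibrations in $\C$, hence lies in $W_{\C} \subseteq W_F$; condition (2) is exactly the stated hypothesis that $\cof_{\C} \cap W_F$ is closed under pushouts and transfinite compositions. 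The converse implication of the ``if and only if'' is immediate: in any model category the class of trivial cofibrations is closed under pushouts and transfinite compositions.

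The main remaining task is to verify the closure condition (2) under the additional assumption in the ``in particular'' statement, namely that $F$ is a left Quillen functor. Suppose $i \colon A \to B$ lies in $\cof_{\C} \cap W_F$. Then $F(i)$ is a cofibration in $\M$ (as $F$ is left Quillen) and a weak equivalence in $\M$ (as $i \in W_F$), hence a trivial cofibration. For a pushout of $i$ along an arbitrary morphism in $\C$, the pushout leg is a cofibration in $\C$ by closure of $\cof_{\C}$, and since $F$ is a left adjoint it preserves pushouts, so the image of the pushout in $\M$ is a pushout of the trivial cofibration $F(i)$, therefore a trivial cofibration; hence the pushout leg in $\C$ lies in $W_F$. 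The transfinite composition case is entirely analogous, using that $F$ preserves transfinite colimits and that trivial cofibrations in $\M$ are closed under such colimits. I do not anticipate a substantial obstacle: the hard part is already packaged inside Theorem \ref{Smith_thm}, and the choice of $I$ as generating cofibrations for $\C$ makes condition (1) automatic, reducing the whole theorem to the closure statement (2).
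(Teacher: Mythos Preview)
Your proposal is correct and follows exactly the approach indicated in the paper: apply Smith's recognition theorem with $I$ a generating set of cofibrations for $\C$, so that $I\text{-}\inj \subseteq W_{\C} \subseteq W_F$ gives condition (1) automatically, and condition (2) is the stated hypothesis; the ``in particular'' clause then follows from $F$ preserving colimits and cofibrations. One small slip: in your second paragraph you write ``$\M$ is $\kappa$-combinatorial with a set $I$ of generating cofibrations,'' but $I$ must be a generating set for $\C$ (as you correctly state elsewhere); note also that the paper invokes Smith's theorem in its general form (\cite[Theorem 1.7]{Be}, \cite[Theorem 4.1]{Ra}) rather than the paper's Theorem \ref{Smith_thm}, which nominally requires $F(i)$ to be a cofibration in $\M$---a condition not needed here and not assumed in the general statement.
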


\subsection{Homotopically full and faithful Quillen functors} For our main comparison results about spaces and simplicial coalgebras in Section \ref{sec_fields}, we will be interested in the following property of a Quillen adjunction. 

\begin{proposition} \label{equiv_char_hff}
Let $F \colon \M \rightleftarrows \N \colon G$ be a Quillen adjunction of model categories. The following are equivalent:
\begin{enumerate}
\item For every pair of cofibrant objects $X, Y \in \M$, the induced map of (derived) mapping spaces
$$\map_{\M}(X, Y) \to \map_{\N}(F(X), F(Y))$$
is a weak homotopy equivalence. 
\item  The derived unit map 
$$X \to G(F(X)^f)$$
is a weak equivalence for every cofibrant object $X \in \M$. (Here the morphism $F(X) \to F(X)^f$ is a functorial fibrant replacement in $\N$.)
\item The unit transformation $\mathrm{Id} \Rightarrow \mathbb{R}G \circ \mathbb{L} F$ of the derived adjunction 
$$\mathbb{L} F \colon \ho(\M) \rightleftarrows \ho(\N) \colon \mathbb{R}G$$
is a natural isomorphism.  
\item The left derived functor 
$$\mathbb{L}F \colon \ho(\M) \to \ho(\N)$$
is full and faithful. 
\end{enumerate}
\end{proposition}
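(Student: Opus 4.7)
The plan is to establish the chain of equivalences $(4) \Leftrightarrow (3) \Leftrightarrow (2) \Leftrightarrow (1)$, treating each link separately. The easiest one to dispatch is $(3) \Leftrightarrow (4)$: this is a purely categorical fact about any adjunction $L \dashv R$, namely that $L$ is full and faithful if and only if the unit $\mathrm{Id} \Rightarrow RL$ is a natural isomorphism. One direction uses that $L$ being full and faithful means $\N(LX, LY) \to \M(X, Y)$ has an inverse, which combined with adjunction gives inverses to the unit; the other direction is immediate from the triangle identities. Applying this to the derived adjunction $\mathbb{L}F \dashv \mathbb{R}G$ on homotopy categories yields the equivalence.

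For $(2) \Leftrightarrow (3)$, I would argue that the derived unit map $X \to G(F(X)^f)$ appearing in (2) is, after passage to $\mathrm{Ho}(\M)$, precisely the component at $X$ of the unit transformation of $\mathbb{L}F \dashv \mathbb{R}G$; this holds for cofibrant $X$ by the standard construction of the derived adjunction (the left derived functor is computed on cofibrant replacements and the right derived functor on fibrant replacements). Since every object of $\mathrm{Ho}(\M)$ is isomorphic to one of the form $X$ with $X$ cofibrant, requiring the unit to be an isomorphism at every object is equivalent to requiring it to be an isomorphism (i.e. a weak equivalence in $\M$) on every cofibrant object.

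The remaining link $(1) \Leftrightarrow (2)$ uses the compatibility of the Quillen adjunction with derived mapping spaces. For $X$ cofibrant and $Z$ fibrant in $\N$, the adjunction induces a weak equivalence $\map_{\N}(F(X), Z) \simeq \map_{\M}(X, G(Z))$. Taking $Z = F(Y)^f$ and using that the fibrant replacement $F(Y) \to F(Y)^f$ induces a weak equivalence on $\map_{\N}(F(X), -)$ (since $F(X)$ is cofibrant), the map of (1) fits into a commutative diagram with a composite of the form
\begin{equation*}
\map_{\M}(X, Y) \xrightarrow{(\eta_Y)_*} \map_{\M}(X, G(F(Y)^f)) \xleftarrow{\simeq} \map_{\N}(F(X), F(Y)^f) \xleftarrow{\simeq} \map_{\N}(F(X), F(Y)),
\end{equation*}
where $\eta_Y \colon Y \to G(F(Y)^f)$ is the derived unit. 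Hence (1) is equivalent to $(\eta_Y)_*$ being a weak equivalence for all cofibrant $X, Y$. By a derived Yoneda argument (or simply by specializing to $X = Y$ and chasing $\mathrm{id}_Y$ together with applying $\pi_0$ of the map for varying $X$), this is in turn equivalent to $\eta_Y$ being a weak equivalence for every cofibrant $Y$, i.e.\ to (2).

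The main obstacle is minor but real: one must be careful that the identifications used in step $(1) \Leftrightarrow (2)$ respect the derived unit in a strict enough way to produce the factorization above. This requires either working in a framework (simplicial/cosimplicial frames, or a functorial model) where the derived adjunction on mapping spaces can be chosen compatibly with the unit transformation, or verifying the comparison up to homotopy at the level of $\pi_0$ and promoting it via the enriched Yoneda principle. Once this bookkeeping is settled, the rest of the argument is formal.
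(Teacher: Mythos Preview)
Your proposal is correct and follows essentially the same approach as the paper. The paper's proof is terser: it declares the equivalence of (2), (3), (4) to be well known, observes that (1) $\Rightarrow$ (4) is clear (via $\pi_0$ of mapping spaces), and then proves (2) $\Rightarrow$ (1) using exactly the derived mapping-space adjunction $\map_{\N}(F(X), Z) \simeq \map_{\M}(X, G(Z))$ with $Z = F(Y)^f$ that you invoke; your version simply unpacks the ``well known'' steps and replaces the short route (1) $\Rightarrow$ (4) by a direct (1) $\Rightarrow$ (2) via the same factorization plus Yoneda.
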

\begin{proof} The equivalence of (2), (3) and (4) is well known and clearly (1) implies (4). For the remaining implication recall that, since $F$ and $G$ define a Quillen adjunction, there is an induced weak homotopy equivalence of (derived) mapping spaces 
$$\map_{\M}(F(X), Z) \simeq \map_{\N}(X, G(Z))$$
for a cofibrant object $X$ in $\mathcal{M}$ and a fibrant object $Z$ in $\mathcal{N}$ (see, for instance, \cite[Proposition 17.4.16]{Hir}, where mapping spaces, also known as homotopy function complexes, are constructed via (co)simplicial resolutions). Setting $Z = F(Y)^f$, we may deduce that (2) implies (1).
\end{proof}

\begin{definition}
Let $F \colon \M \rightleftarrows \N \colon G$ be a Quillen adjunction. We say that $F$ is \emph{homotopically full and faithful} if the equivalent conditions of Proposition \ref{equiv_char_hff} are satisfied.
\end{definition}

The following simple criterion will suffice for our applications in Section \ref{sec_fields}.

\begin{proposition} \label{criterion_hff}
Let $F \colon \M \rightleftarrows \N \colon G$ be a Quillen adjunction. Suppose that $F$ is full and faithful and $G$ preserves all weak equivalences. Then 
$F$ is homotopically full and faithful. 
\end{proposition}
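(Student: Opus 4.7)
The plan is to verify condition (2) of Proposition \ref{equiv_char_hff}, namely that the derived unit map $X \to G(F(X)^f)$ is a weak equivalence for every cofibrant $X \in \M$, where $F(X) \to F(X)^f$ denotes a functorial fibrant replacement in $\N$.

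First, I would recall that since $F \colon \M \to \N$ is full and faithful as an ordinary functor, the unit $\eta \colon \mathrm{Id}_{\M} \Rightarrow GF$ of the underlying adjunction is a natural isomorphism. In particular, for any cofibrant $X \in \M$, the component $\eta_X \colon X \to G(F(X))$ is an isomorphism, hence a weak equivalence. Next, I would factor the derived unit map through the ordinary unit as the composition
\[ X \xrightarrow{\eta_X} G(F(X)) \xrightarrow{G(r_X)} G(F(X)^f), \]
where $r_X \colon F(X) \to F(X)^f$ is the chosen fibrant replacement in $\N$ (a trivial cofibration, and in particular a weak equivalence).

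Finally, using the hypothesis that $G$ preserves all weak equivalences, the map $G(r_X)$ is a weak equivalence in $\M$. Since the first factor $\eta_X$ is an isomorphism and the second factor $G(r_X)$ is a weak equivalence, the composition—which is precisely the derived unit map at $X$—is a weak equivalence by the 2-out-of-3 property. This verifies condition (2) of Proposition \ref{equiv_char_hff}, and therefore $F$ is homotopically full and faithful. No step requires any serious work; the main conceptual point is simply that when $G$ preserves all weak equivalences, deriving it has no effect on the unit up to weak equivalence, so strict full and faithfulness of $F$ transports directly to the homotopical setting.
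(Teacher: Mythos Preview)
Your proof is correct and follows essentially the same approach as the paper: both use that full faithfulness of $F$ makes the ordinary unit an isomorphism, and then that $G$ preserving weak equivalences ensures the derived unit remains a weak equivalence, verifying condition (2) of Proposition~\ref{equiv_char_hff}. Your version simply spells out explicitly the factorization $X \xrightarrow{\eta_X} G(F(X)) \xrightarrow{G(r_X)} G(F(X)^f)$ that the paper leaves implicit in the phrase ``it is immediate.''
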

\begin{proof}
$F$ is full and faithful if and only if the unit transformation $\mathrm{Id} \Rightarrow G \circ F$ is a natural isomorphism. Since $G$ preserves the weak equivalences, it is immediate that the Quillen adjunction $(F, G)$ satisfies condition (2) of Proposition \ref{equiv_char_hff}.
\end{proof}

\section{Model structures on $\mathsf{sSet}_0$ and  $\mathsf{sCoCoalg}^0_{\mathbb{F}}$ } \label{model_coalg}

In this section we will apply the method of Section \ref{prelim_modelcat} to establish the existence of three model category structures on reduced simplicial sets and their corresponding counterparts for the category of connected simplicial cocommutative coalgebras over a field. These model structures are associated with the classes of weak equivalences discussed in Section \ref{sec_3_weq}. 

Let $\iota\colon S^1 \to \mathbb{S}^1$ be a fixed localization of $S^1$. We will make use of the associated \textit{localized cobar construction} $\widehat{\mathbb{\Omega}} =\mathcal{L}_{\iota} \circ \mathfrak{X}\colon \mathsf{sCoCoalg}^0_{R} \to \mathsf{dgAlg}_{R}$ (see Definition \ref{localizedcobardef} and Subsection \ref{derived_cobar_eq}). 

\subsection{Three model structures on $\mathsf{sSet}_0$} The following result proves the existence of three model category structures on reduced simplicial sets: the first one is a linearized version of the Joyal model category (restricted to reduced simplicial sets); the second one is a left Bousfield localization of the first, which involves considering the fundamental group as in the Kan--Quillen model category; finally, the third one corresponds to the Bousfield model category for $R$-homology equivalences restricted to reduced simplicial sets. 

\begin{theorem}\label{sset_modelcat} Let $\mathsf{sSet}_0$ denote the category of reduced simplicial sets and let $R$ be a commutative ring.
\begin{enumerate}
\item There is a left proper combinatorial model category structure on $\mathsf{sSet}_0$ with the monomorphisms as cofibrations and the categorical $R$-equivalences $\W_{J, R}$ as weak equivalences. This model category is a left Bousfield localization of the Joyal model category structure on $\mathsf{sSet}_0$. 

We denote this model category by $(\mathsf{sSet}_0, R\text{-}\cateq)$.

\medskip

\item There is a left proper combinatorial model category structure on $\mathsf{sSet}_0$ with the monomorphisms as cofibrations and the  $\pi_1$-$R$-equivalences $\W_{\pi_1\text{-}R}$ as weak equivalences. This model category is a left Bousfield localization of the model category in (1) and of the Kan--Quillen model category structure on $\mathsf{sSet}_0$. 

We denote this model category by $(\mathsf{sSet}_0, \pi_1\text{-}R\text{-}\eq)$. 

\medskip

\item There is a left proper combinatorial model category structure on $\mathsf{sSet}_0$ with the monomorphisms as cofibrations and the $R$-equivalences $\W_{R}$ as weak equivalences. This model category is a left Bousfield localization of the model category in (2) and is induced by the Bousfield model category structure on $\mathsf{sSet}$ restricted to $\mathsf{sSet}_0$.

We denote this model category by $(\mathsf{sSet}_0, R\text{-}\eq)$.
\end{enumerate}  
\end{theorem}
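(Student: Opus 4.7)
The overall plan is to construct each of the three model structures as a left Bousfield localization of a combinatorial model category on $\mathsf{sSet}_0$, by applying Theorem \ref{loc_modelcat} to pull back quasi-isomorphisms along an accessible functor landing in the projective model structure on $\mathsf{dgAlg}_R$ (Theorem \ref{Pmodelstructure}) or on chain complexes. In each of the three cases the cofibrations are the monomorphisms inherited from the starting point; combinatoriality is automatic from Theorem \ref{loc_modelcat}, and left properness is inherited from the (left proper) Joyal model structure on $\mathsf{sSet}_0$ under each successive localization.

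First I would construct $(\mathsf{sSet}_0, R\text{-}\cateq)$ starting from the Joyal model structure on $\mathsf{sSet}_0$, which is left proper combinatorial by Theorem \ref{Jmodelstructure}. By Remark \ref{joyalanddgalg}, $\Lambda(-;R)\colon \mathsf{sSet}_0 \to \mathsf{dgAlg}_R$ is a left Quillen functor to the projective model structure, and by definition $\W_{J,R} = \Lambda(-;R)^{-1}(\W_{\qi})$. Theorem \ref{loc_modelcat} then immediately yields the desired left Bousfield localization; that it is a localization of the Joyal structure follows from Proposition \ref{RcateqandOmegaqi}.

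For $(\mathsf{sSet}_0, \pi_1\text{-}R\text{-}\eq)$, my plan is to apply Theorem \ref{loc_modelcat} to the functor $\widehat{\Lambda}(-;R) := \widehat{\mathbb{\Omega}} \circ R[-]\colon \mathsf{sSet}_0 \to \mathsf{dgAlg}_R$. By Theorem \ref{LOmegareflects}, $\W_{\pi_1\text{-}R}$ coincides with $\widehat{\Lambda}(-;R)^{-1}(\W_{\qi})$. Starting from model category (1), I would then verify that $\widehat{\Lambda}(-;R)$ is a left Quillen functor: colimit-preservation (hence accessibility) follows from the pushout formula \eqref{pushout} combined with the colimit-preservation of $R[-]$, $\mathcal{N}_*$, and $\mathsf{Cobar}$; preservation of weak equivalences combines Proposition \ref{inclusions2} (every categorical $R$-equivalence is a $\pi_1$-$R$-equivalence) with Theorem \ref{LOmegareflects}; preservation of cofibrations requires checking that the pushout defining $\widehat{\Lambda}(-;R)$ is functorially compatible with cofibrations of dg algebras as $S$ varies through a monomorphism. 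To see (2) as a further localization of the Kan--Quillen model category on $\mathsf{sSet}_0$, I would observe that any weak homotopy equivalence is a $\pi_1$-$R$-equivalence by the standard long exact sequence argument applied to universal covers.

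Finally, for $(\mathsf{sSet}_0, R\text{-}\eq)$, I would apply Theorem \ref{loc_modelcat} to the functor $\mathcal{N}_* \circ R[-]\colon \mathsf{sSet}_0 \to \mathsf{Ch}_R$ starting from (2): it is colimit-preserving, sends monomorphisms to cofibrations in the projective model structure on chain complexes, and by Propositions \ref{inclusions2} and \ref{qiandqi} sends the weak equivalences of (2) to quasi-isomorphisms, so it is left Quillen. Alternatively, (3) can be obtained by a direct restriction of the Bousfield model structure on $\mathsf{sSet}$ (Theorem \ref{Bousfieldmodelstructure}) to reduced simplicial sets. The strict chain $\W_{J,R} \subsetneq \W_{\pi_1\text{-}R} \subsetneq \W_R$ is already established in Proposition \ref{inclusions2}. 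The main technical obstacle I anticipate is the cofibration verification in step (2): confirming that applying the localized cobar construction to a monomorphism of reduced simplicial sets yields a cofibration of dg algebras, despite $\widehat{\mathbb{\Omega}}$ being defined via the pushout formula \eqref{pushout} whose dependence on the functorial set $P_C$ must be controlled.
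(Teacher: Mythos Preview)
Your approach to (1) and (3) matches the paper's proof essentially verbatim. The gap is in (2), and it is precisely where you suspect --- but not for the reason you suspect.

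You claim that $\widehat{\Lambda}(-;R) = \widehat{\mathbb{\Omega}} \circ R[-]$ preserves colimits, citing the pushout formula \eqref{pushout}. This is false: the set $P_C$ consists of $0$-cycles representing monoid-like elements of the fundamental bialgebra $H_0(\mathbb{\Omega}(C))$, and this assignment $C \mapsto P_C$ is not compatible with pushouts (taking $H_0$ and extracting group-likes does not commute with colimits). The functor $\widehat{\mathbb{\Omega}}$ is finitely accessible (it preserves filtered colimits), but it is \emph{not} a left adjoint, so the left-Quillen shortcut in Theorem~\ref{loc_modelcat} is unavailable. The issue is therefore not the preservation of cofibrations, but the closure of $\cof \cap \W_{\pi_1\text{-}R}$ under pushouts, which you must verify directly.

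The paper does exactly this: given a pushout of a monomorphism $i \colon S \hookrightarrow S'$ which is a $\pi_1$-$R$-equivalence along an arbitrary map $S \to X$, one checks by hand that the pushout map $j \colon X \to X'$ is again a $\pi_1$-$R$-equivalence. The $\pi_1$-isomorphism follows from van Kampen, and the $R$-equivalence on universal covers follows because the induced map of universal covers is a (homotopy) pushout of an $R$-equivalence covering $i$. Closure under transfinite compositions is then immediate from filtered-colimit preservation. This topological argument is the actual content of (2); the rest is formal.
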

\begin{proof}
We recall that $\Lambda( -; R)\colon \mathsf{sSet}_0 \to \mathsf{dgAlg}_R$ is a left Quillen functor when $\mathsf{sSet}_0$ is equipped with model structure induced by the Joyal model category and $\mathsf{dgAlg}_R$ has the model structure of Theorem~\ref{Pmodelstructure} (see Remark~\ref{joyalanddgalg}). Note that $\Lambda(-;R)$ preserves all weak equivalences, since every object in $\mathsf{sSet}_0$ is cofibrant. Then, (1) follows directly by applying Theorem~\ref{loc_modelcat} to the model categories $\C=\mathsf{sSet}_0$ and $\M= \mathsf{dgAlg}_R$, and the functor $F=\Lambda( - ; R)$.  

We will now prove (2). By Theorem \ref{LOmegareflects}, we have 
$$\W_{\pi_1\text{-}R}= (\widehat{\mathbb{\Omega}} \circ R[- ])^{-1}(\W_{\mathsf{dgAlg}_R}),$$ 
where $\W_{\mathsf{dgAlg}_R}$ denotes the class of quasi-isomorphisms of dg algebras. To prove (2),  we will apply Theorem \ref{loc_modelcat} in the case of the model categories
$$\C=  (\mathsf{sSet}_0, R\text{-}\cateq) \text{ (as in (1))}$$ 
$$\M= \mathsf{dgAlg}_R \ \text{ (Theorem \ref{Pmodelstructure})}$$ 
and the functor $F = \widehat{\mathbb{\Omega}} \circ R[- ]\colon \mathsf{sSet}_0 \to \mathsf{dgAlg}_R$. The class of $\pi_1$-$R$-equivalences consists precisely of the maps which are sent to quasi-isomorphisms by $F$ (Theorem \ref{LOmegareflects}). $F$ is a composition of functors which preserve filtered colimits, therefore $F$ also preserves filtered colimits. Moreover, $F$ preserves the weak equivalences by Proposition \ref{inclusions2}. 

%Next we verify that $F$ preserves the weak equivalences. This is essentially the content of the first inclusion in Proposition \ref{inclusions2}. Indeed, if $f\colon S \to S'$ is an categorical $R$-equivalence, then $\mathbb{\Omega}R[f]\colon \mathbb{\Omega}R[S] \to  \mathbb{\Omega}R[S']$ is a quasi-isomorphism of flat dg $R$-algebras. Hence, the induced map $\widehat{\mathbb{\Omega}}R[f]\colon \widehat{\mathbb{\Omega}}R[S] \to \widehat{\mathbb{\Omega}}R[S']$ is a quasi-isomorphism by Proposition \ref{inclusions}. 

Even though $F$ does not preserve pushouts in general, the required property that $\cof_{\C} \cap \W_{\pi_1\text{-}R}$ is closed under pushouts still holds, since given a pushout in $\mathsf{sSet}_0$
\[\begin{tikzcd}
	{ S } & {X}\\
  {S'} & {X'}
	\arrow[from=1-1, to=1-2]
	\arrow[from=2-1, to=2-2]
	\arrow["j", hook, from=1-2, to=2-2]
	\arrow["i", hook, from=1-1, to=2-1],
	\end{tikzcd}\]
where $i$ is a monomorphism and a $\pi_1\text{-}R$-equivalence, then we observe that $j$ is again a monomorphism and a $\pi_1$-$R$-equivalence. Indeed, $j$ is a $\pi_1$-isomorphism by the van Kampen theorem and the map of universal covers is a (homotopy) pushout of an $R$-equivalence covering $i$. (Alternatively, in view of Theorem \ref{LOmegareflects}, Proposition \ref{omega-kan} and Proposition \ref{Joyalsthm}, we could also pass to the simplicial localizations and apply the left Quillen functor $\Lambda(-;R)$.) We see similarly (directly) that $\cof_{\C} \cap \W_{\pi_1\text{-}R}$ is closed under transfinite compositions. This completes the proof of (2).

Finally, (3) follows similarly by applying Theorem~\ref{loc_modelcat} to the case where $\C = (\mathsf{sSet}_0, \pi_1\text{-}R\text{-}\eq)$ (as in (2)), $\M = \mathsf{Ch}_R$ is the category of (non-negatively graded) chain complexes of $R$-modules equipped with the projective model structure, and $F \colon \mathsf{sSet}_0 \to \mathsf{Ch}_R$ is the normalized chains functor. 
\end{proof}

We summarize the relationships between the various model category structures on $\mathsf{sSet}_0$ in the following diagram of left Quillen functors. Every arrow indicates the identity functor on $\mathsf{sSet}_0$. 
$$
\xymatrix{
(\mathsf{sSet}_0, \text{Joyal}) \ar[d] \ar[r] & (\mathsf{sSet}_0, \text{Kan--Quillen}) \ar[d] \\
(\mathsf{sSet}_0, R\text{-}\cateq) \ar[r] & (\mathsf{sSet}_0, \pi_1\text{-}R\text{-}\eq) \ar[r] & (\mathsf{sSet}_0, R\text{-}\eq). 
}
$$

\subsection{A fiberwise fracture theorem} 

Homotopy types may be studied in terms of their localizations (or completions) over $\mathbb{Q}$ and over $\mathbb{F}_p$ at each prime $p$. The \textit{fracture theorem} expresses a (nilpotent) homotopy type as a homotopy pullback of its $\mathbb{Q}$-localization, $\mathbb{F}_p$-localizations, and $\mathbb{Q}$-localization of its $\mathbb{F}_p$-localizations \cite{Sul70, BK72, MPBook}. We will show a version of the fracture theorem over $\pi_1$ for all homotopy types using the homotopy theories of Theorem \ref{sset_modelcat}(2). 

\smallskip

We denote by $X \to X_{R/\pi_1}$
a functorial fibrant replacement of $X \in \mathsf{sSet}_0$ in the model category $(\mathsf{sSet}_0, \pi_1\text{-}R\text{-}\eq)$ of Theorem \ref{sset_modelcat}. Let $P$ denote the set of prime numbers.

\begin{theorem} \label{fiberwise_fracture_thm}
For every $X \in \mathsf{sSet}_0$, the canonical square in $\mathsf{sSet}_0$
\begin{equation} \label{fib_fracture} \tag{*}
\xymatrix{
X \ar[r] \ar[d] & \prod_{p \in P} X_{\F_p/\pi_1} \ar[d] \\
X_{\Q/\pi_1} \ar[r] & \big(\prod_{p \in P} X_{\F_p/\pi_1}\big)_{\Q/\pi_1} 
}
\end{equation}
is a homotopy pullback (with respect to the Kan--Quillen model structure). Moreover, suppose we are given a homotopy pullback in $\mathsf{sSet}_0$
\begin{equation} \label{fib_square2} \tag{\(\dagger\)} 
\xymatrix{
X \ar[r] \ar[d] & \prod_{p \in P} X_p \ar[d]^f \\
X_0 \ar[r]^g & Y
}
\end{equation}
where the following hold:
\begin{itemize}
\item[(i)] $X_p$ is fibrant in $(\mathsf{sSet}_0, \pi_1\text{-}\F_p\text{-}\eq)$; 
\item[(ii)] $X_0$ and $Y$ are fibrant in $(\mathsf{sSet}_0, \pi_1\text{-}\Q\text{-}\eq)$; 
\item[(iii)] $f$ is a $\pi_1$-$\Q$-equivalence in $(\mathsf{sSet}_0, \pi_1\text{-}\Q\text{-}\eq)$;
\item[(iv)] for each $p \in P$, the composition
$$\pi_1(X_0) \xrightarrow{\pi_1(g)} \pi_1(Y)  \stackrel{\pi_1 (f)}{\cong} \pi_1( \prod_{p \in P} X_p) \to \pi_1(X_p)$$
is an isomorphism. 
\end{itemize}
Then the square can be identified up to weak homotopy equivalence with the square \eqref{fib_fracture} above. 
\end{theorem}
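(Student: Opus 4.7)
The plan is to reduce both statements to the classical arithmetic fracture theorem for simply-connected (hence nilpotent) spaces, applied to the universal cover $\widetilde{X}$. A key preliminary observation is that the fibrant replacement $X \to X_{R/\pi_1}$ is a $\pi_1$-isomorphism whose lift to universal covers $\widetilde{X} \to \widetilde{X_{R/\pi_1}}$ realizes a classical Bousfield $R$-localization: this follows from the fact that $X_{R/\pi_1}$ is fibrant in $(\mathsf{sSet}_0, \pi_1\text{-}R\text{-}\eq)$, so its universal cover is $R$-local in the Bousfield sense. Consequently, the universal covers of the four corners of \eqref{fib_fracture} may be identified up to weak equivalence with $\widetilde{X}$, $\widetilde{X}_{\Q}$, $\prod_p \widetilde{X}_{\F_p}$, and $(\prod_p \widetilde{X}_{\F_p})_{\Q}$, all simply-connected.

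For the first statement, let $P$ denote the homotopy pullback of the other three corners of \eqref{fib_fracture}; I would show $X \to P$ is a weak homotopy equivalence by checking it induces an isomorphism on $\pi_1$ and a weak equivalence of universal covers. For $\pi_1$, a Mayer--Vietoris computation shows that the two maps into $\pi_1\bigl((\prod_p X_{\F_p/\pi_1})_{\Q/\pi_1}\bigr) \cong \prod_p \pi_1(X)$ are the diagonal (from $X_{\Q/\pi_1}$) and the identity (from $\prod_p X_{\F_p/\pi_1}$), and their equalizer recovers $\pi_1(X)$ along the diagonal. For universal covers, passing to fibers over the relevant classifying spaces (related by the diagonal $B\pi_1(X) \to \prod_p B\pi_1(X)$) reduces the claim to the assertion that $\widetilde{X}$ is the homotopy pullback of the classical arithmetic fracture square
$$\widetilde{X}_{\Q} \longrightarrow \Bigl(\prod_p \widetilde{X}_{\F_p}\Bigr)_{\Q} \longleftarrow \prod_p \widetilde{X}_{\F_p},$$
which holds by the classical arithmetic fracture theorem for the simply-connected space $\widetilde{X}$.

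For the second statement, I would first unpack condition (iv): it forces $\pi_1(X_0) \cong \pi_1(X_p) =: \pi$ for every $p$, with the map $\pi_1(X_0) \to \pi_1(\prod_p X_p) = \prod_p \pi$ being the diagonal. Combined with the homotopy pullback structure and condition (iii), this yields $\pi_1(X) \cong \pi$ and shows that the canonical maps $X \to X_0$ and $X \to X_p$ are $\pi_1$-isomorphisms. Passing to universal covers via the appropriate fiber sequences converts $(\dagger)$ into a homotopy pullback of simply-connected spaces
$$\widetilde{X} \longrightarrow \prod_p \widetilde{X}_p, \qquad \widetilde{X}_0 \longrightarrow \widetilde{Y},$$
in which $\widetilde{X}_0$ and $\widetilde{Y}$ are $\Q$-local (by (ii)), each $\widetilde{X}_p$ is $\F_p$-local (by (i)), and $\prod_p \widetilde{X}_p \to \widetilde{Y}$ is a $\Q$-equivalence (by (iii)). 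The converse direction of the classical arithmetic fracture theorem then identifies the canonical maps $\widetilde{X} \to \widetilde{X}_0$ and $\widetilde{X} \to \widetilde{X}_p$ with the Bousfield $\Q$- and $\F_p$-localizations of $\widetilde{X}$, yielding $X_0 \simeq X_{\Q/\pi_1}$ and $X_p \simeq X_{\F_p/\pi_1}$ as $\pi_1$-$R$-equivalences and so identifying $(\dagger)$ with \eqref{fib_fracture} up to weak homotopy equivalence.

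The main obstacle I anticipate is the careful bookkeeping of fundamental groups: since $\pi_1(\prod_p X_{\F_p/\pi_1}) = \prod_p \pi_1(X)$ is strictly larger than $\pi_1(X)$, the square \eqref{fib_fracture} does \emph{not} arise as the Borel construction of a $\pi_1(X)$-equivariant fracture square in the naive sense, and the diagonal embedding $\pi_1(X) \hookrightarrow \prod_p \pi_1(X)$ must be accounted for explicitly via fiber sequences over the relevant classifying spaces. Once this $\pi_1$ arithmetic is in place, both parts reduce cleanly to the classical fracture theorem on the simply-connected universal cover.
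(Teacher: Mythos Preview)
Your proposal is correct and follows essentially the same strategy as the paper's proof. Both arguments reduce to the classical arithmetic fracture theorem for the simply-connected universal cover by mapping the square \eqref{fib_fracture} (resp.\ \eqref{fib_square2}) to the corresponding square of first Postnikov truncations $B\pi_1(-)$ and then passing to homotopy fibers; your ``key preliminary observation'' that the universal cover of a fibrant object in $(\mathsf{sSet}_0, \pi_1\text{-}R\text{-}\eq)$ is Bousfield $R$-local is exactly the content of the paper's Lemma~\ref{technical_lemma}, and your handling of the diagonal $\pi_1(X) \hookrightarrow \prod_p \pi_1(X)$ matches the paper's square \eqref{fib_fracture_P1}.
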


For the proof of Theorem \ref{fiberwise_fracture_thm}, we will make use of the following lemma about fibrant objects in $(\mathsf{sSet}_0, \pi_1\text{-}R\text{-}\eq)$. Note that every fibrant object in $(\mathsf{sSet}_0, R\text{-}\eq)$ is also fibrant in $(\mathsf{sSet}_0, \pi_1\text{-}R\text{-}\eq)$.

\begin{lemma} \label{technical_lemma}
Let $R$ be $\F_p$ (for some prime $p$) or $\Q$. Let $Z$ be a reduced Kan complex and let $p_1 \colon Z \to P_1(Z)$ be the map to the first Postnikov truncation of $Z$. If $Z$ is fibrant in $(\mathsf{sSet}_0, \pi_1\text{-}R\text{-}\eq)$, then the homotopy fiber of $p_1$ (with respect to the Kan--Quillen model structure) is also fibrant in $(\mathsf{sSet}_0, R\text{-}\eq)$.
\end{lemma}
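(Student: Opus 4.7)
The plan is to use the simply-connectedness of $F$ to reduce the lemma to Bousfield's classical characterization of $R$-local simply-connected spaces, and then to verify the necessary conditions on the homotopy groups $\pi_n Z$ directly from the fibrancy hypothesis.

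First, I would fix a concrete model of $F$ as a reduced simply-connected Kan complex with $\pi_n F \cong \pi_n Z$ for all $n \geq 2$, constructed from the reduced path-loop fibration over a reduced Kan complex model of $P_1(Z) = K(\pi_1 Z, 1)$. By Bousfield's classification of $R$-local simply-connected spaces, such an $F$ is fibrant in $(\mathsf{sSet}, R\text{-eq.})$ if and only if each $\pi_n F$ (with $n \geq 2$) is an $R$-local abelian group---meaning a $\mathbb{Q}$-vector space for $R=\mathbb{Q}$, or Ext-$p$-complete (i.e., $\mathrm{Hom}(\mathbb{Z}[1/p], -) = 0 = \mathrm{Ext}(\mathbb{Z}[1/p], -)$) for $R = \mathbb{F}_p$. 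Since $(\mathsf{sSet}_0, R\text{-eq.})$ has fewer trivial cofibrations than $(\mathsf{sSet}, R\text{-eq.})$, fibrancy in the latter (for an object lying in $\mathsf{sSet}_0$) implies fibrancy in the former, so it suffices to check the homotopy-group condition on $Z$.

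The crucial observation enabling this check is that any cofibration between simply-connected reduced Kan complexes is an $R$-equivalence if and only if it is a $\pi_1$-$R$-equivalence (both $\pi_1$'s vanish, and each space is its own universal cover). Hence the fibrancy of $Z$ in $(\mathsf{sSet}_0, \pi_1\text{-}R\text{-eq.})$ provides lifts against every such cofibration. To test $R$-locality of $\pi_n Z$, I would use standard reduced simplicial models of suitable Moore spaces: $M(\mathbb{Z}/q, n)$ (for $R = \mathbb{Q}$, $q \geq 2$, $n \geq 2$), which is $\mathbb{Q}$-acyclic since $\mathbb{Z}/q \otimes \mathbb{Q} = 0 = \mathrm{Tor}(\mathbb{Z}/q, \mathbb{Q})$; or $M(\mathbb{Z}[1/p], n)$ (for $R = \mathbb{F}_p$), which is $\mathbb{F}_p$-acyclic since $\mathbb{Z}[1/p] \otimes \mathbb{F}_p = 0$. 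In each case the basepoint inclusion $\ast \hookrightarrow M$ is a trivial cofibration in $(\mathsf{sSet}_0, \pi_1\text{-}R\text{-eq.})$, so fibrancy of $Z$ forces the pointed mapping space $\mathrm{map}_*(M, Z)$ to be contractible. Applying $[-,Z]_*$ to the cofiber sequence $S^n \xrightarrow{\cdot q} S^n \to M(\mathbb{Z}/q, n)$ then yields the short exact sequence
\[ 0 \to \pi_{n+1} Z / q \to [M(\mathbb{Z}/q, n), Z]_* \to \pi_n Z[q] \to 0, \]
whose vanishing forces $\pi_n Z$ to be $q$-torsion-free and $\pi_{n+1} Z$ to be $q$-divisible. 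Ranging over all $q \geq 2$ and $n \geq 2$ shows each $\pi_n Z$ is a $\mathbb{Q}$-vector space. For $R = \mathbb{F}_p$, modeling $M(\mathbb{Z}[1/p], n)$ as the mapping telescope of $\cdot p$ on $S^n$ produces the analogous Milnor-type sequence identifying $[M(\mathbb{Z}[1/p], n), Z]_*$ with an extension of $\mathrm{Hom}(\mathbb{Z}[1/p], \pi_n Z)$ by $\mathrm{Ext}(\mathbb{Z}[1/p], \pi_{n+1} Z)$, whose vanishing is precisely Ext-$p$-completeness of the $\pi_n Z$.

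The main obstacle is the standard but nontrivial task of setting up the test cofibrations cleanly in $\mathsf{sSet}_0$---constructing reduced simplicial models of the Moore spaces, and translating the abstract lifting property afforded by fibrancy in $(\mathsf{sSet}_0, \pi_1\text{-}R\text{-eq.})$ into the required arithmetic conditions on the homotopy groups via the Puppe/Milnor sequences. All individual ingredients are classical Bousfield-style arguments, but assembling them in the present framework requires care.
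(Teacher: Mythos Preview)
Your approach is correct but proceeds quite differently from the paper's. You extract the arithmetic content directly: test the fibrant $Z$ against simply-connected $R$-acyclic Moore spaces (which are $\pi_1$-$R$-acyclic for the same reason), read off that each $\pi_n Z$ ($n\geq 2$) is a $\mathbb{Q}$-vector space or Ext-$p$-complete, and then invoke Bousfield's characterization of $R$-local simply-connected spaces to conclude that $F$ is $R$-local. The paper instead stays entirely inside the model-categorical formalism. It first observes that $P_1(Z)$ is always $\pi_1$-$R$-local, so (by Hirschhorn's result that a fibration between local objects in a left Bousfield localization is a local fibration) the fiber $\overline{Z}$ of $p_1$ is itself fibrant in $(\mathsf{sSet}_0,\pi_1\text{-}R\text{-}\eq)$. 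This reduces the lemma to the simply-connected special case, which is dispatched by a two-line fibrant-replacement trick: if $\overline{Z}$ is simply-connected, its $R$-fibrant replacement $\overline{Z}\to \overline{Z}_R$ is again simply-connected (this is where $R\in\{\mathbb{Q},\F_p\}$ enters), hence a $\pi_1$-$R$-equivalence between $\pi_1$-$R$-local objects, hence a weak homotopy equivalence; so $\overline{Z}\simeq \overline{Z}_R$ is already $R$-local.

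What each buys: the paper's argument is shorter, self-contained, and makes transparent exactly where the hypothesis on $R$ is used (preservation of simple-connectedness under $R$-localization). Your argument imports Bousfield's arithmetic criterion as a black box and requires some care in setting up the Moore-space tests in $\mathsf{sSet}_0$, as you note; on the other hand it yields more explicit information (the precise module-theoretic condition on each $\pi_n Z$) and avoids the slightly delicate step of transporting fibrancy along the fibration $p_1$.
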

\begin{proof}
We may assume that $p_1 \colon Z \to P_1(Z)$ is a fibration between fibrant objects in $(\mathsf{sSet}_0, \text{Kan--Quillen})$ (see \cite[Section V.6; esp. 6.6--6.8]{GJ}).  It is easy to see directly that $P_1(Z)$ is fibrant/local in $(\mathsf{sSet}_0, \pi_1\text{-} R\text{-}\eq).$ By general results on left Bousfield localizations \cite[Proposition 3.3.16]{Hir}, it follows that $Z$ is fibrant in $(\mathsf{sSet}_0, \pi_1\text{-}R\text{-}\eq)$ if and only if 
$p_1$ is a (local) fibration in $(\mathsf{sSet}_0, \pi_1\text{-}R\text{-}\eq)$. In this case, the (homotopy) fiber $\overline{Z}$ of $p_1$, which is simply a model for the universal cover of $Z$, is also fibrant in $(\mathsf{sSet}_0, \pi_1\text{-} R\text{-}\eq).$ 

Thus it suffices to prove the following special case of the lemma: a simply-connected $Z \in \mathsf{sSet}_0$ is fibrant in $(\mathsf{sSet}_0, \pi_1\text{-}R\text{-}\eq)$ if and only if $Z$ is fibrant in $(\mathsf{sSet}_0, R\text{-}\eq)$. A fibrant replacement $Z \to Z_R$ in $(\mathsf{sSet}_0, R\text{-}\eq)$ is also a $\pi_1\text{-}R$-equivalence, because $Z_R$ is again simply-connected. Since $(\mathsf{sSet}_0, R\text{-}\eq)$ is a left Bousfield localization of $(\mathsf{sSet}_0, \pi_1\text{-}R\text{-}\eq)$, it follows that $Z_R$ is also fibrant in $(\mathsf{sSet}_0, \pi_1\text{-}R\text{-}\eq)$; then the map $Z \to Z_R$ is a weak homotopy equivalence. Therefore, $Z$ is fibrant in $(\mathsf{sSet}_0, R\text{-}\eq)$, as required. 
 \end{proof}

\begin{proof}(Theorem \ref{fiberwise_fracture_thm})
We consider the canonical map from the square \eqref{fib_fracture} of Theorem \ref{fiberwise_fracture_thm} to the square \eqref{fib_fracture_P1} below obtained after applying the first Postnikov truncation pointwise; explicitly, this Postnikov truncation of \eqref{fib_fracture} can be identified with the following square (denoting by $BG$ a model for the reduced simplicial set with fundamental group $G$ and trivial higher homotopy groups):
\begin{equation} \label{fib_fracture_P1} \tag{**}
\xymatrix{
B \pi_1 X \ar[d]_{\simeq} \ar[r]^(.4){\Delta} & \prod_{p \in P} B \pi_1 X \ar[d]^{\simeq} \\
B \pi_1 X \ar[r]^(0.43){\Delta} & \prod_{p \in P} B \pi_1 X. 
}
\end{equation}
The square \eqref{fib_fracture_P1} is obviously a homotopy pullback (with respect to the Kan--Quillen model structure). Then consider the induced square which consists of the homotopy fibers of the canonical map of squares from \eqref{fib_fracture} to \eqref{fib_fracture_P1}; the resulting square is, up to weak homotopy equivalence, the square of the universal covers of \eqref{fib_fracture}. Using Lemma \ref{technical_lemma}, this square agrees with the classical fracture homotopy pullback of the universal covering $\overline{X}$ of $X$ (see \cite[Theorem 13.1.4]{MPBook}, \cite{BK72}, \cite{Sul70}). Then the first claim of Theorem \ref{fiberwise_fracture_thm} follows from looking at the long exact sequences of homotopy groups. 

For the second claim, we consider again the map of squares from \eqref{fib_square2} to the (homotopy commutative) square:
\begin{equation} \label{fib_fracture_P2} \tag{\(\dagger\dagger\)}
\xymatrix{
B \pi_1 X_0 \ar@{=}[d] \ar[r] & \prod_{p \in P} B \pi_1 X_p \ar[d]^{\simeq}_{\pi_1(f)} \\
B \pi_1 X_0 \ar[r]^{\pi_1(g)} & B \pi_1 Y, 
}
\end{equation}
which, using the assumptions (iii)--(iv), can be identified up to homotopy equivalence with the homotopy pullback
\begin{equation} \label{fib_fracture_P3} \tag{\(\dagger\dagger\)}
\xymatrix{
B \pi_1 X_0 \ar[d]_{\simeq} \ar[r]^(.4){\Delta} & \prod_{p \in P} B \pi_1 X_0 \ar[d]^{\simeq} \\
B \pi_1 X_0 \ar[r]^(0.4){\Delta} & \prod_{p \in P} B \pi_1 X_0. 
}
\end{equation}
Passing to the homotopy fibers of the maps from \eqref{fib_square2} to \eqref{fib_fracture_P2}, we obtain a homotopy pullback:
\begin{equation} \label{fib_fracture_P4} \tag{\(\sim\)}
\xymatrix{
X' \ar[d] \ar[r] & \prod_{p \in P} \overline{X}_p \ar[d] \\
\overline{X}_0 \ar[r] & \overline{Y}. 
}
\end{equation}
Then it follows from Lemma \ref{technical_lemma} and the corresponding uniqueness property of the classical fracture square (see \cite[Theorem 13.1.5]{MPBook}) that \eqref{fib_fracture_P4} is the classical fracture homotopy pullback of the space $X'$. In particular, it follows that $X'$ is simply-connected, so the map $X \to B \pi_1 X_0$ is $2$-connected, $X'$ is (a model for) the universal cover of $X$, and $X \to X_0$ is a $\pi_1$-isomorphism. Therefore, we conclude that the maps $X \to X_0$ and $X \to X_p$ are fibrant replacements in the respective model categories. 
\end{proof}

\subsection{Three model structures on $\mathsf{sCoCoalg}^0_\mathbb{F}$} \label{modelstronscocoalg}
The following result proves the existence of three model category structures on $\mathsf{sCoCoalg}^0_\mathbb{F}$ where $\mathbb{F}$ is a field. These model structures  correspond to the classes of weak equivalences $\mathcal{W}_{\mathbb{\Omega}}, \W_{\widehat{\mathbb{\Omega}}},$ and $\W_{\qi}$, respectively, as discussed in Section \ref{sec_3_weq}. 

We say that that a morphism $f \colon C \to C'$ in $\mathsf{sCoCoalg}^0_{\F}$ is \emph{injective} if the underlying map of simplicial $\F$-vector spaces is injective degreewise. It turns out that -- in the cocommutative case over a field -- the injective maps in $\mathsf{sCoCoalg}^0_{\F}$ (or just $\mathsf{CoCoalg}_{\F}$) are exactly the monomorphisms \cite{NT, Ag}.

\begin{theorem} \label{coalg_modelcat} Let $\mathbb{F}$ be a field and let $\mathsf{sCoCoalg}^0_\mathbb{F}$ denote the category of connected simplicial cocommutative $\mathbb{F}$-coalgebras. 
\begin{enumerate}
\item There is a left proper combinatorial model category structure on $\mathsf{sCoCoalg}^0_\F$ with the injective maps as cofibrations and the $\mathbb{\Omega}$-quasi-isomorphisms $\mathcal{W}_{\mathbb{\Omega}}$ as weak equivalences. 

We denote this model category by $(\mathsf{sCoCoalg}^0_\F, \mathbb{\Omega}\text{-}\qi)$. 

\medskip

\item   There is a left proper combinatorial model category structure on $\mathsf{sCoCoalg}^0_\F$ with the injective maps as cofibrations and the $\widehat{\mathbb{\Omega}}$-quasi-isomorphisms $\W_{\widehat{\mathbb{\Omega}}}$ as weak equivalences. This model category is a left Bousfield localization of the model category in (1).

We denote this model category by $(\mathsf{sCoCoalg}^0_\F, \widehat{\mathbb{\Omega}}\text{-}\qi)$. 

\medskip

\item  There is a left proper combinatorial model category structure on $\mathsf{sCoCoalg}^0_\F$ with the injective maps as cofibrations and the quasi-isomorphisms $\W_{\qi}$ as weak equivalences. This model category is a left Bousfield localization of the model category in (2). Moreover, it is induced by the model category of Theorem \ref{modelcatqi} restricted to $\mathsf{sCoCoalg}^0_\F$ (for $\kappa = \aleph_0$). 
 
We denote this model category by $(\mathsf{sCoCoalg}^0_\F, \qi)$. 
\end{enumerate}
\end{theorem}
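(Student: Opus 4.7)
We apply the method of Section \ref{prelim_modelcat} (Smith's recognition theorem) to establish part (1), and then realize parts (2) and (3) as left Bousfield localizations of (1) via Theorem \ref{loc_modelcat} (or by applying Theorem \ref{Smith_thm} directly to $\widehat{\mathbb{\Omega}}$ and $\mathcal{N}_*$, respectively). For part (1), take $\C = \mathsf{sCoCoalg}^0_\F$, $\M = \mathsf{dgAlg}_\F$ with the projective model structure of Theorem \ref{Pmodelstructure}, and $F = \mathbb{\Omega}$. Fix a regular cardinal $\kappa$ for which $\mathsf{sCoCoalg}^0_\F$ is locally $\kappa$-presentable (e.g., $\kappa = \aleph_1$ following \cite{Po24}); since $\mathbb{\Omega}$ preserves all colimits, it is in particular accessible. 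Let $I$ be a set of representatives for isomorphism classes of injections between $\kappa$-presentable objects of $\mathsf{sCoCoalg}^0_\F$; a standard filtration argument using filtered unions of $\kappa$-presentable subobjects shows that $\cof(I)$ coincides with the class of all monomorphisms.

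The two hypotheses of Theorem \ref{Smith_thm} will then be verified as follows. For \emph{condition (2)}, the key technical input is that over a field $\F$ every injection $f \colon C \hookrightarrow C'$ in $\mathsf{sCoCoalg}^0_\F$ induces a split injection of normalized chain complexes at the level of underlying graded $\F$-vector spaces, and consequently $\mathbb{\Omega}(f)$ is isomorphic, as a map of graded algebras, to a free extension of tensor algebras, constructed by iteratively attaching cells along the generating cofibrations of $\mathsf{dgAlg}_\F$ described after Theorem \ref{Pmodelstructure}. Combined with the fact that $\mathbb{\Omega}$ preserves all colimits and that monomorphisms in $\mathsf{sCoCoalg}^0_\F$ are preserved by pushouts and transfinite compositions, this yields the closure of the trivial cofibrations under pushouts and transfinite compositions. \emph{Condition (1)} will be verified via Proposition \ref{Smith_thm3}: for a morphism $f \colon C \to C'$ between $\kappa$-presentable objects, factor $f$ through the natural mapping cylinder
\[
C \,\hookrightarrow\, C' \sqcup_C \mathrm{Cyl}(C) \xrightarrow{\; p \;} C',
\]
where $\mathrm{Cyl}(C)$ is the natural cylinder of Appendix \ref{sec_appendix}. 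The first map is injective, and $p$ is a retraction of the canonical inclusion $C' \hookrightarrow C' \sqcup_C \mathrm{Cyl}(C)$, so $\mathbb{\Omega}(p)$ is degreewise split surjective; combined with the Appendix result that $\mathrm{Cyl}(C) \to C$ is an $\mathbb{\Omega}$-qi, this shows that $\mathbb{\Omega}(p)$ is a trivial fibration in $\mathsf{dgAlg}_\F$. Left properness of the resulting model structure follows from the same analysis, since $\mathbb{\Omega}$ sends cofibrations to cofibrations of dg algebras and preserves pushouts.

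For parts (2) and (3), we observe that $\widehat{\mathbb{\Omega}}$ and $\mathcal{N}_*$ both preserve $\mathbb{\Omega}$-qi's (by Proposition \ref{inclusions} together with the Koszul duality argument used in the proof of its second inclusion), so Theorem \ref{loc_modelcat} applies once we verify that the classes of injective $\widehat{\mathbb{\Omega}}$-qi's and injective quasi-isomorphisms are each closed under pushouts and transfinite compositions. These closure properties follow by arguments parallel to the one for $\mathbb{\Omega}$ in (1): both $\widehat{\mathbb{\Omega}}$ and $\mathcal{N}_*$ inherit from $\mathbb{\Omega}$ (via the pushout construction of Definition \ref{derivedlocalization_def} and the Alexander--Whitney map, respectively) the property of sending monomorphisms in $\mathsf{sCoCoalg}^0_\F$ to cofibrations (or injections) in the target model category. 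Alternatively, part (3) may be identified with the restriction of the model category structure of Theorem \ref{modelcatqi} from $\mathsf{sCoCoalg}_\F$ to $\mathsf{sCoCoalg}^0_\F$ (taking $\kappa = \aleph_0$ by \cite{Po24}).

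\textbf{Main obstacle.} The principal technical obstacle is the Appendix result that the natural cylinder projection $\mathrm{Cyl}(C) \to C$ is an $\mathbb{\Omega}$-qi; this is nontrivial precisely because $\mathbb{\Omega}$ does not preserve arbitrary quasi-isomorphisms (cf.~Proposition \ref{omegaqiandqi}), so the evident fact that $\mathrm{Cyl}(C) \to C$ is a simplicial homotopy equivalence is not by itself sufficient. A secondary technical point, relied upon throughout the verification of condition (2), is that $\mathbb{\Omega}$ sends monomorphisms in $\mathsf{sCoCoalg}^0_\F$ to cofibrations of dg algebras; this uses the field hypothesis essentially, via the splitting of injections of graded $\F$-vector spaces.
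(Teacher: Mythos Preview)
Your overall plan matches the paper's: part (1) via Smith's recognition theorem applied to $\mathbb{\Omega}$, parts (2) and (3) as left Bousfield localizations via Theorem \ref{loc_modelcat}. Your treatment of part (1) is essentially the paper's proof (the paper uses $\kappa = \aleph_0$, exploiting that $\mathsf{CoCoalg}_\F$ is locally \emph{finitely} presentable over a field, but your choice of a larger $\kappa$ is harmless). Part (3) is also fine since $\mathcal{N}_*$ preserves all colimits.

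There is, however, a genuine gap in your argument for part (2). You claim the closure of injective $\widehat{\mathbb{\Omega}}$-quasi-isomorphisms under pushouts ``follows by arguments parallel to the one for $\mathbb{\Omega}$ in (1).'' But your argument for (1) rests on $\mathbb{\Omega}$ preserving all colimits (this is how Proposition \ref{Smith_thm2} applies), and $\widehat{\mathbb{\Omega}} = \mathcal{L}_\iota \circ \mathfrak{X}$ does \emph{not} preserve pushouts: the marking functor $\mathfrak{X}(C) = (\mathbb{\Omega}(C), P_C)$ depends on the set of monoid-like elements of the fundamental bialgebra, and for a pushout $D = C \oplus_A B$ there can be monoid-like elements in $H_0(\mathbb{\Omega}(D))$ that do not arise from $P_A$, $P_B$, or $P_C$. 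Knowing that $\widehat{\mathbb{\Omega}}$ sends monomorphisms to cofibrations is not enough; you must compare $\widehat{\mathbb{\Omega}}(D)$ with the pushout $\mathcal{L}_\iota(\mathfrak{X}(C)) \circledast_{\mathcal{L}_\iota(\mathfrak{X}(A))} \mathcal{L}_\iota(\mathfrak{X}(B))$ directly. The paper handles this by observing that in the latter pushout all monoid-like elements of $H_0$ are already invertible (since this holds in $H_0(\mathcal{L}_\iota(\mathfrak{X}(C)))$), so further localizing at $P_D$ is a quasi-isomorphism; this bridges the gap between the two dg algebras. You need this extra step, or something equivalent, to complete the pushout closure for $\W_{\widehat{\mathbb{\Omega}}}$.
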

\begin{proof}
We will prove (1) using Theorem \ref{Smith_thm}. Since every $\F$-coalgebra is a filtered colimit of its finite-dimensional subcoalgebras \cite{Sw69}, it follows easily that the categories $\mathsf{CoCoalg}_{\F}$ and $\mathsf{sCoCoalg}^0_{\F}$ are locally finitely presentable.  Let $I$ denote the set of injective maps $i \colon A \to B$ in $\mathsf{sCoCoalg}^0_\F$ between finitely presentable objects. We claim that $\mathrm{Cof}(I)$ consists of the class of injective maps in $\mathsf{sCoCoalg}^0_{\F}$. Clearly every map in $\mathrm{Cof}(I)$ is injective. Conversely, given an injective map $i \colon C \to D$ in $\mathsf{sCoCoalg}^0_{\F}$, we apply the small object argument to obtain a factorization:
$$C \xrightarrow{j} E \xrightarrow{q} D,$$
such that $j \in \mathrm{Cof}(I)$ and $q \in I - \inj$. Then we consider the following lifting problem:
$$
\xymatrix{
C \ar[d]_i \ar[r]^j & E \ar[d]^q \\
D \ar@{=}[r] & D
}
$$
and the poset $\mathcal{Z}$ whose elements are pairs $(Z, h)$ given by an ``intermediate'' simplicial coalgebra $(C \subseteq Z \subseteq D)$ and an ``intermediate'' solution to the lifting problem $h \colon Z \to E$. We define $(Z_1, h_1) \leq (Z_2, h_2)$ if $Z_1 \subseteq Z_2$ and $h_2$ extends $h_1$. Clearly $\mathcal{Z} \neq \varnothing$ and every chain in $\mathcal{Z}$ has an upper bound. By Zorn's lemma, $\mathcal{Z}$ has a maximal element $(C', h)$. If 
$C' \neq D$, then there is a simplicial finite-dimensional subcoalgebra $D' \subseteq D$ such that the injective map $C' \cap D' \subseteq D'$ is in $I$ and $C' \cap D' \neq D'$; the existence of $D'$ uses the fact that every (simplicial) $\F$-coalgebra is the filtered colimit of its finite-dimensional subcoalgebras. Since $q \in I - \inj$, we can then extend $h$ to the simplicial ``intermediate'' subcoalgebra $C' \subsetneq C' \oplus_{C' \cap D'} D' \subseteq D$, contradicting the maximality of $(C', h)$. Therefore, $C' = D$ and the lift $h$ exhibits $i$ is a retract of $j$, so $i \in \mathrm{Cof}(I)$, as claimed. 

Let $\mathsf{dgAlg}_{\F}$ be the finitely combinatorial model category of Theorem \ref{Pmodelstructure}. We will apply the method of Theorem \ref{Smith_thm} to the functor $\mathbb{\Omega} \colon \mathsf{sCoCoalg}^0_{\F} \to \mathsf{dgAlg}_{\F}$. We recall that $\mathbb{\Omega}$ preserves colimits; therefore, it is finitely accessible. 
In addition,  using the explicit description of the cofibrations in $\mathsf{dgAlg}_{\F}$, we see directly that for any injective map $i \colon A \to B$ in $\mathsf{sCoCoalg}^0_\F$, the map $\mathbb{\Omega}(i): \mathbb{\Omega}(A) \to \mathbb{\Omega}(B)$ is a cofibration in the model category $\mathsf{dgAlg}_\F$ (Remark \ref{derived_koszul}, cf. \cite[Section 9]{Po11}).

Condition $(2)$ of Theorem \ref{Smith_thm} is satisfied using Proposition \ref{Smith_thm2} and the fact that $\mathbb{\Omega}$ preserves colimits. Condition (1) of Theorem \ref{Smith_thm} will be shown using the 
criterion in Proposition \ref{Smith_thm3} for the standard mapping cylinder factorization. Namely, the cylinder object is given by the functor
$$\text{Cyl}\colon \mathsf{sCoCoalg}^0_\F \to \mathsf{sCoCoalg}^0_\F, \ C \mapsto \text{Cyl}(C),$$
defined by the pushout
\begin{equation} \label{cylinder} \tag{Cyl}
\xymatrix{
\F \otimes \F[\mathbb{\Delta}^1] \ar[d] \ar[r]^{e \otimes \mathrm{id}} & C \otimes \F[\mathbb{\Delta}^1] \ar[d] \\
\F \otimes \F[\mathbb{\Delta}^0] \cong \F \ar[r] & \text{Cyl}(C)
} 
\end{equation}
where $\F$ is considered as a constant simplicial $\F$-coalgebra and $e\colon \F \to C$ denotes the (implicit) coaugmentation map. In other words, as simplicial $\F$-vector space, we have:
$$\text{Cyl}(C)=\F \oplus \text{coker}(e \otimes \text{id}_{\F[\mathbb{\Delta}^1]}\colon \F\otimes \F[\mathbb{\Delta}^1] \to C \otimes \F[\mathbb{\Delta}^1]).$$

The $n$-simplices of $\mathbb{\Delta}^1$ may be labeled by sequences $[0...01...1]$ consisting of $r$ consecutive $0$'s and $s$ consecutive $1$'s for some non-negative integers $r$ and $s$ satisfying $r+s=n+1$. We denote any such simplex by $[0^r 1^s] \in (\mathbb{\Delta}^1)_n$. 
Using this notation, we have natural inclusion maps 
$i_{\epsilon}\colon C \to \text{Cyl}(C)$, for $\epsilon=0,1,$  by $i_{\epsilon}( x) = x \otimes [\epsilon^{n+1}]$ for any $x \in C_n$. 
We also have a natural projection map $q \colon \text{Cyl}(C) \to C$ determined by $q(x \otimes [0^r1^s])=x$. 

For every map $f \colon C \to C'$ in $\mathsf{sCoCoalg}^0_\F$, we define the mapping cylinder of $f$ 
by the pushout
$$
\xymatrix{
C \ar[d]^f \ar[rr]^(0.45){0 \oplus_\F \mathrm{id}_C} && C \oplus_\F C \ar[d]^{\mathrm{id}_C \oplus f} \ar[rr]^{i_0 \oplus_\F i_1} && \text{Cyl}(C) \ar[d]^h \\
C' \ar[rr] && C \oplus_\F C' \ar[rr]^s && \text{M}(f).
}
$$
Then there is a canonical factorization of $f$,
$$C \xrightarrow{i := h \circ i_0} \text{M}(f) \xrightarrow{p} C',$$
where $p$ is induced by $f$, the identity map on $C'$, and $\text{Cyl}(C) \xrightarrow{q} C \xrightarrow{f} C'$.  

We claim that the conditions (a)--(b) of Proposition \ref{Smith_thm3} are satisfied for this factorization. First, we see from the construction that the map $i$ is injective (because $s$ is). Moreover, assuming that $C$ and $C'$ are finitely presentable in $\mathsf{sCoCoalg}^0_\F$, then so is $\text{M}(f)$; so Proposition \ref{Smith_thm3}(a) is satisfied. 

For Proposition \ref{Smith_thm3}(b), we will show that the map $p \colon \text{M}(f) \to C'$ becomes a trivial fibration in $\mathsf{dgAlg}_\F$ after applying $\mathbb{\Omega}$. First, since $p$ admits a section $$s' \colon C' \to C \oplus_\F C' \xrightarrow{s} \text{M}(f),$$ it follows that $\mathbb{\Omega}(p)$ is surjective (a fibration in $\mathsf{dgAlg}_{\F}$). Then it suffices to show that the section $s'$ is an $\mathbb{\Omega}$-quasi-isomorphism. To see this, it suffices to prove that $$\mathbb{\Omega}(i_1)\colon \mathbb{\Omega}(C) \to \mathbb{\Omega}(\text{Cyl}(C))$$ is a trivial cofibration of dg algebras, since $\mathbb{\Omega}(s')$ is a pushout of this map. Moreover, since $ q \circ i_1 = \mathrm{id}_C$, it suffices to verify that the map $q\colon \text{Cyl}(C) \to C$ is an $\mathbb{\Omega}$-quasi-isomorphism. We prove this in Proposition \ref{projcobarquasiiso} of Appendix~\ref{sec_appendix}. 

To prove $(2)$, we will apply Theorem \ref{loc_modelcat} to the functor $$\widehat{\mathbb{\Omega}} \colon \mathsf{sCoCoAlg}^0_\F \to \mathsf{dgAlg}_\F$$
with respect to the model category structures of (1) and Theorem \ref{Pmodelstructure}. First, it is easy to see that $\widehat{\mathbb{\Omega}} = \mathcal{L}_{\iota} \circ \mathfrak{X}$ is the composition of finitely accessible functors; therefore, $\widehat{\mathbb{\Omega}}$ is finitely accessible. By Proposition~\ref{inclusions},  we have $\mathcal{W}_{\mathbb{\Omega}} \subseteq \mathcal{W}_{\widehat{\mathbb{\Omega}}}$, so $\widehat{\mathbb{\Omega}}$ preserves weak equivalences.
Thus, the assumptions of Theorem \ref{loc_modelcat} are satisfied. 

It remains to show that the class $\mathrm{Cof}(I) \cap \mathcal{W}_{\widehat{\mathbb{\Omega}}}$ is closed under transfinite compositions and pushouts using Proposition \ref{Smith_thm2}. The case of transfinite compositions is immediate from the fact that $\widehat{\mathbb{\Omega}}$ preserves filtered colimits, and the observation that a transfinite composition of quasi-isomorphisms in $\mathsf{dgAlg}_{\F}$ is again a quasi-isomorphism. %as can directly be seen from passing to the homology groups. 
We now argue for the case of pushouts. For every cofibration $i \colon A \hookrightarrow B$ in $\mathsf{sCoCoalg}^0_{\F}$, the induced map 
$$\mathbb{\Omega}(i) \colon \mathbb{\Omega}(A) \to \mathbb{\Omega}(B)$$
is a cofibration in $\mathsf{dgAlg}_{\F}$ (see Subsection \ref{model_str_dgalg_prelim}) and also the map $P_A \hookrightarrow P_B$ is injective. As a consequence, the map $(\mathcal{L}_{\iota}\mathfrak{X})(i)$ can be written as the composition of cofibrations
$$\mathcal{L}_{\iota}(\mathfrak{X}(A)) \to \mathcal{L}_{\iota}(B, i(P_A)) \to \mathcal{L}_{\iota} (\mathfrak{X}(B)),$$
therefore, it is again a cofibration in $\mathsf{dgAlg}_{\F}$. Suppose now that $i \colon A \hookrightarrow B$ is in $\mathrm{Cof}(I) \cap \mathcal{W}_{\widehat{\mathbb{\Omega}}}$ and let $f \colon A \to C$ be an arbitrary map in $\mathsf{sCoCoAlg}^0_\F$. We need to show that the top left horizontal map in the diagram below, induced by the map $C \to D := C \oplus_A B$,
\[\begin{tikzcd}
	{\mathcal{L}_{\iota} (\mathfrak{X}(C)) } & {\mathcal{L}_{\iota} (\mathfrak{X}(C \underset{A}{\oplus} B)) } & \mathcal{L}_{\iota}\big(\mathbb{\Omega}(C) \underset{\mathbb{\Omega}(A)}{\circledast} \mathbb{\Omega}(B), P_D\big) \\
	& {\mathcal{L}_{\iota} (\mathfrak{X}(C)) \underset{\mathcal{L}_{\iota} (\mathfrak{X}(A))}{\circledast} \mathcal{L}_{\iota} (\mathfrak{X}(B))} \\
 \mathbb{\Omega}(C) && \mathbb{\Omega}(C) \underset{\mathbb{\Omega}(A)}{\circledast} \mathbb{\Omega}(B) = \mathbb{\Omega}(D)
	\arrow[from=1-1, to=1-2]
	\arrow["\simeq", from=2-2, to=1-2]
	\arrow["\simeq", from=1-1, to=2-2]
    \arrow["\mathrm{loc}", from=3-1, to=1-1]
    \arrow[from=3-1, to=3-3]
    \arrow["\mathrm{loc}", from=3-3, to=2-2]
    \arrow["\cong", from=1-3, to=1-2]
    \arrow["\mathrm{loc}", from=3-3, to=1-3]
\end{tikzcd}\]
is a quasi-isomorphism of dg algebras.
The left vertical map is given by localization at $P_C$ and the right vertical map is given by the respective localization for the pushout of dg algebras $\mathbb{\Omega}(D)$ at the set $P_D$.
The top diagonal map is a quasi-isomorphism since $(\mathcal{L}_{\iota}\mathfrak{X})(i)$ is a trivial cofibration. The bottom diagonal map is a localization at the set of monoid-like elements that arise from $B$ and $C$. Since every monoid-like element in the  bialgebra $H_0(\mathcal{L}_{\iota}(\mathfrak{X}(C)))$ is invertible, the same holds also for the bialgebra structure of the zeroth homology in the middle of the diagram.  So every monoid-like element, in particular those from $[P_D]$, becomes invertible in the bialgebra structure of the zeroth homology in the middle. Hence the middle vertical map must also be a quasi-isomorphism between derived localizations of $\mathbb{\Omega}(D)$ at $[P_D]$ (see \cite[Section 3]{BCL}). This shows that the top left horizontal map is a quasi-isomorphism, as claimed, and completes the proof of (2).

%Note that $\mathcal{L}_{\iota} \circ \mathfrak{X}$ does not preserve pushouts, since there might be additional elements in $P_{B \underset{A}{\oplus} C}$  that do not come from $P_B$ or $P_C$. However, the dg algebras on the bottom and top right of the diagram differ by pushouts along the map $\mathbb{\Omega}(\F[\iota]) \colon \mathbb{\Omega}(\F[S^1]) \to \mathbb{\Omega}(\F[\mathbb{S}^1]),$ one for each  additional element in  $P_{B \underset{A}{\oplus} C}$ not coming from $P_B$ or $P_C$. Thus, the vertical map becomes a quasi-isomorphism after localizing the bottom dg algebra at these additional elements. Moreover, the diagonal map in the diagram is a quasi-isomorphism since $\mathcal{L}_{\iota}(\mathfrak{X}(A) )\hookrightarrow \mathcal{L}_{\iota}(\mathfrak{X}(B))$ is a quasi-isomorphism by assumption. Since all classes in the set $[P_C]$ are invertible in $H_0(\mathfrak{L}_{\iota}(\mathfrak{X}(C)))$, the diagonal map remains a quasi-isomorphism after localizing the additional elements on its target. It follows that the horizontal map is quasi-isomorphism. This completes the proof of (2).

The proof of $(3)$ is obtained similarly (and more easily) by applying Theorem~\ref{loc_modelcat} to the normalized chains functor,
$$(\mathsf{sCoCoalg}^0_\F, \widehat{\mathbb{\Omega}}\text{-}\qi) \xrightarrow{\mathcal{N}_*} \mathsf{dgCoalg}^c_{\F} \xrightarrow{\text{forgetful}}  \mathsf{Ch}_{\F},$$ 
where $\mathsf{Ch}_{\F}$ denotes the category of chain complexes of $\F$-vector spaces, equipped with the usual combinatorial model structure that is determined by the monomorphisms and the quasi-isomorphisms. Note that the normalized chains functor preserves weak equivalences, since $\W_{\widehat{\mathbb{\Omega}}} \subseteq \W_{\text{q.i.}},$ by Proposition~\ref{inclusions}. 
\end{proof}

\begin{remark}
The proof of Theorem \ref{coalg_modelcat}(3) is based on the existence of the model category $(\mathsf{sCoCoalg}^0_{\F}, \widehat{\mathbb{\Omega}}\text{-}\qi)$ and differs from the proof of Theorem \ref{modelcatqi} given in \cite{Ra}. On the other hand, following directly the proof of Theorem \ref{modelcatqi} (based on Theorem \ref{Smith_thm} and Propositions \ref{Smith_thm2}--\ref{Smith_thm3}) and using the mapping cylinder factorization from the proof of Theorem \ref{coalg_modelcat}(1), one can show that the model category of Theorem \ref{coalg_modelcat}(3) exists for arbitrary commutative rings $R$. 

We note that the proof of Theorem \ref{coalg_modelcat}(1) used special properties about simplicial coalgebras and dg algebras over fields; for instance, that an injective map $A \to B$ induces a cofibration of dg algebras $\mathbb{\Omega}(A) \to \mathbb{\Omega}(B)$. 
\end{remark}

The three pairs of model categories established in Theorems \ref{sset_modelcat} and \ref{coalg_modelcat} fit into Quillen adjunctions as follows. 

\begin{proposition} \label{quillen_adj} Let $\F$ be a field. The adjunctions
\begin{equation} \label{quillenadj1} 
\F[-]\colon (\mathsf{sSet}_0, \F\text{-}\cateq) \rightleftarrows (\mathsf{sCoCoalg}^0_\F, \mathbb{\Omega}\text{-}\qi) \colon \mathcal{P}
\end{equation}
\begin{equation} \label{quillenadj2}
\F[-]\colon (\mathsf{sSet}_0, \pi_1\text{-}\F\text{-}\eq) \rightleftarrows (\mathsf{sCoCoalg}^0_\F, \widehat{\mathbb{\Omega}}\text{-}\qi) \colon \mathcal{P}
\end{equation}
\begin{equation} \label{quillenadj3}
\F[-]\colon (\mathsf{sSet}_0, \F\text{-}\eq) \rightleftarrows (\mathsf{sCoCoalg}^0_\F, \qi) \colon \mathcal{P}
\end{equation}
are Quillen adjunctions. 
\end{proposition}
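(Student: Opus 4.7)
The plan is to verify, in each of the three cases, that the left adjoint $\F[-]$ preserves cofibrations and weak equivalences; since trivial cofibrations are the intersection of these two classes, this suffices to conclude that each pair is a Quillen adjunction. The underlying adjunction $\F[-] \dashv \mathcal{P}$ was already seen in the preliminaries to restrict to $\mathsf{sSet}_0 \rightleftarrows \mathsf{sCoCoalg}^0_\F$, so the only thing to check is the interaction with the model structures.

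For cofibrations, I would simply observe that in all three model structures on $\mathsf{sSet}_0$ (Theorem \ref{sset_modelcat}) the cofibrations are the monomorphisms, and in all three model structures on $\mathsf{sCoCoalg}^0_\F$ (Theorem \ref{coalg_modelcat}) the cofibrations are the injective maps. Applying the free-$\F$-module functor degreewise sends an injection of sets to an injection of $\F$-vector spaces, so $\F[-]$ sends any monomorphism of reduced simplicial sets to an injective map of connected simplicial cocommutative $\F$-coalgebras. Cofibrations are thus preserved in all three cases.

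For weak equivalences, each of the three spatial notions has already been characterized precisely in terms of its coalgebraic counterpart applied to simplicial chains. By the definition of a categorical $\F$-equivalence together with Proposition \ref{RcateqandOmegaqi}, a map $f$ of reduced simplicial sets lies in $\W_{J,\F}$ if and only if $\F[f]$ lies in $\W_{\mathbb{\Omega}}$; by Theorem \ref{LOmegareflects}, $f$ lies in $\W_{\pi_1\text{-}\F}$ if and only if $\F[f]$ lies in $\W_{\widehat{\mathbb{\Omega}}}$; and by Proposition \ref{qiandqi}, $f$ lies in $\W_\F$ if and only if $\F[f]$ lies in $\W_{\qi}$. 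In particular, $\F[-]$ preserves weak equivalences in each of the three pairs (and in fact reflects them, though that is not needed here).

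Combining the two preservation statements, $\F[-]$ sends trivial cofibrations to trivial cofibrations, so each of the three adjunctions is a Quillen adjunction. No genuine obstacle arises in the argument: all the substantive comparisons between the spatial and coalgebraic classes of weak equivalences have already been carried out in Section \ref{sec_3_weq}, and the present statement is essentially an assembly of those identifications with the easy observation that $\F[-]$ preserves injectivity degreewise.
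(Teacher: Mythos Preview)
Your proof is correct and follows essentially the same approach as the paper: verify that $\F[-]$ preserves cofibrations (monomorphisms go to injective maps) and preserves weak equivalences in each of the three cases by invoking Proposition~\ref{RcateqandOmegaqi}, Theorem~\ref{LOmegareflects}, and Proposition~\ref{qiandqi}, respectively. The paper's proof is essentially a two-sentence version of what you wrote.
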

\begin{proof} The left adjoint functor $\F[-]\colon \mathsf{sSet}_0 \to \mathsf{sCoCoalg}^0_\F $ sends monomorphisms to injective maps, that is, $\F[-]$ preserves cofibrations. 
Moreover, using Propositions \ref{RcateqandOmegaqi} and \ref{qiandqi} (for \eqref{quillenadj1} and \eqref{quillenadj3}) and Theorem \ref{LOmegareflects} (for \eqref{quillenadj2}), it follows that $\F[-]$ preserves the weak equivalences. %using  \ref{sset_modelcat} and \ref{coalg_modelcat} .
\end{proof}

\begin{remark}
The left adjoint $\mathbb{\Omega} \colon \mathsf{sCoCoalg}^0_{\F} \to \mathsf{dgAlg}_{\F}$ is a left Quillen functor with respect to the model category structures of Theorem \ref{coalg_modelcat}(1) and Theorem \ref{Pmodelstructure}. 
\end{remark}

\section{Comparison between $\mathsf{sSet}_0$ and $\mathsf{sCoCoalg}^0_\F$} \label{sec_fields}

\subsection{Coalgebraic preliminaries} \label{coalgebra_prelim} We recall some fundamental results about the structure theory of coalgebras over a perfect field. We refer the reader to \cite{Sw69, Go} for more details. 

Let $\F$ be a field. A cocommutative $\F$-coalgebra $A$ is \emph{simple} if it has no non-trivial subcoalgebras; simple $\F$-coalgebras are necessarily finite-dimensional. The \'etale subcoalgebra $\et(A)$ of a cocommutative $\F$-coalgebra $A$ is the (direct) sum of all simple subcoalgebras of $A$.

If $\F$ is algebraically closed, then $\F$ itself is the only simple $\F$-coalgebra up to isomorphism. In this case, the \'etale subcoalgebra of a cocommutative $\F$-coalgebra $A$ can be identified with the canonical counit map
\begin{equation} \label{etale_part_closed}
\F[\mathcal{P}(A)] \subseteq A
\end{equation}
which is associated with the adjunction $\F[-] \colon \mathsf{Set} \rightleftarrows \mathsf{CoCoalg}_{\F} \colon \mathcal{P}$. 

More generally, if $\F$ is a perfect field, the \'etale subcoalgebra of a cocommutative $\F$-coalgebra $A$ can be identified as follows. Let $\F \subseteq \overline{\F}$ be the algebraic closure of $\F$ and let $G$ denote the (profinite) absolute Galois group. Then the \'etale subcoalgebra of $\overline{A} = A \otimes_{\F} \overline{\F}$ is canonically identified with the $\overline{\F}$-subcoalgebra $\overline{\F}[\mathcal{P}(\overline{A})]$ generated by the $\overline{\F}$-points of $\overline{A}$. Moreover, this set of $\overline{\F}$-points
$$\mathcal{P}(\overline{A}) = \mathsf{CoCoalg}_{\overline{\F}}(\overline{\F}, A \otimes_\F \overline{\F})$$ 
admits a natural $G$-action, which extends to a $G$-action on the $\overline{\F}$-coalgebra $\overline{\F}[\mathcal{P}(\overline{A})]$; this $G$-action is compatible with the $G$-action on $\overline{A}$. The $G$-fixed points of $\overline{\F}[\mathcal{P}(\overline{A})]$ define an $\F$-subcoalgebra of $A$ and there is a natural isomorphism of $\F$-coalgebras:
\begin{equation} \label{etale_part}
\et(A) \cong \overline{\F}[\mathcal{P}(\overline{A})]^G.
\end{equation}
The inclusion of the \'etale subcoalgebra can be expressed also in this case in terms of the counit of an adjunction. Let $\mathsf{Set}(G)$ denote the category of $G$-sets (= discrete topological spaces with a continuous $G$-action). We recall that a $G$-action on a set $X$ is continuous if and only if the stabilizer of each element $x \in X$ is an open subgroup of $G$. This is equivalent to requiring that
\begin{equation}
X = \underset{H \in \mathcal{O}_{G}^{op}}{\text{ colim } }X^H,
\end{equation}
where $X^H$ denotes the $H$-fixed points of $X$ and $\mathcal{O}_G^{op}$ denotes the opposite of the category of open subgroups $H$ of $G$ with inclusions as morphisms. Note that the poset $\mathcal{O}_G^{op}$ is filtered. We recall that every open subgroup of a profinite group has finite index.

For any $G$-set $S$, the $G$-fixed points of the $\overline{\F}$-coalgebra $\overline{\F}[S]$ form an $\F$-coalgebra (cf. \eqref{technical_iso} below), so we obtain a functor
$$\overline{\F}[-]^G \colon \mathsf{Set}(G) \to \mathsf{CoCoalg}_{\F},$$
which admits a right adjoint, given by a functor of $\overline{\F}$-points, defined by
$$\mathcal{P}_G \colon \mathsf{CoCoalg}_{\F} \to \mathsf{Set}(G), \ \ A \mapsto \mathsf{CoCoalg}_{\overline{\F}}(\overline{\F}, A \otimes_{\F} \overline{\F}).$$
Then the inclusion of the \'etale subcoalgebra $\et(A) \subseteq A$ can be identified with the counit of this adjunction. For any $G$-set $S$, there is a canonical isomorphism of $\F$-coalgebras (cf. the proof of \cite[Lemma 4.3]{Go}):
\begin{equation} \label{technical_iso} 
\overline{\F}[S]^G \otimes_{\F} \overline{\F} \cong \overline{\F}[S]
\end{equation}
and it follows that the unit transformation of the adjunction $(\overline{\F}[-]^G, \mathcal{P}_G)$,
$$S \to \mathcal{P}_G(\overline{\F}[S]^G),$$
is a natural isomorphism. In particular, the left adjoint $\overline{\F}[-]^G$ is full and faithful. 

We will need the following fundamental property of the \'etale subcoalgebra. For the proof, we refer the reader to \cite[Section 2]{Go}, \cite[Section 4]{Nik}, \cite[Section 2]{Gu}. 

\begin{theorem} \label{decomposition_thm}
Let $A$ be a cocommutative $\F$-coalgebra over a perfect field $\F$. The inclusion $\et(A) \subseteq A$ admits a natural retraction of coalgebras. 
\end{theorem}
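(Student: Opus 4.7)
The plan is to decompose $A$ into manageable pieces, construct the retraction first over the algebraic closure, and then descend by taking Galois invariants. The starting point is the standard fact (see \cite[Chapter 8]{Sw69}) that any cocommutative coalgebra $A$ over a field admits a canonical decomposition $A = \bigoplus_{D} A_D$ into its irreducible components, indexed by the simple subcoalgebras $D$ of $A$, where $A_D$ is the largest subcoalgebra of $A$ whose coradical equals $D$. This decomposition is natural in $A$, since the image of an irreducible subcoalgebra under a coalgebra homomorphism is again an irreducible subcoalgebra (possibly zero). Because $\et(A) = \bigoplus_D D$ under this decomposition, it suffices to produce, naturally in each component, a coalgebra retraction $A_D \to D$.

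The key calculation is carried out in the pointed irreducible case: suppose $B$ is a cocommutative coalgebra with a unique group-like element $x$ whose linear span is the coradical of $B$. Then the $\F$-linear map
\[\pi_x \colon B \to \F \cdot x, \qquad b \mapsto \epsilon(b)\, x,\]
is a coalgebra map retracting the inclusion $\F \cdot x \hookrightarrow B$. Indeed, $\Delta \pi_x(b) = \epsilon(b)(x \otimes x)$ using $\Delta(x) = x \otimes x$, while $(\pi_x \otimes \pi_x)\Delta(b) = \sum \epsilon(b_{(1)})\epsilon(b_{(2)})(x \otimes x) = \epsilon(b)(x \otimes x)$ by the counit axiom; and clearly $\pi_x(x) = x$. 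Crucially, $\pi_x$ is natural in the pair $(B,x)$. Over an algebraically closed field, the only simple coalgebra is the ground field, so each $A_D$ is pointed irreducible with distinguished group-like $x \in \mathcal{P}(A_D)$; summing the maps $\pi_x$ over $x \in \mathcal{P}(A)$ yields a natural coalgebra retraction $A \to \F[\mathcal{P}(A)] = \et(A)$.

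For a perfect field $\F$ with absolute Galois group $G$ and algebraic closure $\overline{\F}$, one applies the construction above to $\overline{A} := A \otimes_{\F} \overline{\F}$ to produce a coalgebra retraction $\overline{\pi} \colon \overline{A} \to \et(\overline{A}) \cong \overline{\F}[\mathcal{P}(\overline{A})]$. The action of $G$ on $\overline{A}$ permutes the group-like elements and therefore the pointed irreducible components, and by the naturality of $\pi_x$ in $(B, x)$ the map $\overline{\pi}$ is $G$-equivariant. Passing to $G$-fixed points, and invoking the identification \eqref{etale_part} together with Galois descent $(\overline{A})^G \cong A$ for the underlying vector space (which requires $\F$ to be perfect), gives the desired natural coalgebra retraction $A \to \et(A)$. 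The main technical obstacle is to check that the pointed irreducible decomposition of $\overline{A}$ is sufficiently functorial and $G$-compatible for the summed retraction to be a well-defined Galois-equivariant coalgebra map; once this is established, the descent step is formal.
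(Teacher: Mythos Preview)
Your proposal is correct and follows essentially the same approach as the references the paper cites (Goerss \cite[Section 2]{Go}, Nikolaus \cite[Section 4]{Nik}, Guzman \cite[Section 2]{Gu}): decompose into irreducible components, use the counit to retract each pointed irreducible component over $\overline{\F}$, and then descend via $G$-invariants using \eqref{etale_part}. The paper itself does not give an independent argument but defers to those sources, and your sketch recovers precisely that construction; the one point you flag as a ``technical obstacle'' (the $G$-equivariance of the summed retraction) is indeed routine once one notes that the $G$-action on $\overline{A}$ is by $\F$-coalgebra automorphisms permuting the group-like elements, and that the counit satisfies $\epsilon(g \cdot b) = g(\epsilon(b))$.
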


\subsection{Algebraically closed fields} Based on Theorem \ref{decomposition_thm}, we can now apply the criterion of Proposition \ref{criterion_hff} to the Quillen adjunctions of Proposition \ref{quillen_adj}. 

\begin{theorem} \label{comparisons1} Let $\mathbb{F}$ be an algebraically closed field. The left Quillen functors 
\begin{equation} \label{quillen-adj1} \tag{1}
\F[-]\colon (\mathsf{sSet}_0, \F\text{-}\cateq) \rightarrow (\mathsf{sCoCoalg}^0_\F, \mathbb{\Omega}\text{-}\qi) 
\end{equation} 
\begin{equation} \label{quillen-adj2} \tag{2}
\F[-]\colon (\mathsf{sSet}_0, \pi_1\text{-}\F\text{-}\eq) \rightarrow (\mathsf{sCoCoalg}^0_\F, \widehat{\mathbb{\Omega}}\text{-}\qi)
\end{equation}
\begin{equation} \label{quillen-adj3} \tag{3}
\F[-]\colon (\mathsf{sSet}_0, \F\text{-}\eq) \rightarrow (\mathsf{sCoCoalg}^0_\F, \qi) 
\end{equation}
are homotopically full and faithful.
\end{theorem}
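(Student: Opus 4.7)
The plan is to apply the criterion of Proposition \ref{criterion_hff} uniformly to all three Quillen adjunctions \eqref{quillen-adj1}--\eqref{quillen-adj3}. Since an algebraically closed field $\F$ has no non-trivial idempotents, the unit transformation $X \to \mathcal{P}(\F[X])$ is a natural isomorphism, and consequently $\F[-]\colon \mathsf{sSet}_0 \to \mathsf{sCoCoalg}^0_\F$ is full and faithful. It will therefore suffice to show that the right adjoint $\mathcal{P}\colon \mathsf{sCoCoalg}^0_\F \to \mathsf{sSet}_0$ preserves weak equivalences in each of the three homotopy theories of Theorem \ref{coalg_modelcat}.

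The main tool will be Theorem \ref{decomposition_thm}, which supplies, for every cocommutative $\F$-coalgebra $A$, a natural retraction $r_A\colon A \to \et(A)$ of the inclusion $\et(A) \hookrightarrow A$ in $\mathsf{CoCoalg}_\F$. When $\F$ is algebraically closed, the identification \eqref{etale_part_closed} gives $\et(A) = \F[\mathcal{P}(A)]$. Applied in each simplicial degree to a connected simplicial cocommutative coalgebra $C$, this produces a pair of natural morphisms of simplicial cocommutative coalgebras
\[
i_C\colon \F[\mathcal{P}(C)] \hookrightarrow C, \qquad r_C\colon C \to \F[\mathcal{P}(C)],
\]
with $r_C \circ i_C = \mathrm{id}$. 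For any map $f\colon C \to C'$ in $\mathsf{sCoCoalg}^0_\F$, the naturality of $i$ and $r$ exhibits $\F[\mathcal{P}(f)]$ as a retract of $f$ in the arrow category of $\mathsf{sCoCoalg}^0_\F$. Hence, if $f$ is a weak equivalence in any one of the three model structures on $\mathsf{sCoCoalg}^0_\F$, then so is $\F[\mathcal{P}(f)]$, since weak equivalences are closed under retracts. To conclude, I invoke Proposition \ref{RcateqandOmegaqi}, Theorem \ref{LOmegareflects}, and Proposition \ref{qiandqi}, each of which shows that $\F[-]$ not only preserves but also \emph{reflects} the relevant class of weak equivalences. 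It then follows that $\mathcal{P}(f)$ is itself a weak equivalence in the corresponding model structure on $\mathsf{sSet}_0$, which verifies the criterion of Proposition \ref{criterion_hff}.

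The only non-formal ingredient is the existence of the natural coalgebra retraction onto the étale part provided by Theorem \ref{decomposition_thm}; I expect this to be the principal input to cite rather than a genuine obstacle to overcome in the present proof. Everything else is a uniform packaging through the adjunction $(\F[-],\mathcal{P})$ and through the three ``detection'' statements, and the specific nature of each of the three notions of weak equivalence enters only via the corresponding reflection statement for $\F[-]$. In particular, the argument does not interact with the explicit construction of $\widehat{\mathbb{\Omega}}$ or with any model-categorical subtleties beyond the closure of weak equivalences under retracts, which explains why the three cases can be treated in lockstep.
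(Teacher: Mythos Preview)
Your proposal is correct and follows essentially the same argument as the paper's own proof: apply Proposition~\ref{criterion_hff}, use Theorem~\ref{decomposition_thm} together with \eqref{etale_part_closed} to exhibit $\F[\mathcal{P}(f)]$ as a natural retract of $f$, and then invoke the reflection statements (Proposition~\ref{RcateqandOmegaqi}, Theorem~\ref{LOmegareflects}, Proposition~\ref{qiandqi}) to conclude that $\mathcal{P}(f)$ is a weak equivalence in each of the three cases.
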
 
\begin{proof} The proof is similar to the proof in \cite{Go} which essentially treated case \eqref{quillen-adj3}. For any field $\mathbb{F}$, the functor $\mathbb{F}[-] $ is (1-categorically) full and faithful. Thus, by Proposition \ref{criterion_hff}, it suffices to show that 
$\mathcal{P}: \mathsf{sCoCoalg}_{\F}^0 \to \mathsf{sSet}_0$ preserves all weak equivalences in each one of the cases. Given a map $f: C \to C'$ in $\mathsf{sCoCoalg}^0_{\mathbb{F}}$ and assuming that $\F$ is algebraically closed, it follows from Theorem \ref{decomposition_thm} and \eqref{etale_part_closed} that the map $\F[\mathcal{P}(f)]$ is a retract of $f$. Therefore, if $f$ is a weak equivalence (in any of the three cases), then $\F[\mathcal{P}(f)]$ is again a weak equivalence. Hence $\mathcal{P}(f)$ is a weak equivalence (in any of the three cases) as required, using Proposition \ref{RcateqandOmegaqi} (for \eqref{quillen-adj1}), 
Theorem \ref{LOmegareflects} (for \eqref{quillen-adj2}), and Proposition \ref{qiandqi} (for \eqref{quillen-adj3}). 
\end{proof}

\begin{remark}
An analogue of Theorem \ref{comparisons1}(3) for simplicial presheaves with respect to the local model category structures was shown in \cite{Ra} and for the motivic homotopy theory in \cite{Gu}.
\end{remark} 

\begin{corollary} \label{completeinvariant}
Let $\F$ be an algebraically closed field and let $X$ and $Y$ be reduced simplicial sets. 
\begin{enumerate}
\item Suppose $\ \F[X] \cong \F[Y]$ in $\mathrm{Ho}(\mathsf{sCoCoalg}^0_{\F}, \mathbb{\Omega}\text{-}\qi)$. Then 
$$X \cong Y \text{ in } \mathrm{Ho}(\mathsf{sSet}_0, \F\text{-}\cateq).$$ 
\item Suppose $\ \F[X] \cong \F[Y]$ in $\mathrm{Ho}(\mathsf{sCoCoalg}^0_{\F}, \widehat{\mathbb{\Omega}}\text{-}\qi)$. Then 
$$X \cong Y \text{ in } \mathrm{Ho}(\mathsf{sSet}_0, \pi_1\text{-}\F\text{-}\eq).$$ 
\item Suppose $\ \F[X] \cong \F[Y]$ in $\mathrm{Ho}(\mathsf{sCoCoalg}^0_{\F}, \qi)$. Then 
$$X \cong Y \text{ in } \mathrm{Ho}(\mathsf{sSet}_0, \F\text{-}\eq).$$ 
\end{enumerate}
\end{corollary}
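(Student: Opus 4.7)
The corollary will follow directly from Theorem~\ref{comparisons1} together with two essentially formal observations: that $\F[-]$ computes its own left derived functor on the nose, and that fully faithful functors reflect isomorphisms.

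The plan is to invoke Proposition~\ref{equiv_char_hff}(4), which identifies ``homotopically full and faithful'' with the condition that the left derived functor
\[\mathbb{L}\F[-] \colon \ho(\mathsf{sSet}_0) \longrightarrow \ho(\mathsf{sCoCoalg}^0_\F)\]
is full and faithful with respect to the corresponding pair of model structures. Thus Theorem~\ref{comparisons1} gives us three fully faithful derived functors, one for each pair of model categories in \eqref{Qadj1}--\eqref{Qadj3}.

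Next, I would observe that in each of the three model structures on $\mathsf{sSet}_0$ established in Theorem~\ref{sset_modelcat}, the cofibrations are the monomorphisms, so every reduced simplicial set is cofibrant (the unique morphism from the terminal reduced simplicial set $\mathbb{\Delta}^0$ is always a monomorphism). Moreover, by Proposition~\ref{RcateqandOmegaqi} in case~(1), Theorem~\ref{LOmegareflects} in case~(2), and Proposition~\ref{qiandqi} in case~(3), the functor $\F[-]$ preserves all weak equivalences. Consequently, for any $X \in \mathsf{sSet}_0$, the derived functor is computed as $\mathbb{L}\F[X] \cong \F[X]$ in the relevant homotopy category of connected simplicial cocommutative coalgebras.

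Finally, I would invoke the standard fact that any fully faithful functor between categories is conservative. Given isomorphisms $\F[X] \cong \F[Y]$ in each of the three homotopy categories $\ho(\mathsf{sCoCoalg}^0_{\F}, \mathbb{\Omega}\text{-}\qi)$, $\ho(\mathsf{sCoCoalg}^0_{\F}, \widehat{\mathbb{\Omega}}\text{-}\qi)$, and $\ho(\mathsf{sCoCoalg}^0_{\F}, \qi)$, the identifications $\F[X] \cong \mathbb{L}\F[X]$ and $\F[Y] \cong \mathbb{L}\F[Y]$ translate the hypothesis to $\mathbb{L}\F[X] \cong \mathbb{L}\F[Y]$; conservativity of the corresponding fully faithful $\mathbb{L}\F[-]$ then produces the required isomorphism $X \cong Y$ in the appropriate homotopy category of reduced simplicial sets. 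There is no genuine obstacle in this deduction; the only small point requiring care is verifying cofibrancy in each setting so that the derived functor agrees with $\F[-]$ strictly, but this is immediate from the cofibration descriptions in Theorems~\ref{sset_modelcat} and~\ref{coalg_modelcat}.
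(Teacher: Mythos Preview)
Your argument is correct and is exactly the intended deduction: the paper states the corollary with no proof, since it is immediate from Theorem~\ref{comparisons1} together with the cofibrancy of every object in $\mathsf{sSet}_0$. One minor terminological point: what you actually use in the final step is not just conservativity (which concerns a specific morphism $f$ with $F(f)$ an isomorphism) but the stronger fact that a fully faithful functor reflects isomorphism \emph{classes}; this needs fullness to lift the given isomorphism $\F[X] \cong \F[Y]$ to a morphism in $\ho(\mathsf{sSet}_0)$ before invoking faithfulness, but of course this is exactly what full faithfulness provides.
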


\begin{remark}
A version of Corollary~\ref{completeinvariant}(2) was shown in \cite{RWZTransactions} assuming that $X$ and $Y$ are Kan complexes and using the standard (non-localized) cobar construction. 
\end{remark}

\subsection{Perfect fields} Let $\F$ be a perfect field, let $\F \subseteq \overline{\F}$ be its algebraic closure and let $G$ denote the (profinite) absolute Galois group of $\F$. The analogue of Theorem \ref{comparisons1} for perfect fields will make use of the adjunction (see Subsection \ref{coalgebra_prelim}): 
$$\overline{\F}[-]^G \colon \mathsf{sSet}(G)_0 \rightleftarrows \mathsf{sCoCoalg}^0_\F \colon \mathcal{P}_G.$$
Here $\mathsf{sSet}(G)_0$ denotes the category of reduced simplicial $G$-sets. We recall that the left adjoint $\overline{\F}[-]^G$ is full and faithful. 

We denote by $\delta \colon \mathsf{sSet}_0 \rightleftarrows \mathsf{sSet}(G)_0 \colon (-)^G$ the adjunction induced by the trivial $G$-action functor $\delta$ and the $G$-fixed points functor $(-)^G$. Our analysis of the Quillen adjunction (in all three cases) $\F[-] \colon \mathsf{sSet}_0 \rightleftarrows \mathsf{sCoCoalg}^0_{\F} \colon \mathcal{P}$
is based on its factorization as the composition of adjunctions
$$\F[-] \colon \mathsf{sSet}_0 \overset{\delta}{\underset{(-)^G}\rightleftarrows} \mathsf{sSet}(G)_0  \overset{\overline{\F}[-]^G}{\underset{\mathcal{P}_G}\rightleftarrows} \mathsf{sCoCoalg}^0_{\F} \colon \mathcal{P}.$$
We will construct three auxiliary model structures on $\mathsf{sSet}(G)_0$ in order to set the adjunctions above into a homotopical setting. We say that a map $f\colon S \to S'$ in $\mathsf{sSet}(G)_0$ is a categorical $\F$-equivalence (resp. $\pi^G_1$-$\F$-equivalence, $\F$-equivalence) if the functor $\overline{\F}[-]^G$ sends $f$ to an $\mathbb{\Omega}$-quasi-isomorphism (resp. $\widehat{\mathbb{\Omega}}$-quasi-isomorphism, quasi-isomorphism) in $\mathsf{sCoCoalg}^0_\F$. The terminology is justified by the following lemma. We denote by $$U \colon \mathsf{sSet}(G)_0 \to \mathsf{sSet}_0$$ the forgetful functor that forgets the $G$-action.

\begin{lemma} \label{G-weak-eq}
Let $f \colon S \to S'$ be a map in $\mathsf{sSet}(G)_0$. Then the following hold:
\begin{itemize}
    \item[(1)] $f$ is a categorical $\F$-equivalence if and only if $U(f)$ is a categorical $\F$-equivalence. 
    \item[(2)] if $f$ is a $\pi_1^G$-$\F$-equivalence, then $U(f)$ is a $\pi_1$-$\F$-equivalence.
    \item[(3)] $f$ is an $\F$-equivalence if and only if $U(f)$ is an $\F$-equivalence. 
    \end{itemize}
\end{lemma}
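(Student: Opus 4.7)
The three parts share a common strategy: descend every claim to the algebraic closure $\overline{\F}$ via faithful flatness of $\overline{\F}/\F$. Three ingredients carry this out. First, the canonical isomorphism $\overline{\F}[S]^G \otimes_\F \overline{\F} \cong \overline{\F}[S] \cong \overline{\F}[U(S)]$ of simplicial $\overline{\F}$-coalgebras, which is the content of \eqref{technical_iso}. Second, faithful flatness of $\overline{\F}/\F$: a map of chain complexes (equivalently, of dg algebras) of $\F$-modules is a quasi-isomorphism if and only if its base change to $\overline{\F}$ is, and consequently the notions of quasi-isomorphism, categorical equivalence and $\pi_1$-equivalence in $\mathsf{sSet}_0$ are the same whether tested over $\F$ or over $\overline{\F}$. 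Third, the functors $\mathcal{N}_*$ and $\mathsf{Cobar}$ both preserve small colimits and therefore commute with the base-change functor $- \otimes_\F \overline{\F}$; in particular there is a natural isomorphism $\mathbb{\Omega}(C) \otimes_\F \overline{\F} \cong \mathbb{\Omega}(C \otimes_\F \overline{\F})$.

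For (1), these inputs yield a chain of biconditionals: $\mathbb{\Omega}(\overline{\F}[f]^G)$ is a quasi-isomorphism over $\F$ iff $\mathbb{\Omega}(\overline{\F}[f]^G) \otimes_\F \overline{\F}$ is a quasi-isomorphism over $\overline{\F}$ iff $\mathbb{\Omega}(\overline{\F}[U(f)]) = \Lambda(U(f); \overline{\F})$ is a quasi-isomorphism, and the last condition is equivalent, by faithful flatness applied to $\Lambda(U(f); \F)$, to $U(f)$ being a categorical $\F$-equivalence. The proof of (3) is identical after replacing $\mathbb{\Omega}$ by $\mathcal{N}_*$ and invoking Proposition \ref{qiandqi}.

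For (2) the forward direction follows the same pattern but requires an extra step, because $\widehat{\mathbb{\Omega}}$ is the derived localization of $\mathbb{\Omega}$ at the set $P_C$ of monoid-like elements of the fundamental bialgebra, and this set can strictly enlarge under base change since the $\overline{\F}$-points of an $\F$-bialgebra need not all be Galois-fixed. Using compatibility of derived localization with flat base change (Remark \ref{derivedlocalization} and \cite{BCL}, applicable because $\mathbb{\Omega}(C)$ is $\F$-flat and hence left proper), one identifies $\widehat{\mathbb{\Omega}}(\overline{\F}[S]^G) \otimes_\F \overline{\F}$ with the derived localization of $\mathbb{\Omega}(\overline{\F}[U(S)])$ at the image of $P_{\overline{\F}[S]^G}$, and obtains a canonical comparison map
\[
\widehat{\mathbb{\Omega}}(\overline{\F}[S]^G) \otimes_\F \overline{\F} \longrightarrow \widehat{\mathbb{\Omega}}(\overline{\F}[U(S)])
\]
which is itself a further derived localization at the remaining (Galois-conjugate) monoid-like elements. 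If $f$ is a $\pi_1^G$-$\F$-equivalence, faithful flatness makes the source of this arrow a quasi-isomorphism on $f$; applying the same further derived localization to both source and target preserves quasi-isomorphisms by Proposition \ref{invariance}, so $\widehat{\mathbb{\Omega}}(\overline{\F}[U(f)])$ is a quasi-isomorphism, and Theorem \ref{LOmegareflects} identifies $U(f)$ as a $\pi_1$-$\overline{\F}$-equivalence, equivalently a $\pi_1$-$\F$-equivalence.

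The main technical obstacle is making precise the interaction of $\widehat{\mathbb{\Omega}}$ with flat base change and tracking the two distinct sets of monoid-like elements that appear on the two sides. This is also the reason only one implication of (2) is asserted: the converse would require showing that inverting the additional Galois-new monoid-like elements cannot create new quasi-isomorphisms that were not already present before, which is not a formal consequence of flat descent.
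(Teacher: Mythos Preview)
Your proof is correct and follows essentially the same approach as the paper's: reduce to $\overline{\F}$ via \eqref{technical_iso} and faithful flatness, use that $\mathbb{\Omega}$ commutes with base change, and for (2) observe that the further localization at the extra monoid-like elements preserves quasi-isomorphisms. One minor point: your citation to Proposition~\ref{invariance} for that last step is not quite apt---the relevant fact is that derived localization, being a homotopy pushout, preserves quasi-isomorphisms (this is \cite[Proposition~3.5]{BCL}, which the paper invokes in the proof of Proposition~\ref{inclusions}), together with the observation that the further localizing sets on both sides correspond under the bialgebra isomorphism on $H_0$.
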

\begin{proof}
Note that the functor $(-) \otimes \overline{\F} \colon \mathsf{sCoCoalg}^0_{\F} \to \mathsf{sCoCoalg}^0_{\overline{\F}}$ preserves and detects quasi-isomorphisms. It also preserves and detects $\mathbb{\Omega}$-quasi-isomorphisms because we have an isomorphism $\mathbb{\Omega}(C \otimes \overline{\F}) \cong \mathbb{\Omega}(C) \otimes \overline{\F}$ for any $C \in \mathsf{sCoCoalg}^0_\F$. On the other hand, for $C \in \mathsf{sCoCoalg}^0_{\F}$, the associated fundamental $\overline{\F}$-bialgebra $H_0(\mathbb{\Omega}(C \otimes \overline{\F})) \cong H_0(\mathbb{\Omega}(C)) \otimes \overline{\F}$ could possibly have new monoid-like elements that do not arise from monoid-like elements of the fundamental $\F$-bialgebra $H_0(\mathbb{\Omega}(C))$ of $C$ -- this does not occur for $C$ of the form  $\F[S]$. Still, since localization preserves quasi-isomorphisms, it follows that $(-) \otimes \overline{\F}$ preserves $\widehat{\mathbb{\Omega}}$-quasi-isomorphisms. Then the claims (1)--(3) follow easily from the natural isomorphism \eqref{technical_iso} and the characterizations of the three classes of weak equivalences in $\mathsf{sSet}_0$ shown in Section \ref{sec_3_weq}.
\end{proof}

The next theorem establishes three corresponding model category structures on $\mathsf{sSet}(G)_0$ together with three pairs of associated Quillen adjunctions.

\begin{theorem} \label{Gsset_modelcat}  Let $\F$ be a perfect field, let $\F \subseteq \overline{\F}$ be its algebraic closure and let $G$ denote the (profinite) absolute Galois group of $\F$.
\begin{enumerate}
\item There is a left proper combinatorial model category structure on $\mathsf{sSet}(G)_0$ with the monomorphisms as cofibrations and the categorical $\F$-equivalences, denoted by $\W(G)_{J, \F}$, as weak equivalences. 

We denote this model category by $(\mathsf{sSet}(G)_0, \F\text{-}\cateq)$.

\medskip

\item There is a left proper combinatorial model category structure on $\mathsf{sSet}(G)_0$ with the monomorphisms as cofibrations and the $\pi^G_1$-$\F$-equivalences, denoted by $\W(G)_{\pi^G_1\text{-}\F}$, as weak equivalences. This model category is a left Bousfield localization of the model category in (1).

We denote this model category by $(\mathsf{sSet}(G)_0, \pi^G_1\text{-}\F\text{-}\eq)$. 

\medskip

\item There is a left proper combinatorial model category structure on $\mathsf{sSet}(G)_0$ with the monomorphisms as cofibrations and the $\F$-equivalences, denoted by $\W(G)_{\F}$, as weak equivalences. This model category is a left Bousfield localization of the model category in (2).

We denote this model category by $(\mathsf{sSet}(G)_0, \F\text{-}\eq)$.
\end{enumerate}  
Moreover, the adjunctions
\begin{equation} \label{quillen-adjG1} \tag{1}
\delta \colon (\mathsf{sSet}_0, \F\text{-}\cateq) \rightleftarrows (\mathsf{sSet}(G)_0, \F\text{-}\cateq) \colon (-)^G  
\end{equation} 
\begin{equation} \label{quillen-adjG2} \tag{2}
\delta \colon (\mathsf{sSet}_0, \pi_1\text{-}\F\text{-}\eq) \rightleftarrows (\mathsf{sSet}(G)_0, \pi^G_1\text{-}\F\text{-}\eq) \colon (-)^G 
\end{equation}
\begin{equation} \label{quillen-adjG3} \tag{3}
\delta \colon (\mathsf{sSet}_0, \F\text{-}\eq) \rightleftarrows (\mathsf{sSet}(G)_0,  \F\text{-}\eq) \colon (-)^G 
\end{equation}
and 
\begin{equation} \label{quillen-adjG1} \tag{1}
\overline{\F}[-]^G \colon (\mathsf{sSet}(G)_0, \F\text{-}\cateq) \rightleftarrows (\mathsf{sCoCoalg}^0_\F, \mathbb{\Omega}\text{-}\qi) \colon \mathcal{P}_G  
\end{equation} 
\begin{equation} \label{quillen-adjG2} \tag{2}
\overline{\F}[-]^G \colon (\mathsf{sSet}(G)_0, \pi^G_1\text{-}\F\text{-}\eq) \rightleftarrows (\mathsf{sCoCoalg}^0_\F, \widehat{\mathbb{\Omega}} \text{-}\qi) \colon \mathcal{P}_G 
\end{equation}
\begin{equation} \label{quillen-adjG3} \tag{3}
\overline{\F}[-]^G \colon (\mathsf{sSet}(G)_0, \F\text{-}\eq) \rightleftarrows (\mathsf{sCoCoalg}^0_\F, \qi) \colon \mathcal{P}_G 
\end{equation}
are Quillen adjunctions. 
\end{theorem}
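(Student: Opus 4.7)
The plan is to parallel the proofs of Theorems \ref{sset_modelcat} and \ref{coalg_modelcat}, applying Theorem \ref{Smith_thm} to construct model structure (1) and then Theorem \ref{loc_modelcat} to obtain (2) and (3) as successive left Bousfield localizations. First, I would check that $\mathsf{sSet}(G)_0$ is locally presentable: the category of continuous $G$-sets, for $G$ profinite, is locally finitely presentable (being the ind-completion of finite $G$-sets), passing to simplicial objects preserves this, and the subcategory of reduced simplicial $G$-sets is reflective. Moreover, all three classes $\W(G)_{J,\F}$, $\W(G)_{\pi_1^G\text{-}\F}$, $\W(G)_\F$ are by construction inverse images under $\overline{\F}[-]^G$ of the corresponding accessible weak-equivalence classes in $\mathsf{sCoCoalg}^0_{\F}$, so they are accessible and satisfy 2-out-of-3.

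For model structure (1), apply Theorem \ref{Smith_thm} with the accessible functor $F_1 := \mathbb{\Omega} \circ \overline{\F}[-]^G \colon \mathsf{sSet}(G)_0 \to \mathsf{dgAlg}_{\F}$ and with $I$ the set of monomorphisms between $\kappa$-presentable objects in $\mathsf{sSet}(G)_0$ (for $\kappa$ sufficiently large). The argument from the proof of Theorem \ref{coalg_modelcat}(1) adapts to show that $\mathrm{Cof}(I)$ equals the class of monomorphisms. Since $\overline{\F}[-]^G$ is a left adjoint and $\mathbb{\Omega}$ preserves colimits, $F_1$ preserves small colimits, so condition (2) of Theorem \ref{Smith_thm} follows from Proposition \ref{Smith_thm2}. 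Condition (1) is verified via Proposition \ref{Smith_thm3} using a mapping cylinder factorization built from the cylinder $\mathrm{Cyl}(S)$ on a reduced simplicial $G$-set $S$, defined using $\mathbb{\Delta}^1$ with trivial $G$-action, exactly as in Theorem \ref{coalg_modelcat}(1). The essential point is that, since $\overline{\F}[-]^G$ preserves colimits and satisfies $\overline{\F}[S \times \mathbb{\Delta}^1]^G \cong \overline{\F}[S]^G \otimes \F[\mathbb{\Delta}^1]$, there is a natural isomorphism $\overline{\F}[\mathrm{Cyl}(S)]^G \cong \mathrm{Cyl}(\overline{\F}[S]^G)$; the mapping cylinder projection then becomes a quasi-isomorphism of dg algebras under $F_1$ by Proposition \ref{projcobarquasiiso} in the appendix.

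Model structures (2) and (3) are obtained as left Bousfield localizations via Theorem \ref{loc_modelcat}. For (2), apply it to $F_2 := \widehat{\mathbb{\Omega}} \circ \overline{\F}[-]^G$; by Proposition \ref{inclusions}, $F_2$ preserves categorical $\F$-equivalences, and the required closure of trivial cofibrations under pushouts and transfinite compositions is transported from $\mathsf{sCoCoalg}^0_{\F}$ along the colimit-preserving functor $\overline{\F}[-]^G$, using the analysis from the proof of Theorem \ref{coalg_modelcat}(2) verbatim. For (3), apply Theorem \ref{loc_modelcat} analogously to $F_3 := \mathcal{N}_* \circ \overline{\F}[-]^G$ landing in $\mathsf{Ch}_{\F}$, using the inclusion $\W_{\widehat{\mathbb{\Omega}}} \subseteq \W_{\qi}$ and reducing to Theorem \ref{coalg_modelcat}(3). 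Left properness in all three cases is automatic, since every object is cofibrant (cubes / gluing lemma).

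For the Quillen adjunctions, $\delta$ and $\overline{\F}[-]^G$ both send monomorphisms to monomorphisms. The key identification is the natural isomorphism $\overline{\F}[\delta S]^G \cong \F[S]$ for $S \in \mathsf{sSet}_0$ (Galois descent applied to the trivial $G$-action), which combined with Proposition \ref{RcateqandOmegaqi}, Theorem \ref{LOmegareflects} and Proposition \ref{qiandqi} shows that $\delta$ preserves weak equivalences in each case. That $\overline{\F}[-]^G$ preserves the weak equivalences is immediate from the very definition of the three model structures on $\mathsf{sSet}(G)_0$. I expect the main obstacle to be the verification of condition (1) of Theorem \ref{Smith_thm} for model structure (1), namely the compatibility between the cylinder on $\mathsf{sSet}(G)_0$ and the cylinder on $\mathsf{sCoCoalg}^0_{\F}$ required to reduce the cylinder-projection quasi-isomorphism to Proposition \ref{projcobarquasiiso}; the remaining closure and preservation arguments are then largely formal, thanks to $\overline{\F}[-]^G$ being cocontinuous.
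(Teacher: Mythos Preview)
Your approach is correct but the paper takes a rather different and more economical route. You follow the template of Theorems~\ref{sset_modelcat} and~\ref{coalg_modelcat} closely: build model structure~(1) via Theorem~\ref{Smith_thm} using a mapping cylinder factorization (reducing condition~(1) of Theorem~\ref{Smith_thm} to Proposition~\ref{projcobarquasiiso} through the compatibility $\overline{\F}[\mathrm{Cyl}(S)]^G \cong \mathrm{Cyl}(\overline{\F}[S]^G)$), and then obtain~(2) and~(3) as successive left Bousfield localizations by transporting the closure arguments for trivial cofibrations along the cocontinuous functor $\overline{\F}[-]^G$. This works, and makes the parallel with Theorem~\ref{coalg_modelcat} explicit.

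The paper instead proves a single stronger statement that handles all three model structures simultaneously: it shows directly that every map in $\mathcal{I}\text{-}\mathrm{inj}$ is an underlying trivial Kan fibration (hence an underlying categorical equivalence, hence in all three weak-equivalence classes by Lemma~\ref{G-weak-eq}). The argument uses the specific structure of \emph{discrete} $G$-sets: for $p \in \mathcal{I}\text{-}\mathrm{inj}$ and any open subgroup $H \leq G$, the map $p^H$ is a trivial Kan fibration (by lifting against $\partial\mathbb{\Delta}^n \times G/H \hookrightarrow \mathbb{\Delta}^n \times G/H$), and then continuity of the $G$-action gives $U(p) = \mathrm{colim}_H\, p^H$ as a filtered colimit of trivial fibrations. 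This avoids the mapping-cylinder analysis and the separate localization steps for~(2) and~(3) entirely. Your approach has the advantage of being a straightforward transport from the coalgebra side, requiring no new ideas beyond the cylinder compatibility; the paper's approach is shorter and exploits the topos-theoretic features of $\mathsf{sSet}(G)$ that are not available for $\mathsf{sCoCoalg}^0_{\F}$.
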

\begin{proof} We will apply Theorem \ref{Smith_thm} to the left adjoint functor $\overline{\F}[-]^G \colon \mathsf{sSet}(G)_0 \to \mathsf{sCoCoalg}^0_{\F}$ and the model categories of Theorem \ref{coalg_modelcat}. First note that $\overline{\F}[-]^G$ preserves colimits; in particular, it is accessible. 

We recall that the larger category $\mathsf{sSet}(G)$ is a (category of simplicial objects in a) Grothendieck topos; either directly or by using this fact, it is easy to conclude that $\mathsf{sSet(G)}_0$ is also locally presentable. Moreover, the class of monomorphisms in $\mathsf{sSet}(G)$, as in any topos, is cofibrantly generated 
by a set of monomorphisms $\widetilde{\mathcal{I}}$. For every $\widetilde{i} \colon A \to B$ in $\widetilde{\mathcal{I}}$, let $i \colon A/A_0 \to B/B_0$ be the induced monomorphism in $\mathsf{sSet}(G)_0$, where $(-)_0$ denotes the $0$-simplices as a constant simplicial object. Let $\mathcal{I}$ denote the set of monomorphisms obtained from the morphisms in $\widetilde{\mathcal{I}}$ in this way. We observe that a map in $\mathsf{sSet}(G)$ between reduced simplicial $G$-sets has the right lifting property with respect to $\widetilde{\mathcal{I}}$ if and only if it is has the right lifting property with respect to $\mathcal{I}$. Therefore, by the retract argument or otherwise, the set of monomorphisms $\mathcal{I}$ is a generating set for the monomorphisms in $\mathsf{sSet}(G)_0$. In more detail, for every monomorphism $f$ in $\mathsf{sSet}(G)_0$, there is a factorization $f = p i$ where $i \in \mathrm{Cof}(\mathcal{I})$ and 
$p \in \mathcal{I}-\text{inj}$ using the small object argument; then, as observed above, $p$ has the right lifting property with respect to $f$, hence $f$ is a retract of $i$.  

Then it suffices to prove the stronger claim that $\mathcal{I}-\text{inj}$ consists of maps which are categorical equivalences of underlying reduced simplicial sets (after forgetting the $G$-actions). Then the result will follow from Theorem \ref{Smith_thm} for all three cases simultaneously, since each one of the three classes of weak equivalences contains the class of categorical equivalences between underlying reduced simplicial sets (see also Lemma \ref{G-weak-eq}). 

Let $p \colon X \to Y$ be a morphism in $\mathsf{sSet}(G)_0$ which has the right lifting property with respect to the monomorphisms. Using similar arguments as above, it follows that $p$ has the right lifting property in $\mathsf{sSet}(G)$ with respect to the monomorphisms in $\mathsf{sSet}(G)$. For every open subgroup $H$ of $G$ and any $n \geq 0$, the map $i \times \mathrm{id} \colon \partial \Delta^n \times G/H \to \Delta^n \times G/H$ is a monomorphism in $\mathsf{sSet}(G)$. Thus the right lifting property of $p$ implies that the map 
$$p^H \colon X^H \to Y^H$$
is a trivial fibration of simplicial sets for any such $H$. Using the continuity of the $G$-action on $X$ and $Y$, it follows that $U(p)$ is the filtered colimit of the maps $p^H$ where $H$ ranges over the open subgroups of $G$ (with finite index). Since filtered colimits of trivial fibrations of simplicial sets are again trivial fibrations, we conclude that $U(p)$ is a trivial fibration between (reduced) simplicial sets; therefore, it is also a categorical equivalence, as required. 

It now follows immediately from the definition of cofibrations and weak equivalences that the adjunctions
$$\delta \colon \mathsf{sSet}_0 \rightleftarrows \mathsf{sSet}(G)_0 \colon (-)^G \text{ and } \overline{\F}[-]^G \colon \mathsf{sSet}(G)_0 \rightleftarrows \mathsf{sCoCoalg}^0_\F \colon \mathcal{P}_G  
$$ are Quillen adjunctions in all three contexts. 
\end{proof}

\begin{remark}
The model category structure on $\mathsf{sSet}(G)$ analogous to Theorem \ref{Gsset_modelcat}(3) was also constructed by Goerss \cite{Go2}. Note that the above proof works for any class of weak equivalences $\W$ in $\mathsf{sSet}(G)_0$ (or in $\mathsf{sSet}(G)$) such that: 
\begin{itemize} 
\item[(i)] $\W$ is detected by the class of weak equivalences $W_{\mathcal{M}}$ in a combinatorial model category $\mathcal{M}$  via a functor $F \colon \mathsf{sSet}(G)_0 \to \mathcal{M}$;
\item[(ii)] $F$ preserves small colimits and sends monomorphisms to cofibrations;
\item[(iii)] $\W$ contains the maps which define categorical equivalences between the underlying simplicial sets. 
\end{itemize}
For example, this applies to the forgetful functor $U \colon \mathsf{sSet}(G)_0 \to \mathsf{sSet}_0$ and the model categories of Theorem \ref{sset_modelcat} for any commutative ring $R$ and profinite group $G$. Specialized to a perfect field $\F$ as above, the resulting model category agrees in the cases (1) and (3) with the corresponding one from Theorem \ref{Gsset_modelcat} (cf. Lemma \ref{G-weak-eq}). In the case (2), the model category resulting from $U$ defines a left Bousfield localization of the corresponding model category from Theorem \ref{Gsset_modelcat}(2). In this case, the functor $\overline{\F}[-]^G \colon \mathsf{sSet}(G)_0 \to \mathsf{sCoCoalg}^0_{\F}$ becomes a left Quillen functor if the target category is equipped with the left Bousfield localization of $(\mathsf{sCoCoalg}^0_\F, \widehat{\mathbb{\Omega}} \text{-}\qi)$ with weak equivalences those maps that become $\widehat{\mathbb{\Omega}}$-quasi-isomorphisms after tensoring first with $\overline{\F}$ (cf. \eqref{technical_iso} and the proof of Lemma \ref{G-weak-eq}).
\end{remark}

We can now state the analogue of Theorem \ref{comparisons1} for perfect fields. 

\begin{theorem} \label{comparisons2}
Let $\F$ be a perfect field and let $\F \subseteq \overline{\F}$ be its algebraic closure with (profinite) absolute Galois group $G$. Then the left Quillen functors
\begin{equation} \label{quillen--adj1} \tag{1}
\overline{\F}[-]^G \colon (\mathsf{sSet}(G)_0, \F\text{-}\cateq) \rightarrow (\mathsf{sCoCoalg}^0_\F, \mathbb{\Omega}\text{-}\qi) 
\end{equation} 
\begin{equation} \label{quillen--adj2} \tag{2}
\overline{\F}[-]^G \colon (\mathsf{sSet}(G)_0, \pi^G_1\text{-}\F\text{-}\eq) \rightarrow (\mathsf{sCoCoalg}^0_\F, \widehat{\mathbb{\Omega}}\text{-}\qi)
\end{equation}
\begin{equation} \label{quillen--adj3} \tag{3}
\overline{\F}[-]^G \colon (\mathsf{sSet}(G)_0, \F\text{-}\eq) \rightarrow (\mathsf{sCoCoalg}^0_\F, \qi) 
\end{equation}
are homotopically full and faithful.
%\begin{itemize} 
%\item[(1)] The left Quillen functor
%\begin{equation*} \label{quillen--adj1} 
%\overline{\F}[-]^G \colon (\mathsf{sSet}(G)_0, \F\text{-}\cateq) \rightarrow (\mathsf{sCoCoalg}^0_\F, \mathbb{\Omega}\text{-}\qi) 
%\end{equation*} 
%is homotopically full and faithful. 
%\item[(2)] The derived unit transformation of the Quillen adjunction
%\begin{equation*} \label{quillen--adj2} 
%\overline{\F}[-]^G \colon (\mathsf{sSet}(G)_0, \pi_1^G\text{-}\F\text{-}\eq) \rightarrow (\mathsf{sCoCoalg}^0_\F, \widehat{\mathbb{\Omega}}\text{-}\qi)
%\end{equation*}
%is an isomorphism in $\ho(\mathsf{sSet}(G)_0)$ at every object which is in the image of $\delta \colon \mathsf{sSet}_0 \to \mathsf{sSet}(G)_0$. 
%\item[(3)] The left Quillen functor
%\begin{equation*} \label{quillen--adj3} 
%\overline{\F}[-]^G \colon (\mathsf{sSet}(G)_0, \F\text{-}\eq) \rightarrow (\mathsf{sCoCoalg}^0_\F, \qi) 
%\end{equation*}
%is homotopically full and faithful.
%\end{itemize}
\end{theorem}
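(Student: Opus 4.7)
The plan is to mirror the strategy used in the proof of Theorem~\ref{comparisons1} by applying the criterion of Proposition~\ref{criterion_hff} to each of the three Quillen adjunctions
$$\overline{\F}[-]^G \colon \mathsf{sSet}(G)_0 \rightleftarrows \mathsf{sCoCoalg}^0_\F \colon \mathcal{P}_G.$$
Concretely, it suffices to verify two things: (i) the left adjoint $\overline{\F}[-]^G$ is (1-categorically) full and faithful, and (ii) the right adjoint $\mathcal{P}_G$ preserves all weak equivalences in each of the three model structures on $\mathsf{sCoCoalg}^0_\F$.

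For (i), the isomorphism \eqref{technical_iso} of Subsection~\ref{coalgebra_prelim} shows that the unit transformation $S \to \mathcal{P}_G(\overline{\F}[S]^G)$ is a natural isomorphism for every $S \in \mathsf{sSet}(G)_0$, which is equivalent to $\overline{\F}[-]^G$ being full and faithful. For (ii), given any morphism $f \colon C \to C'$ in $\mathsf{sCoCoalg}^0_\F$, I would invoke the étale decomposition (Theorem~\ref{decomposition_thm}) together with the identification \eqref{etale_part} of the étale subcoalgebra as $\et(C) \cong \overline{\F}[\mathcal{P}_G(C)]^G$. Applied functorially, this shows that the map $\overline{\F}[\mathcal{P}_G(f)]^G$ is a natural retract of $f$ in $\mathsf{sCoCoalg}^0_\F$. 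Since each of the three classes of weak equivalences $\W_{\mathbb{\Omega}}$, $\W_{\widehat{\mathbb{\Omega}}}$, and $\W_{\qi}$ is closed under retracts, it follows that whenever $f$ is a weak equivalence in the respective class, so is $\overline{\F}[\mathcal{P}_G(f)]^G$. By the very definition of the three classes of weak equivalences on $\mathsf{sSet}(G)_0$ (given before Lemma~\ref{G-weak-eq}), this is precisely what it means for $\mathcal{P}_G(f)$ to be a categorical $\F$-equivalence (resp.\ $\pi^G_1$-$\F$-equivalence, $\F$-equivalence). Combining (i) and (ii), Proposition~\ref{criterion_hff} yields the desired conclusion for each of the three Quillen adjunctions.

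The main conceptual input here is really Theorem~\ref{decomposition_thm}, which provides the natural coalgebra retraction $C \twoheadrightarrow \et(C)$ needed to upgrade the set-theoretic/counit description of the étale part to a homotopical statement; the remaining arguments are a formal consequence of the way the model structures on $\mathsf{sSet}(G)_0$ are set up in Theorem~\ref{Gsset_modelcat}, together with the fact that the left adjoint $\overline{\F}[-]^G$ is genuinely full and faithful. I do not expect any serious technical obstacle: the only point that requires a brief sanity check is that $\mathcal{P}_G$ restricted to $\mathsf{sCoCoalg}^0_\F$ indeed lands in $\mathsf{sSet}(G)_0$, which follows from the computation $\mathcal{P}_G(\F) = \{*\}$ together with the fact that $\mathcal{P}_G$, being a right adjoint, preserves the relevant limits defining connectedness degreewise.
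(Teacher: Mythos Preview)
Your proposal is correct and follows essentially the same argument as the paper's proof: apply Proposition~\ref{criterion_hff}, use that $\overline{\F}[-]^G$ is full and faithful, and show $\mathcal{P}_G$ preserves weak equivalences via the natural retraction onto the \'etale subcoalgebra (Theorem~\ref{decomposition_thm} and \eqref{etale_part}). The paper's proof is slightly terser but the logic is identical.
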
 
\begin{proof} The proof is similar to the proof of Theorem \ref{comparisons1}. We recall that the functor $\overline{\F}[-]^G$ is (1-categorically) full and faithful (see Subsection \ref{coalgebra_prelim}). Then, by Proposition \ref{criterion_hff}, it suffices to show that 
$\mathcal{P}_G: \mathsf{sCoCoalg}_{\F}^0 \to \mathsf{sSet}(G)_0$ preserves all weak equivalences. 
Let $f: C \to C'$ in $\mathsf{sCoCoalg}^0_{\mathbb{F}}$ be a weak equivalence (in any of the three cases). By Theorem \ref{decomposition_thm} and \eqref{etale_part}, the induced map 
$$\overline{\F}[\mathcal{P}_G(f)]^G \colon \overline{\F}[\mathcal{P}_G(C)]^G \to \overline{\F}[\mathcal{P}_G(C')]^G$$
is a retract of $f$. So $\overline{\F}[\mathcal{P}_G(f)]^G$ is again a weak equivalence. This means that the map $\mathcal{P}_G(f)$ is a weak equivalence in 
$\mathsf{sSet}(G)_0$ (in any of the three cases) and finishes the proof. 
%is the induced map $f \otimes \mathrm{id} \colon C \otimes_{\F} \overline{\F} \to C' \otimes_{\F} \overline{\F}$. This holds in all \textit{three} cases. By Theorem \ref{decomposition_thm} (for $\overline{\F}$), it follows that the map $\overline{\F}[\mathcal{P}(f \otimes \mathrm{id})]$ is a retract of $f \otimes \mathrm{id}$, since this is the map between the \'etale subcoalgebras degreewise. So $\overline{\F}[\mathcal{P}(f \otimes \mathrm{id})]$ is a weak equivalence (in any of the three cases). But $\overline{\F}[\mathcal{P}(f \otimes \mathrm{id})]$ is simply the map $\overline{\F}[U(\mathcal{P}_G(f))]$ between the simplicial $\overline{\F}$-coalgebras (after forgetting the $G$-actions). 
%This means that $\mathcal{P}_G(f)$ is a weak equivalence in the cases (1) and (3), as required, using Lemma \ref{G-weak-eq} together with Proposition \ref{RcateqandOmegaqi} (for (1)) and Proposition \ref{qiandqi} (for (3)). The statement for (3) was already shown in \cite{Go}. This completes the proof of (1) and (3). 
\end{proof}

\begin{remark}
An analogue of Theorem \ref{comparisons2}(3) for simplicial presheaves with respect to the local model structures was shown in \cite{Ra} and for the motivic homotopy theory in \cite{Gu}.
\end{remark} 

The Quillen adjunction $\F[-] \colon \mathsf{sSet}_0 \rightleftarrows \mathsf{sCoCoalg}^0_{\F} \colon \mathcal{P}$ (in any of three cases -- Proposition \ref{quillen_adj}) factors as the composition of two adjunctions:
$$\delta \colon \mathsf{sSet}_0 \rightleftarrows \mathsf{sSet}(G)_0 \colon (-)^G$$
$$\overline{\F}[-]^G \colon \mathsf{sSet}(G)_0 \rightleftarrows \mathsf{sCoCoalg}^0_{\F} \colon \mathcal{P}_G$$
and these are Quillen adjunctions in each one of the three cases (Theorem \ref{Gsset_modelcat}).  Since the derived unit transformation of the last Quillen adjunction is a natural isomorphism by Theorem \ref{comparisons2} (in each one of the three cases), we obtain the following identification of the derived unit transformation of the composite Quillen adjunction. 
%Theorem \ref{comparisons2} in each one of the three cases, we obtain the following identification of the derived unit transformation of the composite Quillen adjunction.

\begin{corollary} \label{perfectfield_corollary}
Let $\F$ be a perfect field with algebraic closure $\F \subseteq \overline{\F}$ and (profinite) absolute Galois group $G$. 
\begin{enumerate}
\item The derived unit transformation of the Quillen adjunction $$\F[-] \colon (\mathsf{sSet}_0, \F\text{-}\cateq) \rightleftarrows (\mathsf{sCoCoalg}^0_{\F}, \mathbb{\Omega}\text{-}\qi) \colon \mathcal{P}$$
is canonically identified with the derived unit transformation of the Quillen adjunction 
$$\delta \colon (\mathsf{sSet}_0, \F\text{-}\cateq) \rightleftarrows (\mathsf{sSet}(G)_0, \F\text{-}\cateq) \colon (-)^G.$$

\item The derived unit transformation of the Quillen adjunction $$\F[-] \colon (\mathsf{sSet}_0, \pi_1\text{-}\F\text{-}\eq) \rightleftarrows (\mathsf{sCoCoalg}^0_{\F}, \widehat{\mathbb{\Omega}}\text{-}\qi) \colon \mathcal{P}$$
is canonically identified with the derived unit transformation of the Quillen adjunction 
$$\delta \colon (\mathsf{sSet}_0, \pi_1\text{-}\F\text{-}\eq) \rightleftarrows (\mathsf{sSet}(G)_0, \pi^G_1\text{-}\F\text{-}\eq) \colon (-)^G.$$

\item The derived unit transformation of the Quillen adjunction $$\F[-] \colon (\mathsf{sSet}_0, \F\text{-}\eq) \rightleftarrows (\mathsf{sCoCoalg}^0_{\F}, \qi) \colon \mathcal{P}$$ is canonically identified with the derived unit transformation of the Quillen adjunction 
$$\delta \colon (\mathsf{sSet}_0, \F\text{-}\eq) \rightleftarrows (\mathsf{sSet}(G)_0, \F\text{-}\eq) \colon (-)^G.$$
\end{enumerate}
In other words, the derived unit transformation is identified in each case with the canonical map into the homotopy $G$-fixed points $X \to (\delta(X))^{hG}$ (where $(-)^{hG}$ is interpreted in the appropriate way in each model category). 
\end{corollary}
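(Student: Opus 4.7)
The plan is to exploit the factorization of the Quillen adjunction $\F[-] \dashv \mathcal{P}$ as the composite of the two Quillen adjunctions
\[
\delta \colon \mathsf{sSet}_0 \rightleftarrows \mathsf{sSet}(G)_0 \colon (-)^G, \qquad \overline{\F}[-]^G \colon \mathsf{sSet}(G)_0 \rightleftarrows \mathsf{sCoCoalg}^0_\F \colon \mathcal{P}_G,
\]
in each of the three model-categorical contexts of Theorem~\ref{Gsset_modelcat}. First I would verify at the $1$-categorical level that $\F[-]$ is naturally isomorphic to $\overline{\F}[-]^G \circ \delta$, and that $\mathcal{P}$ is naturally isomorphic to $(-)^G \circ \mathcal{P}_G$. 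The first identification follows from the canonical isomorphism \eqref{technical_iso} applied to a simplicial set with trivial $G$-action, and the second follows formally by uniqueness of right adjoints (or directly from the definition of $\mathcal{P}_G$ together with the fact that $G$ acts trivially on the image of $\F[X] \hookrightarrow \overline{\F}[X] = \F[X] \otimes_\F \overline{\F}$).

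Next, for any two composable Quillen adjunctions $F_1 \dashv G_1$ and $F_2 \dashv G_2$, the derived unit of the composite $(F_2F_1) \dashv (G_1G_2)$ at a cofibrant object $X$ is given (up to natural weak equivalence) by the composite
\[
X \xrightarrow{\eta^1_X} G_1(F_1(X)^f) \xrightarrow{G_1(\eta^2_{F_1(X)^f})} G_1\bigl(G_2\bigl(F_2(F_1(X)^f)^f\bigr)\bigr),
\]
where $(-)^f$ denotes a functorial fibrant replacement in the appropriate model category and $\eta^i$ denotes the derived unit of the $i$-th adjunction. By Theorem~\ref{comparisons2}, $\eta^2$ is a natural weak equivalence in each of the three cases (1)--(3). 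Moreover, $G_1 = (-)^G$ preserves weak equivalences between fibrant objects (indeed, it preserves all trivial fibrations, since it is right Quillen, and since all objects are cofibrant and the model categories are combinatorial and left proper, Ken Brown's lemma applies). Consequently, the composite displayed above is naturally weakly equivalent to $\eta^1_X \colon X \to G_1(F_1(X)^f) = \delta(X)^{hG}$, which is precisely the derived unit of the first adjunction.

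The only delicate point is the interpretation of the homotopy $G$-fixed points $(-)^{hG}$: in each case this means the right derived functor of $(-)^G \colon \mathsf{sSet}(G)_0 \to \mathsf{sSet}_0$ with respect to the appropriate model structure from Theorem~\ref{Gsset_modelcat}. To close the argument one needs to observe that in the trivial-$G$-action case, a functorial fibrant replacement of $\delta(X)$ in $(\mathsf{sSet}(G)_0, \W)$ (for each of the three classes $\W$) computes the correct homotopy fixed points, which is automatic from the definition of $(-)^{hG}$ as a derived functor. The main obstacle is really just the bookkeeping of which model structure is used at each stage and the verification that right Quillen functors preserve weak equivalences between fibrant objects in the three relevant cases; all the genuinely homotopical content is already encapsulated in Theorem~\ref{comparisons2}.
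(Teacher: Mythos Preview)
Your proposal is correct and follows essentially the same approach as the paper: factor the Quillen adjunction $(\F[-],\mathcal{P})$ as the composite of $(\delta,(-)^G)$ and $(\overline{\F}[-]^G,\mathcal{P}_G)$, invoke Theorem~\ref{comparisons2} to conclude that the derived unit of the second adjunction is a natural isomorphism, and deduce that the derived unit of the composite agrees with that of the first. The paper states this tersely in the paragraph preceding the corollary; your version just spells out the general formula for the derived unit of a composite Quillen adjunction. One small remark: your parenthetical justification for why $(-)^G$ preserves weak equivalences between fibrant objects is more involved than necessary --- Ken Brown's lemma gives this directly for any right Quillen functor, without needing left properness or that all objects are cofibrant.
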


\begin{remark} In \cite[Proposition 1.5]{Go} the space of homotopy fixed points $\delta(X)^{hG}$ corresponding to Corollary \ref{perfectfield_corollary}(3) is described in certain special cases. For instance, if $G$ is a finite group and $X$ is fibrant in $(\mathsf{sSet}_0, \F\text{-eq.})$, then $\delta(X)^{hG}$ is the usual homotopy fixed point space of $X$.  

When $\F= \F_p$ and $X$ is simply-connected, then $\delta(X)^{hG}$ is the free loop space of the $p$-completion of $X$. If $\mathbb{F}$ has characteristic zero and $X$ is simply-connected, then $\delta(X)^{hG}$ is the rational localization of $X$. Analogous results hold also in the context of Corollary \ref{perfectfield_corollary}(2) using Lemma \ref{technical_lemma}.
\end{remark}

\appendix
\section{Cylinder objects for simplicial coalgebras} \label{sec_appendix}

\numberwithin{theorem}{section}
\numberwithin{equation}{theorem}

Let $R$ be a commutative ring. For any $C \in \mathsf{sCoCoalg}^0_{R}$, we define $\text{Cyl}(C)$ as in the pushout diagram \eqref{cylinder} in the proof of Theorem \ref{coalg_modelcat}. In this appendix, we prove the following result that was used in the proof of Theorem~\ref{coalg_modelcat}(1). 

\begin{proposition} \label{projcobarquasiiso} For any $C \in \mathsf{sCoCoalg}^0_{R}$, the natural projection map $$q\colon \emph{Cyl}(C) \to C$$ is an $\mathbb{\Omega}$-quasi-isomorphism. 
\end{proposition}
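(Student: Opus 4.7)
The plan begins with the observation that the projection $q$ admits natural sections $i_0, i_1 \colon C \to \text{Cyl}(C)$ defined in simplicial degree $n$ by $i_\varepsilon(x) = x \otimes [\varepsilon^{n+1}]$, and that $q \circ i_\varepsilon = \mathrm{id}_C$. Applying the colimit-preserving functor $\mathbb{\Omega}$, we obtain $\mathbb{\Omega}(q) \circ \mathbb{\Omega}(i_\varepsilon) = \mathrm{id}_{\mathbb{\Omega}(C)}$, so it suffices to prove that $\mathbb{\Omega}(i_1)$ is a quasi-isomorphism, or equivalently that $\mathrm{coker}\,\mathbb{\Omega}(i_1)$ is acyclic.

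To identify the relevant acyclic piece at the chain level, set $I := \F[\mathbb{\Delta}^1]$, let $\epsilon \colon I \to \F$ be the augmentation sending every basis simplex to $1$, and let $\overline{I} := \ker\epsilon$ and $\overline{C} := C/\F$. Unraveling the pushout defining $\text{Cyl}(C)$, the underlying simplicial $\F$-vector space decomposes as $\text{Cyl}(C) \cong \F \oplus (\overline{C} \otimes I)$, and under this decomposition $q$ becomes $\mathrm{id}_\F \oplus (\mathrm{id}_{\overline{C}} \otimes \epsilon)$; in particular $\ker q \cong \overline{C} \otimes \overline{I}$ as simplicial $\F$-vector spaces. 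By the Eilenberg--Zilber theorem there is a natural chain quasi-isomorphism $\mathcal{N}_*(\overline{C} \otimes \overline{I}) \simeq \mathcal{N}_*(\overline{C}) \otimes \mathcal{N}_*(\overline{I})$, while $\mathcal{N}_*(\overline{I})$ is the two-term acyclic complex concentrated in degrees $0$ and $1$ with generators $[0]-[1]$ and $[01]$ and differential $\partial[01] = -([0]-[1])$. Hence $\mathcal{N}_*(\ker q)$ is contractible, and since $\mathcal{N}_*(q)$ is split as a morphism of chain complexes, it is already a chain homotopy equivalence of coaugmented conilpotent dg coalgebras.

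The crux of the proof is to lift this chain-level statement to $\mathbb{\Omega}$, since $\mathsf{Cobar}$ does not in general preserve quasi-isomorphisms of conilpotent dg coalgebras. The plan is to show that $\mathcal{N}_*(q)$ is a weak equivalence in the Hinich--Positselski model structure on $\mathsf{dgCoalg}^c_\F$ recalled in Remark~\ref{derived_koszul}, i.e., a filtered quasi-isomorphism with respect to the coradical filtration; such maps are by construction sent by $\mathsf{Cobar}$ to quasi-isomorphisms. The main obstacle is verifying this filtered property: the Alexander--Whitney coproduct on $\mathcal{N}_*(\text{Cyl}(C))$ mixes the $C$-factor and the $I$-factor nontrivially, so the Eilenberg--Zilber contraction of $\mathcal{N}_*(\ker q)$ is not a coderivation on the nose. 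We expect to handle this by exploiting the connectedness hypothesis $C_0 = \F$, which guarantees that $\overline{\mathcal{N}}_*(C)$ is concentrated in positive degrees and hence that the natural weight filtration on $\mathsf{Cobar}$ is degreewise bounded. With this in place, the associated spectral sequence converges and each associated graded piece is controlled by the contractibility of $\mathcal{N}_*(\overline{I})$ obtained above.
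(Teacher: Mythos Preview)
Your proposal has a genuine gap in the final step. You claim that connectedness of $C$ (i.e., $C_0=R$) guarantees that $\overline{\mathcal{N}}_*(C)$ is concentrated in positive degrees and ``hence that the natural weight filtration on $\mathsf{Cobar}$ is degreewise bounded.'' This inference is false. Connectedness gives $\overline{\mathcal{N}}_*(C)$ in degrees $\geq 1$, so $s^{-1}\overline{\mathcal{N}}_*(C)$ lives in degrees $\geq 0$; but it is typically \emph{nonzero} in degree $0$ (namely, $(s^{-1}\overline{\mathcal{N}}_*(C))_0 = \overline{\mathcal{N}}_1(C)$, which is nontrivial as soon as $C_1$ has nondegenerate elements). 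Consequently $(s^{-1}\overline{\mathcal{N}}_*(C))^{\otimes k}$ contributes to degree $0$ for all $k$, the decreasing length filtration on $\mathsf{Cobar}$ is not finite in each degree, and the spectral sequence does not converge strongly. This is exactly the phenomenon behind the well-known fact that $\mathsf{Cobar}$ preserves quasi-isomorphisms only between \emph{simply-connected} conilpotent dg coalgebras (those with $\overline{N}_1=0$). Your Eilenberg--Zilber argument correctly produces a chain homotopy equivalence at the level of $\mathcal{N}_*$, but you have not shown it is a weak equivalence in the Positselski model structure, and the spectral sequence route you propose to close this gap does not work.

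The paper's proof avoids this convergence problem entirely by a different mechanism: it writes down an explicit chain homotopy $H\colon \mathcal{N}_*(\text{Cyl}(C)) \to \mathcal{N}_{*+1}(\text{Cyl}(C))$ between $\mathcal{N}_*(i_1\circ q)$ and the identity, and then verifies by direct computation that $H$ is an $(\mathfrak{i}_1\circ\mathfrak{q},\mathrm{id})$-\emph{coderivation}, i.e., satisfies $(H\otimes(\mathfrak{i}_1\circ\mathfrak{q}) + \mathrm{id}\otimes H)\circ\mathbf{\Delta} = \mathbf{\Delta}\circ H$. A general lemma (any $(f,g)$-coderivation chain homotopy lifts to a chain homotopy on cobar via $\sum_{i,j}\overline{f}^{\otimes i}\otimes\overline{F}\otimes\overline{g}^{\otimes j}$) then transports $H$ to an honest chain homotopy on $\mathbb{\Omega}(\text{Cyl}(C))$ without any filtration or convergence argument. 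The coderivation property is precisely what substitutes for the missing simply-connectedness hypothesis; your Eilenberg--Zilber contraction is not a coderivation for the Alexander--Whitney coproduct, as you yourself observe, and that is the obstruction you would need to overcome.
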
 

For simplicity, denote by $\q=\mathcal{N}_*(q)\colon \mathcal{N}_*(\text{Cyl}(C)) \to \mathcal{N}_*(C)$ the map on normalized chains induced by the projection $q\colon \text{Cyl}(C) \to C$ and by $\y =\mathcal{N}_*(i_1) \colon \mathcal{N}_*(C) \to  \mathcal{N}_*(\text{Cyl}(C))$ the map induced by the inclusion $i_1 \colon C \to \text{Cyl}(C)$. To prove the above proposition, we will:
\begin{itemize}
\item construct a chain homotopy $$H \colon \mathcal{N}_*(\text{Cyl}(C)) \to \mathcal{N}_{*+1}(\text{Cyl}(C))$$ between the composition $\y \circ \q$ and the identity map $\text{id}_{ \mathcal{N}_*(\text{Cyl}(C))}$, and then

\item extend $H$ to a chain homotopy $$H_{\mathbb{\Omega}} \colon \mathbb{\Omega}(\text{Cyl}(C)) \to \mathbb{\Omega}(\text{Cyl}(C))$$ between the identity map $\text{id}_{\mathbb{\Omega}(\text{Cyl}(C))}$ and the composition $\mathbb{\Omega}(i_1) \circ \mathbb{\Omega}(q)$. This involves verifying that $H$ is a $(\y \circ \q, \text{id})$-\textit{coderivation}, i.e. that the equation $$\big(H \otimes (\y \circ \q) + \text{id} \otimes H\big) \circ \mathbf{ \Delta} = \mathbf{\Delta} \circ H$$ is satisfied. 
\end{itemize}

Using the notation introduced in the proof of Theorem~\ref{coalg_modelcat}, we may represent any element in $\text{Cyl}(C)_n$ by a linear combination of elements of the form $x \otimes [0^{k+1} 1^{n-k}]$, where $k \in \{-1, 0, \dots, n\}$. For simplicity, we will write $ x \otimes [0^{k+1} 1^{n-k}]= (x, [0^{k+1}1^{n-k}])$. Note that, when $0 \leq k \leq n-1$,
$$ [0^{k+1} 1^{n-k}] = s_{n-1}\dots \widehat{s_k} \dots s_0[01],$$
where  the $s_j$'s denote simplicial degeneracy maps, $[01] \in (\mathbb{\Delta}^1)_1$ is the unique non-degenerate $1$-simplex, and $\widehat{s_k}$ means we omit this map from the composition.  

\smallskip

Define a degree $+1$ map $$H \colon \mathcal{N}_*(\text{Cyl}(C))  \to \mathcal{N}_{*+1}(\text{Cyl}(C))$$ given on any representative $(x, [0^{k+1}1^{n-k}]) \in \text{Cyl}(C)_n$ by the formula
\begin{eqnarray}\label{Hdefinition}
H(x, [0^{k+1}1^{n-k}]) := \sum_{j=0}^k (-1)^j (s_{k-j}x, [0^{k-j+1}1^{n-k+j+1}])= \\
\notag
\sum_{j=0}^k (-1)^j (s_{k-j}x, s_n \dots \widehat{s_{k-j}} \dots s_0[01]) .
\end{eqnarray}

\begin{remark} Since we are working with normalized chains we have $H(x,[1^{n+1}])=0$ for any $x \in C_n$, $n \geq 0$. Also note $H(x,[0])=H(x,[1])=0$. Moreover, if $n>0$,  we have
$$H(x, [0^{n+1}])= (\mathcal{N}_*(p) \circ EZ)(x \otimes [01]),$$
where $$EZ \colon \mathcal{N}_*(C) \otimes \mathcal{N}_*(R[\mathbb{\Delta}^1]) \to \mathcal{N}_*(C \otimes R[\mathbb{\Delta}^1])$$ is the Eilenberg-Zilber map and $p\colon C \otimes R[\mathbb{\Delta}^1] \to \text{Cyl}(C)$ the map in the pushout defining $\text{Cyl}(C)$. In other words, on simplices at the $0$-th end of the cylinder, $H$ is given by ``crossing a simplex with the simplicial interval" and subdividing appropriately.  
\end{remark} 

An easy computation yields that $H$ satisfies the chain homotopy equation
$$ H \circ \partial + \partial \circ H = \y \circ \q - \text{id},$$ where $\partial \colon \mathcal{N}_*(C) \to \mathcal{N}_{*-1}(C)$ is the normalized chains differential. Then we define $$H_{\mathbb{\Omega}} \colon \mathbb{\Omega} (\text{Cyl}(C)) \to  \mathbb{\Omega} (\text{Cyl}(C))$$ by $$H_{\mathbb{\Omega}} := \sum_{i,j \geq 0}  (s^{-1} \circ( \y \circ \q )\circ s^{+1})^{\otimes i} \circ (s^{-1} \circ H \circ s^{+1}) \circ ( \text{id})^{\otimes j}.$$ We will show that $$D H_{\mathbb{\Omega}}  +  H_{\mathbb{\Omega}}D= \text{id} - \mathbb{\Omega}(i_1) \circ \mathbb{\Omega}(q),$$ where $D \colon \mathbb{\Omega}(\text{Cyl}(C)) \to \mathbb{\Omega}(\text{Cyl}(C))$ is the differential of the dg algebra $\mathbb{\Omega}(\text{Cyl}(C))$, but we first make a general observation. 

\medskip

The following notion was introduced in \cite[Section 1.11]{Mu}.
\begin{definition}Let $N=(N, \partial_N, \mathbf{\Delta}_N)$ and $N'=(N', \partial_{N'}, \mathbf{\Delta}_{N'})$ be two dg coalgebras and $f,g : N \to N'$ two morphisms of dg coalgebras. A degree $+1$ map  $F: N \to N'$ between underlying graded modules  is said to be an $(f,g)$-\textit{coderivation} if  the equation $$(F \otimes f + g \otimes F ) \circ \mathbf{\Delta}_N = \mathbf{\Delta}_{N'} \circ F$$ is satisfied.
\end{definition}

\begin{lemma} \label{fgcoderivation} Let $N=(N, \partial_N, \mathbf{\Delta}_N)$ and $N'=(N', \partial_{N'}, \mathbf{\Delta}_{N'})$ be two conilpotent dg $R$-coalgebras. Suppose $f,g: N \to N'$ are two morphisms of dg coalgebras and let $F: N \to N'$ be a chain homotopy between $f$ and $g$ (of degree $+1$). If  $F$ is an $(f,g)$-coderivation, then the map $$F_{\mathbb{\Omega}}: \mathsf{Cobar}(N) \to \mathsf{Cobar}(N')$$ defined by $$F_{\mathbb{\Omega}}:= \sum_{i,j \geq 0} \overline{f}^{\otimes i} \circ \overline{F} \circ \overline{g}^{\otimes j},$$ where
$\overline{f}= s^{-1} \circ f \circ s^{+1}, \overline{F}= s^{-1} \circ F \circ s^{+1}$, and $\overline{g}= s^{-1} \circ g \circ s^{+1}$, is a chain homotopy between  $\mathsf{Cobar}(g)= \sum_n \overline{g}^{\otimes n}$ and $\mathsf{Cobar}(f)= \sum_n \overline{f}^{\otimes n}$.
\end{lemma}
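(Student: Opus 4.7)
The plan is to verify the chain homotopy equation
\[
D' \circ F_{\mathbb{\Omega}} + F_{\mathbb{\Omega}} \circ D \;=\; \mathsf{Cobar}(g) - \mathsf{Cobar}(f)
\]
by direct computation on a generic monomial $\overline{x_1} \otimes \cdots \otimes \overline{x_n} \in T(s^{-1}\overline{N})$, where $D$ (resp.\ $D'$) is the cobar differential on $\mathsf{Cobar}(N)$ (resp.\ $\mathsf{Cobar}(N')$). I split each cobar differential into its two standard summands, $D = D_\partial + D_{\mathbf{\Delta}}$, corresponding respectively to the internal differential of the dg coalgebra and to its coproduct. Applying $F_{\mathbb{\Omega}}$, whose summands are indexed by the position of the single $\overline{F}$ factor, the computation naturally organizes into three families of terms: those where the differential lands on an $\overline{f}$-factor, on the $\overline{F}$-factor, or on an $\overline{g}$-factor.

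First I handle the internal-differential contributions $D_{\partial}$. On $\overline{f}$- and $\overline{g}$-factors, the chain map property $\partial_{N'} f = f \partial_N$ (and likewise for $g$) shows that the $D_{\partial}'$ terms on the image side match with the corresponding $D_\partial$ terms on the source side; these cancel between $D' F_{\mathbb{\Omega}}$ and $F_{\mathbb{\Omega}} D$. On the $\overline{F}$-factor, the chain homotopy relation $\partial_{N'} F + F \partial_N = g - f$ (with the appropriate Koszul sign incurred by the shift $s^{-1}$) produces exactly the monomial $\overline{x_1}\otimes\cdots\otimes(\overline{g}-\overline{f})(\overline{x_i})\otimes\cdots\otimes\overline{x_n}$ in position $i$. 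Summing over $i$ and over the two surrounding tensor patterns $\overline{f}^{\otimes(i-1)}$ and $\overline{g}^{\otimes(n-i)}$, the pieces telescope into the difference
\[
\sum_{i,j \geq 0, \; i+j+1 = n} \overline{f}^{\otimes i} \otimes \overline{g} \otimes \overline{g}^{\otimes j} \;-\; \overline{f}^{\otimes i} \otimes \overline{f} \otimes \overline{g}^{\otimes j} \;=\; \overline{g}^{\otimes n} - \overline{f}^{\otimes n} \;=\; \mathsf{Cobar}(g) - \mathsf{Cobar}(f)
\]
evaluated on the monomial, which is precisely the desired right-hand side.

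Next I verify that the coproduct-differential contributions $D_{\mathbf{\Delta}}$ cancel in their entirety. When $D_{\mathbf{\Delta}}'$ splits an $\overline{f}$- or $\overline{g}$-factor in $F_{\mathbb{\Omega}}(\overline{x_1}\otimes\cdots\otimes\overline{x_n})$, the fact that $f$ and $g$ are coalgebra maps, i.e.\ $\mathbf{\Delta}_{N'} f = (f \otimes f)\mathbf{\Delta}_N$ and analogously for $g$, ensures that these terms coincide with what $F_{\mathbb{\Omega}}$ produces from the corresponding terms of $D_{\mathbf{\Delta}}$ acting on the source (the length of the string of $\overline{f}$'s or $\overline{g}$'s increases by one in both cases). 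When $D_{\mathbf{\Delta}}'$ splits the $\overline{F}$-factor, the $(f,g)$-coderivation identity
\[
\mathbf{\Delta}_{N'} \circ F = (F \otimes f + g \otimes F) \circ \mathbf{\Delta}_N
\]
yields two terms: one of the form $\overline{F} \otimes \overline{f}$ (which matches the summand of $F_{\mathbb{\Omega}} D_{\mathbf{\Delta}}$ where $\overline{F}$ sits to the left of a new $\overline{f}$-factor) and one of the form $\overline{g} \otimes \overline{F}$ (which matches the summand where $\overline{F}$ sits to the right of a new $\overline{g}$-factor). All $D_{\mathbf{\Delta}}$-contributions on the two sides therefore cancel in pairs.

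The bookkeeping of Koszul signs coming from the shift $s^{-1}$ and from moving the differential past the leading tensor factors is the main technical obstacle; it is routine but needs to be carried out carefully, in particular to confirm that the signs in the $(f,g)$-coderivation identity match those produced by $D_{\mathbf{\Delta}}'$ on the appropriate position in the tensor string. Once the three families of terms are matched as described, the equation $D' F_{\mathbb{\Omega}} + F_{\mathbb{\Omega}} D = \mathsf{Cobar}(g) - \mathsf{Cobar}(f)$ follows, completing the proof of the lemma. Proposition~\ref{projcobarquasiiso} is then an immediate consequence by applying the lemma to $N = \mathcal{N}_*(\text{Cyl}(C))$, $N' = \mathcal{N}_*(\text{Cyl}(C))$, $f = \id$, $g = \y \circ \q$, and $F = H$, after verifying that $H$ is an $(\id, \y\circ\q)$-coderivation directly from the explicit formula~\eqref{Hdefinition}.
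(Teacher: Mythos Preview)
Your proof is correct and follows essentially the same route as the paper's: split the cobar differential into its internal and coproduct parts, use the dg coalgebra map property of $f,g$ to cancel the terms where the differential lands on an $\overline{f}$- or $\overline{g}$-factor, use the $(f,g)$-coderivation identity to cancel the coproduct terms on the $\overline{F}$-factor, and telescope the remaining internal-differential terms on the $\overline{F}$-factor to obtain $\mathsf{Cobar}(g)-\mathsf{Cobar}(f)$.

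One small correction to your closing remark about the application: the paper applies the lemma with $f=\y\circ\q$ and $g=\id$, not the reverse, and accordingly Proposition~\ref{Hcoderivation} verifies that $H$ is a $(\y\circ\q,\id)$-coderivation, i.e.\ $(H\otimes(\y\circ\q)+\id\otimes H)\circ\mathbf{\Delta}=\mathbf{\Delta}\circ H$, which is not the same identity as the $(\id,\y\circ\q)$-coderivation equation you name.
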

\begin{proof} Let $D_N$ and $D_{N'}$ be the differentials of $\mathsf{Cobar}(N)$ and $\mathsf{Cobar}(N')$, respectively. 
After using that $f$ and $g$ are maps of dg coalgebras to cancel terms, we obtain
\begin{eqnarray*}
D_{N'}F_{\mathbb{\Omega}} + F_{\mathbb{\Omega}}D_N=
\\
\sum - \overline{f}^{\otimes i} \otimes (\overline{\partial}_{N'} \circ \overline{F}) \otimes \overline{g}^{\otimes j} + \sum \overline{f}^{\otimes i} \otimes (\overline{\mathbf{\Delta}}_{N'} \circ\overline{F} ) \otimes \overline{g}^{\otimes j}
\\
+ \sum -\overline{f}^{\otimes i} \otimes ( \overline{F}\circ \overline{\partial}_N ) \otimes \overline{g}^{\otimes j} + \sum \overline{f}^{\otimes i} \otimes \Big((\overline{f} \otimes \overline{F} + \overline{F} \otimes \overline{g}) \circ \overline{\mathbf{\Delta}}_N\Big) \otimes \overline{g}^{\otimes j}.
\end{eqnarray*}
Since $F$ is a $(f,g)$-coderivation, the second and fourth terms in the above sum cancel and we obtain
\begin{eqnarray*}
D_{N'}F_{\mathbb{\Omega}} + F_{\mathbb{\Omega}}D_N=
\\
\sum - \overline{f}^{\otimes i} \otimes (\overline{\partial}_{N'} \circ \overline{F}) \otimes \overline{g}^{\otimes j} + \sum - \overline{f}^{\otimes i} \otimes  (\overline{F} \circ \overline{\partial}_{N})  \otimes \overline{g}^{\otimes j} =
\\
 \sum - \overline{f}^{\otimes i+1} \otimes \overline{g}^{\otimes j} + \sum \overline{f}^{\otimes i} \otimes \overline{g}^{\otimes j+1}=
\\
 \sum - \overline{f}^{\otimes n} + \sum \overline{g}^{\otimes n} = \mathsf{Cobar}(g) - \mathsf{Cobar}(f).
\end{eqnarray*}
\end{proof}
In order to apply Lemma~\ref{fgcoderivation} with $f = \y \circ \q$, $g= \text{id}_{\mathcal{N}_*(\text{Cyl}(C))}$, and $F=H$ as defined in \ref{Hdefinition}, we must verify the following.
\begin{proposition} \label{Hcoderivation} The map $H \colon \mathcal{N}_*(\emph{Cyl}(C)) \to \mathcal{N}_{*+1}(\emph{Cyl}(C))$ defined in ~\ref{Hdefinition} is a $(\y \circ \q, \emph{id})$-coderivation. 
\end{proposition}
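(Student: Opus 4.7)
Plan.

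The argument is a direct term-by-term verification of the coderivation identity on each element $(x, \tau_k^{(n)}) \in \mathcal{N}_n(\text{Cyl}(C))$ with $\tau_k^{(n)} = [0^{k+1}1^{n-k}]$. The key structural input is that $\text{Cyl}(C)$ inherits its simplicial coalgebra structure from the tensor product $C \otimes \F[\Delta^1]$ (modulo the pushout collapse $\F \otimes \F[\Delta^1] \to \F$); in particular, both the simplicial coproduct and all face and degeneracy maps act diagonally on the pair. Consequently, writing $\Delta x = \sum x' \otimes x''$ in Sweedler notation, the Alexander--Whitney coproduct on $\mathcal{N}_*(\text{Cyl}(C))$ decomposes as
\[
\mathbf{\Delta}(x, \tau_k^{(n)}) = \sum_{p=0}^{n} \sum_{(x)} \bigl(F_p x', F_p \tau_k^{(n)}\bigr) \otimes \bigl(B_{n-p} x'', B_{n-p} \tau_k^{(n)}\bigr),
\]
where the front/back faces $F_p \tau_k^{(n)}$ and $B_{n-p}\tau_k^{(n)}$ are explicit simplices of $\Delta^1$ of the form $\tau_m^{(d)} = [0^{m+1}1^{d-m}]$ with $m, d$ depending on $p, k$.

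Using the explicit formula for $H$ together with the cosimplicial identities $d_i s_j = s_{j-1} d_i$ for $i < j$, $d_i s_j = \mathrm{id}$ for $i \in \{j, j+1\}$, and $d_i s_j = s_j d_{i-1}$ for $i > j+1$, one derives the central computational lemma
\[
F_p(s_{k-j} y) = \begin{cases} F_p(y) & \text{if } p \leq k-j, \\ s_{k-j} F_{p-1}(y) & \text{if } p > k-j, \end{cases}
\]
together with an analogous identity relating $B_q(s_{k-j} y)$ either to $B_{q-1}'(y)$ or to $s_{k-j-(n+1-q)} B_q(y)$. Combining this with the parallel computation of $F_p\tau_{k-j}^{(n+1)}$ and $B_q\tau_{k-j}^{(n+1)}$ in $\Delta^1$ (again of the form $\tau_\ast^{(\ast)}$), the expansion $\mathbf{\Delta}(H(x, \tau_k^{(n)})) = \sum_{j=0}^{k}(-1)^j \mathbf{\Delta}(s_{k-j}x, \tau_{k-j}^{(n+1)})$ becomes a double sum indexed by $(p, j)$ that splits into the two ranges $p \leq k-j$ and $p > k-j$. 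Exchanging the order of summation (fix $p$, sum over $j$) and reindexing by $\ell = k - p - j$ or $\ell = k - j$ matches the $p \leq k-j$ range term by term with $(\mathrm{id} \otimes H)\mathbf{\Delta}(x, \tau_k^{(n)})$, and the $p > k-j$ range term by term with $(H \otimes \y\q) \mathbf{\Delta}(x, \tau_k^{(n)})$, once one uses that $\y \circ \q$ sends $(y, \sigma)$ to $(y, [1^{|y|+1}])$.

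The main obstacle in executing this plan is the careful bookkeeping of boundary and coaugmentation contributions: many of the intermediate terms involve face maps that drive the $C$-factor down to $C_0 = \F$, after which the pair descends to the basepoint through the pushout collapse and vanishes in normalized chains. Accounting for these vanishings is what ensures that the reindexed sums agree on the nose, and it is what causes the alternating signs $(-1)^j$ in the definition of $H$ to produce the correct boundary cancellations rather than accumulate. Verifying the identity on a simple test case --- for instance $C = \F[S^1]$ with $x$ the unique non-degenerate $1$-simplex, where the Sweedler sum collapses to a single term and the AW expansion reduces to $\mathbf{\Delta}(x, \tau) = 1 \otimes (x, \tau) + (x, \tau) \otimes 1$ --- provides a reassuring sanity check and isolates the pattern of vanishings that governs the general argument.
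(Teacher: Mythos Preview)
Your plan is essentially the paper's proof: expand $\mathbf{\Delta}\circ H$ as a double sum over $(p,j)$, split it according to whether $p\le k-j+1$ or $p\ge k-j+2$ (this comes from the Alexander--Whitney decomposition of $[0^{k-j+1}1^{n+1-k+j}]$), and match the two pieces via the simplicial identities and a reindexing with $(\mathrm{id}\otimes H)\mathbf{\Delta}$ and $(H\otimes(\y\circ\q))\mathbf{\Delta}$ respectively. One minor correction to your diagnosis of the ``main obstacle'': the only vanishings actually used are $H(y,[1^{r+1}])=0$ (the defining sum is empty when $k=-1$) and the death of degenerates in normalized chains; the pushout collapse in degree $0$ plays no role, and there are no sign cancellations among different $j$'s --- after reindexing the sums agree term by term.
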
 
\begin{proof} 

We prove that 
\begin{eqnarray} \label{coderivationeqn}
(H \otimes (\y \circ \q) + \text{id} \otimes H) \circ \mathbf{\Delta} = \mathbf{\Delta} \circ H,
\end{eqnarray}
where $\mathbf{\Delta} \colon \mathcal{N}_*(\text{Cyl}(C)) \to \mathcal{N}_*(\text{Cyl}(C)) \otimes \mathcal{N}_*(\text{Cyl}(C))$ is the Alexander-Whitney coproduct. On any $(x,\sigma) \in \text{Cyl}(C)_n$, where $\sigma \in \mathbb{\Delta}^1_n$, the Alexander-Whitney coproduct is given by
\begin{eqnarray}\label{AWformula}
\mathbf{\Delta}(x,\sigma) = \sum_{p=1}^{n+1} (d_{p} \dots d_nx' , d_{p} \dots d_n\sigma) \otimes (d_0^{p-1}x'', d_0^{p-1}\sigma),
\end{eqnarray}
where we have used (generalized) Sweedler's notation writing $\Delta_n(x) =x' \otimes x''$ for the coproduct of the cocommutative coalgebra $C_n$. 

Note that on any $n$-simplex $\sigma=[0^{k+1} 1^{n-k}] \in \mathbb{\Delta}^1_n$, we can split the Alexander-Whitney coproduct into two sums
\begin{eqnarray} \label{AWonasimplex}
\notag
 \sum_{p=1}^{n+1} d_{p} \dots d_n[0^{k+1}1^{n-k}] \otimes d_0^{p-1}[0^{k+1}1^{n-k}] =
 \\ 
 \sum_{p=1}^{k+1} [0^p] \otimes [0^{k+2-p}1^{n-k}] + \sum_{p=k+2}^{n+1}[0^{k+1}1^{p-k-1}] \otimes [1^{n+2-p}].
\end{eqnarray}

\noindent For any $(x, [0^{k+1} 1^{n-k}]) \in \text{Cyl}(C)_n$, the right hand side of \ref{coderivationeqn} is given by
\begin{eqnarray*}
\mathbf{\Delta}\big( H(x, [0^{k+1} 1^{n-k}])\big) =
\\  \sum_{p=1}^{n+2} \ \sum_{j=0}^k (-1)^j(d_{p} \dots d_{n+1}s_{k-j} x', d_{p} \dots d_{n+1}[0^{k-j+1}1^{n+1-k+j}])
\\ \otimes (d_0^{p-1}s_{k-j}x'', d_0^{p-1}[0^{k-j+1}1^{n+1-k+j}]).
\end{eqnarray*}
Using \ref{AWonasimplex}, we split the above sum into two sums (I) and (II):
\\
\\
sum (I) is given by
\begin{eqnarray*}
 \sum_{p=1}^{k-j+1} \ \sum_{j=0}^k (-1)^j (d_{p} \dots d_{n+1}s_{k-j} x', [0^p]) \otimes (d_0^{p-1}s_{k-j}x'', [0^{k-j+2-p}1^{n+1-k+j}]),
\end{eqnarray*}
 and sum (II) by
 \begin{eqnarray*}
\sum_{p=k-j+2}^{n+2} \ \sum_{j=0}^k (-1)^j (d_{p} \dots d_{n+1}s_{k-j}x', [0^{k-j+1}1^{p-k+j-1}]) \otimes (d_0^{p-1}s_{k-j}x'', [1^{n+3-p}]).
\end{eqnarray*}
The simplicial identities, together with the fact that $H(y , [1^{r+1}])=0$ for any  $y \in C_r$, yield that, up to the Koszul sign rule,  sum (I) equals:
\begin{eqnarray*}
(\text{id} \otimes H) \mathbf{\Delta}(x, [0^{k+1} 1^{n-k}] )=
\\
(\text{id} \otimes H) (\sum_{p=1}^{k+1} (d_{p} \dots d_nx', [0^p]) \otimes (d_0^{p-1}x'', [0^{k+2-p}1^{n-k}] ))=
\\
\sum_{j=0}^{k+1-p} \ \sum_{p=1}^{k+1} (-1)^{p-1+j} (d_{p}\dots d_n x', [0^p]) \otimes (s_{k+1-p-j}d_0^{p-1}x'', [0^{k+2-p-j}1^{n-k+j+1}]).
\end{eqnarray*}
Finally, we use \ref{AWformula}, \ref{AWonasimplex}, and the formula $$(\y \circ \q)(y, [0^{r}1^s])=(y, [1^{r+s}])$$ to compute
\begin{eqnarray*}
(H \otimes (\y \circ \q)) \mathbf{\Delta}(x, [0^{k+1}, 1^{n-k}] ) =
\\
\sum_{p=1}^{n+1} H(d_{p} \dots d_n x' , d_{p} \dots d_n[0^{k+1}1^{n-k}]) \otimes (\y \circ \q) (d_0^{p-1}x'', d_0^{p-1}[0^{k+1}1^{n-k}])=
\\
\sum_{p=1}^{k+1}H(d_p \dots d_nx', [0^p]) \otimes (\y \circ \q)(d_0^{p-1}x'', [0^{k+2-p}1^{n-k}])
\\
+ \sum_{p=k+2}^{n+1} H(d_p \cdots d_nx', [0^{k+1}1^{p-k-1}])\otimes (\y \circ \q)(d_0^{p-1}x'', [1^{n+2-p}])=
\\
\sum_{p=1}^{k+1} \ \sum_{j=0}^{p-1} (-1)^j (s_{p-1-j}d_p \dots d_nx', [0^{p-j}1^{j+1}]) \otimes (d_0^{p-1}x'', [1^{n+2-p}])
\\
+\sum_{p=k+2}^{n+1} \ \sum_{j=0}^k (-1)^j (s_{k-j}d_p \dots d_nx', [0^{k+1-j}1^{p-k+j}]) \otimes (d_0^{p-1}x'',[1^{n+2-p}]).
\end{eqnarray*}
If we reindex the above two sums by setting $i=j-p+1+k$ and $q=p+1$ in the first sum and $q=p+1$ in the second, we obtain
\begin{eqnarray*}
\sum_{q=2}^{k+2} \ \sum_{i=k+2-q}^k (-1)^{q+i-k} (s_{k-i}d_{q-1} \dots d_nx', [0^{k+1-i}1^{q-k+i-1}]) \otimes (d_0^{q-2}x'', [1^{n+3-q}])
\\
+ \sum_{q=k+3}^{n+2} \ \sum_{j=0}^k (-1)^j (s_{k-j}d_{q-1} \dots d_nx', [0^{k+1-j}1^{q-k+j-1}])\otimes (d_0^{q-2}x'', [1^{n+3-q}]).
\end{eqnarray*}
This is exactly sum (II) after using the simplicial identities. 
 \end{proof}

We may now conclude the main result of this appendix.
\medskip

\noindent \textit{Proof of Proposition~\ref{projcobarquasiiso}:} Lemma~\ref{fgcoderivation} together with Proposition~\ref{Hcoderivation} imply that $H_{\mathbb{\Omega}}: \mathbb{\Omega} (\text{Cyl}(C)) \to \mathbb{\Omega} (\text{Cyl}(C))$ is a chain homotopy between the maps $\text{id}_{\mathbb{\Omega} (\text{Cyl}(C))}$ and $\mathsf{Cobar}(\y \circ \q)= \mathbb{\Omega}(i_1 \circ q)= \mathbb{\Omega}(i_1) \circ \mathbb{\Omega}(q)$. Since $\mathbb{\Omega}(q) \circ \mathbb{\Omega}(i_1) = \text{id}_{\mathbb{\Omega} (\text{Cyl}(C))}$, it follows that both maps of dg algebras $\mathbb{\Omega}(q)$ and $\mathbb{\Omega}(i_1)$ are chain homotopy inverses to each other and, consequently, quasi-isomorphisms. \qed


\begin{thebibliography}{10}




\bibitem{AR}
J.~Adamek and J.~Rosick\'y, \emph{Locally presentable and accessible categories}. London Mathematical Society Lecture Note Series 189. Cambridge University Press, Cambridge, 1994.

\bibitem{Ag}
A.~L.~Agore, \emph{Monomorphisms of coalgebras}. Colloq. Math. 120 (2010), no. 1, 149--155. 

\bibitem{Ba}
M.~Barr, \emph{Coalgebras over a commutative ring}. J. Algebra 32 (1974), no. 3, 600--610.

\bibitem{Be}
T.~Beke, \emph{Sheafifiable homotopy model categories.} Math. Proc. Cambridge Philos. Soc. 129 (2000), no. 3, 447--475.

\bibitem{B75}
A.~K.~Bousfield, \emph{The localization of spaces with respect to homology.} Topology 14 (1975), no. 2, 133--150.

\bibitem{BK72}
A.~K.~Bousfield and D.~M.~ Kan, \emph{Homotopy limits, completions and localizations.} Lecture Notes in Mathematics 304. Springer-Verlag, Berlin-New York, 1972. 

\bibitem{BCL}
C.~Braun, J.~Chuang, and A.~Lazarev, \emph{Derived localisation of algebras and modules.} Adv. Math. 328 (2018), 555--622.
 
\bibitem{BS93}
E.~H.~Brown and R.~H.~Szczarba, \emph{Rational and real homotopy theory with arbitrary fundamental groups}. Duke Math. J. 71 (1993), no. 1, 299--316.

\bibitem{CHL}
J.~Chuang, J.~Holstein, and A.~Lazarev, \emph{Homotopy theory of monoids and derived localization}. J. Homotopy Relat. Struct. 16 (2021), no. 2, 175--189.

\bibitem{D}
D.~Dugger, \emph{Combinatorial model categories have presentations}, Adv. Math. 164 (2001), no.~1, 177--201.

\bibitem{DK}
W.~G.~Dwyer and D.~M.~Kan, \emph{Homotopy theory and simplicial groupoids.} Nederl. Akad. Wetensch. Indag. Math. 46 (1984), no. 4, 379--385. 
 
 \bibitem{Go2}
 P.~G.~Goerss, \emph{Homotopy fixed points for Galois groups.} The \v{C}ech centennial (Boston, MA, 1993), pp. 187--224, 
Contemp. Math., 181, Amer. Math. Soc., Providence, RI, 1995. 

\bibitem{Go}
P.~G.~Goerss, \emph{Simplicial chains over a field and $p$-local homotopy theory.} Math. Z. 220 (1995), no. 4, 523--544.

\bibitem{GJ}
P.~G.~Goerss and J.~F.~Jardine, \emph{Simplicial homotopy theory}. Progress in Mathematics 174. Birkh{\"a}user, Boston-Basel-Berlin, 1999.

\bibitem{Gu}
G.~Guzman, \emph{Rational and $p$-local motivic homotopy theory.} Preprint (2019). \url{https://arxiv.org/abs/1911.05061} 

\bibitem{Ha}
A.~Hatcher, \emph{Algebraic topology.} Cambridge University Press, Cambridge, 2002.

\bibitem{HT10}
K. ~Hess and A.~Tonks, \emph{The loop group and the cobar construction.} Proc. Amer. Math. Soc. 138 (2010), no.~5, 1861--1876.

\bibitem{Hin}
V.~Hinich, \emph{Homological algebra of homotopy algebras.} Communications in Algebra 25 (1997), no.~10, 3291--3323.
 
\bibitem{Hir}
P.~S.~Hirschhorn, \emph{Model categories and their localizations.} Mathematical Surveys and Monographs 99. American Mathematical Society, Providence, RI, 2003. 

\bibitem{Hol} 
J.~V.~S.~Holstein, \emph{Properness and simplicial resolutions for the model category
dgCat.} Homology, Homotopy Appl. 16 (2014), no.~2, 263--273.

\bibitem{Ho}
M.~Hovey, \emph{Model categories.} Mathematical Surveys and Monographs 63. American Mathematical Society, Providence, RI, 1999.


\bibitem{HMS74}
D. ~Husemoller, J.~C. ~Moore, and J. ~Stasheff, \emph{Differential homological algebra and homogeneous spaces.} J. Pure Appl. Algebra 5 (1974), 113--185.

\bibitem{Ja}
J.~F.~Jardine, \emph{A closed model structure for differential graded algebras.} \textit{Cyclic cohomology and noncommutative geometry} (Waterloo, ON, 1995), pp. 55--58, Fields Inst. Commun. 17. American Mathematical Society, Providence, RI, 1997.

\bibitem{Joyal}
A.~Joyal, \emph{Quasi-categories and Kan complexes.} J. Pure Appl. Algebra 175 (2002), no. 1-3, 207--222.

\bibitem{Joyal2}
A.~Joyal, \emph{The theory of quasi-categories and its applications}. Lecture notes available at: \url{https://mat.uab.cat/~kock/crm/hocat/advanced-course/Quadern45-2.pdf}


\bibitem{Ka03}
T. ~Kadeishvili, \emph{Cochain operations defining Steenrod $\smile_i$-products in the bar construction.} Georgian Math. J. 10 (2003), no. 1, 115--125.

\bibitem{LH03}
K.~Lefevre-Hasegawa, \emph{Sur les A-infini cat\'egories.} PhD thesis, Univ. Paris 7, 2003. \url{arXiv:math/0310337}

\bibitem{LV12}
J.~L.~Loday and B.~Vallette, \emph{Algebraic operads.} Grundlehren der Mathematischen Wissenschaften 346. Springer-Verlag, Berlin, 2012.


\bibitem{LuHTT}
J.~Lurie, \emph{Higher Topos Theory.} Annals of Mathematics Studies 170. Princeton University Press, Princeton, NJ, 2009. Revised version available online at \url{https://www.math.ias.edu/~lurie/papers/HTT.pdf}

\bibitem{LuHA}
J.~Lurie, \emph{Higher Algebra.} Available online at: \url{https://www.math.ias.edu/~lurie/papers/HA.pdf}

\bibitem{M01}
M.~A.~Mandell, $E_{\infty}$\emph{-algebras and p-adic homotopy theory.} Topology 40 (2001), no. 1, 43--94.

\bibitem{Ma2}
M.~A.~Mandell, \emph{Cochains and homotopy type}, Publ. Math. Inst. Hautes \'Etudes Sci. 103 (2006), 213--246.

\bibitem{MPBook}
J.~P.~May and K.~Ponto, \emph{More concise algebraic topology. Localization, completion, and model categories.} Chicago Lectures in Mathematics. University of Chicago Press, Chicago, IL, 2012.

\bibitem{McSmi}
J. ~E. ~McClure and  J. ~H. ~Smith, \emph{Multivariable cochain operations and little $n$-cubes.} J. Amer. Math. Soc. 16 (2003), no. 3, 681--704.

\bibitem{MRZ}
E.~Minichiello, M.~Rivera, and M. ~Zeinalian, \emph{Categorical models for path spaces.} Adv. Math. 415 (2023). %\url{https://arxiv.org/abs/2201.03046}

\bibitem{Mu}
H.~Munkholm, \emph{The Eilenberg-Moore spectral sequence and strongly homotopy multiplicative maps.} J. Pure Appl. Algebra 5 (1974),no.~1,  1--50.

\bibitem{NT}
C.~N\u{a}st\u{a}sescu and B.~Torrecillas, \emph{Torsion theories for coalgebras}. J. Pure Appl. Algebra
97 (1994), no.~2, 203--220. 

\bibitem{NRS}
H.~K.~Nguyen, G.~Raptis and C.~Schrade, \emph{Adjoint functor theorems for $\infty$-categories}, J. London Math. Soc. 101 (2020), no.~2, 659--681.

\bibitem{Nik}
T.~Nikolaus, \emph{Rational and p-adic homotopy theory}. Lecture notes available at: \url{https://www.uni-muenster.de/IVV5WS/WebHop/user/nikolaus/Papers/Padic.pdf}

\bibitem{Po11}
L.~Positselski, \emph{Two kinds of derived categories, Koszul duality, and comodule-contramodule correspondence.} 
Mem. Amer. Math. Soc. 212, no. 996, 2011.

\bibitem{Po24}
L.~Positselski, \emph{The categories of corings and coalgebras over a ring are locally countably presentable}, Preprint (2024).
\url{https://arxiv.org/abs/2401.02928}


\bibitem{Q67}
D.~Quillen, \emph{Homotopical algebra.} Lecture Notes in Mathematics 43. Springer-Verlag, Berlin-New York, 1967. 

\bibitem{Q69}
D.~Quillen, \emph{Rational homotopy theory.} Ann. of Math. 90 (2) (1969), 205--295. 



\bibitem{Ra2}
G.~Raptis, \emph{On a conjecture by J.H. Smith.} Theory Appl. Categ. Vol. 24 (2010), no.~5, 114--116. 

\bibitem{Ra}
G.~Raptis, \emph{Simplicial presheaves of coalgebras.} Algebr. Geom. Topol. 13 (2013), no. 4, 1967--2000.

\bibitem{Ra_notes}
G.~Raptis, \emph{Notes on combinatorial model categories}. Available at: \url{https://graptismath.net/files/notes-comb-model-cat.pdf}

\bibitem{RR}
G.~Raptis and J.~Rosick\'y, \emph{The accessibility rank of weak equivalences.} Theory Appl. Categ. 30 (2015), no. 19, 687--703.

\bibitem{Re} 
C.~Rezk, \emph{Every homotopy theory of simplicial algebras admits a proper model.}
Topol. Appl. 119 (2002), no.~1, 65--94.


\bibitem{RiZe16}
M.~Rivera and M.~Zeinalian, \emph{Cubical rigidification, the cobar construction and the based loop space.} Algebr. Geom. Topol. 18 (2018), no. 7, 3789--3820.
 
\bibitem{RWZTransactions}
M.~Rivera, F.~Wierstra, and M.~Zeinalian, \emph{The simplicial coalgebra of chains determines homotopy types rationally and one prime at a time.} Trans. Amer. Math. Soc. 375 (2022), no. 5, 3267--3303 
\bibitem{St}
M.~Stelzer, \emph{Purity and homotopy theory of coalgebras.} J. Pure Appl. Algebra 223 (2019), no. 6, 2455--2473. 
 
 \bibitem{S77}
D.~Sullivan, \emph{Infinitesimal computations in topology.} Inst. Hautes \'Etudes Sci. Publ. Math. 47 (1977), 269--331.

\bibitem{Sul70}
D.~Sullivan, \emph{Geometric topology: localization, periodicity and Galois symmetry.} (The 1970 MIT Notes. Edited and with a preface by Andrew Ranicki.) 
$K$-Monographs in Mathematics 8. Springer, Dordrecht, 2005.

\bibitem{Sw69}
M.~Sweedler, \emph{Hopf algebras.} W.A. Benjamin. New York, 1969.

 \bibitem{Ta}
G.~Tabuada, \emph{Une structure de cat\'egorie de mod\`{e}les de Quillen sur la cat\'egorie des dg-cat\'egories.} C. R. Math. Acad. Sci. Paris 340 (2005), no. 1, 15--19.

\bibitem{Ta2}
G.~Tabuada, \emph{Differential graded versus simplicial categories.} Topol. Appl. 157 (2010), 563--593.

% \bibitem{B20}
% B.~Vallette, \emph{Homotopy theory of homotopy algebras.} Ann. Inst. Fourier (Grenoble) 70 (2020), no. 2, 683--738.
 
 
 \bibitem{Yu21}
 A.~Yuan, \emph{Integral models for spaces via the higher Frobenius.} To appear in \text{J. Amer. Math. Soc.} \url{https://doi.org/10.1090/jams/998} 

\bibitem{Z57} 
E.~Zeeman, \emph{A proof of the comparison theorem for spectral sequences.} Math. Proc. Camb. Philos. Soc. 53 (1957), no.~1, 57--62. 


\end{thebibliography}
\end{document}